\definecolor{indigo}{HTML}{492DA5}
\providecommand{\noopsort}[1]{}
\g@addto@macro\bfseries{\boldmath}\makeatother
\let\origsection\section
\renewcommand\section{\@ifstar{\starsection}{\nostarsection}}
\newcommand\sectionspace{\vspace{0.5ex}}
\newcommand\nostarsection[1]{\sectionspace\origsection{#1}\sectionspace}
\newcommand\starsection[1]{\sectionspace\origsection*{#1}\sectionspace}
\crefname{page}{page}{pages}
\setlist[enumerate]{font=\normalfont}
\crefname{enumi}{}{}
\crefname{enumii}{}{}
\numberwithin{equation}{section}
\crefname{equation}{equation}{equations}
\crefname{condition}{condition}{conditions}
\newtheorem{theorem}{Theorem}[section]
\crefname{thm}{Theorem}{Theorems}
\newtheorem{lemma}[theorem]{Lemma}
\crefname{lemma}{Lemma}{Lemmas}
\newtheorem{prop}[theorem]{Proposition}
\crefname{prop}{Proposition}{Propositions}
\newtheorem{cor}[theorem]{Corollary}
\crefname{cor}{Corollary}{Corollaries}
\theoremstyle{definition}
\newtheorem{definition}[theorem]{Definition}
\crefname{definition}{Definition}{Definitions}
\theoremstyle{remark}
\newtheorem{remark}[theorem]{Remark}
\crefname{remark}{Remark}{Remarks}
\newtheorem{remarks}[theorem]{Remarks}
\crefname{remarks}{Remarks}{Remarks}
\newtheorem{example}[theorem]{Example}
\crefname{example}{Example}{Examples}
\crefname{ADP}{}{}
\crefname{ACP}{}{}
\crefname{AQP}{}{}
\newcommand{\Cstar}{C*}
\newcommand{\C}{\mathbb{C}}
\newcommand{\N}{\mathbb{N}}
\newcommand{\R}{\mathbb{R}}
\newcommand{\T}{\mathbb{T}}
\newcommand{\Z}{\mathbb{Z}}
\newcommand{\BB}{\mathcal{B}}
\newcommand{\FF}{\mathcal{F}}
\newcommand{\GG}{\mathcal{G}}
\newcommand{\II}{\mathcal{I}}
\newcommand{\JJ}{\mathcal{J}}
\renewcommand{\SS}{\mathcal{S}}
\newcommand{\UU}{\mathcal{U}}
\newcommand{\VV}{\mathcal{V}}
\newcommand{\fp}{\mathfrak{p}}
\newcommand{\fq}{\mathfrak{q}}
\newcommand{\Go}{{G^{(0)}}}
\newcommand{\Gc}{{G^{(2)}}}
\newcommand{\Sigmao}{{\Sigma^{(0)}}}
\newcommand{\Sigmac}{{\Sigma^{(2)}}}
\newcommand{\vecspan}{\operatorname{span}}
\newcommand{\supp}{\operatorname{supp}}
\newcommand{\id}{\operatorname{id}}
\newcommand{\Iso}{\operatorname{Iso}}
\newcommand{\Spec}{\operatorname{Spec}}
\newcommand{\restr}[1]{\vert_{#1}}
\renewcommand{\r}{\boldsymbol{r}}
\newcommand{\s}{\boldsymbol{s}}
\begin{document}

\date{\today}
\title[Reconstruction of twisted Steinberg algebras]{Reconstruction of twisted Steinberg algebras}

\author[Armstrong]{Becky Armstrong}
\author[de Castro]{Gilles G. de Castro}
\author[Clark]{Lisa Orloff Clark}
\author[Courtney]{Kristin Courtney}
\author[Lin]{Ying-Fen Lin}
\author[McCormick]{Kathryn McCormick}
\author[Ramagge]{Jacqui Ramagge}
\author[Sims]{Aidan Sims}
\author[Steinberg]{Benjamin Steinberg}
\renewcommand{\shortauthors}{B.\ Armstrong et al.}

\address[B.\ Armstrong and K.\ Courtney]{Mathematical Institute, WWU M\"unster, Einsteinstr.\ 62, 48149 M\"unster, GERMANY}
\email{\href{mailto:becky.armstrong@uni-muenster.de}{becky.armstrong}, \href{mailto:kcourtne@uni-muenster.de}{kcourtne@uni-muenster.de}}
\address[L.O.\ Clark]{School of Mathematics and Statistics, Victoria University of Wellington, PO Box 600, Wellington 6140, NEW ZEALAND}
\email{\href{mailto:lisa.clark@vuw.ac.nz}{lisa.clark@vuw.ac.nz}}
\address[G.G.\ de Castro]{Departamento de Matem\'atica, Universidade Federal de Santa Catarina, Florian\'opolis, 88040-900, Brazil}
\email{\href{mailto:gilles.castro@ufsc.br}{gilles.castro@ufsc.br}}
\address[Y.-F.\ Lin]{Mathematical Sciences Research Centre, Queen's University Belfast, Belfast, BT7 1NN, UNITED KINGDOM}
\email{\href{mailto:y.lin@qub.ac.uk}{y.lin@qub.ac.uk}}
\address[K.\ McCormick]{Department of Mathematics and Statistics, California State University, Long Beach, CA, UNITED STATES}
\email{\href{mailto:kathryn.mccormick@csulb.edu}{kathryn.mccormick@csulb.edu}}
\address[J.\ Ramagge]{Faculty of Science, Durham University, Durham, DH1 3LE, UNITED KINGDOM}
\email{\href{mailto:jacqui.ramagge@durham.ac.uk}{jacqui.ramagge@durham.ac.uk}}
\address[A.\ Sims]{School of Mathematics and Applied Statistics, University of Wollongong, Northfields Avenue, Wollongong 2522, AUSTRALIA}
\email{\href{mailto:asims@uow.edu.au}{asims@uow.edu.au}}
\address[B.\ Steinberg]{Department of Mathematics, City College of New York, Convent Avenue at 138th Street, New York, New York 10031, USA.}
\email{\href{mailto:bsteinberg@ccny.cuny.edu}{bsteinberg@ccny.cuny.edu}}

\thanks{This research was supported by: the project-oriented workshop ``Women in Operator Algebras'' (18w5168) in November 2018, which was funded and hosted by the Banff International Research Station; the Australian Research Council Discovery Project DP200100155; the Sydney Mathematical Research Institute Domestic Visitor Program; the Marsden Fund of the Royal Society of New Zealand (grant number 18-VUW-056); CAPES-PrInt grant number 88887.368595/2019-00; the Deutsche Forschungsgemeinschaft (DFG, German Research Foundation) under Germany's Excellence Strategy -- EXC 2044 -- 390685587, Mathematics M\"unster -- Dynamics -- Geometry -- Structure; the Deutsche Forschungsgemeinschaft (DFG, German Research Foundation) – Project-ID 427320536 – SFB 1442; ERC Advanced Grant 834267 - AMAREC; PSC-CUNY; and the Fulbright Commission. We thank the anonymous referees for their careful reading and helpful suggestions.}

\subjclass[2020]{16S99 (primary), 22A22 (secondary)} \keywords{Steinberg algebra, groupoid, cohomology}

\begin{abstract}
We show how to recover a discrete twist over an ample Hausdorff groupoid from a pair consisting of an algebra and what we call a \emph{quasi-Cartan subalgebra}. We identify precisely which twists arise in this way (namely, those that satisfy the \emph{local bisection hypothesis}), and we prove that the assignment of twisted Steinberg algebras to such twists and our construction of a twist from a quasi-Cartan pair are mutually inverse. We identify the algebraic pairs that correspond to effective groupoids and to principal groupoids. We also indicate the scope of our results by identifying large classes of twists for which the local bisection hypothesis holds automatically.
\end{abstract}

\maketitle
\vspace{-3.75ex}
\tableofcontents
\vspace{-4.5ex}

\section{Introduction}

In this paper we provide a concrete representation theorem for a large class of abstract algebras by establishing a correspondence between algebraic twists over ample Hausdorff groupoids and a corresponding class of pairs $(A,B)$ of abstract algebras.

When trying to understand an algebra, it is often helpful to describe it in terms of a concrete representation. For example: Stone duality for commutative algebras generated by idempotents; the duality between commutative von Neumann algebras and hyperstonean spaces; Gelfand duality for commutative \Cstar-algebras; and the Gelfand--Naimark theorem for noncommutative \Cstar-algebras.

A more recent example comes from Leavitt path algebras, whose advent \cite{AAP2005JA, AMP2007ART} has sparked substantial activity around interactions between abstract algebra and \Cstar-algebras over the last decade or so. It was discovered early on that the complex Leavitt path algebra of a graph embeds in the graph \Cstar-algebra, providing a concrete representation of any given Leavitt path algebra by bounded operators on a Hilbert space. But a clear understanding of the striking structural similarities between Leavitt path algebras over general rings and the corresponding graph \Cstar-algebras was only achieved through the development of convolution algebras of functions on groupoids \cite{Steinberg2010, CFST2014}, now called Steinberg algebras, as a unifying framework. Both Leavitt path algebras and graph \Cstar-algebras can be realised as algebras of functions on the underlying graph groupoid \cite{CFST2014, KPRR1997}. Moreover, the groupoid can be recovered from either the graph \Cstar-algebra together with its abelian subalgebra generated by the range projections of its generating partial isometries \cite{BCW2017ETDS}, or from the Leavitt path algebra (over a very broad class of rings) and its corresponding commutative subalgebra \cite{ABHS2017FM, BCaH2017JPAA, Carlsen2018, CR2018CCM, Steinberg2019}. This led to deep connections between abstract algebra and symbolic dynamics \cite{CR2018CCM}, and these have been significantly extended through recent reconstruction results that show how increasingly broad classes of groupoids can be reconstructed both from their \Cstar-algebras \cite{Renault2008, CRST2021} and from their Steinberg algebras \cite{ABHS2017FM, Steinberg2019}. Analogously to the case for groupoid \Cstar-algebras, there is a disintegration theorem~\cite{Steinberg2014} for realising modules over Steinberg algebras as sheaves over the groupoid, and Morita equivalence of Steinberg algebras can often be seen at the groupoid level~\cite{CS2015,Steinberg2014}. The upshot is that there are significant advantages of being able to recognise a given algebra as the Steinberg algebra of a groupoid.

In a series of papers \cite{FM1975BAMS, FM1977TAMS1, FM1977TAMS2} Feldman and Moore showed that every Borel $2$-cocycle on a Borel equivalence relation gives rise to a von Neumann algebra and a so-called Cartan subalgebra (roughly, an abelian subalgebra whose normaliser generates the whole algebra); and that the equivalence relation and $2$-cocycle can be recovered (up to cohomology) from the pair of algebras. A corresponding theorem for \Cstar-algebras was developed by Renault and by Kumjian first for principal groupoids \cite{Kumjian1986} and then for effective groupoids \cite{Renault2008}, using Kumjian's notion of a \emph{twist} over an \'etale groupoid in place of a $2$-cocycle. It was these results that Matsumoto and Matui subsequently used \cite{MM2014KJM} to prove that flow equivalence of shift spaces is characterised by diagonal-preserving isomorphism of the associated Cuntz--Krieger algebras, leading to the surge of activity around reconstruction of groupoids that are not even effective from the \Cstar-algebras discussed above.

Renault's results beg an extension to abstract algebras, following the program of ``algebraisation'' of operator-theoretic ideas going back to von Neumann and Kaplansky (see \cite{Kaplansky1968}) described in \cite{CH2020}: in the words of Berberian \cite{Berberian1972} ``if all the functional analysis is stripped away\,\textellipsis what remains should stand firmly as a substantial piece of algebra, completely accessible through algebraic avenues.'' The groundwork for such an extension was laid recently in \cite{ACCLMR2022}, where a notion of an algebraic twist over an ample groupoid, suitable for constructing twisted Steinberg algebras, was developed, and the structure of the resulting twisted Steinberg algebras was analysed.

In this paper we provide such an extension. We identify a class of commutative subalgebras, called \emph{algebraic quasi-Cartan subalgebras}, in abstract algebras over indecomposable commutative rings. Our main theorems establish a correspondence between algebraic quasi-Cartan pairs and twists over ample Hausdorff \'etale groupoids that satisfy a \emph{local bisection hypothesis} (analogous to that of \cite{Steinberg2019}) that generalises the no-nontrivial-units condition for twisted group rings. Indeed, we show that the local bisection hypothesis follows from the no-nontrivial-units condition for sufficiently many of the twisted group rings over the interior of the isotropy of the groupoid. This guarantees that our results apply to many important examples, as we detail in the final section of the paper---carrying our results for algebras beyond the current state of the art for \Cstar-algebras. We also pick out from amongst all algebraic quasi-Cartan pairs the algebraic Cartan pairs that correspond to the effective groupoids of Renault's setting, and the algebraic diagonal pairs that correspond to the principal groupoids of Kumjian's setting.

This opens interesting new avenues of study for algebras admitting quasi-Cartan subalgebras, and provides a wealth of invariants for algebraic quasi-Cartan pairs: for example, the topological full-group of the underlying groupoid $G$, with intriguing links to the Thompson groups \cite{Nekrashevych2004, MM2017GGD}; the homology groups of $G$, which have promising connections to computations of $K$-theory in the \Cstar-algebraic setting \cite{Matui2012PLMS, Matui2015JRAM, Matui2016AM}; and the twist itself, which behaves like a second cohomology class for the orbit space of the groupoid that can be used as a classifying invariant for Fell algebras \cite{aHKS2011}.

This paper resulted from a merging of two research teams. Each team independently proved similar results and when we realised what had happened we decided to join forces. We are glad we did---we think the end result is the better for it.

\subsection*{Related work}

In the final stages of preparation of this manuscript we became aware of Bice's independent work \cite{Bice2021} on ringoid bundles, and in particular on Steinberg rings and Steinberg bundles. We thank Bice bringing his work to our attention, and for sharing his preprint.

Bice's results deal, at their most general, with topological categories appropriately fibred over \'etale groupoids. His main results are about \emph{Steinberg bundles} which, loosely speaking, bear the same relationship to our discrete $R$-twists as Fell bundles bear to twists in the \Cstar-algebraic setting; in particular, in Bice's setting, the fibres of $q^{-1}(G)$ can vary, and need not be commutative. Bice shows that every Steinberg bundle determines a \emph{Steinberg ring}: a ring $A$ with distinguished subsemigroups $Z \subseteq S \subseteq A$ (corresponding to $I(B) \subseteq N(B) \subseteq A$), and a suitable notion of a conditional expectation $\Phi\colon A \to S$ such that $Z \subseteq \Phi(A)$, satisfying an additional support condition. He demonstrates that every Steinberg ring determines a corresponding Steinberg bundle (via an ultrafilter construction) whose Steinberg ring is the original $(A, S, Z, \Phi)$. However, the same pair $(A, Z)$ could admit multiple Steinberg-ring structures, yielding different Steinberg bundles.

Our set-up is less general, but requires less algebraic data to describe the algebraic objects of study: the quasi-Cartan pair $(A,B)$ is all the information required, and then $N(B)$ and $P$ can be recovered. We then obtain a complete and concrete correspondence between quasi-Cartan pairs and $R$-twists satisfying the local bisection hypothesis.

\subsection*{Outline of the paper}

The paper is organised as follows. In \cref{sec:prelims}, we introduce preliminaries on commutative $R$-algebras without torsion (this is a standing assumption on the commutative subalgebras $B$ in our quasi-Cartan pairs), on groupoids and discrete $R$-twists, on twisted Steinberg algebras, and on inverse semigroups of normalisers of commutative subalgebras.

In \cref{sec:algebraic qcps}, we introduce our three notions of algebraic pairs (see \cref{def:ACP}), and we prove in \cref{lem:D=>C} that every algebraic diagonal pair is an algebraic Cartan pair. We also show that for a given quasi-Cartan pair $(A,B)$, there is only one conditional expectation from $A$ to $B$ that satisfies the key algebraic condition of being implemented by idempotents (see \cref{def:conditional expectation}). To finish the section, we show that if $(A,B)$ is an algebraic Cartan pair, then there is a unique conditional expectation from $A$ to $B$, and we show in \cref{lem:C=>Q} that every algebraic Cartan pair is an algebraic quasi-Cartan pair.

In \cref{sec:prototypes}, we study the pairs of algebras arising from discrete $R$-twists, and demonstrate that such a pair is an algebraic quasi-Cartan pair if and only if the twist satisfies the local bisection hypothesis. We also provide tools for checking the local bisection hypothesis, and we finish the section by showing in \cref{prop:effectiveACPprincipalADP} that twists over principal groupoids yield algebraic diagonal pairs, and twists over effective groupoids yield algebraic Cartan pairs.

In \cref{sec:build twist}, we show how to construct a twist from a pair of algebras. We have made an effort to be clear about which hypotheses are required for each part of the construction. In \cref{sec:Sigma def}, we begin with a pair $(A,B)$ such that $B$ is without torsion and the idempotents of $B$ are a set of local units for $A$, and we describe an associated groupoid $\Sigma$ of ultrafilters of normalisers of $B$ and some of its key properties. In \cref{sec:G def} we additionally assume that $B$ is the $R$-linear span of its idempotent elements. With this hypothesis, we construct a quotient $G$ of $\Sigma$ by the action of $R^\times$, and show that $G$ is an ample groupoid. In \cref{sec:twist def}, we show that under the same hypotheses we obtain a discrete $R$-twist $\Go \times R^\times \hookrightarrow \Sigma \twoheadrightarrow G$ over $G$. We also show that if $(A,B)$ is an algebraic quasi-Cartan pair, then $G$ (and hence also $\Sigma$) is Hausdorff---this is all contained in \cref{thm:the twist,prop:Hausdorff}.

In \cref{sec:main iso}, we prove our first main result. We start with an algebraic quasi-Cartan pair $(A,B)$, and consider the twist $\Sigma$ over $G$ constructed in the preceding section. Our main theorem, \cref{thm:main}, shows that there is an explicit isomorphism of $A$ onto the twisted Steinberg algebra $A_R(G;\Sigma)$ that restricts to the standard isomorphism of $B$ onto $A_R(\Go; q^{-1}(\Go)) \cong A_R(\Go)$ arising from Stone duality as in \cite[Th\'eor\`eme~1]{Keimel1970}.

In \cref{sec:info from iso}, we explore the relationship between properties of a twist $\Sigma$ over $G$ satisfying the local bisection hypothesis and properties of the associated Steinberg algebra. Specifically, we prove that if $(A,B)$ is an algebraic quasi-Cartan pair, then it is a Cartan pair if and only if the associated groupoid $G$ is effective (so that our terminology is consistent with Renault's in \cite{Renault2008}) and that it is a diagonal pair if and only if $G$ is principal (so that our terminology is consistent with Kumjian's in
\cite{Kumjian1986}).

We see in \cref{sec:main iso} that passing from an algebraic quasi-Cartan pair to the associated twist and then to the associated twisted Steinberg algebra and its diagonal subalgebra recovers the original pair. In \cref{sec:recover twist} we consider the dual approach where we start with a twist, pass to the associated pair of algebras, and then construct the associated sequence of groupoids. We prove in \cref{prop:embedding} that given any twist $\Sigma$ over an ample Hausdorff groupoid $G$, if the algebraic pair $(A,B)$ consists of a Steinberg algebra and a diagonal subalgebra, then the groupoid $\Sigma'$ of ultrafilters of normalisers of $B$ admits a continuous open embedding of $\Sigma$ that restricts to a homeomorphism of unit spaces. Then in \cref{thm:reconstructing.the.twist}, we show that this embedding is an isomorphism if and only if $(A,B)$ is an algebraic quasi-Cartan pair (equivalently, if and only if $\Sigma \to G$ satisfies the local bisection hypothesis). We deduce in \cref{cor:equiv.twist} that given twists $\Sigma_1 \to G_1$ and $\Sigma_2 \to G_2$ such that $\Sigma_1 \to G_1$ satisfies the local bisection hypothesis, the twists are isomorphic (which implies in particular that $\Sigma_2 \to G_2$ also satisfies the local bisection hypothesis) if and only if the associated algebraic pairs are isomorphic. We then deduce in \cref{cor:equiv.twist.effective} that if $G_1$ is effective, then the twists are isomorphic (which implies in particular that $G_2$ is effective) if and only if there is an isomorphism $A_R(G_1; \Sigma_1) \mapsto A_R(G_2; \Sigma_2)$ of twisted Steinberg algebras that carries the diagonal subalgebra of $A_R(G_1; \Sigma_1)$ into the diagonal subalgebra of $A_R(G_2; \Sigma_2)$.

Finally, in \cref{sec:examples} we demonstrate the applicability of our results by showing that there are substantial classes of twists that satisfy the local bisection hypothesis, and therefore correspond to algebraic quasi-Cartan pairs. We consider groups $H$ with the \emph{unique product property}; that is, that products $AB$ of finite subsets of $H$ always contain an element with a unique factorisation of the form $ab$ with $a \in A$ and $b \in B$. (Examples include right-orderable groups, and in particular torsion-free abelian groups.) By adapting the approach used in \cite{Steinberg2019} for untwisted group rings, we show that if $R$ is reduced and indecomposable and $H$ is a group with the unique product property, then every twisted group $R$-algebra of $H$ has no nontrivial units. Combining this with our results in \cref{sec:prototypes}, we deduce that for reduced and indecomposable $R$, all $R$-twists over groupoids $G$ such that the fibres of the interior of the isotropy of $G$ have the unique product property satisfy the local bisection hypothesis (the reduced assumption can be dropped if the groupoid is effective). In particular, this includes all twists over ample Deaconu--Renault groupoids; we give a specific application to twisted Kumjian--Pask algebras.

\section{Preliminaries} \label{sec:prelims}

\subsection{Commutative \texorpdfstring{$R$}{R}-algebras without torsion}

Throughout this article, $R$ denotes a commutative ring with identity, $R^\times$ is the group of units of $R$, and $A$ is an $R$-algebra. We always assume that $B \subseteq A$ is a commutative subalgebra with idempotents $I(B)$. We also ask that $B$ is \emph{without torsion} with respect to $R$ in the sense implicitly intended in \cite{Keimel1970}; specifically,
\begin{gather} \label[condition]{cond:torsion free}
\text{if } e \in I(B) {\setminus} \{0\} \text{ and } t \in R \text{ satisfy } te=0, \text{ then } t = 0. \tag{WT}
\end{gather}
\Cref{cond:torsion free} holds automatically if $R$ is a field (and also if $R$ is an integral domain and $B$ is a torsion-free $R$-module, as explained below). Observe that if $B$ has at least one nonzero idempotent, then \cref{cond:torsion free} also implies that $R$ is \emph{indecomposable}\footnote{For us, indecomposable implicitly implies unital.} in the sense that its only idempotents are $0$ and $1$: if $t \in R$ satisfies $t^2=t$, then $t^2e=te$ for any $e \in I(B) {\setminus} \{0\}$, and then $t(1-t)e=0$, forcing $t=0$ or $t=1$ (since $(1-t)e\in I(B)$). Since we will be working exclusively with algebras $B$ that are generated by their idempotent elements, it follows that $R$ is generally indecomposable. This is also the condition on $R$ used in \cite{Steinberg2019}. The condition of a ring $R$ being indecomposable is a natural one in commutative ring theory. For instance, recall that the Zariski (or prime) spectrum $\Spec(R)$ of $R$ is the set of prime ideals of $R$ with the topology whose basic open subsets are of the form $D(r) = \{\fp \in \Spec(R) : r \notin \fp\}$ with $r\in R$. It is well known that $\Spec(R)$ is a connected space if and only if $R$ is indecomposable; see \cite[Chapter~1, Exercise~22]{AM1969}.

We will frequently use without comment the following observation. If $B$ is spanned as an $R$-module by $I(B)$, then each element $b\in B$ can be written as $b=\sum_{i=1}^n t_ie_i$ with $e_1,\dotsc, e_n\in I(B)$ mutually orthogonal and $t_1,\dotsc, t_n\in R$. Indeed, if $b=\sum_{e\in F}t_ee$ with $F\subseteq I(B)$ finite, then $F$ generates a finite Boolean algebra, where we recall that the join of commuting idempotents $e$ and $f$ is $e\vee f=e+f-ef$, their meet is $e\wedge f=ef$, and their relative complement is $e\setminus f=e-ef$. We can then take $e_1,\dotsc, e_n$ to be the atoms (minimal idempotents) of this Boolean algebra.

An $R$-module $M$ is called \emph{torsion-free} if $rm=0$ implies that either $m=0$ or $r$ is a zero divisor, for $r\in R$ and $m\in M$. Notice that if $I(B)$ generates $B$ as an $R$-algebra, then \cref{cond:torsion free} implies that $B$ is a torsion-free $R$-module. Indeed, if $0\ne b\in B$ and $tb=0$, we can write $b=\sum_{i=1}^n t_ie_i$ with $0\ne t_i\in R$ and $e_i\in I(B)$ with $e_1,\dotsc,e_n$ mutually orthogonal. Then $0=tbe_1=tt_1e_1$, and so $tt_1=0$ by \labelcref{cond:torsion free}, whence $t$ is a zero divisor. In particular, if $R$ is an integral domain and $B$ is generated by $I(B)$, then $B$ is torsion-free if and only if it satisfies \cref{cond:torsion free}.

\subsection{Groupoids}

We use $G$ to mean a locally compact topological groupoid with Hausdorff unit space $\Go$, composable pairs $\Gc \subseteq G \times G$, and range and source maps $\r,\s\colon G \to \Go$. We evaluate composition of groupoid elements from right to left, which means that $\gamma \gamma^{-1} = \r(\gamma)$ and $\gamma^{-1} \gamma = \s(\gamma)$, for all $\gamma \in G$.

For each $x \in \Go$, we define
\[
G_x \coloneqq \s^{-1}(x), \quad G^x \coloneqq \r^{-1}(x), \quad \text{and } \quad G_x^x \coloneqq G_x \cap G^x.
\]
For any two subsets $U$ and $V$ of a groupoid $G$, we define
\[
UV \coloneqq \{\alpha\beta : (\alpha,\beta) \in (U \times V) \cap \Gc \} \ \text{ and } \
U^{-1} \coloneqq \{\alpha^{-1} : \alpha \in U \}.
\]

We call a subset $B$ of $G$ a \emph{bisection} if there exists an open subset $U$ of $G$ such that $B \subseteq U$, and $\r\restr{U}$ and $\s\restr{U}$ are homeomorphisms onto open subsets of $\Go$. We say that $G$ is \emph{\'etale} if $\r$ (or, equivalently, $\s$) is a local homeomorphism. If $G$ is \'etale, then $\Go$ is open, and both $G_x$ and $G^x$ are discrete in the subspace topology for any $x \in \Go$. We recall that $G$ is \'etale if and only if $G$ has a basis of open bisections. We say that $G$ is \emph{ample} if it has a basis of \emph{compact} open bisections. If $G$ is \'etale, then $G$ is ample if and only if its unit space $\Go$ is totally disconnected (see \cite[Proposition~4.1]{Exel2010}). It is well known that an \'etale groupoid is Hausdorff if and only if its unit space is closed (see \cite[Proposition~3.10]{EP2016}, or, more generally, \cite[Lemma~8.3.2]{Sims2020}).

If $B$ and $D$ are compact open bisections of an ample groupoid, then $B^{-1}$ and $BD$ are also compact open bisections. The collection of compact open bisections is an inverse semigroup under these operations (see \cite[Proposition~2.2.4]{Paterson1999}).

The \emph{isotropy} of a groupoid $G$ is the set
\[
\Iso(G) \coloneqq \{ \gamma \in G : \r(\gamma) = \s(\gamma) \} = \bigcup_{x \in \Go} G_x^x.
\]
We say that $G$ is \emph{principal} if $\Iso(G) = \Go$, and that $G$ is \emph{effective} if the topological interior of $\Iso(G)$ is equal to $\Go$. So every principal \'etale groupoid is effective.

\begin{remark}
In full generality, effectiveness should not be confused with several related notions. There is the notion of a groupoid being topologically principal, which means that points with trivial isotropy (that is, $G_x^x=\{x\}$) are dense in $\Go$, and the weaker notion of a groupoid being topologically free, which means that $\Iso(G) \setminus \Go$ has empty interior. Both effective and topologically principal \'etale groupoids are topologically free (see \cite[Page~283]{KM2021}). For second-countable \'etale groupoids, the notions of topologically principal and topologically free coincide (see \cite[Corollary~2.26]{KM2021}), whereas for Hausdorff \'etale groupoids, being effective is equivalent to being topologically free (see \cite[Page~284]{KM2021}). Hence for second-countable Hausdorff \'etale groupoids, all three notions agree (see also \cite[Proposition~3.6]{Renault2008}). However, there are examples of ample groupoids that are effective but not topologically principal (see \cite[Example~6.4]{BCFS2014}, or~\cite[Section~6.2]{StSz21} for examples that are minimal, in the sense that $r(G_x)$ is dense in $\Go$ for every $x \in \Go$), and examples of second-countable non-Hausdorff groupoids that are \'etale and topologically principal but not effective (see \cite[Section~5.1]{CEPSS2019}). The construction in~\cite[Proposition~6.11]{StSz21}, but with the finite alphabet $X$ replaced by an infinite one, yields a topologically free minimal ample groupoid that is neither effective (by~\cite[Proposition~6.1]{StSz21}) nor topologically principal.
\end{remark}

For the majority of this paper, the groupoids we work with are ample Hausdorff groupoids, but we will make these assumptions explicit throughout. (Note that we use the term ``Hausdorff \'etale groupoid'' to refer to a topological groupoid that is locally compact as well as Hausdorff and \'etale.)

\subsection{Discrete twists}

We next recall \cite[Definition~4.1]{ACCLMR2022}. Let $G$ be an \'etale groupoid, let $R$ be a commutative unital ring, and let $T$ be a subgroup of $R^\times$. A \emph{discrete twist} over $G$ is a sequence
\begin{equation} \label{eqn:exact.seq}
\displaystyle \Go \times T \overset{i} \hookrightarrow \Sigma \overset{q} \twoheadrightarrow G,
\end{equation}
where the groupoid $\Go \times T$ is regarded as a trivial group bundle with fibres $T$, $\Sigma$ is a groupoid with $\Sigmao = i\big(\Go \times \{1\}\big)$, and $i$ and $q$ are continuous groupoid homomorphisms that restrict to homeomorphisms of unit spaces, such that the following conditions hold:
\begin{enumerate}[label=(DT\arabic*)]
\item \label{item:exact} The sequence is exact, in the sense that $i(\{x\} \times T) = q^{-1}(x)$ for every $x \in \Go$, $i$ is injective, and $q$ is surjective.
\item \label{item:P_alpha} The groupoid $\Sigma$ is a locally trivial $G$-bundle, in the sense that for each $\alpha \in G$, there is an open bisection $B_\alpha$ of $G$ containing $\alpha$, and a continuous map $P_\alpha\colon B_\alpha \to \Sigma$ such that
\begin{enumerate}[label=(\roman*)]
\item \label{item:section} $q \circ P_\alpha = \id_{B_\alpha}$;
\item \label{item:P_alpha homeo} the map $(\beta, t) \mapsto i(\r(\beta), t) \,
P_\alpha(\beta)$ is a homeomorphism from $B_\alpha \times T$ to $q^{-1}(B_\alpha)$.
\end{enumerate}
\item \label{item:centrality} The image of $i$ is \emph{central} in $\Sigma$, in the sense that $i(\r(\sigma), t) \, \sigma = \sigma \, i(\s(\sigma), t)$ for all $\sigma \in \Sigma$ and $t \in T$.
\end{enumerate}

We denote a discrete twist over $G$ by $(\Sigma,i,q)$, or by $\Sigma \to G$, or simply by $\Sigma$. We identify $\Sigmao$ with $\Go$ via $i$ (or via $q\restr{\Sigmao})$. Note that \cite[Definition~4.1]{ACCLMR2022} requires $G$ and $\Sigma$ to be Hausdorff, although we show in \cref{cor:coveringspace} that $\Sigma$ is Hausdorff whenever $G$ is Hausdorff.

Recall from \cite[Page~14]{ACCLMR2022} that there is a continuous free action of $T$ on $\Sigma$ given by
\[
t \cdot \sigma = i(\r(\sigma),t) \sigma \, \text{ for all } t \in T \text{ and } \sigma \in \Sigma.
\]

In this paper, we will only be interested in the setting where $T = R^\times$. We will sometimes emphasise the dependence on $R^\times$ by calling $\Sigma$ a \emph{discrete $R$-twist over $G$}, or sometimes just an \emph{$R$-twist over $G$}.

We begin by observing that if $\Sigma$ is a discrete twist, then it, too, is an \'etale groupoid, and the map $q$ is a quotient map. In fact, we have the following characterisations of discrete twists, some of which are easier to check than others.

\begin{prop} \label{prop:check.locally.trivial}
Consider an exact sequence of groupoids as in \labelcref{eqn:exact.seq} with $G$ \'etale but not necessarily Hausdorff. Suppose that conditions \cref{item:exact,item:centrality} in the definition of a discrete twist are satisfied. If $q$ is a local homeomorphism, then $\Sigma$ is \'etale. The following are equivalent.
\begin{enumerate}[label=(\arabic*)]
\item \label{item1:check.locally.trivial}
The sequence is a discrete twist.
\item \label{item2:check.locally.trivial} The map $i$ is open and $q$ is a covering map.
\item \label{item3:check.locally.trivial} The map $i$ is open and the map $q$ is a local homeomorphism.
\item \label{item4:check.locally.trivial} The maps $i$ and $q$ are open, and $\Sigma$ is \'etale.
\end{enumerate}
\end{prop}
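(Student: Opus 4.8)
The plan is to isolate two general reductions that feed every implication, and then run a short cycle through \cref{item1:check.locally.trivial,item2:check.locally.trivial,item3:check.locally.trivial,item4:check.locally.trivial}. The first reduction is the claim in the statement that $q$ being a local homeomorphism forces $\Sigma$ to be \'etale: since $q$ is a groupoid homomorphism, on $\Sigma$ one has $q\restr{\Sigmao}\circ\r=\r\circ q$ (range is preserved by homomorphisms), where $\r$ denotes the range map of $\Sigma$ on the left and of $G$ on the right; as $q\restr{\Sigmao}$ is a homeomorphism this rearranges to $\r=(q\restr{\Sigmao})^{-1}\circ\r\circ q$, a composite of local homeomorphisms (using that $G$ is \'etale), so the range map of $\Sigma$ is a local homeomorphism and $\Sigma$ is \'etale. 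The second reduction is that whenever $\Sigma$ is \'etale, the map $i$ is automatically open: identifying $x\in\Go$ with the corresponding unit of $\Sigma$, the definition of the action gives $i(x,t)=t\cdot x$, so for open $U\subseteq\Go$ and $V\subseteq T$ we have $i(U\times V)=\bigcup_{t\in V}t\cdot U$, a union of images of the open set $U\subseteq\Sigmao\subseteq\Sigma$ under the action homeomorphisms $\sigma\mapsto t\cdot\sigma$, hence open. In particular, once $\Sigma$ is \'etale each fibre $q^{-1}(x)=i(\{x\}\times T)$ is a discrete subset of the isotropy, so $T$ is discrete.

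Next I would dispatch the easy arrows. For $\ref{item1:check.locally.trivial}\Rightarrow\ref{item2:check.locally.trivial}$: a discrete twist is \'etale, the homeomorphisms $(\beta,t)\mapsto i(\r(\beta),t)P_\alpha(\beta)$ from $B_\alpha\times T$ onto $q^{-1}(B_\alpha)$ intertwine $q$ with projection onto the open bisection $B_\alpha$, and since these $B_\alpha$ cover $G$ and $T$ is discrete, $q$ is a covering map; openness of $i$ comes from the second reduction. The implication $\ref{item2:check.locally.trivial}\Rightarrow\ref{item3:check.locally.trivial}$ is immediate, as covering maps are local homeomorphisms. For $\ref{item3:check.locally.trivial}\Rightarrow\ref{item4:check.locally.trivial}$ I invoke both reductions: $q$ local homeomorphism makes $\Sigma$ \'etale, $q$ is open (being a local homeomorphism), and then $i$ is open. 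For $\ref{item4:check.locally.trivial}\Rightarrow\ref{item3:check.locally.trivial}$, given $\sigma\in\Sigma$ I choose an open bisection $V\ni\sigma$ (available since $\Sigma$ is \'etale); if $q(\sigma_1)=q(\sigma_2)$ with $\sigma_1,\sigma_2\in V$, then \cref{item:exact} and freeness of the action give $\sigma_2=t\cdot\sigma_1$ for some $t\in T$, but $t\cdot\sigma_1$ has the same range and source as $\sigma_1$, so the bisection property forces $\sigma_1=\sigma_2$ and then freeness forces $t=1$; hence $q\restr{V}$ is injective, and being continuous and open (as $q$ is open) it is a homeomorphism onto the open set $q(V)$, so $q$ is a local homeomorphism.

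The substance is the remaining arrow $\ref{item3:check.locally.trivial}\Rightarrow\ref{item1:check.locally.trivial}$. Given $\alpha\in G$, I choose a $q$-preimage and, using that $q$ is a local homeomorphism and $G$ is \'etale, shrink to an open set $W\subseteq\Sigma$ and an open bisection $B_\alpha\ni\alpha$ with $q\restr{W}\colon W\to B_\alpha$ a homeomorphism; then $P_\alpha\coloneqq(q\restr{W})^{-1}$ is a continuous section with $q\circ P_\alpha=\id_{B_\alpha}$, giving the section condition. It remains to show the candidate trivialisation $\Phi_\alpha\colon(\beta,t)\mapsto i(\r(\beta),t)P_\alpha(\beta)$ is a homeomorphism $B_\alpha\times T\to q^{-1}(B_\alpha)$. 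Continuity and $q\circ\Phi_\alpha(\beta,t)=\beta$ are routine, while bijectivity onto $q^{-1}(B_\alpha)$ follows because (by \cref{item:exact}) the fibres of $q$ are exactly the free $T$-orbits. The crux, and the step I expect to be the main obstacle, is continuity of the inverse: I would write $\Phi_\alpha^{-1}(\sigma)=\bigl(q(\sigma),\tau(\sigma)\bigr)$, where $\tau(\sigma)\in T$ is determined by $i(\r(\sigma),\tau(\sigma))=\sigma\,P_\alpha(q(\sigma))^{-1}$; the right-hand side is a continuous function of $\sigma$ landing in $q^{-1}(\Go)=i(\Go\times T)$, and since $i$ is open it restricts to a homeomorphism onto the open set $q^{-1}(\Go)$, so composing with $i^{-1}$ shows $\tau$ is continuous. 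This is precisely the only place where openness of $i$ (equivalently, continuity of $i^{-1}$ on $q^{-1}(\Go)$) is genuinely needed, namely to recover the $T$-coordinate continuously, which is what upgrades the set-theoretic bijection $\Phi_\alpha$ to the homeomorphism required by the local-triviality condition.
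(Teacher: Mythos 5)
Your proposal is correct, and its skeleton matches the paper's: the paper also proves that $q$ being a local homeomorphism makes $\Sigma$ \'etale by writing the source map of $\Sigma$ as a composite of $q$ with the source map of $G$ (your range-map version is the same argument), and its hard implication also builds $P_\alpha$ as a local inverse of $q$ over an open bisection and checks that $(\beta,t)\mapsto i(\r(\beta),t)P_\alpha(\beta)$ is a continuous bijection using exactness, exactly as you do. Two of your sub-arguments genuinely differ, both to your credit. First, for openness of $i$ in \cref{item1:check.locally.trivial}$\implies$\cref{item2:check.locally.trivial} the paper cites \cite[Lemma~4.3(b)]{ACCLMR2022}, a lemma stated only for Hausdorff groupoids, and consequently has to add a remark explaining that its one non-Hausdorff application of the proposition uses only \cref{item4:check.locally.trivial}$\implies$\cref{item1:check.locally.trivial}; your second reduction---$\Sigmao$ is open in the \'etale groupoid $\Sigma$, so $i(U\times V)=\bigcup_{t\in V}t\cdot U$ is open because the $T$-action is by homeomorphisms---is a self-contained two-line replacement that is valid without any Hausdorffness, and it moreover shows that the ``$i$ is open'' hypotheses in \cref{item2:check.locally.trivial}--\cref{item4:check.locally.trivial} are automatic once $\Sigma$ is \'etale. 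Second, you run the substantive implication as \cref{item3:check.locally.trivial}$\implies$\cref{item1:check.locally.trivial} and upgrade the bijection to a homeomorphism by exhibiting a continuous inverse, recovering the $T$-coordinate via $i^{-1}$ on $q^{-1}(\Go)=i(\Go\times T)$ (legitimate since $i$ is injective, continuous, and open, hence a homeomorphism onto its image); the paper instead proves \cref{item4:check.locally.trivial}$\implies$\cref{item1:check.locally.trivial} and argues that the forward map $\phi_{P_\alpha}$ is itself open as a composite of continuous open maps. Your version is marginally longer but isolates precisely where openness of $i$ is used, which the paper's openness argument leaves implicit. Finally, the cycle structures differ only cosmetically---the paper runs the four-cycle \cref{item1:check.locally.trivial}$\implies$\cref{item2:check.locally.trivial}$\implies$\cref{item3:check.locally.trivial}$\implies$\cref{item4:check.locally.trivial}$\implies$\cref{item1:check.locally.trivial}, whereas you close \cref{item1:check.locally.trivial}$\implies$\cref{item2:check.locally.trivial}$\implies$\cref{item3:check.locally.trivial}$\implies$\cref{item1:check.locally.trivial} and prove \cref{item3:check.locally.trivial}$\iff$\cref{item4:check.locally.trivial} separately---and in your \cref{item4:check.locally.trivial}$\implies$\cref{item3:check.locally.trivial} the detour through the $T$-action is sound but heavier than needed: as in the paper, $q(\sigma_1)=q(\sigma_2)$ already forces $\s(\sigma_1)=\s(\sigma_2)$ (since $q$ is a homomorphism restricting to a bijection on units), so injectivity of $\s$ on the bisection $V$ finishes immediately.
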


\begin{proof}
If $q$ is a local homeomorphism, then since the source map for $\Sigma$ is the composition of $q$ with the source map for $G$, and since local homeomorphisms are closed under composition, $\Sigma$ is \'etale.

\mbox{\cref{item1:check.locally.trivial}$\implies$\cref{item2:check.locally.trivial}}. Condition~\cref{item:P_alpha} says that $\Sigma$ is a locally trivial fibre bundle with discrete fibres $T$, and hence $q$ is a covering map (and so a local homeomorphism). Thus $\Sigma$ is \'etale by the previous assertion. Now \cite[Lemma~4.3(b)]{ACCLMR2022} (which formally assumes that $\Sigma$ and $G$ are Hausdorff but only uses that they are \'etale and that conditions \cref{item:exact}--\cref{item:centrality} are satisfied) implies that $i$ is open.

\mbox{\cref{item2:check.locally.trivial}$\implies$\cref{item3:check.locally.trivial}} is immediate: every covering map is a local homeomorphism.

\mbox{\cref{item3:check.locally.trivial}$\implies$\cref{item4:check.locally.trivial}} is also immediate: if $q$ is a local homeomorphism then it is open, and $\Sigma$ is \'etale by the first assertion of the proposition.

\mbox{\cref{item4:check.locally.trivial}$\implies$\cref{item1:check.locally.trivial}}. Suppose that $i$ and $q$ are open and $\Sigma$ is \'etale. We show that \cref{item:P_alpha} is satisfied; that is, that $\Sigma$ is locally trivial. Fix $\alpha\in G$ and $\sigma\in \Sigma$ with $q(\sigma)=\alpha$, and choose an open bisection $V\subseteq \Sigma$ with $\sigma\in V$ using that $\Sigma$ is \'etale. Note that $q\restr{V}$ is injective, because if $q(\beta) = q(\beta')$ with $\beta,\beta'\in V$, then $\s(\beta) = \s(\beta')$, and so $\beta=\beta'$ since $V$ is a bisection. Put $B_{\alpha}\coloneqq q(V)$ and $P_{\alpha}\coloneqq (q\restr{V})^{-1}$. Then $P_\alpha$ is a homeomorphism since $q$ is open by hypothesis. Moreover, $B_\alpha$ is an open bisection, because $V$ is an open bisection and $q$ is an open map that respects $\r$ and $\s$. It remains to show that the map $\phi_{P_\alpha}\colon (\beta,t) \mapsto i(\r(\alpha),t) \, P_\alpha(\beta)$ is a homeomorphism from $B_{\alpha}\times T$ to $q^{-1}(B_{\alpha})$. Since $i$, $\r$, $P_{\alpha}$, and the multiplication in an \'etale groupoid are all continuous and open, it follows that $\phi_{P_\alpha}$ is continuous and open. We now show that it is bijective. If $i(\r(\beta),t) \, P_\alpha(\beta) = i(\r(\gamma),t') \, P_{\alpha}(\gamma)$, then applying $q$ to both sides shows that $\beta = \gamma$. We then obtain that $i(\r(\beta),t)=i(\r(\beta),t')$, and so $t=t'$ by the injectivity of $i$. Finally, to see that $\phi_{P_\alpha}$ is surjective, fix $\beta\in q^{-1}(B_{\alpha})$. Then $q(\beta)=q(P_{\alpha}(q(\beta)))$, and so $\beta P_{\alpha}(q(\beta))^{-1}\in q^{-1}(\Go)$. Thus $\beta = i(\r(\beta),t)P_{\alpha}(q(\beta))$ for some $t \in T$. This completes the proof.
\end{proof}

The only time we apply \cref{prop:check.locally.trivial} to non-Hausdorff groupoids is in the proof of \cref{thm:the twist}, where we only use the implication \mbox{\cref{item4:check.locally.trivial}$\implies$\cref{item1:check.locally.trivial}}, which does not rely on~\cite[Lemma~4.3(b)]{ACCLMR2022} (stated only for Hausdorff groupoids).

\begin{cor} \label{cor:coveringspace}
Let $(\Sigma,i,q)$ be a discrete twist over a Hausdorff \'etale groupoid $G$. Then $\Sigma$ is a Hausdorff \'etale groupoid, and $q$ is a local homeomorphism. In particular, $G$ has the quotient topology. Moreover, if $G$ is ample, then $\Sigma$ is ample.
\end{cor}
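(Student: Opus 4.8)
The plan is to extract the \'etale and covering-map structure directly from \cref{prop:check.locally.trivial}, then upgrade to Hausdorffness by hand using local triviality, and finally read off ampleness from the unit space.

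First, since $(\Sigma,i,q)$ is a discrete twist over the \'etale groupoid $G$, condition \cref{item1:check.locally.trivial} of \cref{prop:check.locally.trivial} holds, so by its equivalence with \cref{item2:check.locally.trivial} the map $i$ is open and $q$ is a covering map; in particular $q$ is a local homeomorphism, and the first assertion of that proposition then gives that $\Sigma$ is \'etale. Since a surjective local homeomorphism is a continuous open surjection, and hence a quotient map, it follows that $G$ carries the quotient topology induced by $q$. Local compactness of $\Sigma$ is inherited through $q$, a local homeomorphism onto the locally compact Hausdorff groupoid $G$. This disposes of every claim except the Hausdorffness of $\Sigma$ and the ample case.

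The main point is the Hausdorffness of $\Sigma$, which I would prove by a direct separation argument, splitting into two cases according to the behaviour of $q$. Given distinct $\sigma,\tau\in\Sigma$, if $q(\sigma)\neq q(\tau)$ then, since $G$ is Hausdorff, I can choose disjoint open sets around $q(\sigma)$ and $q(\tau)$ and pull them back along the continuous map $q$ to separate $\sigma$ and $\tau$. If instead $q(\sigma)=q(\tau)=:\alpha$, I would invoke local triviality \cref{item:P_alpha}: there is an open bisection $B_\alpha\ni\alpha$ and a homeomorphism from $B_\alpha\times T$ onto $q^{-1}(B_\alpha)$. The set $q^{-1}(B_\alpha)$ is an open neighbourhood of both $\sigma$ and $\tau$, and it is homeomorphic to $B_\alpha\times T$, which is Hausdorff because $B_\alpha\subseteq G$ is Hausdorff and $T=R^\times$ carries the discrete topology. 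Hence $\sigma$ and $\tau$ can be separated inside $q^{-1}(B_\alpha)$, and $\Sigma$ is Hausdorff. I expect this case analysis---in particular the observation that local triviality reduces the same-fibre case to the trivial Hausdorffness of $B_\alpha\times T$---to be the only real content of the proof; the other claims are essentially bookkeeping on top of \cref{prop:check.locally.trivial}.

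Finally, for the ample case, $\Sigma$ is by now a Hausdorff \'etale groupoid, and its unit space $\Sigmao$ is homeomorphic to $\Go$ via $i$ (equivalently via $q\restr{\Sigmao}$). If $G$ is ample, then $\Go$ is totally disconnected, and hence so is $\Sigmao$. By the characterisation that a Hausdorff \'etale groupoid is ample precisely when its unit space is totally disconnected (see \cite[Proposition~4.1]{Exel2010}), we conclude that $\Sigma$ is ample.
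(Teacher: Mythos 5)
Your proof is correct, and its skeleton matches the paper's: both begin by invoking \cref{prop:check.locally.trivial} to get that $q$ is a covering map (hence a local homeomorphism) and that $\Sigma$ is \'etale. The two divergences are minor but worth noting. For Hausdorffness, the paper simply cites the standard fact that a covering space of a Hausdorff space is Hausdorff; your two-case separation argument is exactly the proof of that fact, with the same-fibre case handled via the local trivialisation $q^{-1}(B_\alpha) \cong B_\alpha \times T$ from \cref{item:P_alpha}---so this is an unpacking rather than a new route, and it is sound. For ampleness, you genuinely depart from the paper: it reads ampleness directly off \cref{item:P_alpha} (choosing the bisections $B_\alpha$ compact, the trivialisations supply a basis of compact open bisections for $\Sigma$), whereas you pass through the unit space, using that $\Sigmao \cong \Go$ is totally disconnected together with the characterisation of ample \'etale groupoids via totally disconnected unit spaces \cite[Proposition~4.1]{Exel2010}. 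Your route requires knowing $\Sigma$ is Hausdorff \'etale and locally compact first---which you correctly supply, noting that local compactness is inherited through the local homeomorphism $q$---while the paper's is more direct and makes no use of Hausdorffness of $\Sigma$; both are valid, and your explicit treatment of the quotient-topology claim (surjective local homeomorphism $\Rightarrow$ open quotient map) fills in a step the paper leaves implicit.
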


\begin{proof}
The map $q$ is a covering map, and hence a local homeomorphism, by \cref{prop:check.locally.trivial}. Thus $\Sigma$ is Hausdorff, because any covering space of a Hausdorff space is Hausdorff. If $G$ is ample, then it follows from condition~\cref{item:P_alpha} that $\Sigma$ is also ample.
\end{proof}

In \cite[Lemma~4.3(c)]{ACCLMR2022} it is shown that if $(\Sigma,i,q)$ is a discrete twist, then the sets $B_\alpha$ and maps $P_\alpha$ of condition~\cref{item:P_alpha} can be chosen such that $P_\alpha(B_\alpha \cap \Go) \subseteq \Sigmao$. A continuous map $P_\alpha\colon B_\alpha \to \Sigma$ defined as in condition~\cref{item:P_alpha} that satisfies this additional condition is called a \emph{(continuous) local section}. If $G$ is ample, then the open bisections from condition~\cref{item:P_alpha} can be chosen to be compact.

If $(\Sigma,i,q)$ is a discrete twist, then by definition, every element of $G$ has an open bisection neighbourhood that admits a local trivialisation. To finish this section, we observe that the reduction of $\Sigma$ to any countable union of compact open subsets of $G$ is topologically trivial. The proof of the following result is essentially the proof of \cite[Theorem~4.10]{ACCLMR2022}.

\begin{lemma} \label{lem:local triviality}
Let $G$ be an ample Hausdorff groupoid, and let $(\Sigma,i,q)$ be a discrete $R$-twist over $G$. Suppose that $U \subseteq G$ is a countable union of compact open subsets. Then there exists a continuous section $S\colon U \to \Sigma$ that \emph{trivialises} $\Sigma$ over $U$, in the sense that $(\gamma,t) \mapsto t \cdot S(\gamma)$ is a homeomorphism of $U\times R^\times$ onto $q^{-1}(U)$.
\end{lemma}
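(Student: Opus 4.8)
The plan is to build $S$ piece by piece over a countable clopen partition of $U$ into compact open bisections, each carrying a continuous local section, and then to glue. First I would reduce the cover: for each $\gamma \in G$, condition~\cref{item:P_alpha} supplies an open bisection $B_\gamma \ni \gamma$ together with a continuous local section $P_\gamma$; since $G$ is ample I may shrink $B_\gamma$ to a compact open bisection $B_\gamma' \subseteq B_\gamma$ containing $\gamma$ and restrict $P_\gamma$ to it. These compact open bisections cover $U$, and as $U = \bigcup_n W_n$ with each $W_n$ compact open, compactness of the $W_n$ lets me extract a \emph{countable} subcover $\{D_m\}_{m \geq 1}$ of $U$ by compact open bisections, each equipped with a continuous section $P_m \colon D_m \to \Sigma$ satisfying $q \circ P_m = \id_{D_m}$.

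Next I disjointify. Set $V_1 \coloneqq D_1 \cap U$ and $V_m \coloneqq (D_m \cap U) \setminus (D_1 \cup \dots \cup D_{m-1})$ for $m \geq 2$. This is where countability and Hausdorffness enter: at each stage only finitely many compact (hence closed) sets are removed, so each $V_m$ is a compact open subset of the bisection $D_m$, hence itself a compact open bisection; the $V_m$ are pairwise disjoint with $\bigcup_m V_m = U$; and since $U \setminus V_m = \bigcup_{k \neq m} V_k$ is open, each $V_m$ is clopen in $U$. I then define $S \colon U \to \Sigma$ by $S\restr{V_m} \coloneqq P_m\restr{V_m}$. As the $V_m$ form a clopen partition of $U$ and $S$ agrees with a continuous map on each, $S$ is continuous and satisfies $q \circ S = \id_U$.

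It remains to check that $\Phi \colon (\gamma, t) \mapsto t \cdot S(\gamma) = i(\r(\gamma), t)\, S(\gamma)$ is a homeomorphism of $U \times R^\times$ onto $q^{-1}(U)$. Continuity is clear from continuity of $i$, $\r$, $S$, and multiplication. For injectivity, applying $q$ to $\Phi(\gamma,t) = \Phi(\gamma',t')$ gives $\gamma = \gamma'$, and cancelling $S(\gamma)$ yields $i(\r(\gamma),t) = i(\r(\gamma),t')$, so $t = t'$ by injectivity of $i$. For surjectivity, given $\sigma \in q^{-1}(U)$ with $q(\sigma) = \gamma$, the element $\sigma S(\gamma)^{-1}$ lies in $q^{-1}(\r(\gamma))$, so by exactness \cref{item:exact} it equals $i(\r(\gamma), t)$ for some $t \in R^\times$, whence $\sigma = \Phi(\gamma, t)$. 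Finally, on each piece $\Phi\restr{V_m \times R^\times}$ coincides with the restriction of the local trivialisation $(\beta,t) \mapsto i(\r(\beta),t)\,P_m(\beta)$ of \cref{item:P_alpha homeo}, so it is a homeomorphism of $V_m \times R^\times$ onto $q^{-1}(V_m)$; since $\{V_m \times R^\times\}$ and $\{q^{-1}(V_m)\}$ are clopen partitions of $U \times R^\times$ and $q^{-1}(U)$ respectively, matched homeomorphically by $\Phi$, the map $\Phi$ is a global homeomorphism.

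The main obstacle I anticipate is not any single estimate but the bookkeeping that makes the gluing legitimate: one must arrange the pieces $V_m$ to be simultaneously compact open bisections (so that the local sections of \cref{item:P_alpha} are available) \emph{and} clopen in $U$ (so that continuity and the homeomorphism property pass from the individual pieces to all of $U$). This is precisely what the countability hypothesis secures, since an uncountable disjointification would at some stage remove infinitely many compact sets and could destroy openness of the resulting piece.
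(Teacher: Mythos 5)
Your proof is correct and is essentially the argument the paper itself relies on (it defers to the proof of \cite[Theorem~4.10]{ACCLMR2022}, which is exactly this scheme): pass to a countable cover of $U$ by compact open bisections carrying continuous local sections from \cref{item:P_alpha}, disjointify at finite stages so that only finitely many compact (hence closed) sets are removed at each step, glue over the resulting clopen partition of $U$, and verify the trivialisation piecewise. One cosmetic slip: since you did not arrange $D_m \subseteq U$ (which you could, by replacing each $B_\gamma'$ with $B_\gamma' \cap W_n$), the sets $V_m = (D_m \cap U) \setminus (D_1 \cup \dotsb \cup D_{m-1})$ need not be compact as you claim---but this is harmless, because your argument only ever uses that each $V_m$ is an open subset of the bisection $D_m$ (so that $P_m$ and the trivialisation of \cref{item:P_alpha homeo} restrict to it) and that the $V_m$ form an open, hence clopen, partition of $U$, both of which hold.
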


In fact, in \cref{lem:local triviality} it is enough for $U {\setminus} \Go$ to be paracompact rather than $U$ being a countable union of compact open sets, since it is well known that a paracompact locally compact space can be partitioned into clopen $\sigma$-compact subspaces.

In \cite{ACCLMR2022}, the authors studied discrete twists arising from continuous $2$-cocycles (see \cite[Example~4.5]{ACCLMR2022}), and showed that every discrete $R$-twist $(\Sigma,i,q)$ admitting a continuous global section for $q$ is induced by a continuous $2$-cocycle (see \cite[Proposition~4.8]{ACCLMR2022}). A continuous $2$-cocycle on a Hausdorff \'etale groupoid $G$ is a continuous map $\sigma\colon \Gc \to R^\times$ that satisfies the \emph{$2$-cocycle identity}: $\sigma(\alpha,\beta) \, \sigma(\alpha\beta,\gamma) = \sigma(\alpha, \beta\gamma) \, \sigma(\beta, \gamma)$ for all composable $\alpha, \beta, \gamma \in G$; and is \emph{normalised}, in the sense that $\sigma(\r(\gamma),\gamma) = \sigma(\gamma, \s(\gamma)) = 1$ for all $\gamma \in G$.

\begin{example}
We show how to construct, for a fairly broad class of rings $R$, a discrete $R$-twist over an ample groupoid that does not admit a continuous global section, and hence is not equivalent to a twist coming from a continuous $2$-cocycle. The corresponding twisted Steinberg algebra and diagonal subalgebra comprise an algebraic quasi-Cartan pair to which our theory applies, so the full generality of twists, rather than just $2$-cocycles, is necessary.

Let $R$ be a commutative unital ring. Suppose that $A \subseteq R^\times$ is a subgroup and that $t \in R^\times$ is an element such that $(i,a) \mapsto t^i a$ is an isomorphism of $\Z/2\Z \times A$ onto $R^\times$. An obvious example is $R = \Z$ with $t = -1$ and $A = \{1\}$; but more generally, any ordered ring (for example, a subring of an ordered field like $\R$) provides an example, as does any finite field of order congruent to $3$ modulo $4$.

We begin by showing that there is a totally disconnected locally compact Hausdorff space $X$ that supports a nontrivial principal $(\Z/2\Z)$-bundle.

Example~5.3 of \cite{Wiegand1969} describes a totally disconnected locally compact Hausdorff space $X$ with a covering $\UU$ by compact open sets such that the first \v{C}ech cohomology group of $X$ with respect to this covering and with coefficients in some sheaf of abelian groups does not vanish. In the footnote to the example, the author points out that one can, in fact, prove that the first \v{C}ech cohomology group $H^1(\UU;\Z/2\Z)$ with coefficients in the constant sheaf with value $\Z/2\Z$ does not vanish (with respect to this covering or any covering by compact open sets); some further details are provided in~\cite{Sawin}. By the theory of principal bundles, it follows that there exists a nontrivial principal $(\Z/2\Z)$-bundle $p\colon B\to X$ over $X$. Note that $p$ is, in fact, a double cover and hence a local homeomorphism. Since $\langle t\rangle \cong \Z/2\Z$, we can and do view $B$ as a principal $\langle t\rangle$-bundle.

Let $G$ be the trivial group bundle $G = X\times \Z$. Endow $\Z$ and $R^\times$ with the discrete topology. Let $\Sigma$ denote the topological sum (disjoint union)
\[
\Sigma \coloneqq (X\times 2\Z\times R^\times) \coprod (B\times (1+2\Z)\times A).
\]
Define $\Sigmao \coloneqq X\times \{0\}\times \{1\}$. Define $s,r\colon \Sigma \to \Sigmao$ by $s(x,j,r) = r(x,j,r) = (x,0,1)$ for $(x,j,r) \in X\times 2\Z\times R^\times$ and $s(b,k,a) = r(b,k,a) = (p(b), 0, 1)$ for $(b,k,a) \in B\times (1+2\Z)\times A$. These are local homeomorphisms because $p$ is a covering map. Using that $p$ is a double cover, so that each fibre of $p\colon B \to X$ has two elements, we can define a multiplication $\Sigmac \to \Sigma$ as follows: for $x \in X$, $b,b' \in p^{-1}(x)$, $j,j' \in 2\Z$, $k,k' \in 1 + 2\Z$, $r,r' \in R^\times$, $a,a' \in A$, and $i \in \{0,1\}$,
\begin{align*}
(x,j,r)(x,j',r') &\coloneqq (x, j+j', rr'), \\
(x,j,t^ia)(b,k,a') = (b, k, a')(x, j, t^ia) &\coloneqq (t^ib, j+k,aa'),\text{ and} \\
(b,k,a)(b',k',a') &\coloneqq (p(b),k+k', t^{1-\delta_{b,b'}}aa').
\end{align*}
Routine checks show that this is a continuous, associative, commutative multiplication. Hence $\Sigma$ is an ample \'etale groupoid---indeed, it is an abelian group bundle.

Define $i\colon X \times R^{\times} = \Go \times R^\times \to \Sigma$ by $i(x,r) = (x,0,r)$. Then $i$ is an injective open groupoid homomorphism onto a clopen subgroupoid of $\Sigma$. We have $\Sigmao = i(\Go \times \{1\})$, and $i(X \times R^{\times})$ is central because $\Sigma$ is abelian. Define $q\colon \Sigma\to X\times \Z$ by $q(x,j,r) = (x,j)$ for $(x,j,r) \in X\times 2\Z\times R^\times$, and $q(b,k,a) = (p(b),k)$ for $(b,k,a) \in B\times (1+2\Z)\times A$. Then $q$ is a surjective groupoid homomorphism and a covering map, and $q^{-1}(x,0) = \{x\}\times \{0\}\times R^\times = i(\{(x,0)\} \times R^\times)$ for each $x \in X$. Also, $q$ restricts to the natural homeomorphism $(x,0,1) \mapsto (x,0)$ from $\Sigmao$ to $\Go$. Using \cref{prop:check.locally.trivial}, we see that $\Go \times T \overset{i} \hookrightarrow \Sigma \overset{q} \twoheadrightarrow G$ is a discrete twist over $G$.

We claim that $q$ does not admit a continuous global section. To see this, suppose for contradiction that $f\colon G\to \Sigma$ is a continuous section for $q$ such that $f(\Go)\subseteq \Sigmao$. Then $f(X\times \{1\})\subseteq B\times \{1\}\times A$. In particular, $x \mapsto f(x,1,0)$ is a continuous map from $X$ to $B \times (1+2\Z)\times A$, and post-composing it with the projection of $B \times (1+2\Z)\times A$ onto $B$ yields a continuous global section for $p\colon B\to X$, which is a contradiction. Hence $(\Sigma,i,q)$ does not come from a continuous $2$-cocycle.
\end{example}

\subsection{Twisted Steinberg algebras}

In \cite{ACCLMR2022}, the authors defined the \emph{twisted Steinberg algebra} arising from a discrete $R$-twist $(\Sigma,i,q)$ over an ample Hausdorff groupoid $G$. This was accomplished in the setting where $\Sigma$ is \emph{topologically trivial} (that is, $\Sigma$ admits a continuous global section for $q$, and is thus induced by a continuous $2$-cocycle). The original definition also allowed for twists over groupoids with fibres a subgroup $T \le R^\times$. We include the definition which we use below.

Given a topological space $X$ and a ring $R$, we write $C(X,R)$ for the $R$-module of locally constant maps from $X$ to $R$. For $f \in C(X,R)$, we define
\[
\supp(f) \coloneqq \{x \in X : f(x) \ne 0\},
\]
which is a clopen set. We also define
\[
C_c(X,R) \coloneqq \{f \in C(X,R) : \supp(f) \text{ is compact}\},
\]
which is an $R$-submodule of $C(X,R)$.

\begin{definition} \label{def:A_R(G;Sigma)}
Let $G$ be an ample Hausdorff groupoid, and let $(\Sigma,i,q)$ be a discrete $R$-twist over $G$. We say that $f \in C(\Sigma,R)$ is \emph{$R^\times$-contravariant} if $f(t \cdot \sigma) = t^{-1}f(\sigma)$ for all $t \in R^\times$ and $\sigma \in \Sigma$,\footnote{In \cite[Definition~4.17]{ACCLMR2022}, the authors consider \emph{$R^\times$-equivariant} functions $f \in C(\Sigma,R)$, which satisfy $f(t \cdot \sigma) = t f(\sigma)$ for all $t \in R^\times$ and $\sigma \in \Sigma$. We remark that the results in \cite[Section~4.3]{ACCLMR2022} go through with \cref{def:A_R(G;Sigma)} as well. See \cite[Remark~4.24]{ACCLMR2022}. Our use of the term ``contravariant'' is not related to the notion of a ``contravariant functor''.} and we define
\[
A_R(G;\Sigma) \coloneqq \{ f \in C(\Sigma,R)\,:\, \text{$f$ is $R^\times$-contravariant and $q(\supp(f))$ is compact} \}.
\]
\end{definition}

As a notational convenience, given a discrete $R$-twist $(\Sigma,i,q)$ over $G$, for $f \in A_R(G;\Sigma)$, we define
\[
\supp_G(f) \coloneqq q(\supp(f)) \subseteq G.
\]
We then have $\supp(f) = q^{-1}(\supp_G(f))$ because of the $R^\times$-contravariance of $f$. Since $q$ is a quotient map and $\supp(f)$ is clopen, $\supp_G(f)$ is also clopen. Thus the condition that $q(\supp(f))$ is compact and the condition that $\overline{q(\supp(f))}$ is compact (as in \cite[Definition~4.17]{ACCLMR2022}) are equivalent.

In \textcolor{red}{\cite{ACCLMR2022}}, the authors were mainly concerned with topologically trivial twists, and in particular proved that $A_R(G;\Sigma)$ has a well-defined multiplication under this assumption. Here we show that $A_R(G;\Sigma)$ is an $R$-algebra even when the twist is not topologically trivial.

We now establish that elements of $A_R(G;\Sigma)$ can be expressed in terms of their restriction to any ``slice'' of their support with respect to the action of $R^\times$ on $\Sigma$.

\begin{lemma} \label{lem:unique extensions}
Let $G$ be an ample Hausdorff groupoid and let $(\Sigma,i,q)$ be a discrete twist over $G$. Suppose that $X$ is an open subset of $\Sigma$ such that $q\restr{X}$ is injective. Let $f\colon X \to R$ be a locally constant compactly supported function. Then there is a unique element $\tilde{f}$ of $A_R(G;\Sigma)$ such that $\supp(\tilde{f}) \subseteq R^\times \cdot X$ and $\tilde{f}\restr{X} = f$.
\end{lemma}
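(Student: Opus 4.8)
The plan is to build $\tilde f$ fibrewise, by transporting $f$ along the $R^\times$-action and extending by zero, and then to verify that the result lies in $A_R(G;\Sigma)$.

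First I would record the structural facts that make the construction unambiguous. The action $t\cdot\sigma = i(\r(\sigma),t)\,\sigma$ of $R^\times$ on $\Sigma$ is continuous and free, it satisfies $(st)\cdot\sigma = s\cdot(t\cdot\sigma)$, and its orbits are exactly the fibres of $q$ (this is immediate from \cref{item:exact} together with the local triviality \cref{item:P_alpha}); in particular $q$ is constant on orbits. Since $q\restr{X}$ is injective, each orbit therefore meets $X$ in at most one point, so every $\sigma \in R^\times\cdot X$ can be written as $\sigma = t\cdot x$ with $t\in R^\times$ and $x\in X$ in a \emph{unique} way: if $t\cdot x = t'\cdot x'$ with $x,x'\in X$, then applying $q$ gives $q(x)=q(x')$ and hence $x=x'$, and freeness then gives $t=t'$. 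I would accordingly define $\tilde f(\sigma) \coloneqq t^{-1}f(x)$ for $\sigma = t\cdot x \in R^\times\cdot X$, and $\tilde f(\sigma)\coloneqq 0$ otherwise. Taking $t=1$ gives $\tilde f\restr{X}=f$, the definition gives $\supp(\tilde f)\subseteq R^\times\cdot X$, and $R^\times$-contravariance $\tilde f(s\cdot\sigma)=s^{-1}\tilde f(\sigma)$ follows since $s\cdot(t\cdot x)=(st)\cdot x$ (and since $R^\times\cdot X$ is $R^\times$-invariant, both sides vanish off it).

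The substance of the argument is showing that $\tilde f$ is locally constant and that $q(\supp(\tilde f))$ is compact, so that $\tilde f\in A_R(G;\Sigma)$. At a point $\sigma = t\cdot x$ where $\tilde f(\sigma)\ne 0$, I would pick an open $U\subseteq X$ with $x\in U$ on which $f$ is constant; as $\sigma'\mapsto t\cdot\sigma'$ is a homeomorphism of $\Sigma$, the set $t\cdot U$ is an open neighbourhood of $\sigma$, and uniqueness of representatives gives $\tilde f(t\cdot u)=t^{-1}f(u)=t^{-1}f(x)$ for all $u\in U$, so $\tilde f$ is constant there. For the zeros of $\tilde f$, the crucial observation is that $\supp(\tilde f)=q^{-1}(K)$, where $K\coloneqq q(\supp(f))$: the inclusion ``$\subseteq$'' is clear, and ``$\supseteq$'' uses that the fibres of $q$ are single orbits. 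Because $\supp(f)$ is compact and $q$ is continuous, $K$ is compact, hence closed in the Hausdorff groupoid $G$; therefore $\supp(\tilde f)=q^{-1}(K)$ is closed, and its open complement is a neighbourhood of every zero of $\tilde f$ on which $\tilde f\equiv 0$. This establishes local constancy everywhere and, at the same time, shows that $q(\supp(\tilde f))=K$ is compact, so $\tilde f\in A_R(G;\Sigma)$.

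Uniqueness is then a direct consequence of contravariance: if $g\in A_R(G;\Sigma)$ satisfies $g\restr{X}=f$ and $\supp(g)\subseteq R^\times\cdot X$, then $g(t\cdot x)=t^{-1}g(x)=t^{-1}f(x)=\tilde f(t\cdot x)$ for $\sigma=t\cdot x\in R^\times\cdot X$, while $g$ and $\tilde f$ both vanish off $R^\times\cdot X$, so $g=\tilde f$. I expect the main obstacle to be the local constancy of $\tilde f$ at its zero set, that is, verifying that $\supp(\tilde f)$ is closed: this is precisely where the compactness of $\supp(f)$ and the Hausdorffness of $G$ (which make $K$ closed) are used, and where the identification $\supp(\tilde f)=q^{-1}(K)$ does the work. (Note that $\Sigma$ is itself Hausdorff by \cref{cor:coveringspace}, although the argument only needs closedness of $K$ in $G$.)
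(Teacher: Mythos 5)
Your proof is correct and takes essentially the same route as the paper's: both transport $f$ along the $R^\times$-orbits via contravariance (using freeness and the fact that $q$-fibres are orbits, so each orbit meets $X$ at most once), extend by zero, use compactness of $q(\supp(f))$ together with Hausdorffness of $G$ to obtain local constancy at the zeros, and conclude uniqueness directly from contravariance. The only cosmetic difference is that the paper notes $q(\supp(f))$ is clopen (invoking openness of $q$ for the open half), whereas you get by with closedness of $q^{-1}(q(\supp(f)))$ alone, handling local constancy on $R^\times \cdot X$ directly via the homeomorphisms $\sigma \mapsto t\cdot\sigma$.
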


\begin{proof}
We have $q(t \cdot \sigma) = q(\sigma)$ for all $t \in R^\times$ and $\sigma \in \Sigma$, and so since $q\restr{X}$ is injective, $X$ intersects each $R^\times$-orbit at no more than one point. So there is a well-defined function $f'\colon R^\times \cdot \supp(f) \to R$ satisfying $f'(t \cdot \sigma) = t^{-1}f(\sigma)$ for all $\sigma \in \supp(f)$ and $t \in R^\times$. Since $f$ is locally constant and $\sigma \mapsto t \cdot \sigma$ is a homeomorphism for each fixed $t \in R^\times$, the function $f'$ is also locally constant. It follows from $q$ being an open map, $q(\supp(f))$ being compact, and $G$ being Hausdorff that $q(\supp(f))$ is clopen, and hence $R^\times \cdot \supp(f)$ is also clopen. Therefore, the function $\tilde{f}$ given by $\tilde{f}\restr{R^\times \cdot \supp(f)} = f'$ and $\tilde{f}\restr{\Sigma \setminus R^\times \cdot \supp(f)}= 0$ is locally constant. Since $q(\supp(\tilde{f})) = q(\supp(f))$, we have that $q(\supp(\tilde{f}))$ is compact, and $\tilde{f}$ is $R^\times$-contravariant by definition. The uniqueness is clear: if $g \in A_R(G;\Sigma)$ agrees with $f$ on $X$ then it agrees with $f'$ on $R^\times \cdot X$, and so if in addition $\supp(g) \subseteq R^\times \cdot X$, then $g = \tilde{f}$.
\end{proof}

Suppose that $X \subseteq \Sigma$ is a compact open bisection. Then in particular, $q\restr{X}$ is injective because $q$ respects $\r$ and $\s$ and restricts to a homeomorphism of unit spaces. We will denote by $\tilde{1}_X$ the element of $A_R(G;\Sigma)$ obtained by applying \cref{lem:unique extensions} to the constant function $1_X\colon X \to R$ that maps every element of $X$ to $1$.

\begin{prop} \label{prop:f is a finite sum of lifts of indicators}
Let $G$ be an ample Hausdorff groupoid, and let $(\Sigma,i,q)$ be a discrete $R$-twist. Then for each $f \in A_R(G;\Sigma)$, there exist a finite set $\FF$ of compact open bisections of $\Sigma$ with mutually disjoint images in $G$, and elements $r_U$ of $R$, for each $U \in \FF$, such that $f = \sum_{U \in \FF} r_U \tilde{1}_U$.
\end{prop}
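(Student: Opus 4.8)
The plan is to build the bisections $U \in \FF$ upstairs in $\Sigma$ as small pieces on which $f$ is constant, to arrange that their images in $G$ partition $\supp_G(f)$, and then to check via $R^\times$-contravariance that the corresponding sum of lifts reproduces $f$. First I would record the standing facts: by \cref{cor:coveringspace} the groupoid $\Sigma$ is ample and Hausdorff, so it has a basis of compact open bisections, while $\supp_G(f) = q(\supp(f))$ is compact and clopen in $G$ with $\supp(f) = q^{-1}(\supp_G(f))$. Since $f$ is locally constant, for each $\sigma \in \supp(f)$ I would choose a compact open bisection $V_\sigma \subseteq \Sigma$ containing $\sigma$ on which $f$ takes the constant value $f(\sigma)$, by intersecting the clopen set on which $f$ equals $f(\sigma)$ with a basic compact open bisection through $\sigma$ and refining inside it. Because $q$ respects $\r$ and $\s$ and restricts to a homeomorphism of unit spaces, $q\restr{V_\sigma}$ is injective, so each $W_\sigma \coloneqq q(V_\sigma)$ is a compact open bisection of $G$; the family $\{W_\sigma\}_{\sigma \in \supp(f)}$ covers the compact set $\supp_G(f)$, and extracting a finite subcover yields bisections $V_1, \dots, V_m$ with $f$ constant on each and $\bigcup_i q(V_i) \supseteq \supp_G(f)$.

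Next I would disjointify downstairs in $G$. Writing $W_i = q(V_i)$, I set $W_i' \coloneqq \big(W_i \setminus \bigcup_{j<i} W_j\big) \cap \supp_G(f)$; since in the Hausdorff ample groupoid $G$ the compact open sets are closed under finite intersections and relative complements, each $W_i'$ is a compact open bisection (being a subset of $W_i$), the $W_i'$ are mutually disjoint, and $\bigsqcup_i W_i' = \supp_G(f)$. Pulling back through the homeomorphism $q\restr{V_i}$, I put $U_i \coloneqq (q\restr{V_i})^{-1}(W_i')$, a compact open bisection of $\Sigma$ with $q(U_i) = W_i'$ on which $f$ is still constant, say with value $r_i$. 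By construction the images $q(U_i) = W_i'$ are mutually disjoint, and after discarding any $U_i$ with $W_i'$ empty I take $\FF = \{U_i\}$, with the coefficients $r_U = r_i$.

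Finally I would verify that $f = \sum_i r_i \tilde{1}_{U_i}$. Using exactness \cref{item:exact} together with the definition of the $R^\times$-action, each fibre $q^{-1}(\alpha)$ is a single $R^\times$-orbit: if $q(\sigma) = q(\sigma') = \alpha$, then $\sigma'\sigma^{-1} \in q^{-1}(\r(\alpha)) = i(\{\r(\alpha)\}\times R^\times)$, so $\sigma' = t \cdot \sigma$ for some $t \in R^\times$. Hence, by \cref{lem:unique extensions}, $\supp(\tilde{1}_{U_i}) = R^\times \cdot U_i = q^{-1}(W_i')$; these supports are mutually disjoint and their union is $q^{-1}(\supp_G(f)) = \supp(f)$. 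For a point $\tau = t \cdot u$ with $u \in U_i$ and $t \in R^\times$, contravariance gives $\big(r_i \tilde{1}_{U_i}\big)(\tau) = r_i t^{-1} = t^{-1} f(u) = f(\tau)$, while both sides vanish off $\supp(f)$; thus the sum equals $f$.

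I expect the only real care to be needed in the disjointification step: each lifted piece $U_i$ must stay inside a single $V_i$ so that $f$ remains constant on it, while the images $W_i'$ are forced to be pairwise disjoint, and one must then confirm (via the fibrewise-orbit description and \cref{lem:unique extensions}) that the supports $R^\times \cdot U_i = q^{-1}(W_i')$ are precisely the disjoint $R^\times$-saturations partitioning $\supp(f)$. Everything else is a routine compactness-and-local-constancy argument.
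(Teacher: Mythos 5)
Your proof is correct, but it takes a genuinely different route from the paper's. The paper's proof is short modulo two imported tools: it invokes \cref{lem:local triviality} to obtain a continuous section $S$ trivialising $\Sigma$ over the compact open set $\supp_G(f)$, observes that $f$ restricted to the slice $X = S(\supp_G(f))$ is an element of the \emph{untwisted} Steinberg algebra $A_R(\Sigma)$, cites \cite[Proposition~4.14]{Steinberg2010} to decompose $f\restr{X}$ as a finite $R$-linear combination of indicators of mutually disjoint compact open bisections, and then concludes $f = \sum_{U\in\FF} r_U\tilde 1_U$ by the uniqueness clause of \cref{lem:unique extensions}. You instead work directly: compactness of $\supp_G(f)$ plus local constancy of $f$ yields finitely many compact open bisections $V_i \subseteq \Sigma$ on which $f$ is constant, you disjointify their images in the Hausdorff ample groupoid $G$ (where compact open sets are closed under intersection and relative complement), pull the disjoint pieces back through the homeomorphisms $q\restr{V_i}$, and verify the identity pointwise using that each fibre $q^{-1}(\alpha)$ is a single $R^\times$-orbit together with contravariance. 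Your argument is entirely self-contained --- it needs neither the global trivialisation of \cref{lem:local triviality} nor the citation to the untwisted case, and in effect it reproves \cite[Proposition~4.14]{Steinberg2010} in situ, with the disjointification performed downstairs in $G$ so that the mutual disjointness of the images $q(U_i)$ (which the statement requires) comes for free rather than being inherited from disjointness on a slice. The only delicate points --- that each pulled-back piece $U_i$ stays inside a single $V_i$ so $f$ remains constant on it, and that $\supp(\tilde 1_{U_i}) = R^\times\cdot U_i = q^{-1}(W_i')$ so the supports exactly partition $\supp(f)$ --- you handle explicitly and correctly. The trade-off: the paper's route is briefer and reuses machinery it needs anyway, while yours is more elementary and makes transparent exactly which topological facts ($\Sigma$ ample Hausdorff by \cref{cor:coveringspace}, $q$ an open local homeomorphism, fibres equal to $R^\times$-orbits by exactness) drive the decomposition.
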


\begin{proof}
We apply \cref{lem:local triviality} to find a section $S\colon \supp_G(f) \to \supp(f)$ that trivialises $\Sigma$ over $\supp_G(f)$. Then $X \coloneqq S(\supp_G(f)) \cong \supp_G(f) \times \{1\}$ is compact and open, and $f\restr{X}$ belongs to the (untwisted) Steinberg algebra $A_R(\Sigma)$. Consequently, by \cite[Proposition~4.14]{Steinberg2010}, we can write $f\restr{X} = \sum_{U \in \FF} r_U 1_U$ as a finite $R$-linear combination of characteristic functions of mutually disjoint compact open bisections. Since $q$ is injective on $X$, the images of the elements of $\FF$ under $q$ are also mutually disjoint.

By \cref{lem:unique extensions}, each $\tilde{1}_U$ belongs to $A_R(G;\Sigma)$, and so $\sum_{U \in \FF} r_U \tilde{1}_U \in A_R(G;\Sigma)$ because $A_R(G;\Sigma)$ is an $R$-module. Now, since the functions $f$ and $\sum_{U \in \FF} r_U \tilde{1}_U$ agree on $X$ and both belong to $A_R(G;\Sigma)$ with support contained in $\supp(f) = R^\times\cdot X$, \cref{lem:unique extensions} implies that $f = \sum_{U \in \FF} r_U \tilde{1}_U$.
\end{proof}

\begin{prop} \label{prop:multiplication in twisted Steinberg algebras}
Let $G$ be an ample Hausdorff groupoid, and let $(\Sigma,i,q)$ be a discrete $R$-twist over $G$. Given $f, g \in A_R(G;\Sigma)$, let $S\colon \supp_G(f) \to \supp(f)$ be any section (not necessarily continuous) for $q$ on $\supp_G(f)$. The formula
\[
(f * g)(\sigma) \coloneqq \sum_{\alpha \in G^{\r(\sigma)} \cap \supp_G(f)} f(S(\alpha)) \, g(S(\alpha)^{-1}\sigma)
\]
does not depend on the choice of $S$. This formula defines an associative multiplication on $A_R(G;\Sigma)$, making it into an $R$-algebra.
\end{prop}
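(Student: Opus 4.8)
The plan is to prove the four assertions in turn---finiteness of the sum (so that the formula even defines a function), independence of the section $S$, membership of $f*g$ in $A_R(G;\Sigma)$, and associativity---reducing the latter two to the case of lifts of indicator functions of compact open bisections via \cref{prop:f is a finite sum of lifts of indicators}. First I would check that the sum is finite. Since each $\alpha$ indexing the sum lies in the discrete fibre $G^{\r(\sigma)}$, which is closed because $G$ is Hausdorff and $\r$ is continuous, the index set $G^{\r(\sigma)} \cap \supp_G(f)$ is a closed discrete subset of the compact set $\supp_G(f)$, and hence finite; so $(f*g)(\sigma)$ is a well-defined element of $R$.

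Next I would establish independence of the choice of $S$, which is where the twist must be handled carefully. Given two sections $S, S'$ for $q$ over $\supp_G(f)$, exactness \cref{item:exact} gives for each $\alpha$ a unique $t_\alpha \in R^\times$ with $S'(\alpha) = t_\alpha \cdot S(\alpha)$. Contravariance then gives $f(S'(\alpha)) = t_\alpha^{-1} f(S(\alpha))$. A short computation using centrality \cref{item:centrality} together with the fact that $i$ is a groupoid homomorphism yields the identity $(t \cdot \tau)^{-1} = t^{-1} \cdot \tau^{-1}$, and pushing the unit through the product shows $S'(\alpha)^{-1}\sigma = t_\alpha^{-1} \cdot (S(\alpha)^{-1}\sigma)$, whence $g(S'(\alpha)^{-1}\sigma) = t_\alpha \, g(S(\alpha)^{-1}\sigma)$. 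The two scalars cancel, so each summand---and thus the whole sum---is unchanged. The same centrality manipulation, applied to $\r(t \cdot \sigma) = \r(\sigma)$ (so that the index set is unchanged) and to $S(\alpha)^{-1}(t \cdot \sigma) = t \cdot (S(\alpha)^{-1}\sigma)$, shows directly that $f*g$ is $R^\times$-contravariant.

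Then I would prove bilinearity and reduce to bisections. Linearity in $g$ is immediate from the formula; for linearity in $f$ I would invoke section-independence to evaluate each $f_i * g$ using a single section defined on $\supp_G(f_1) \cup \supp_G(f_2)$, the terms with $\alpha \notin \supp_G(f_i)$ vanishing since $\supp(f_i) = q^{-1}(\supp_G(f_i))$, after which additivity is clear. The key computation is $\tilde{1}_U * \tilde{1}_V = \tilde{1}_{UV}$ for compact open bisections $U, V \subseteq \Sigma$: taking $S = (q\restr{U})^{-1}$, the bisection property of $q(U)$ forces the sum to have at most one term, and for $\sigma = uv \in UV$ one computes $S(q(u))^{-1}\sigma = v$, giving the value $1$ on $UV$; combined with $R^\times$-contravariance and the containment $q(\supp(\tilde{1}_U * \tilde{1}_V)) \subseteq q(U)q(V) = q(UV)$, a pointwise comparison identifies $\tilde{1}_U * \tilde{1}_V$ with $\tilde{1}_{UV} \in A_R(G;\Sigma)$ (via \cref{lem:unique extensions}). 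Writing $f = \sum_U r_U \tilde{1}_U$ and $g = \sum_V s_V \tilde{1}_V$ through \cref{prop:f is a finite sum of lifts of indicators} and applying bilinearity then exhibits $f*g = \sum_{U,V} r_U s_V \tilde{1}_{UV}$ as a finite $R$-linear combination of elements of the $R$-module $A_R(G;\Sigma)$, so $f*g \in A_R(G;\Sigma)$.

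Finally, for associativity I would again reduce to bisections by bilinearity: it suffices to verify $(\tilde{1}_U * \tilde{1}_V) * \tilde{1}_W = \tilde{1}_U * (\tilde{1}_V * \tilde{1}_W)$, and by the product formula both sides equal $\tilde{1}_{(UV)W} = \tilde{1}_{U(VW)}$, which agree because the compact open bisections of $\Sigma$ form an inverse semigroup. I expect the main obstacle to be the bookkeeping with the twist in the middle steps---namely deriving $(t \cdot \tau)^{-1} = t^{-1}\cdot\tau^{-1}$ and correctly transporting units through the products $S(\alpha)^{-1}\sigma$ and $S(\alpha)^{-1}(t\cdot\sigma)$ using only centrality and exactness; once $\tilde{1}_U * \tilde{1}_V = \tilde{1}_{UV}$ is secured, membership and associativity follow cleanly from the inverse-semigroup structure.
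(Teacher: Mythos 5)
Your proposal is correct and follows essentially the same route as the paper's proof: finiteness from the fibres being compact and discrete, independence of the section via the scalar cancellation coming from centrality and $R^\times$-contravariance, reduction by bilinearity and \cref{prop:f is a finite sum of lifts of indicators} to the identity $\tilde{1}_U * \tilde{1}_V = \tilde{1}_{UV}$ established through \cref{lem:unique extensions}, and associativity from the inverse-semigroup structure of the compact open bisections of $\Sigma$. The only differences are cosmetic, in that you spell out steps the paper treats as immediate (the direct contravariance check for $f*g$, the identity $(t\cdot\tau)^{-1} = t^{-1}\cdot\tau^{-1}$, and the common-section argument for additivity in $f$).
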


\begin{proof}
It is straightforward to check that $A_R(G;\Sigma)$ is an $R$-module. Fix $f, g \in A_R(G;\Sigma)$. For each $\sigma \in \Sigma$, define $F^{\r(\sigma)} \coloneqq G^{\r(\sigma)} \cap \supp_G(f)$. Since each $F^{\r(\sigma)}$ is compact and discrete, and hence finite, the convolution formula makes sense.

Suppose that $S$ and $S'$ are two sections for $q$ on $\supp_G(f)$. Then there is a unique (not necessarily continuous) function $\alpha \mapsto t_\alpha$ from $\supp_G(f)$ to $R^\times$ such that $S(\alpha) = t_\alpha \cdot S'(\alpha)$ for all $\alpha\in \supp_G(f)$. Fix $\sigma \in \Sigma$. Using the centrality of $i(\Go \times R^\times)$ and the $R^\times$-contravariance of $f$ and $g$ for the second equality, we obtain
\begin{align*}
\sum_{\alpha \in F^{\r(\sigma)}} f(S(\alpha)) \, g(S(\alpha)^{-1}\sigma)
&= \sum_{\alpha \in F^{\r(\sigma)}} f(t_\alpha \cdot S'(\alpha)) \, g((t_\alpha \cdot S'(\alpha))^{-1}\sigma) \\
&= \sum_{\alpha \in F^{\r(\sigma)}} t_\alpha^{-1} f(S'(\alpha)) \, t_\alpha \, g(S'(\alpha)^{-1}\sigma) \\
&= \sum_{\alpha \in F^{\r(\sigma)}} f(S'(\alpha)) \, g(S'(\alpha)^{-1}\sigma),
\end{align*}
and thus the convolution formula does not depend on $S$.

By \cref{prop:f is a finite sum of lifts of indicators}, any $f\in A_R(G;\Sigma)$ can be written in the form $f = \sum_{U \in \FF} r_U \tilde{1}_U$, where $\FF$ is a finite collection of compact open bisections of $\Sigma$. As the convolution product is clearly $R$-bilinear by definition, to show both that $A_R(G;\Sigma)$ is closed under convolution and that the convolution product is associative, it suffices to show that $\tilde{1}_U * \tilde{1}_V = \tilde{1}_{UV}$ for compact open bisections $U$ and $V$ of $\Sigma$ (as the compact open bisections form a semigroup). Note that $q\restr{U}$ and $q\restr{V}$ are injective. The definition of the convolution shows that $\supp(\tilde{1}_U * \tilde{1}_V) \subseteq (R^\times \cdot U)(R^\times \cdot V) = R^\times \cdot UV$. For $(\tau,\rho) \in (U \times V) \cap \Sigmac$, choosing a section $\zeta\colon G \to \Sigma$ such that $\zeta(q(\tau)) = \tau$ gives that if $\sigma = \tau\rho$, then $(\tilde{1}_U * \tilde{1}_V)(\sigma) = \sum_{\alpha \in G^{\r(\sigma)}} \tilde{1}_U(\zeta(\alpha)) \tilde{1}_V(\zeta(\alpha)^{-1}\sigma) = \tilde{1}_U(\tau)\tilde{1}_V(\rho) = 1$, and so $\tilde{1}_U * \tilde{1}_V$ agrees with $\tilde{1}_{UV}$ on $UV$. Thus $\tilde{1}_U * \tilde{1}_V = \tilde{1}_{UV}$ by \cref{lem:unique extensions}.
\end{proof}

Note that when the intended meaning is clear, we sometimes write $fg$ rather than $f * g$ for the convolution product of two functions $f, g \in A_R(G;\Sigma)$.

The following corollary is a straightforward consequence of \cref{prop:f is a finite sum of lifts of indicators} and the proof of \cref{prop:multiplication in twisted Steinberg algebras}.

\begin{cor} \label{cor:normalisermult}
Let $\FF$ and $\GG$ be finite collections of compact open bisections of $\Sigma$ with mutually disjoint images in $G$, and suppose that $f, g \in A_R(G;\Sigma)$ satisfy $f = \sum_{U \in \FF} r_U \tilde{1}_U$ and $g = \sum_{V \in \GG} t_V \tilde{1}_V$. Then
\begin{equation} \label{eqn:convolution of sums}
f*g = \sum_{U \in \FF, V \in \GG} r_Ut_V \tilde{1}_{UV}.
\end{equation}
In particular, each $\sigma \in \Sigma$ is contained in a compact open bisection $X$ of $\Sigma$, and
\[
(\tilde{1}_X * f)(\r(\sigma')) = f\big((\sigma')^{-1}\big) = (f * \tilde{1}_X)(\s(\sigma')) \ \text{ for all } \sigma' \in X.
\]
\end{cor}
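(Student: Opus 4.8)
The plan is to treat the displayed formula \labelcref{eqn:convolution of sums} and the ``in particular'' clause separately, since the former is purely formal while the latter requires a short computation with the convolution. For \labelcref{eqn:convolution of sums} I would simply invoke bilinearity: the proof of \cref{prop:multiplication in twisted Steinberg algebras} already establishes that convolution is $R$-bilinear and that $\tilde{1}_U * \tilde{1}_V = \tilde{1}_{UV}$ for any two compact open bisections $U, V$ of $\Sigma$, so expanding $f*g = \big(\sum_{U \in \FF} r_U \tilde{1}_U\big) * \big(\sum_{V \in \GG} t_V \tilde{1}_V\big)$ and distributing yields $\sum_{U,V} r_U t_V \tilde{1}_{UV}$ at once. (The disjointness hypotheses on $\FF$ and $\GG$ play no role beyond guaranteeing, via \cref{prop:f is a finite sum of lifts of indicators}, that $f$ and $g$ are genuinely presented in this form.) The existence of a compact open bisection $X \ni \sigma$ is immediate: $G$ is ample, so $\Sigma$ is ample by \cref{cor:coveringspace} and hence has a basis of compact open bisections.

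The substance is the two convolution identities, which I would verify by direct evaluation of the formula in \cref{prop:multiplication in twisted Steinberg algebras}, exploiting both the freedom to choose the section $S$ and the fact that $q(X)$, and thus $q(X)^{-1}$, is a compact open bisection. For the first identity, evaluating $\tilde{1}_X * f$ at the unit $\r(\sigma')$ gives a sum indexed by $G^{\r(\sigma')} \cap \supp_G(\tilde{1}_X) = G^{\r(\sigma')} \cap q(X)$; since the range map is injective on the bisection $q(X)$, this index set is the singleton $\{q(\sigma')\}$. Choosing a section $S$ with $S(q(\sigma')) = \sigma'$, the lone surviving term is $\tilde{1}_X(\sigma')\,f\big((\sigma')^{-1}\r(\sigma')\big) = f\big((\sigma')^{-1}\big)$, using $\tilde{1}_X(\sigma') = 1$ and $(\sigma')^{-1}\r(\sigma') = (\sigma')^{-1}$.

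For the second identity I would evaluate $f * \tilde{1}_X$ at the unit $\s(\sigma')$. Here the factor $\tilde{1}_X\big(S(\alpha)^{-1}\s(\sigma')\big) = \tilde{1}_X\big(S(\alpha)^{-1}\big)$ is nonzero only when $S(\alpha)^{-1} \in R^\times \cdot X$, equivalently $\alpha \in q(X)^{-1}$; as $q(X)^{-1}$ is a bisection, there is at most one index $\alpha$ in the sum with $\r(\alpha) = \s(\sigma')$, namely $\alpha_0 = q\big((\sigma')^{-1}\big)$. The hard part, such as it is, is that $\alpha_0$ may or may not lie in $\supp_G(f)$, so I would split into two cases. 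If $\alpha_0 \in \supp_G(f)$, choosing $S$ with $S(\alpha_0) = (\sigma')^{-1}$ collapses the sum to $f\big((\sigma')^{-1}\big)\,\tilde{1}_X(\sigma') = f\big((\sigma')^{-1}\big)$; if $\alpha_0 \notin \supp_G(f)$ the sum is empty, but then $(\sigma')^{-1} \notin \supp(f)$ as well, so $f\big((\sigma')^{-1}\big) = 0$ and both sides vanish.

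That the value is independent of all these section choices is exactly the content of \cref{prop:multiplication in twisted Steinberg algebras}, so the device of tailoring $S$ to the single relevant base point is legitimate, and this is really the only subtlety: once one recognises that each convolution sum collapses to a unique surviving index determined by the bisection property, the remaining identities follow from the groupoid relations $\tilde{1}_X(\sigma') = 1$, $(\sigma')^{-1}\r(\sigma') = (\sigma')^{-1}$, and $S(\alpha)^{-1}\s(\sigma') = S(\alpha)^{-1}$. The asymmetry worth flagging is that $\supp_G(\tilde{1}_X) = q(X)$ exactly, so no case distinction is needed for the first identity, whereas the second requires the $\supp_G(f)$ case split because the relevant index $\alpha_0$ is constrained by $q(X)^{-1}$ rather than by the support of $f$.
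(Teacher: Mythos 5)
Your proof is correct. For \cref{eqn:convolution of sums} you argue exactly as the paper does: $R$-bilinearity of convolution together with the identity $\tilde{1}_U * \tilde{1}_V = \tilde{1}_{UV}$ established in the proof of \cref{prop:multiplication in twisted Steinberg algebras} (and you are right that the disjointness hypotheses play no logical role in this formula; they merely reflect the form of the decomposition supplied by \cref{prop:f is a finite sum of lifts of indicators}). Where you diverge is the ``in particular'' clause: the paper disposes of it in one sentence by deducing it from \cref{eqn:convolution of sums} itself---decompose $f = \sum_{U} r_U \tilde{1}_U$, so that $\tilde{1}_X * f = \sum_U r_U \tilde{1}_{XU}$ and $f * \tilde{1}_X = \sum_U r_U \tilde{1}_{UX}$, and evaluate the lifted indicators at $\r(\sigma')$ and $\s(\sigma')$---whereas you bypass the decomposition and compute directly from the convolution formula of \cref{prop:multiplication in twisted Steinberg algebras}, tailoring the section $S$ to the single relevant point, which is legitimate precisely because that proposition shows the value is section-independent. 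Your route is a little longer but more self-contained, and it makes explicit the one subtlety that the paper's ``follows easily'' elides: the asymmetry between the two identities. For $\tilde{1}_X * f$ the sum is indexed by $G^{\r(\sigma')} \cap q(X)$, a singleton since $q(X)$ is a bisection, so no case distinction arises; for $f * \tilde{1}_X$ the surviving index $\alpha_0 = q\big((\sigma')^{-1}\big)$ is pinned down by the bisection $q(X)^{-1}$ but need not lie in $\supp_G(f)$, and your case split handles this correctly---in the empty-sum case one indeed has $f\big((\sigma')^{-1}\big) = 0$ because $\supp(f) = q^{-1}(\supp_G(f))$ by $R^\times$-contravariance. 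The appeal to \cref{cor:coveringspace} for ampleness of $\Sigma$, giving the compact open bisection $X \ni \sigma$, is also the intended justification.
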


\begin{proof}
\Cref{eqn:convolution of sums} follows from the formula $\tilde{1}_U\ast \tilde{1}_V=\tilde{1}_{UV}$ established in the proof of \cref{prop:multiplication in twisted Steinberg algebras}, and from the bilinearity of multiplication.
The second claim follows easily from \cref{eqn:convolution of sums}.
\end{proof}

An important feature of the twisted Steinberg algebra $A_R(G;\Sigma)$ is that, even though it does not contain the algebra $A_R(\Sigmao)$ of locally constant functions on the unit space of $\Sigma$ (because such functions are not $R^\times$-contravariant), it does contain an algebra canonically isomorphic to $A_R(\Sigmao)$, and the actions of elements of this algebra on $A_R(G;\Sigma)$ by multiplication are exactly as one would expect.

\begin{prop} \label{prop:Steinberg diagonal}
Let $G$ be an ample Hausdorff groupoid and let $(\Sigma,i,q)$ be a discrete $R$-twist over $G$. The map $f \mapsto \tilde{f}$ obtained from \cref{lem:unique extensions} applied to elements $f \in C_c(\Sigmao,R) \cong A_R(\Go)$ defines an injective $R$-algebra homomorphism of $C_c(\Sigmao,R)$ into $A_R(G;\Sigma)$ whose image is precisely $A_R(\Go; q^{-1}(\Go))$. For $f \in C_c(\Sigmao,R)$ and $a \in A_R(G;\Sigma)$, we have $(\tilde{f}*a)(\sigma) = f(\r(\sigma))a(\sigma)$ and $(a*\tilde{f})(\sigma) = a(\sigma)f(\s(\sigma))$ for all $\sigma \in \Sigma$.
\end{prop}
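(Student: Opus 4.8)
The plan is to derive all four assertions from \cref{lem:unique extensions} together with the convolution formula of \cref{prop:multiplication in twisted Steinberg algebras}, exploiting throughout that every $\tilde f$ coming from $C_c(\Sigmao,R)$ is supported on $R^\times \cdot \Sigmao = i(\Go \times R^\times) = q^{-1}(\Go)$ (the last two equalities by \cref{item:exact} and the definition of the $R^\times$-action). First I would record that $q$ restricts to a homeomorphism $\Sigmao \to \Go$, so each $f \in C_c(\Sigmao,R)$ is a locally constant compactly supported function on the open set $\Sigmao \subseteq \Sigma$ on which $q$ is injective; hence \cref{lem:unique extensions} applies and produces $\tilde f \in A_R(G;\Sigma)$ with $\supp(\tilde f) \subseteq R^\times \cdot \Sigmao$ and $\tilde f\restr{\Sigmao} = f$. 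The $R$-linearity of $f \mapsto \tilde f$ is then immediate from the uniqueness clause: $r\tilde f + s\tilde g$ lies in $A_R(G;\Sigma)$, is supported on $R^\times \cdot \Sigmao$, and restricts to $rf+sg$ on $\Sigmao$, so it must equal $\widetilde{rf+sg}$. Injectivity is equally immediate from $\tilde f\restr{\Sigmao} = f$.

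For the image, I would first note that $\Go$ is clopen in $G$ (open because $G$ is \'etale, closed because $G$ is Hausdorff \'etale), so $q^{-1}(\Go)$ is clopen in $\Sigma$ and functions supported there extend by zero to elements of $A_R(G;\Sigma)$; this is how I regard $A_R(\Go;q^{-1}(\Go))$ inside $A_R(G;\Sigma)$. Since $R^\times \cdot \Sigmao = q^{-1}(\Go)$, every $\tilde f$ lies in $A_R(\Go;q^{-1}(\Go))$. Conversely, given $g \in A_R(\Go;q^{-1}(\Go))$, the restriction $f \coloneqq g\restr{\Sigmao}$ is locally constant, and since $q(\supp(f)) = q(\supp(g))$ is compact and $q\restr{\Sigmao}$ is a homeomorphism, $\supp(f)$ is compact, so $f \in C_c(\Sigmao,R)$; as $g$ is supported on $q^{-1}(\Go) = R^\times \cdot \Sigmao$ and agrees with $f$ on $\Sigmao$, the uniqueness in \cref{lem:unique extensions} gives $g = \tilde f$. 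Hence the image is exactly $A_R(\Go;q^{-1}(\Go))$.

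The substantive step, which I expect to be the main obstacle, is establishing the two convolution identities; the multiplicativity of $f \mapsto \tilde f$ will then follow from them. Fix $a \in A_R(G;\Sigma)$ and $\sigma \in \Sigma$. For $\tilde f * a$ I use the section-independent formula of \cref{prop:multiplication in twisted Steinberg algebras}: the sum runs over $\alpha \in G^{\r(\sigma)} \cap \supp_G(\tilde f)$, and the crucial observation is that $\supp_G(\tilde f) \subseteq \Go$ while $G^{\r(\sigma)} \cap \Go$ is the single unit $\r(q(\sigma))$, so the sum collapses to at most one term. Choosing a section $S$ with $S(\r(q(\sigma))) = \r(\sigma) \in \Sigmao$ (possible precisely when $\r(q(\sigma)) \in \supp_G(\tilde f)$, and when it is not both sides vanish), and using $\r(\sigma)^{-1}\sigma = \sigma$ together with $\tilde f\restr{\Sigmao} = f$, yields $(\tilde f * a)(\sigma) = f(\r(\sigma))\,a(\sigma)$. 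The formula for $a * \tilde f$ is symmetric: here the summand $\tilde f(S(\alpha)^{-1}\sigma)$ can be nonzero only when $q(S(\alpha)^{-1}\sigma) = \alpha^{-1}q(\sigma) \in \Go$, which (since $\r(\alpha) = \r(q(\sigma))$) forces $\alpha = q(\sigma)$; choosing $S$ with $S(q(\sigma)) = \sigma$ and using $\sigma^{-1}\sigma = \s(\sigma)$ gives $(a * \tilde f)(\sigma) = a(\sigma)\,f(\s(\sigma))$.

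Finally, to see that $f \mapsto \tilde f$ is multiplicative I apply the first identity with $a = \tilde g$: for a unit $u \in \Sigmao$ I get $(\tilde f * \tilde g)(u) = f(\r(u))\,\tilde g(u) = f(u)g(u) = (fg)(u)$, while $\supp(\tilde f * \tilde g) \subseteq \supp(\tilde f)\,\supp(\tilde g) \subseteq q^{-1}(\Go) = R^\times \cdot \Sigmao$, because a product of composable elements of $q^{-1}(\Go)$ again lies in $q^{-1}(\Go)$. By the uniqueness in \cref{lem:unique extensions}, $\tilde f * \tilde g = \widetilde{fg}$, so $f \mapsto \tilde f$ is an injective $R$-algebra homomorphism, completing the proof.
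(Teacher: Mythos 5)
Your proof is correct and takes essentially the same approach as the paper's: \cref{lem:unique extensions} supplies linearity, injectivity, and the identification of the image, and a judiciously chosen (not-necessarily-continuous) section collapses each convolution sum to a single term, with multiplicativity then deduced from the first identity. The only omission is the trivial edge case $\sigma \notin \supp(a)$ in the second identity, where your section with $S(q(\sigma)) = \sigma$ is unavailable but both sides vanish since $\supp(a) = q^{-1}(\supp_G(a))$ --- the paper records this in one line, and you handled the analogous case explicitly in the first identity.
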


\begin{proof}
That $f \mapsto \tilde{f}$ is an injective $R$-module map that has image contained in $A_R(\Go; q^{-1}(\Go))$ is straightforward. For the reverse containment, observe that if $g \in A_R(\Go; q^{-1}(\Go))$, then $f \coloneqq g\restr{\Sigmao} \in C_c(\Sigmao,R)$, and since $\tilde{f}$ and $g$ are elements of $A_R(\Go; q^{-1}(\Go))$ that agree on $\Sigmao$, they are equal by \cref{lem:unique extensions}.

That this map is multiplicative and hence is an $R$-algebra homomorphism is a particular case of the final assertion (where $a \in C_c(\Sigmao,R)$), so we now just have to prove the final assertion. Fix $f \in C_c(\Sigmao,R)$ and $a \in A_R(G;\Sigma)$. Let $S\colon q(\supp(f)) \to \supp(f)$ be the identity map. Then $S$ is a section from $\supp_G(\tilde{f})$ to $\supp(\tilde{f})$, and so \cref{prop:multiplication in twisted Steinberg algebras} shows that for all $\sigma \in \Sigma$,
\[
(\tilde{f}* a)(\sigma) = \sum_{\beta \in G^{\r(\sigma)} \cap \supp_G(\tilde{f})}
\tilde{f}(S(\beta)) \, a(S(\beta)^{-1}\sigma).
\]
The only term in the sum is $\beta = \r(\sigma)$,
which yields $S(\beta) = \r(\sigma)$, and so the sum collapses to $(\tilde{f}* a)(\sigma) = \tilde{f}(\r(\sigma)) a(\sigma) = f(\r(\sigma)) a(\sigma)$ since $\tilde{f}$ extends $f$ by definition.

For the second equality, fix $\sigma \in \supp(a)$ and choose a section $S\colon \supp_G(a) \to \supp(a)$ such that $S(q(\sigma)) = \sigma$. Then
\[
(a* \tilde{f})(\sigma) = \sum_{\beta \in
G^{\r(\sigma)} \cap \supp_G(a)} a(S(\beta)) \tilde{f}(S(\beta)^{-1}\sigma).
\]
The only nonzero terms in the sum are those corresponding to $\beta \in G^{\r(\sigma)} \cap \supp_G(a)$ such that
\[
S(\beta)^{-1}\sigma \in \supp(\tilde{f}) \subseteq i(\Go \times R^\times).
\]
Since $\supp_G(\tilde{f}) \cap G^{\r(\sigma)} = \{\r(\sigma)\}$, the only possible nonzero summand is such that $q(S(\beta)^{-1}\sigma) = \r(\sigma)$; that is, $\beta = q(\sigma)$. Since $\sigma \in \supp(a)$, we have $S(\beta) = \sigma$ by our choice of $S$, and so the sum collapses to $(a* \tilde{f})(\sigma) = a(\sigma) \tilde{f}(\sigma^{-1}\sigma) = a(\sigma) f(\s(\sigma))$. If $\sigma \in \Sigma\setminus\supp(a)$, then both $(a* \tilde{f})(\sigma)$ and $a(\sigma)f(\s(\sigma))$ are zero.
\end{proof}

When $R$ is indecomposable, the idempotents of $A_R(\Go)$ are the characteristic functions of compact open subsets of $\Go$. Given that indecomposability of $R$ is implied by \cref{cond:torsion free} when $I(B)$ is nontrivial, the $R$-algebra homomorphism from \cref{prop:Steinberg diagonal} gives a better perspective on the elements of $I(B)$.

\begin{cor} \label{cor:lattice iso}
Let $G$ be an ample Hausdorff groupoid, let $R$ be an indecomposable commutative ring, and let $(\Sigma,i,q)$ be a discrete $R$-twist over $G$. Then the map $U \mapsto \tilde{1}_U$ gives a lattice isomorphism between compact open subsets of $\Sigmao$ and $I(B)$.
\end{cor}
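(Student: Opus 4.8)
The plan is to realise the claimed bijection as a composite of two lattice isomorphisms, exploiting the $R$-algebra isomorphism $f \mapsto \tilde{f}$ from \cref{prop:Steinberg diagonal}. In this setting $B$ denotes the diagonal subalgebra $A_R(\Go; q^{-1}(\Go))$, which by \cref{prop:Steinberg diagonal} is precisely the image of $C_c(\Sigmao,R)$ under $f \mapsto \tilde{f}$, and that map is an $R$-algebra isomorphism. My first task is therefore to understand the idempotent lattice of $C_c(\Sigmao,R)$, and my second is to transport it across this isomorphism.

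First I would identify the idempotents of $C_c(\Sigmao,R) \cong A_R(\Sigmao)$. Since $\Sigmao$ is a unit space, the convolution product on $C_c(\Sigmao,R)$ is just pointwise multiplication, so $f$ is idempotent if and only if $f(x)^2 = f(x)$ for every $x \in \Sigmao$; that is, each $f(x)$ is an idempotent of $R$. This is where indecomposability of $R$ enters: it forces $f(x) \in \{0,1\}$ for all $x$, so $f = 1_U$ with $U = \supp(f)$ a compact open subset of $\Sigmao$. Hence $U \mapsto 1_U$ is a bijection from the compact open subsets of $\Sigmao$ onto $I(C_c(\Sigmao,R))$, with inverse $f \mapsto \supp(f)$. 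The lattice structure is then immediate from the pointwise operations: $1_U 1_V = 1_{U \cap V}$ realises meets as intersections, $1_U + 1_V - 1_U 1_V = 1_{U \cup V}$ realises joins as unions, and $1_U 1_V = 1_U \Leftrightarrow U \subseteq V$ recovers the order, so $U \mapsto 1_U$ is a lattice isomorphism.

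To finish, I would transport this across $f \mapsto \tilde{f}$. Being an $R$-algebra isomorphism onto $B$, it restricts to a bijection $I(C_c(\Sigmao,R)) \to I(B)$ that preserves products (meets), the join $e + f - ef$, and the order $ef = e$; and since $\widetilde{1_U}$ and $\tilde{1}_U$ both arise from applying \cref{lem:unique extensions} to the function $1_U$, the uniqueness in that lemma gives $\widetilde{1_U} = \tilde{1}_U$. Composing with the previous paragraph shows that $U \mapsto \tilde{1}_U$ is a lattice isomorphism from compact open subsets of $\Sigmao$ onto $I(B)$. As a sanity check at the groupoid level, note that two units of $\Sigma$ are composable only if equal, so for $U, V \subseteq \Sigmao$ the bisection product satisfies $UV = U \cap V$; hence the identity $\tilde{1}_U * \tilde{1}_V = \tilde{1}_{UV}$ from \cref{prop:multiplication in twisted Steinberg algebras} agrees with the meet $\tilde{1}_{U \cap V}$ computed above.

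The argument is essentially formal once these pieces are assembled; the only genuine input is the use of indecomposability to pin down the idempotents of $R$ in the first step, together with the minor bookkeeping to confirm that the two meanings of $\tilde{1}_U$ coincide. I would expect no real obstacle beyond this.
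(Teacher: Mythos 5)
Your proof is correct and follows essentially the same route as the paper, which derives \cref{cor:lattice iso} directly from the isomorphism $f \mapsto \tilde{f}$ of \cref{prop:Steinberg diagonal} together with the observation (stated just before the corollary) that indecomposability of $R$ forces the idempotents of $A_R(\Go)$ to be exactly the characteristic functions of compact open sets. The paper leaves these steps implicit; you have merely spelled them out, including the correct identification $\widetilde{1_U} = \tilde{1}_U$ via the uniqueness in \cref{lem:unique extensions}.
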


\subsection{Normalisers and inverse semigroups} \label{sec:invsemigroup}

An inverse semigroup is a semigroup $\SS$ such that, for each $s \in \SS$, there is a unique element $s^\dagger$ (called the \emph{inverse} of $s$) satisfying $ss^\dagger s=s$ and $s^\dagger ss^\dagger=s^\dagger$. We have $(st)^\dagger=t^\dagger s^\dagger$ and $(s^\dagger)^\dagger = s$. The idempotents of an inverse semigroup form a commutative subsemigroup, and are precisely the elements of the form $s^\dagger s$. Details can be found in~\cite{Lawson1998}.

\begin{definition} \label{def:normaliser}
Let $R$ be a commutative unital ring, let $A$ be an $R$-algebra, and let $B \subseteq A$ be a commutative $R$-subalgebra. Suppose that the set $I(B)$ of idempotents of $B$ is a set of \emph{local units for $A$}: that is, for any $\{a_1,\dotsc,a_n\}\subseteq A$, there exists $e\in I(B)$ with $ea_i=a_i=a_ie$ for all $i \in \{1,\dotsc,n\}$. As in \cite[Definition~3.3]{ABHS2017FM}, define the \emph{normaliser} of $B$ to be the set
\[
N(B) \coloneqq \{ n \in A : \text{there exists } k \in A \text{ with } knk = k, nkn = n, \text{and } kBn\cup nBk \subseteq B \}.
\]
\end{definition}

We use the notation $N_A(B)$ for $N(B)$ if we need to specify the ambient algebra $A$. We begin by establishing that $N(B)$ is an inverse semigroup.

\begin{lemma} \label{lem:dagger}
For each $n \in N(B)$, there exists a unique $k \in N(B)$ such that
\begin{equation} \label{eqn:dagger}
knk = k, \quad nkn = n, \quad \text{and } \quad kBn \cup nBk \subseteq B.
\end{equation}
\end{lemma}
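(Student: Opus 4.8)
The goal is to establish that each $n \in N(B)$ has a \emph{unique} element $k \in N(B)$ satisfying the three conditions in \labelcref{eqn:dagger}; this is exactly what is needed to conclude that $N(B)$ is an inverse semigroup with $n^\dagger = k$. By definition of $N(B)$, the existence of \emph{some} $k \in A$ satisfying these relations is guaranteed, so two things remain: first, that any such $k$ actually lies in $N(B)$, and second, that $k$ is uniquely determined by $n$.

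The plan is to proceed as follows. First I would observe that if $k \in A$ satisfies $knk = k$, $nkn = n$, and $kBn \cup nBk \subseteq B$, then $k$ automatically belongs to $N(B)$: the element $n$ itself serves as the required partner for $k$, since the three relations are symmetric in the roles of $n$ and $k$ (reading $nkn=n$, $knk=k$, and $nBk \cup kBn \subseteq B$). So membership in $N(B)$ is immediate, and the content of the lemma is really the uniqueness assertion.

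For uniqueness, the natural strategy is the standard inverse-semigroup argument, adapted to this algebraic setting. Suppose $k$ and $k'$ both satisfy \labelcref{eqn:dagger} relative to $n$. The idempotents $nk$, $kn$, $nk'$, $k'n$ will play the central role. The key point to extract is that these idempotents lie in (or interact correctly with) the commutative algebra $B$: from $kBn \subseteq B$ and the local-units hypothesis, applying a suitable idempotent $e \in I(B)$ with $ene = n$ and $eke = k$ should show $kn, nk \in B$ (and likewise $k'n, nk' \in B$), using $k n = k e n \in kBn \subseteq B$ and similarly for the others. Once these products are known to sit inside the commutative algebra $B$, I would run the usual computation: write $k = knk$ and substitute to relate $k$ and $k'$ through commuting idempotents, for instance showing $k = knk'nk = \dots = k'$ by repeatedly using $nkn = n = nk'n$ together with commutativity of the relevant idempotents in $B$. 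Concretely, one computes $k = k(nk) = k(nk')(nk)$ and $k' = (k'n)k' = (k'n)(kn)k'$, then uses that $nk, nk', kn, k'n$ all commute (being elements of the commutative algebra $B$) to collapse these to a common expression.

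The main obstacle I anticipate is establishing that the products $kn$, $nk$, $k'n$, $nk'$ genuinely lie in $B$, rather than merely being ``absorbed'' by $B$ under multiplication. The definition only directly gives $kBn \subseteq B$, not $kn \in B$, so the local-units property is essential here: I need an idempotent $e \in I(B)$ that acts as a two-sided identity on the finite set $\{n, k, k'\}$ (and their relevant products), so that $kn = ken \in kBn \subseteq B$. I would want to choose $e$ carefully at the outset to serve for all the elements involved simultaneously. Once everything is inside the commutative $B$, commutativity does all the remaining work and the uniqueness computation is routine; the subtlety is entirely in the bookkeeping of which idempotent to insert and verifying it fixes the elements in question on both sides.
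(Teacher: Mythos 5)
Your proposal is correct and follows essentially the same route as the paper's proof: membership of $k$ in $N(B)$ via the symmetry of the relations~\labelcref{eqn:dagger} in $n$ and $k$, then a local unit $e \in I(B)$ with $k, k', n \in eAe$ to show $kn = ken \in kBn \subseteq B$ (and likewise $nk, k'n, nk' \in B$), and finally the standard commutativity computation collapsing $k = knk'nk$ and $k' = k'nknk'$ to a common expression. The paper organises the last step as a single chain of equalities ($k_1 = k_1nk_1 = k_1nk_2nk_1 = \dotsb = k_2$), but this is the same calculation, and you correctly identified the one genuine subtlety — that the local-units hypothesis is what upgrades $kBn \subseteq B$ to $kn \in B$.
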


\begin{proof}
Fix $n \in N(B)$. If $k$ satisfies~\cref{eqn:dagger} for $n$, then $k$ is itself in $N(B)$ because $n$ satisfies~\cref{eqn:dagger} for $k$. Suppose that $k_1$ and $k_2$ both satisfy~\cref{eqn:dagger} for $n$. We claim that since $I(B)$ forms a set of local units for $A$, we have
\[
k_1n, k_2n, nk_1, nk_2 \in B.
\]
Indeed, there exists $e \in I(B)$ with $k_1,k_2,n \in eAe$, and so $k_in = k_ien \in B$ and $nk_i=nek_i\in B$ for $i \in \{1,2\}$. Using this and that $B$ is commutative for the third and sixth equalities, we obtain
\[
k_1 = k_1nk_1
= k_1nk_2nk_1
= k_1nk_1nk_2
= k_1nk_2
= k_1nk_2nk_2
= k_2nk_1nk_2
= k_2nk_2
=k_2.\qedhere
\]
\end{proof}

For each $n \in N(B)$, we write $n^\dagger$ for the unique element $k \in N(B)$ satisfying \cref{eqn:dagger}. So $a \in A$ belongs to $N(B)$ if and only if $a^\dagger$ exists. Notice that the above proof shows that $aa^\dagger, a^\dagger a\in I(B)$ for all $a\in N(B)$. Also note that $I(B) \subseteq N(B)$ since $e^\dagger = e$ for each $e \in I(B)$.

\begin{lemma}
The normaliser $N(B)$ is an inverse semigroup with inverse $n\mapsto n^\dagger$ and set of idempotents $I(B)$.
\end{lemma}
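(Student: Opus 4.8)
The plan is to verify the three defining features in turn: that $N(B)$ is closed under the multiplication inherited from $A$ (so that it is a semigroup), that the map $n \mapsto n^\dagger$ supplied by \cref{lem:dagger} is a genuine semigroup inverse in the required sense, and that the idempotents of $N(B)$ are exactly $I(B)$. Associativity is free, and \cref{lem:dagger} already hands us, for each $n$, an element $n^\dagger$ with $nn^\dagger n = n$ and $n^\dagger n n^\dagger = n^\dagger$; so the work lies in closure, the identification of the idempotents, and uniqueness of the inverse.

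First I would establish closure, and in fact show that for $m,n \in N(B)$ the product $mn$ lies in $N(B)$ with $(mn)^\dagger = n^\dagger m^\dagger$. Setting $k = n^\dagger m^\dagger$, the identities $k(mn)k = k$ and $(mn)k(mn) = mn$ reduce to moving the idempotents $m^\dagger m$ and $nn^\dagger$ past one another: both lie in $I(B)$ by the remark following \cref{lem:dagger}, hence commute because $B$ is commutative, after which the relations $mm^\dagger m = m$, $nn^\dagger n = n$, and their partners collapse the products. For the containment $kB(mn) \cup (mn)Bk \subseteq B$ I would peel off one normaliser at a time, using \cref{def:normaliser}: since $m^\dagger B m \subseteq B$ and $n^\dagger B n \subseteq B$, we get $n^\dagger m^\dagger B\, mn \subseteq n^\dagger B n \subseteq B$, and symmetrically $mn\, B n^\dagger m^\dagger \subseteq m B m^\dagger \subseteq B$.

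The key remaining step—the one I expect to need the most care—is to show that the idempotents of $N(B)$ are precisely $I(B)$. The inclusion $I(B) \subseteq N(B)$ with each $e \in I(B)$ idempotent is already noted. For the converse, suppose $s \in N(B)$ satisfies $s^2 = s$. Writing $e = ss^\dagger$ and $f = s^\dagger s$, both of which lie in $I(B)$ and therefore commute, I would compute $fe = s^\dagger s^2 s^\dagger = s^\dagger s s^\dagger = s^\dagger$, so that $s^\dagger = fe = ef \in B$; then $(s^\dagger)^2 = (ef)^2 = ef = s^\dagger$, whence $s^\dagger \in I(B)$. Because the defining relations for $\dagger$ in \cref{lem:dagger} are symmetric in $s$ and $s^\dagger$, we have $(s^\dagger)^\dagger = s$, and applying the same argument to the idempotent $s^\dagger$ yields $s = (s^\dagger)^\dagger \in I(B)$.

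Finally, since every idempotent of $N(B)$ lies in the commutative algebra $B$, the idempotents commute; the standard argument then shows that the generalised inverse of each element is unique—if $x$ and $y$ both satisfy the two inverse relations for $n$, then commuting the idempotents $xn$, $yn$, $nx$, $ny$ forces $x = y$. Hence $N(B)$ is an inverse semigroup whose inverse is $n \mapsto n^\dagger$ and whose idempotent set is $I(B)$.
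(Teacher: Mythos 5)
Your proof is correct, and your closure argument is exactly the paper's: you take $(mn)^\dagger = n^\dagger m^\dagger$, verify the two von Neumann equations by commuting the idempotents $m^\dagger m,\, nn^\dagger \in I(B)$, and obtain the normalising containments by peeling off one normaliser at a time, $n^\dagger m^\dagger B\, mn \subseteq n^\dagger B n \subseteq B$ and $mn\, B\, n^\dagger m^\dagger \subseteq mBm^\dagger \subseteq B$. Where you genuinely diverge is in the order and mechanism of the remaining steps. The paper goes: closure plus \cref{lem:dagger} yield the inverse semigroup structure, and the idempotent identification then falls out as a corollary via $e = e^\dagger e \in I(B)$ (tacitly using that idempotents in an inverse semigroup are self-inverse). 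You instead prove the idempotent characterisation \emph{first}, by the direct computation $s^\dagger = (s^\dagger s)(ss^\dagger) \in I(B)$ for an idempotent $s \in N(B)$, followed by $s = (s^\dagger)^\dagger \in I(B)$ using the symmetry of the relations in \cref{eqn:dagger}; and only then, knowing that all idempotents of $N(B)$ lie in the commutative algebra $B$ and hence commute, do you run the standard regular-semigroup argument to get uniqueness of von Neumann inverses. Your route is slightly longer but buys a point of rigour: \cref{lem:dagger} asserts uniqueness only among elements $k$ satisfying the \emph{three} conditions of \cref{eqn:dagger}, including $kBn \cup nBk \subseteq B$, whereas the paper's definition of an inverse semigroup demands uniqueness among elements satisfying just the two equations $nkn = n$ and $knk = k$; your commuting-idempotents argument (with $xn$, $yn$, $nx$, $ny$ idempotent and hence in $I(B)$ by your earlier step) supplies exactly this stronger uniqueness, making explicit a passage that the paper's proof leaves to the standard theory of regular semigroups with commuting idempotents.
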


\begin{proof}
To see that $N(B)$ is closed under multiplication, fix $n,m \in N(B)$. Then $nmBm^\dagger n^\dagger\subseteq nBn^\dagger\subseteq B$, and similarly, $m^\dagger n^\dagger Bnm\subseteq B$. As observed above, $mm^\dagger, n^\dagger n \in I(B)$, and so $mm^\dagger$ and $n^\dagger n$ commute. Hence
\[
(nm)(m^\dagger n^\dagger)(nm) = nn^\dagger nmm^\dagger m = nm.
\]
Similarly, $m^\dagger n^\dagger (nm)m^\dagger n^\dagger= m^\dagger n^\dagger$, and so $nm \in N(B)$. Thus $N(B)$ is a semigroup. \Cref{lem:dagger} therefore implies that $N(B)$ is an inverse semigroup with inversion given by $n\mapsto n^\dagger$. The final statement follows because if $e \in N(B)$ is an idempotent, then $e = e^\dagger e\in I(B)$, as we saw earlier.
\end{proof}

Recall that there is a partial order on any inverse semigroup given by $s \le t$\label{def:partial order} if and only if $s = ss^\dagger t$ (or, equivalently, if and only if $s = ts^\dagger s$). The partial order is preserved by multiplication and inversion, and the product of two idempotents is their meet. Furthermore, $s \le t$ if and only if there is an idempotent $e$ such that $s=te$, and this is equivalent to the existence of an idempotent $f$ such that $s = ft$. See \cite{Lawson1998} for details. Note that $\le$ reduces to the usual order relation on commuting idempotents; that is, $e \le f$ if and only if $ef = e$. In \cref{lem:scalarn,lem:properties of le}, we present various additional properties of $\le$ that we use throughout the paper. The first of these results is straightforward, so we omit the proof.

\begin{lemma} \label{lem:scalarn}
Suppose that $A$ is an $R$-algebra, and that $B \subseteq A$ is a commutative subalgebra such that $I(B)$ forms a set of local units for $A$. Fix $t \in R^\times$.
\begin{enumerate}[label=(\alph*)]
\item \label{item1:scalarn} If $n \in N(B)$, then $t n \in N(B)$ with $(t n)^\dagger =
t^{-1}n^\dagger$.
\item \label{item2:scalarn} If $n, m \in N(B)$ with $n \le m$, then $t n \le t m$.
\end{enumerate}
\end{lemma}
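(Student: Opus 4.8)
The plan is to derive both parts directly from the defining conditions of the normaliser in \cref{def:normaliser} and the uniqueness clause of \cref{lem:dagger}, using throughout that a unit $t \in R^\times$ is central in the $R$-algebra $A$ (so scalars may be freely factored out of products) and that $B$, being an $R$-subalgebra, is stable under multiplication by scalars.

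For part~\cref{item1:scalarn}, I would exhibit $t^{-1}n^\dagger$ as an explicit witness that $tn \in N(B)$, checking the three conditions of \cref{def:normaliser} for the pair $(tn,\, t^{-1}n^\dagger)$. Each reduces to the corresponding identity for $(n, n^\dagger)$ once the scalars $t$ and $t^{-1}$ are cancelled: $(t^{-1}n^\dagger)(tn)(t^{-1}n^\dagger) = t^{-1}\,n^\dagger n n^\dagger = t^{-1}n^\dagger$ and $(tn)(t^{-1}n^\dagger)(tn) = t\,nn^\dagger n = tn$, while for $b \in B$ we have $(t^{-1}n^\dagger)\,b\,(tn) = n^\dagger b n \in B$ and $(tn)\,b\,(t^{-1}n^\dagger) = n b n^\dagger \in B$, so that $(t^{-1}n^\dagger)B(tn) \cup (tn)B(t^{-1}n^\dagger) \subseteq B$. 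Having verified these, the uniqueness of the dagger from \cref{lem:dagger} immediately forces $(tn)^\dagger = t^{-1}n^\dagger$.

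For part~\cref{item2:scalarn}, I would unwind the definition $n \le m \iff n = nn^\dagger m$ and combine it with the formula for $(tn)^\dagger$ just obtained. Then $(tn)(tn)^\dagger(tm) = (tn)(t^{-1}n^\dagger)(tm) = t\,(nn^\dagger m) = tn$, where the final equality is precisely the hypothesis $n \le m$; and $tn = (tn)(tn)^\dagger(tm)$ is exactly the assertion $tn \le tm$.

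The main point to watch is simply the bookkeeping of scalars: one must use that $t^{-1}$ exists (which is why the hypothesis is $t \in R^\times$ rather than $t \in R$), and that scalar multiplication is central and preserves $B$, so that factors of $t$ and $t^{-1}$ can be slid past elements of $A$ and $B$ and then cancelled. Beyond this there is no genuine obstacle, which is presumably why the authors regard the statement as straightforward and omit the proof.
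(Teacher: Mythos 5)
Your proof is correct: the paper deliberately omits the argument as straightforward, and your direct verification — checking the three conditions of \cref{def:normaliser} for the pair $(tn,\,t^{-1}n^\dagger)$, invoking the uniqueness clause of \cref{lem:dagger} to identify $(tn)^\dagger$, and then unwinding $n \le m \iff n = nn^\dagger m$ for part~(b) — is exactly the intended routine computation. Your attention to the scalar bookkeeping (centrality of the $R$-action, invertibility of $t$, and stability of $B$ under scalars) covers the only points where care is needed.
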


\begin{lemma} \label{lem:properties of le}
Suppose that $A$ is an $R$-algebra, and that $B \subseteq A$ is a commutative subalgebra satisfying \cref{cond:torsion free} such that $I(B)$ forms a set of local units for $A$. Then the following hold.
\begin{enumerate}[label=(\alph*)]
\item \label{item1:properties of le} If $n \in N(B)$ and $e \in I(B)$ satisfy $n\le e$, then $n\in I(B)$.
\item \label{item2:properties of le} For $f,e\in I(B)$ and $t\in R^\times$, if $0 \ne f \le t e$, then $t=1$.
\item \label{item3:properties of le} For $n,m \in N(B)$, we have $nn^\dagger m = n \iff mn^\dagger n = n$.
\item \label{item4:properties of le} For all $n \in N(B)$ and $e \in I(B)$, we have $en, ne \le n$.
\item \label{item5:properties of le} If $e\in I(B)$ and $n\in N(B)$, then $n-ne, n-en \in N(B)$, with $(n-ne)^\dagger = n^\dagger - en^\dagger$ and $(n-en)^\dagger = n^\dagger - n^\dagger e$, and $n-ne, n-en \le n$.
\end{enumerate}
\end{lemma}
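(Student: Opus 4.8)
The plan is to dispatch parts~\labelcref{item1:properties of le}, \labelcref{item3:properties of le}, and~\labelcref{item4:properties of le} directly from the inverse-semigroup structure of $N(B)$ and the recorded descriptions of the order $\le$, to treat part~\labelcref{item2:properties of le} using \cref{cond:torsion free}, and to prove part~\labelcref{item5:properties of le} by explicit computation, which is where essentially all of the work lies. For~\labelcref{item1:properties of le}, the relation $n \le e$ supplies an idempotent $f \in I(B)$ with $n = ef$; since $e$ and $f$ are commuting idempotents in $B$, their product $ef$ is again an idempotent, so $n \in I(B)$. For~\labelcref{item3:properties of le}, the two equations $nn^\dagger m = n$ and $mn^\dagger n = n$ are exactly the two equivalent forms of the single statement $n \le m$, so the equivalence is immediate from the definition of $\le$. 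For~\labelcref{item4:properties of le}, note first that $ne, en \in N(B)$ because $e \in I(B) \subseteq N(B)$ and $N(B)$ is closed under multiplication; then $ne = n \cdot e$ and $en = e \cdot n$ exhibit $ne \le n$ and $en \le n$ via the characterisations of $\le$ by right- and left-multiplication by an idempotent.

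For part~\labelcref{item2:properties of le}, I would use \cref{lem:scalarn}\labelcref{item1:scalarn} to get $te \in N(B)$ with $(te)^\dagger = t^{-1}e$. The relation $f \le te$ then gives $f = f f^\dagger (te) = t(fe)$, so $fe = t^{-1} f$. As $fe$ is an idempotent, squaring yields $fe = (fe)^2 = t^{-2} f$, and comparing with $fe = t^{-1}f$ produces $(1-t)f = 0$. Since $f \ne 0$, \cref{cond:torsion free} forces $t = 1$. This is the one place where the torsion hypothesis is genuinely used.

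The crux is part~\labelcref{item5:properties of le}. Writing $p = nn^\dagger$ and $q = n^\dagger n$ (both in $I(B)$), I would set $N = n - ne$ and $K = n^\dagger - en^\dagger$, and first isolate the absorption identities $nq = n$ and $pn = n$, together with $neq = ne$ (which follows from $nq = n$ and the commutativity $eq = qe$ of idempotents in $B$). Using these, a direct expansion shows $NK = p - nen^\dagger$ and $KN = q - qe$; both are idempotents in $I(B)$, because $nen^\dagger = (ne)(ne)^\dagger \in I(B)$ and $qe$ is a product of commuting idempotents. Multiplying out and repeatedly collapsing terms such as $nen^\dagger n = neq = ne$ then gives $NKN = N$ and $KNK = K$. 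The condition $KBN \cup NBK \subseteq B$ follows by expanding each product into four summands and observing that each lies in $B$, since $n^\dagger B n \cup nBn^\dagger \subseteq B$ and $B$ is a subalgebra containing $e$. By the uniqueness in \cref{lem:dagger}, this identifies $N \in N(B)$ with $N^\dagger = K$; finally $NN^\dagger n = (p - nen^\dagger)n = n - ne = N$ gives $N \le n$. The assertions for $n - en$ and its inverse $n^\dagger - n^\dagger e$ follow from the mirror-image computation with the roles of left and right multiplication swapped.

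I expect the bookkeeping in part~\labelcref{item5:properties of le} to be the main obstacle: the verifications all reduce to the two absorption identities $nq = n$ and $pn = n$ and to commutativity of idempotents in $B$, but one must repeatedly recognise when a long product such as $neqe$ or $nen^\dagger ne$ collapses back to $ne$. Once these identities are isolated at the outset, the remaining computations are routine, if lengthy.
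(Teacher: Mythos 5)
Your proof is correct, and for part~\cref{item5:properties of le} it takes a genuinely different route from the paper. Parts~\cref{item1:properties of le}, \cref{item3:properties of le}, and~\cref{item4:properties of le} are handled in the paper purely by citation to Lawson's book, so your short inline arguments from the recorded characterisations of $\le$ (with the one needed ingredient that the idempotents of $N(B)$ are exactly $I(B)$, so the idempotent witnessing $n\le e$ lies in the commutative algebra $B$) deliver the same content directly. Your part~\cref{item2:properties of le} is essentially the paper's computation: the paper derives $t^2fe=(tfe)^2=f=tfe$, hence $(1-t)fe=0$, and applies \cref{cond:torsion free} to the nonzero idempotent $fe$, whereas you apply it to $f$ itself via $(1-t)f=0$; both are valid. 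The real divergence is part~\cref{item5:properties of le}: the paper reduces it to part~\cref{item4:properties of le} in two lines by factorising, observing that $f\coloneqq n^\dagger n-n^\dagger ne\in I(B)\subseteq N(B)$ (a relative complement of commuting idempotents) and that $n-ne=nf$, so that $n-ne\in N(B)$ with $n-ne\le n$ immediately, and $(n-ne)^\dagger=fn^\dagger=n^\dagger-en^\dagger$ using the commutation $n^\dagger n\,e\,n^\dagger=e\,n^\dagger nn^\dagger=en^\dagger$. You instead verify the defining relations~\cref{eqn:dagger} by brute force for $N=n-ne$ and $K=n^\dagger-en^\dagger$, and your expansions check out: $NK=nn^\dagger-nen^\dagger$ and $KN=n^\dagger n-n^\dagger ne$ are idempotents, the collapses $nen^\dagger n=nqe=ne$ give $NKN=N$ and $KNK=K$, the four-term expansions of $KbN$ and $NbK$ land in $B$ since $n^\dagger Bn\cup nBn^\dagger\subseteq B$, and $NN^\dagger n=N$ certifies $N\le n$ by the definition of the order, with \cref{lem:dagger} pinning down $N^\dagger=K$. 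The paper's factorisation buys brevity and gets $n-ne\le n$ for free from part~\cref{item4:properties of le}; your direct verification is longer but self-contained at the level of the normaliser axioms and makes the inverse formula emerge from an explicit check rather than the idempotent-commutation trick, which is a reasonable trade.
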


\begin{proof}
Part~\cref{item1:properties of le} is~\cite[Proposition~1.4.7(5)]{Lawson1998}, and parts~\cref{item3:properties of le,item4:properties of le} are contained in~\cite[Lemma~1.4.6]{Lawson1998}. For part~\cref{item2:properties of le}, suppose that $0 \ne f \le te$. Then $tfe = f \ne 0$ implies that $fe \ne 0$ and
\[
t^2fe = (tfe)^2 = f^2 = f = tfe.
\]
Multiplying both sides by $t^{-1}$ then gives $(1-t)fe=0$. Hence $t=1$, as $B$ satisfies~\labelcref{cond:torsion free}.

Part~\cref{item5:properties of le} follows from \cref{item4:properties of le} once we observe that $n^\dagger ne\le n^\dagger n$ and hence $n^\dagger n-n^\dagger ne\in I(B) \subseteq N(B)$. Thus $n-ne=n(n^\dagger n-n^\dagger ne) \le n$, and $n - ne$ is an element of $N(B)$ with inverse $(n^\dagger n-n^\dagger ne)n^\dagger = n^\dagger- en^\dagger$. A similar argument proves the claim about $n-en$.
\end{proof}

We define \emph{free} normalisers by analogy (with suitable modifications) with the definition given by Kumjian in the setting of \Cstar-algebras.

\begin{definition}[{cf.~\cite[Definition~1]{Kumjian1986}}]
Let $R$ be a commutative unital ring and let $A$ be an $R$-algebra, with a commutative subalgebra $B\subseteq A$ whose idempotents form a set of local units for $A$. We say that $n \in N(B)$ is a \emph{free normaliser} if either $n \in B$ or $(n^\dagger n) (nn^\dagger ) = 0$ (this latter condition is equivalent to $n^2=0$, as $n^2=n(n^\dagger nnn^\dagger) n$).
\end{definition}

\section{Algebraic quasi-Cartan pairs} \label{sec:algebraic qcps}

In this section we introduce our main objects of study---algebraic quasi-Cartan pairs. Our main theorem shows that these are precisely the pairs of algebras that arise from discrete $R$-twists satisfying an appropriate local bisection hypothesis. We also define two additional types of pairs of algebras: algebraic diagonal pairs, and algebraic Cartan pairs. We prove later in this section that every diagonal pair is an algebraic Cartan pair and every algebraic Cartan pair is an algebraic quasi-Cartan pair. The diagonal pairs correspond to twists over principal groupoids, while the Cartan pairs correspond to twists over effective groupoids.

Before defining these pairs of algebras, we need to introduce an appropriate notion of a conditional expectation from an algebra onto a commutative subalgebra.

\begin{definition} \label{def:conditional expectation}
Let $R$ be an indecomposable commutative ring, let $A$ be an $R$-algebra, and let $B\subseteq A$ be a commutative subalgebra such that $I(B)$ is a set of local units for $A$. A map $P\colon A \to B$ is called a
\emph{conditional expectation} if
\begin{enumerate}[label=(\roman*)]
\item $P$ is $R$-linear;
\item $P\restr{B}=\id_B$; and
\item \label{item:B-linear} $P(bab')=bP(a)b'$ for $a\in A$ and $b, b'\in B$.
\end{enumerate}
The conditional expectation $P\colon A \to B$ is \emph{faithful} if, for every $a \in A {\setminus} \{0\}$, there exists $n \in N(B)$ such that $P(n a) \ne 0$. The conditional expectation $P\colon A \to B$ is \emph{implemented by idempotents} if, for every $n \in N(B)$, there exists $e \in I(B)$ such that $P(n) = ne = en$.
\end{definition}

Note that since $B$ contains local units for $A$, condition~\cref{item:B-linear} of \cref{def:conditional expectation} implies that $P$ is both left and right $B$-linear.

\begin{remark}
Watatani~\cite{Watatani1990} defines a conditional expectation $P\colon A \to B$ to be \emph{nondegenerate} if for every nonzero $a \in A$ there exists $a' \in A$ such that $P(a'a) \ne 0$. Since we will assume that $A$ is spanned by $N(B)$, the notion of a nondegenerate conditional expectation is equivalent to our definition of a faithful conditional expectation in our setting.
\end{remark}

We now define algebraic diagonal pairs, Cartan pairs, and quasi-Cartan pairs.

\begin{definition}
\label{def:ACP} Let $R$ be a unital ring, let $A$ be an $R$-algebra, and let $B\subseteq A$ be a commutative subalgebra satisfying~\cref{cond:torsion free} and with the following properties.
\begin{enumerate}[label=(\roman*)]
\item \label{item:ACP.local units} The set $I(B)$ forms a set of local units for $A$.
\item \label{item:ACP.IB spans} $B = \vecspan{(I(B))}$.
\item \label{item:ACP.NB spans} $A = \vecspan{(N(B))}$.
\item \label{item:ACP.FCE} There exists a faithful conditional expectation $P\colon A \to B$.
\end{enumerate}
Then we say that the pair $(A,B)$ is:
\begin{enumerate}
\item[(ADP)] \label[ADP]{item:ADP} an \emph{algebraic diagonal pair} if $A$ is spanned by the free normalisers of $B$;
\item[(ACP)] \label[ACP]{item:ACP} an \emph{algebraic Cartan pair} if $B$ is a maximal commutative subalgebra of $A$; and
\item[(AQP)] \label[AQP]{item:AQP.expectation} an \emph{algebraic quasi-Cartan pair} if there is a faithful conditional expectation $P\colon A \to B$ that is implemented by idempotents.
\end{enumerate}
\end{definition}

We will see that every algebraic diagonal pair is an algebraic Cartan pair (\cref{lem:D=>C}); that every algebraic Cartan pair is an algebraic quasi-Cartan pair, and that for such pairs there is only one conditional expectation from $A$ to $B$ (\cref{lem:C=>Q}); and that for any algebraic quasi-Cartan pair $(A,B)$, there is only one conditional expectation from $A$ to $B$ that is implemented by idempotents (\cref{prop:canonical CE}).

\begin{remarks} \leavevmode
\begin{enumerate}[label=(\arabic*)]
\item To construct the inverse semigroup $N(B)$ in \cref{def:normaliser}, we required $B$ to be commutative and to satisfy property~\cref{item:ACP.local units} of \cref{def:ACP}. In fact, $B$ being abelian and property~\cref{item:ACP.local units} are almost enough to build our discrete twist $\Sigma$ over $G$ in the next two sections. However, we need properties~\cref{item:ACP.IB spans} and \cref{item:AQP.expectation} of \cref{def:ACP} in \cref{prop:Hausdorff} to show that $G$ is Hausdorff.
\item In the definition of an algebraic quasi-Cartan pair, note that if $n \in N(B)$ satisfies $P(n) = 0$, then $e = 0$ satisfies $P(n) = en = ne$. So we could equivalently require just that whenever $P(n) \ne 0$ there exists $e \in I(B)$ such that $P(n) = en = ne$.
\item Significant work has been done in the direction of ascertaining existence and uniqueness of Cartan subalgebras of von Neumann algebras (for example \cite{SS1999, HR2011, HV2013}) and \Cstar-algebras \cite{LR2019}. So the corresponding questions of existence and uniqueness of algebraic quasi-Cartan subalgebras in a given $R$-algebra are natural and interesting questions, though they are not addressed in this paper.
\end{enumerate}
\end{remarks}

The simplest example of an algebraic diagonal pair is the following. Let $R$ be an indecomposable commutative ring, let $A=M_n(R)$ be the ring of $n\times n$ matrices over $R$, and let $B$ be the subalgebra of diagonal matrices. Then $B$ is spanned by the diagonal matrix units $E_{ii}$, which are idempotent, and $A$ is spanned by the elementary matrix units $E_{ij}$, which are free normalisers. The faithful conditional expectation is given by making all non-diagonal entries zero. This example motivates the terminology.

A more complicated, but illustrative, class of examples is the class of Leavitt path algebras associated to directed graphs. Recall that if $E$ is a directed graph and $L_R(E)$ is its Leavitt path algebra over a ring $R$, then it admits a commutative subalgebra $D_R(E)$---the subalgebra generated by the idempotents of the form $s_\mu s_{\mu^*}$ for finite paths $\mu$. \Cref{eg:KP-algs} shows that $(A_R(E), D_R(E))$ is an algebraic quasi-Cartan pair. \Cref{prop:effective} together with known structure theory for the groupoid of a directed graph show that $(A_R(E), D_R(E))$ is an algebraic Cartan pair if and only if every cycle in $E$ has an entrance, and is an algebraic diagonal pair if and only if $E$ contains no cycles.

\begin{lemma} \label{lem:D=>C}
Every algebraic diagonal pair is an algebraic Cartan pair.
\end{lemma}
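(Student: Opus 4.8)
The plan is to show that for an algebraic diagonal pair $(A,B)$, the only extra condition needed for $B$ to witness an algebraic Cartan pair---maximal commutativity of $B$---holds; the conditions \cref{item:ACP.local units}--\cref{item:ACP.FCE} of \cref{def:ACP} are shared verbatim between the two notions, so nothing needs to be rechecked there. Since $B$ is commutative, maximal commutativity is equivalent to $B$ coinciding with its own relative commutant, so I would fix $a \in A$ commuting with every element of $B$ and aim to prove $a \in B$.

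First I would use the defining property of a diagonal pair, that $A$ is spanned by the free normalisers of $B$, to write $a = b + c$, where $b \in B$ and $c = \sum_{j=1}^{k} t_j n_j$ is an $R$-linear combination of free normalisers $n_j \notin B$; by definition each such $n_j$ satisfies $n_j^2 = 0$. Since $b \in B$ commutes with $B$, so does $c = a - b$, and it suffices to prove $c = 0$. The key structural observation is that for a square-zero free normaliser $n$ the range and source idempotents $p = nn^\dagger$ and $q = n^\dagger n$ are \emph{orthogonal}, $pq = 0$, and satisfy $n = pnq$ with $np = qn = 0$; thus such normalisers are genuinely ``off-diagonal''.

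The main work is to control cancellation in the sum $c = \sum_j t_j n_j$, and the cleanest device is a finite Boolean algebra of idempotents. Choosing a local unit $e \in I(B)$ for $\{n_j\}$ and passing to the finite Boolean algebra generated by $e$ together with all the idempotents $p_j, q_j$, I would take its atoms $f_1, \dotsc, f_m \in I(B)$, which are mutually orthogonal with $\sum_r f_r = e$, so that $c = \sum_{r,s} f_r c f_s$. Commutativity of $c$ with each atom $f_r \in B$ forces $f_r c f_s = 0$ whenever $r \neq s$, leaving $c = \sum_r f_r c f_r$. Finally, because $n_j = p_j n_j q_j$ with $p_j q_j = 0$, an atom $f_r$ cannot lie below both $p_j$ and $q_j$, so $f_r n_j f_r = 0$ for every $r, j$; hence each diagonal corner $f_r c f_r$ vanishes and $c = 0$. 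This gives $a = b \in B$, so $B$ is maximal commutative.

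I expect the only real obstacle to be the bookkeeping in this third step: individually each $n_j$ is off-diagonal, but a careless argument could founder on relations among the overlapping source and range idempotents of different $n_j$. Refining the generators to the atoms of a single finite Boolean algebra sidesteps this, reducing everything to the two elementary facts that an atom is minimal and that $p_j q_j = 0$. I note that this argument uses the diagonal hypothesis (square-zero free normalisers) essentially, and does not appear to require \cref{cond:torsion free} or indecomposability of $R$ beyond what is already built into the definition of the pair.
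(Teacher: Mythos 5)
Your proof is correct and follows essentially the same route as the paper: decompose $a$ as an element of $B$ plus an $R$-linear combination of square-zero free normalisers, pass to the atoms of a finite Boolean algebra containing the idempotents $n_j^\dagger n_j$ and $n_j n_j^\dagger$, use commutation with these atoms to reduce to the diagonal corners, and kill each corner using atom minimality together with the orthogonality $(n_j^\dagger n_j)(n_j n_j^\dagger) = 0$. The only cosmetic differences are that you adjoin a local unit $e$ to the Boolean algebra, whereas the paper instead observes that the top element $\sum_{e \in M} e$ already acts as an identity on each $n_j$ (since $n_j^\dagger n_j$ lies below it), and that you phrase the corner computation via the factorisation $n_j = p_j n_j q_j$ rather than the paper's direct manipulations with $n = nn^\dagger n$.
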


\begin{proof}
Suppose that $(A,B)$ is an algebraic diagonal pair and that $a \in A$ commutes with every element of $B$; we must show that $a \in B$. Since $(A,B)$ is an algebraic diagonal pair, we can express $a$ as a finite linear combination of free normalisers. Every free normaliser $n$ either belongs to $B$ or satisfies $n^\dagger n nn^\dagger = 0$; so there exist $b \in B$, a finite set $F \subseteq N(B)$ such that $n^\dagger n nn^\dagger = 0$ for all $n \in F$, and coefficients $\alpha_n \in R$ for each $n \in F$ such that
\[
a = b + \sum_{n \in F} \alpha_n n.
\]
Thus it suffices to show that $a' \coloneqq \sum_{n \in F} \alpha_n n$ is zero. Since $a$ and $b$ are in the commutant (centraliser) of $B$, their difference $a' = a - b$ is also in the commutant of $B$. Let $M$ be the set of minimal nonzero idempotents in the Boolean algebra $X$ generated by $\{n^\dagger n, nn^\dagger : n \in F\}$. Then each $e \in M$ commutes with $a'$. Since $\sum_{e\in M}e$ is the maximum element of $X$, we have $n\sum_{e\in M}e=nn^\dagger n\sum_{e\in M}e=nn^\dagger n=n$. Hence $a' = \sum_{e \in M} a' e = \sum_{e \in M} ea'e = \sum_{n \in F} \alpha_n \sum_{e \in M} e n e$. So it suffices to show that each $ene = 0$. Fix $n \in F$ and $e \in M$. If $n^\dagger n e = 0$, then $e n e = e nn^\dagger n e = 0$, so we may suppose that $n^\dagger n e \ne 0$. Since $e$ is a minimal nonzero idempotent in $X$, we deduce that $e n^\dagger n = e$, and hence $enn^\dagger = e n^\dagger n nn^\dagger = 0$, since $n \in F$. Thus $e n e = e nn^\dagger n e = 0$, as required.
\end{proof}

We next show that every algebraic Cartan pair is an algebraic quasi-Cartan pair, so that we have \labelcref{item:ADP} implies \labelcref{item:ACP} implies \labelcref{item:AQP.expectation}.

\begin{lemma} \label{lem:C=>Q}
Suppose that $(A,B)$ is an algebraic Cartan pair. Then $(A,B)$ is an algebraic quasi-Cartan pair, and every conditional expectation $P\colon A \to B$ is implemented by idempotents.
\end{lemma}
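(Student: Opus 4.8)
The plan is to observe that both assertions collapse to a single statement: \emph{every} conditional expectation $P\colon A \to B$ is implemented by idempotents. Indeed, the definition of an algebraic Cartan pair already supplies a faithful conditional expectation, so once we know every conditional expectation is implemented by idempotents, that faithful one witnesses the algebraic quasi-Cartan property \cref{item:AQP.expectation}, and the ``moreover'' clause is exactly the statement we are proving. So I would fix an arbitrary conditional expectation $P$ and an arbitrary $n \in N(B)$, and aim to produce $e \in I(B)$ with $P(n) = en = ne$. Setting $b \coloneqq P(n) \in B$ and using $n = (nn^\dagger)\, n\, (n^\dagger n)$ together with the left/right $B$-linearity of $P$ (property~\cref{item:B-linear}), I get $b = (nn^\dagger)\, b\, (n^\dagger n)$. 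Since $B = \vecspan(I(B))$, I can then write
\[
b = \sum_{i=1}^{k} t_i g_i, \qquad t_i \in R \setminus \{0\}, \quad g_i \in I(B) \text{ mutually orthogonal}, \quad g_i \le (nn^\dagger)(n^\dagger n).
\]

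The single tool driving the argument is the maximality hypothesis, phrased as: any element of $A$ commuting with $B$ lies in $B$. In particular, for $e \in I(B)$ one has $en \in B$ if and only if $en$ commutes with $B$. The target reduces to showing that for each $i$, the normaliser $g_i n$ actually lies in $B$: granting this, $P$ fixes $B$, so $P(g_i n) = g_i n$, while right $B$-linearity and orthogonality give $P(g_i n) = g_i\, P(n) = g_i b = t_i g_i$; hence $g_i n = t_i g_i$. Putting $e \coloneqq \sum_i g_i \in I(B)$ then yields $en = \sum_i g_i n = \sum_i t_i g_i = b = P(n)$, and the symmetric computation (applying left $B$-linearity to $n g_i$) gives $ne = P(n)$, finishing the proof. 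The torsion condition \cref{cond:torsion free}, via the scalar rigidity in \cref{lem:properties of le}, enters to keep the scalars $t_i$ honest and, crucially, to drive the contradiction below.

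The main obstacle — and the only place where maximality does real work — is establishing that $g_i n \in B$, i.e.\ that the support of $P(n)$ meets only the ``diagonal'' (isotropy) part of $n$. This is precisely the effectiveness phenomenon: a normaliser whose $G$-support meets the isotropy on a nonempty clopen set but which is not an idempotent multiple would commute with $B$ without lying in $B$, contradicting maximality. I would argue by contradiction: if $m \coloneqq g_i n \notin B$, then $m$ is a normaliser with range idempotent $g_i$ and $P(m) = t_i g_i \ne 0$, yet $m$ does not commute with $B$, so its source and range idempotents disagree on some nonzero subidempotent $h \le m^\dagger m$; conjugating by a suitable idempotent that separates $h m m^\dagger$ from the relevant source projection forces $P$ to average $P(mh)$ to $0$ there (this is where \cref{lem:properties of le}\cref{item2:properties of le} and \cref{cond:torsion free} rule out a compensating unit scalar), contradicting $t_i \ne 0$. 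An equivalent and perhaps cleaner route, if the uniqueness of the conditional expectation on a Cartan pair is invoked, is to \emph{construct} the candidate diagonal expectation $P'(n) \coloneqq n e_n$ (with $e_n$ the idempotent built above), verify via maximality that $P'$ is a well-defined conditional expectation, and conclude $P = P'$ by uniqueness; either way the entire difficulty concentrates in this off-diagonal vanishing, which is the step I expect to require the most care.
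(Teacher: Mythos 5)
Your reduction is correct (the Cartan definition already supplies a faithful conditional expectation, so it suffices to show every conditional expectation is implemented by idempotents), and your setup is the paper's: your $e = \sum_i g_i$ is exactly the paper's $e(n)$, and showing each $g_i n \in B$ is equivalent to the paper's goal $e(n)n \in B$, with the same endgame via $P\restr{B} = \id_B$ and the bimodule property. The genuine gap is at the step you yourself flagged, and it is twofold. First, the inference ``$m = g_i n \notin B$, so $m$ fails to commute with $B$, so its source and range idempotents disagree on some nonzero subidempotent $h \le m^\dagger m$'' is unproven and, as literally phrased, false: a normaliser can satisfy $m^\dagger m = mm^\dagger$ and still fail to commute with $B$ (take $\tilde{1}_X$ for a bisection $X$ interchanging two disjoint clopen subsets of the unit space of a principal groupoid, a bona fide Cartan situation). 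What failure to commute actually yields is that conjugation $h \mapsto mhm^\dagger$ moves some $h \le m^\dagger m$, and proving even that implication is not a formal consequence of maximality --- it requires essentially the same computations as the paper's Claim.

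Second, the proposed mechanism for the contradiction does not produce one where you place it. Computing $P(mh)$ both ways --- $P(mh) = P(m)h = t_i g_i h$ by right $B$-linearity, and, since $mh = (mhm^\dagger)m$, also $P(mh) = (mhm^\dagger)P(m) = t_i\, mhm^\dagger$ --- and then cancelling $t_i$ by the idempotent trick with \labelcref{cond:torsion free} gives $mhm^\dagger = g_i h$, \emph{not} $mhm^\dagger = h$; since $h \le m^\dagger m$ need not lie below $g_i$, this is entirely consistent with $mhm^\dagger \ne h$, and nothing gets ``averaged to $0$''. The argument can be completed, but only by further grinding: taking $h = m^\dagger m$ gives $g_i \le m^\dagger m$; comparing $mh$ with $m(hg_i)$ gives $m = mg_i$; the symmetric computation of $P(h'm)$ for $h' \le g_i$ gives $mh' = h'm$; and only then does $mg = gm$ for all $g \in I(B)$ follow, at which point maximality applies directly and the contradiction framing becomes unnecessary. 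That chain is precisely the paper's Claim ($n^\dagger f n = f$ for all $f \le e(n)$, each inequality established by a separate two-sided $P$-computation plus \labelcref{cond:torsion free}) together with \cref{eqn:commutation}; your one-sentence gesture replaces the hardest part of the proof with a mechanism that, taken literally, fails. Your fallback route is also unavailable: uniqueness of the conditional expectation on a Cartan pair (\cref{cor:unique CE}) is deduced in the paper from \cref{lem:C=>Q} together with \cref{prop:canonical CE}, so invoking it here is circular, and verifying that $P'(n) \coloneqq ne_n$ is a well-defined conditional expectation would in any case require the same off-diagonal analysis you are trying to avoid.
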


\begin{proof}
Suppose that $(A,B)$ is an algebraic Cartan pair and that $P\colon A \to B$ is a conditional expectation. To see that $P$ is implemented by idempotents, suppose that $n \in N(B)$ satisfies $P(n) \ne 0$. Write $P(n) = \sum_{i=1}^k t_i e_i$, where the $e_i$ are mutually orthogonal elements of $I(B)$ and each $t_i \in R {\setminus} \{0\}$. Let
\[
e(n) \coloneqq \sum_{i=1}^k e_i.
\]
Then $e(n) \in I(B)$. We show that $e(n)n = ne(n) = P(n)$. First, we claim that if $f\le e(n)$ with $f\in I(B)$, then $n^\dagger fn=f$. To prove the claim, notice that
\[
P(nf) = P(n)f = \sum_{i=1}^k t_ie_if,
\]
and since $nfn^\dagger \in nBn^\dagger \subseteq B$,
\begin{equation} \label{eqn:P(nf) sum}
P(nf) = P(nn^\dagger nf)= P(nfn^\dagger n) = nfn^\dagger P(n) = P(n)nfn^\dagger = \sum_{i=1}^k t_ie_infn^\dagger.
\end{equation}
Since $P(nf)f=P(nf)$, we deduce after multiplying both sides of \cref{eqn:P(nf) sum} on the right by $f$ that $\sum_{i=1}^k t_ie_if = \sum_{i=1}^k t_ie_infn^\dagger f$. For any $j \in \{1,\dotsc, k\}$, multiplying both sides by $e_j$ on the left yields $t_je_jf = t_je_jnfn^\dagger f$, and so each $t_j(e_jf-e_jnfn^\dagger f)=0$. Since $t_j \ne 0$ and $e_jf-e_jnfn^\dagger f=e_jf-e_jfnfn^\dagger$ is an idempotent, we deduce from \cref{cond:torsion free} that each $e_jf=e_jnfn^\dagger f$. Summing over all $j \in \{1, \dotsc, k\}$ and using that $f\le e(n)$ yields
\[
f=e(n)f=\sum_{i=1}^ke_if=\sum_{i=1}^ke_infn^\dagger f = e(n)nfn^\dagger f\le nfn^\dagger.
\]
Therefore, $n^\dagger fn\le n^\dagger nfn^\dagger n\le f$. To prove the reverse inequality, note that
\[
P(nf) = P(n)f=fP(n) = P(fn) = P(nn^\dagger fn) = P(n)n^\dagger fn = \sum_{i=1}^k t_ie_in^\dagger fn.
\]
Therefore, for any $j \in \{1, \dotsc, k\}$, we have $t_je_jf= t_je_jn^\dagger fn$, forcing
\[
t_j(e_jf-e_jn^\dagger fn)=0.
\]
Hence $e_jf=e_jn^\dagger fn$ by \labelcref{cond:torsion free}, as $n^\dagger fn\le f$ implies that $e_jf-e_jn^\dagger fn$ is an idempotent. Once again summing over $j$, and using that $f\le e(n)$, yields $f=e(n)n^\dagger fn\le n^\dagger fn$, and so $f=n^\dagger f n$ for all $f\le e(n)$, proving the claim.

It follows that if $f\le e(n)$ in $I(B)$, then
\begin{equation} \label{eqn:commutation}
nf = nn^\dagger f n = f nn^\dagger n = fn.
\end{equation}

Now fix $g \in I(B)$. Applying~\cref{eqn:commutation} to $f = ge(n) \le e(n)$ at the first step, and to $f = e(n)$ at the final step, we obtain
\[
g (e(n)n) = n g e(n) = ne(n) g = (e(n)n) g.
\]
Since $B$ is spanned by $I(B)$, we deduce that $e(n)n$ commutes with all elements of $B$. Since $(A,B)$ is an algebraic Cartan pair, $B$ is a maximal commutative subalgebra, and so $e(n)n \in B$. We then have $e(n)n = P(e(n)n) = e(n)P(n) = P(n)$ by the definition of $e(n)$. As we already observed, \cref{eqn:commutation} gives $e(n)n = ne(n)$, and so $(A,B)$ is an algebraic quasi-Cartan pair.
\end{proof}

In \cref{sec:main iso}, we will define an isomorphism of $A$ onto the twisted Steinberg algebra of an associated twist in terms of a conditional expectation $P\colon A \to B$ that is implemented by idempotents. So \emph{a priori} the isomorphism would depend upon a choice of such a conditional expectation. We finish this section by showing that in fact such a conditional expectation is uniquely determined by the algebraic structure of $(A,B)$.

\begin{prop} \label{prop:canonical CE}
Suppose that $(A,B)$ is an algebraic quasi-Cartan pair. For each normaliser $n \in N(B)$, the set $\{e \in I(B) : e \le n^\dagger n \text{ and } ne \in B\}$ contains a maximum element $e(n)$. If $P$ is a conditional expectation that is implemented by idempotents, then $P(n) = n e(n)=e(n)n$. In particular, there is a unique conditional expectation $P\colon A \to B$ that is implemented by idempotents. In general there can be many conditional expectations from $A$ to $B$ that are not implemented by idempotents.
\end{prop}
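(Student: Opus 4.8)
The plan is to fix, once and for all, a conditional expectation $P\colon A\to B$ that is implemented by idempotents (one exists because $(A,B)$ is an algebraic quasi-Cartan pair), and to show that for each $n\in N(B)$ the \emph{support idempotent} of $P(n)$ is exactly the maximum of the set $S_n \coloneqq \{e\in I(B): e\le n^\dagger n \text{ and } ne\in B\}$. Here by the support of $P(n)$ I mean the idempotent $e(n)=\sum_{i=1}^k e_i$ obtained by writing $P(n)=\sum_{i=1}^k t_ie_i$ with the $e_i\in I(B)$ mutually orthogonal and the $t_i\in R\setminus\{0\}$; using \cref{cond:torsion free} one checks that $e(n)$ is the least idempotent $g$ with $P(n)g=P(n)$ (equivalently, by commutativity of $B$, with $gP(n)=P(n)$), so $e(n)$ depends only on the element $P(n)\in B$. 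The crucial structural point is that $S_n$ is defined purely in terms of the algebraic data $(A,B)$; so once I prove that $P(n)=n\,e(n)=e(n)\,n$ with $e(n)=\max S_n$, uniqueness follows immediately: any conditional expectation $P'$ implemented by idempotents must satisfy $P'(n)=n\max S_n=P(n)$ for all $n\in N(B)$, and then $P'=P$ by $R$-linearity, since $A=\vecspan(N(B))$ by \cref{def:ACP}.

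First I would record three observations. Since $P$ is implemented by idempotents, there is $e_0\in I(B)$ with $P(n)=ne_0=e_0n$; replacing $e_0$ by $f\coloneqq n^\dagger n\,e_0\in I(B)$ gives $nf=nn^\dagger n\,e_0=ne_0=P(n)$ and $f\le n^\dagger n$, so $f\in S_n$ and in particular $S_n\neq\varnothing$. Next, for \emph{any} $e\in S_n$ we have $ne\in B$, whence $ne=P(ne)=P(n)e$ using $P\restr{B}=\id_B$ and the right $B$-linearity of $P$. Finally, these two facts bound $S_n$ above by $e(n)$: given $e\in S_n$, the idempotent $ee(n)$ again lies in $S_n$ (it is $\le e\le n^\dagger n$, and $n(ee(n))=(ne)e(n)\in B$), so $n(ee(n))=P(n)(ee(n))=P(n)e(n)\,e=P(n)e=ne$; multiplying on the left by $n^\dagger$ and using that both $e$ and $ee(n)$ lie under $n^\dagger n$ yields $ee(n)=e$, i.e.\ $e\le e(n)$.

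It then remains to show that the upper bound is attained, i.e.\ that $e(n)\in S_n$; I would prove the sharper statement $f=e(n)$ for the element $f$ above. By the upper-bound step $f\le e(n)$, so $c\coloneqq e(n)-f$ is an idempotent below $e(n)$ with $fc=0$. Then $P(n)c=(nf)c=n(fc)=0$, while $P(n)c=\sum_i t_ie_ic$; multiplying by $e_j$ and invoking \cref{cond:torsion free} forces $e_jc=0$ for every $j$, whence $e(n)c=0$ and so $c=e(n)c=0$. Thus $e(n)=f\in S_n$, giving $e(n)=\max S_n$ and $ne(n)=nf=P(n)$. The identity $e(n)n=P(n)$ follows by the left--right symmetric argument (working with $nn^\dagger$ and $\{e\in I(B):e\le nn^\dagger,\ en\in B\}$, and using that the support idempotent is two-sided), and uniqueness of the idempotent-implemented expectation then follows as explained.

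I expect the main obstacle to be the bridge between the purely algebraic set $S_n$ and the support idempotent $e(n)$ of $P(n)$—namely the upper-bound and attainment steps—where one must cancel the normaliser $n$ using $n^\dagger n$ (rather than a global identity, as $A$ need not be unital) and repeatedly use \cref{cond:torsion free} to strip off the nonzero coefficients $t_j$. The closing sentence, that there can be many conditional expectations \emph{not} implemented by idempotents, is of a different nature and would be handled separately by an example: a conditional expectation is precisely an $R$-linear projection $A\to B$ satisfying the bimodule identity of \cref{def:conditional expectation}, so by arranging $A$ so that $B$ admits more than one complementary $R$-linear $B$-sub-bimodule one obtains distinct such expectations, none of which need satisfy $P(n)=ne(n)$.
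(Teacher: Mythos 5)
Your core argument is correct and is essentially the paper's own proof: you define $e(n)$ as the support idempotent of $P(n)$ from an orthogonal decomposition, characterise it via \cref{cond:torsion free} as the least $g \in I(B)$ with $P(n)g = P(n)$, use $ne = P(ne) = P(n)e$ for members of the set, and derive uniqueness from the fact that the set $S_n$ is defined purely by $(A,B)$ together with $R$-linearity and $A = \vecspan(N(B))$. The differences are only in routing: the paper keeps the two-sided implementing idempotent $f$ (with $nf = fn = P(n)$) and gets $ne(n) = e(n)n = P(n)$ in one computation from $e(n) \le f$, then verifies membership and maximality directly, whereas you first pass to $f = n^\dagger n e_0$ (which sacrifices $fn = P(n)$), prove the upper bound via the auxiliary element $ee(n) \in S_n$, prove attainment by killing the complement idempotent $c = e(n) - f$ with \cref{cond:torsion free}, and then recover $e(n)n = P(n)$ by a left--right symmetric argument; all of these steps check out. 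The one genuine shortfall is the final sentence: the statement asserts the existence of pairs with many expectations not implemented by idempotents, and your abstract recipe (multiple complementary $B$-sub-bimodules) neither produces a concrete instance nor ensures the pair is actually quasi-Cartan, which the proposition's context presumes. The paper settles this with $A = \C\Z$, $B = \C 1_A$ (a quasi-Cartan pair, since $\C\Z$ has only trivial units), where every full-support probability measure $m$ on $\T$ yields a faithful conditional expectation $f \mapsto \int_\T f(z)\,\mathrm{d}m(z)$ via the Laurent-polynomial embedding $\C\Z \hookrightarrow C(\T)$.
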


\begin{proof}
Fix $n\in N(B)$ and suppose that $P\colon A \to B$ is a conditional expectation that is implemented by idempotents. Write $P(n) = \sum_{i=1}^k t_i e_i$, where the $e_i$ are mutually orthogonal idempotents and each $t_i \in R {\setminus}\{0\}$. Let $e(n) \coloneqq \sum_{i=1}^k e_i \in I(B)$. Then we have $P(n)e(n) = P(n)$. We claim that $e(n)$ is the smallest such element of $I(B)$. To see this, suppose that $f \in I(B)$ satisfies $P(n)f=P(n)$. Then, for each $j \in \{1, \dotsc, k\}$, we have
\[
t_jfe_j = P(n)fe_j = P(n)e_j = t_je_j,
\]
and hence $t_j(e_j-fe_j)=0$, whence $e_j=fe_j$ by \labelcref{cond:torsion free}. Thus $e(n)=fe(n) \le f$, proving the claim.
Since $P$ is implemented by idempotents, there exists $f \in I(B)$ such that $fn = nf = P(n)$. Thus
\[
n e(n) = nfe(n) = P(n)e(n) = P(n) = e(n)P(n) = e(n)fn = fe(n)n = e(n)n.
\]

Notice that $e(n)$ is an element of the set $\{e \in I(B) : e \le n^\dagger n\text{ and }ne \in B\}$: we have shown that $ne(n) = P(n) \in B$, and since $P(n)(n^\dagger n) = P(n)$, we have $e(n) \le n^\dagger n$ by our previous argument showing that $e(n)$ is the smallest such element. It remains to show that $e(n)$ is a maximum. Fix $g \in \{e \in I(B) : e \le n^\dagger n \text{ and } ne \in B\}$. We show that $g \le e(n)$. Using that $ng, g \in B$, we compute $ng=P(ng)=P(n)g=ne(n)g=nge(n)$, and so $g=n^\dagger ng=n^\dagger nge(n)=ge(n) \le e(n)$.

To see that in general there can be multiple conditional expectations from $A$ to $B$, consider the complex group algebra $A = \C\Z$ and commutative subalgebra $B = \C1_A$ of scalar multiples of the identity. Then $A$ embeds unitally in the algebra $C(\T)$ of continuous functions on the complex unit circle as Laurent polynomials, and every probability measure $m$ on $\T$ with full support determines a faithful conditional expectation $f \mapsto \int_\T f(z)\, \mathrm{d} m(z)$ from $A$ to $B$.
\end{proof}

\begin{cor} \label{cor:unique CE}
Suppose that $(A,B)$ is an algebraic Cartan pair. Then there is a unique conditional expectation from $A$ to $B$.
\end{cor}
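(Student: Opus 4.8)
The plan is to derive this corollary directly by chaining together the two preceding results, \cref{lem:C=>Q} and \cref{prop:canonical CE}, since an algebraic Cartan pair enjoys a stronger rigidity property than a generic algebraic quasi-Cartan pair. The key observation is that \cref{lem:C=>Q} gives us \emph{two} pieces of information: first, that $(A,B)$ is an algebraic quasi-Cartan pair (so the hypotheses of \cref{prop:canonical CE} are met), and second---crucially---that \emph{every} conditional expectation $P\colon A \to B$ is automatically implemented by idempotents. This second conclusion is what upgrades the uniqueness-among-idempotent-implemented expectations (from \cref{prop:canonical CE}) into uniqueness among \emph{all} expectations.

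Concretely, I would argue as follows. Suppose $(A,B)$ is an algebraic Cartan pair, and let $P_1, P_2\colon A \to B$ be any two conditional expectations. By \cref{lem:C=>Q}, both $P_1$ and $P_2$ are implemented by idempotents. But \cref{prop:canonical CE} asserts that there is a \emph{unique} conditional expectation that is implemented by idempotents; hence $P_1 = P_2$. To finish, I would note that existence is already guaranteed by condition~\cref{item:ACP.FCE} of \cref{def:ACP} (every algebraic Cartan pair, being defined via \cref{def:ACP}, comes equipped with a faithful conditional expectation), so there is exactly one conditional expectation from $A$ to $B$.

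I do not expect any genuine obstacle here: all the substantive work has already been discharged in \cref{lem:C=>Q} (which handles the maximality-of-$B$ argument forcing $e(n)n \in B$ and hence the idempotent implementation) and in \cref{prop:canonical CE} (which pins down $P(n) = e(n)n = n e(n)$ in terms of the purely algebraic data of the maximum idempotent $e(n)$). The only point worth stating carefully is that \cref{lem:C=>Q} applies to \emph{every} conditional expectation, not merely to some distinguished one; this is precisely what distinguishes the Cartan case from the general quasi-Cartan case, where \cref{prop:canonical CE} itself remarks that there can be many conditional expectations that fail to be implemented by idempotents (witness the $A = \C\Z$, $B = \C 1_A$ example). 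Thus the corollary is a clean two-line consequence of the two prior results.
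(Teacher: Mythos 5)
Your proposal is correct and is essentially identical to the paper's own proof: the paper likewise combines \cref{lem:C=>Q} (every conditional expectation for a Cartan pair is implemented by idempotents) with \cref{prop:canonical CE} (uniqueness among idempotent-implemented expectations), with existence supplied by condition~\cref{item:ACP.FCE} of \cref{def:ACP}. No gaps; your careful note that \cref{lem:C=>Q} applies to every conditional expectation is exactly the point the paper's two-line argument relies on.
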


\begin{proof}
\Cref{lem:C=>Q} implies that every conditional expectation from $A$ to $B$ is implemented by idempotents, and \cref{prop:canonical CE} implies that there is a unique such conditional expectation.
\end{proof}

We conclude this section with an example showing that \labelcref{item:AQP.expectation} is a generalisation of the much studied no-nontrivial-units condition on twisted group rings. Recall that if $R$ is a commutative unital ring, $G$ is a group with identity $e$, and $c\colon G\times G\to R^\times$ is a normalised $2$-cocycle, then the \emph{twisted group ring} $R(G,c)$ is the $R$-algebra of finitely supported functions $f\colon G\to R$ with multiplication given by
\[
(f * g)(\beta) = \sum_{\alpha\in G} c(\alpha,\alpha^{-1}\beta) \, f(\alpha) \, g(\alpha^{-1}\beta).
\]
It is a unital ring with identity the point-mass function $\delta_e$. Note that twisted group rings are precisely the twisted Steinberg algebras of twists over discrete groups (viewed as ample groupoids); see~\cref{lem:unit cocycle}.

\begin{example} \label{exam:no.non.triv.units}
Let $G$ be a discrete group with identity $e$, let $R$ be an indecomposable commutative ring, and let $c\colon G\times G\to R^\times$ be a normalised $2$-cocycle. Let $A \coloneqq R(G,c)$ be the corresponding twisted group ring, and let $B \coloneqq R\delta_e \cong R$. Then $I(B) = \{0,\delta_e\}$ spans $B$ (which trivially satisfies \labelcref{cond:torsion free}), and $N(B)=A^\times \cup\{0\}$, which clearly contains $\{\delta_g : g \in G\}$ and hence spans $A$. We claim that $(A,B)$ is an algebraic quasi-Cartan pair if and only if every unit of $A$ is of the form $t\delta_g$ for some $t \in R^\times$ and $g \in G$; that is, $A$ only has trivial units. We discuss twisted group rings with only trivial units---and the connection to the famous Kaplansky unit conjecture, recently resolved by Gardam \cite{Gardam2021}---in more detail in \cref{sec:examples}.

In this setup, a conditional expectation is an $R$-linear map $P\colon A\to R\delta_e$ that fixes $R\delta_e$. We claim that the only possible conditional expectation that is implemented by idempotents is given on the basis by fixing $\delta_e$ and annihilating $\delta_g$ if $g\ne e$. Indeed, $\delta_g$ is a normaliser and hence $P(\delta_g)=f\delta_g$ for some $f\in I(B)=\{0,\delta_e\}$. But since $P$ has image $B$, we must choose $f=0$ unless $g=e$, in which case we must choose $f=\delta_e$. This formula defines a conditional expectation $P$. It is faithful because if $a \in A {\setminus} \{0\}$ has a nonzero coefficient of $\delta_g$, then $P(\delta_{g^{-1}}a)\ne 0$.

It remains to verify that $P$ is implemented by idempotents if and only if $A$ only has trivial units. If $A$ has only trivial units and $u\in N(A)=A^\times\cup\{0\}$, then $P(u)=u=u\delta_e=\delta_eu$ if $u\in R^\times\delta_e$, or $P(u)=0=u0=0u$, otherwise, and so $P$ is implemented by idempotents. Conversely, if $P$ is implemented by idempotents and $u\in A^\times$, then we can find $g\in G$ with $e$ in the support of $u\delta_{g^{-1}}$. Then $P(u\delta_{g^{-1}})=r\delta_e$ for some $r\ne 0$. On the other hand, $P(u\delta_{g^{-1}}) = u\delta_{g^{-1}} f$ with $f=0$ or $f=\delta_e$. We conclude that $r\delta_e=P(u\delta_{g^{-1}}) = u\delta_{g^{-1}}$, and so $r\delta_g = u \in A^\times$ and $r \in R^\times$.
\end{example}

\section{Building an algebraic quasi-Cartan pair from a twist} \label{sec:prototypes}

In this section we show that a discrete twist over an ample Hausdorff groupoid $G$ gives rise to an algebraic quasi-Cartan pair if and only if it satisfies an appropriate analogue of the local bisection hypothesis of \cite{Steinberg2019}. We then also follow the arguments of \cite{Steinberg2019} to see that an algebraic twist satisfies the local bisection hypothesis if and only if the sub-twist corresponding to the interior of the isotropy does so, and that a sufficient condition for this is that there is a dense set of units $x$ for which the twisted group algebra corresponding to the fibre of the interior of the isotropy over $x$ has no nontrivial units. Finally, we demonstrate that if the underlying groupoid is effective, then the twist gives rise to an algebraic Cartan pair, and if it is principal, then we obtain an algebraic diagonal pair.

\begin{definition}
Let $R$ be a commutative unital ring. We say that a discrete $R$-twist $(\Sigma,i,q)$ over an ample Hausdorff groupoid $G$ \emph{satisfies the local bisection hypothesis} if for every normaliser $n$ of $A_R(\Go; q^{-1}(\Go))$ in $A_R(G;\Sigma)$, the set $\supp_G(n)$ is a bisection of $G$.
\end{definition}

Our interest in twists satisfying the local bisection hypothesis is that they give rise to algebraic quasi-Cartan pairs. The following lemma can be seen as a generalisation of \cref{exam:no.non.triv.units}.

\begin{lemma} \label{lem:LBH<->AQP}
Let $R$ be an indecomposable commutative ring and let $(\Sigma,i,q)$ be a discrete $R$-twist over an ample Hausdorff groupoid $G$. Let $A = A_R(G;\Sigma)$, and let $B = A_R(\Go; q^{-1}(\Go)) \subseteq A$. The pair $(A,B)$ satisfies conditions \cref{item:ACP.local units}--\cref{item:ACP.FCE} of \cref{def:ACP} with faithful conditional expectation $P\colon A \to B$ given by restriction of functions from $\Sigma$ to $q^{-1}(\Go)$. If $(A,B)$ is an algebraic quasi-Cartan pair, then $P$ is implemented by idempotents. The pair $(A,B)$ is an algebraic quasi-Cartan pair if and only if $(\Sigma,i,q)$ satisfies the local bisection hypothesis.
\end{lemma}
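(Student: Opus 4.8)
The plan is to first verify conditions \cref{item:ACP.local units}--\cref{item:ACP.FCE} of \cref{def:ACP} for $(A,B)$, with the conditional expectation $P$ given by restriction of functions to $q^{-1}(\Go)$, and then to prove that the local bisection hypothesis is equivalent to $P$ being implemented by idempotents; the remaining assertions follow. For \cref{item:ACP.local units}, given $a_1,\dotsc,a_m \in A$ the sets $\supp_G(a_i)$ are compact, so $K \coloneqq \bigcup_i \big(\s(\supp_G(a_i)) \cup \r(\supp_G(a_i))\big)$ is a compact open subset of $\Go$, and $\tilde{1}_{q^{-1}(K) \cap \Sigmao}$ is a two-sided local unit for the $a_i$ by \cref{prop:Steinberg diagonal}. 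Condition \cref{item:ACP.IB spans} holds because $B \cong A_R(\Go)$ is spanned by the characteristic functions of compact open subsets of $\Go$, which are exactly its idempotents (\cref{cor:lattice iso}). Condition \cref{item:ACP.NB spans} follows from \cref{prop:f is a finite sum of lifts of indicators}: each $f \in A$ is a finite $R$-linear combination of elements $\tilde{1}_U$ with $U$ a compact open bisection of $\Sigma$, and each such $\tilde{1}_U$ lies in $N(B)$ with $\tilde{1}_U^\dagger = \tilde{1}_{U^{-1}}$, using the formula $\tilde{1}_U * \tilde{1}_V = \tilde{1}_{UV}$ from \cref{prop:multiplication in twisted Steinberg algebras}. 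For \cref{item:ACP.FCE}, $R$-linearity and $P\restr{B} = \id_B$ are immediate; the identity $P(bab') = bP(a)b'$ follows by evaluating the formulas of \cref{prop:Steinberg diagonal} at units, where $\r = \s = \id$; and faithfulness follows from \cref{cor:normalisermult}, since for $a \neq 0$ we may pick $\sigma_0 \in \supp(a)$ and a compact open bisection $X \ni \sigma_0^{-1}$, whereupon $(\tilde{1}_X * a)(\r(\sigma_0^{-1})) = a(\sigma_0) \neq 0$ at the unit $\r(\sigma_0^{-1})$, so that $P(\tilde{1}_X * a) \neq 0$.

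Next I would show that the local bisection hypothesis implies that $P$ is implemented by idempotents. Fix $n \in N(B)$ and put $W \coloneqq \supp_G(n)$, which is a bisection by hypothesis. Let $U \coloneqq W \cap \Go$ and $e \coloneqq \tilde{1}_{q^{-1}(U) \cap \Sigmao} \in I(B)$. The key observation is that, since $\s\restr{W}$ is injective and $U \subseteq W$, the set $\{\gamma \in W : \s(\gamma) \in U\}$ collapses to $U$ itself; hence, by the multiplication formulas of \cref{prop:Steinberg diagonal}, right multiplication by $e$ restricts $n$ to its values on the units in $U$, giving $ne = n\restr{q^{-1}(\Go)} = P(n)$, and symmetrically $en = P(n)$ using injectivity of $\r\restr{W}$. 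Combined with the faithfulness of $P$ established above, this shows that $(A,B)$ is an algebraic quasi-Cartan pair.

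The substantial direction is the converse, which I would prove by contraposition, generalising \cref{exam:no.non.triv.units}. Suppose some $n \in N(B)$ has $W \coloneqq \supp_G(n)$ not a bisection. The first---and I expect hardest---step is to show that this failure must come from the isotropy: that there exist distinct $\gamma_1, \gamma_2 \in W$ with $\s(\gamma_1) = \s(\gamma_2)$ \emph{and} $\r(\gamma_1) = \r(\gamma_2)$, so that $\gamma_1^{-1}\gamma_2$ is a nontrivial isotropy element. This amounts to excluding support points of a normaliser that share a source but not a range (and vice versa), and I would extract it from the facts that $n^\dagger n$ and $nn^\dagger$ are genuine idempotents of $B$---hence characteristic functions by \cref{cor:lattice iso}---together with $n = nn^\dagger n$; the delicate point is controlling the cancellation in these convolutions. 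Granting this, I would trivialise $\Sigma$ near $\gamma_1$ and $\gamma_2$ using \cref{lem:local triviality}, choose a compact open bisection $X \subseteq \Sigma$ through a lift of $\gamma_1$, and form the normaliser $m \coloneqq \tilde{1}_{X^{-1}} * n$. By \cref{cor:normalisermult}, $m$ takes a nonzero value at the unit $\s(\gamma_1)$, so that $P(m) \neq 0$, and also at the nontrivial isotropy element $\gamma_1^{-1}\gamma_2$ over the same base point. Exactly as in \cref{exam:no.non.triv.units}, any idempotent $e$ with $P(m) = me$ must keep the source $\s(\gamma_1) = \s(\gamma_2)$ in its support, and would therefore retain the off-unit point $\gamma_1^{-1}\gamma_2$ in $me$, contradicting $me = P(m) \in B$. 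Hence no conditional expectation for such a pair can be implemented by idempotents.

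Finally I would assemble the pieces. By \cref{prop:canonical CE}, an algebraic quasi-Cartan pair admits a unique conditional expectation implemented by idempotents; combined with the faithful $P$ from the first paragraph and the contrapositive argument above, the existence of any such expectation forces the local bisection hypothesis, after which the second paragraph shows that $P$ itself is the expectation implemented by idempotents. This yields at once the assertion that $P$ is implemented by idempotents whenever $(A,B)$ is an algebraic quasi-Cartan pair, together with the equivalence between $(A,B)$ being an algebraic quasi-Cartan pair and $(\Sigma,i,q)$ satisfying the local bisection hypothesis. I expect the isotropy-extraction step of the third paragraph to be the crux, since every other step is a direct manipulation of the convolution formula.
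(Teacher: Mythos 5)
Your first two paragraphs are sound and essentially identical to the paper's proof: the verification of conditions \cref{item:ACP.local units}--\cref{item:ACP.FCE} and the implication ``local bisection hypothesis $\implies$ $P$ implemented by idempotents'' both go through exactly as you describe. The genuine gap is in your converse. Your contrapositive only shows that the \emph{restriction} expectation $P$ fails to be implemented by idempotents when some $\supp_G(n)$ is not a bisection: the step ``any idempotent $e$ with $P(m)=me$ must keep $\s(\gamma_1)$ in its support'' is justified by $P(m)(\s(\gamma_1))=m(\s(\gamma_1))\ne 0$, which uses that $P$ is restriction of functions. But the quasi-Cartan hypothesis supplies only \emph{some} faithful conditional expectation $P'$ implemented by idempotents, and the uniqueness in \cref{prop:canonical CE} identifies $P'$ among such expectations---it does not identify $P'$ with $P$. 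If you run your contradiction with an abstract $P'$, using only $P'(m)=me=em\in B$, you get nothing: every idempotent $e=\tilde{1}_E$ with $me\in B$ simply has $\s(\gamma_1)\notin E$, which is perfectly consistent. Your final assembly is therefore circular: you invoke the contrapositive to obtain the local bisection hypothesis and \emph{then} conclude that $P$ is the expectation implemented by idempotents, but the contrapositive already presupposed that identification. The missing step---and it is exactly how the paper proceeds---is to prove first that AQP forces $P'=P$: by \cref{prop:canonical CE}, $P'(\tilde{1}_V)=\tilde{1}_V\tilde{1}_U$ with $U\subseteq\Sigmao$ \emph{maximum} such that $\tilde{1}_V\tilde{1}_U\in B$; for a compact open bisection $V\subseteq\Sigma$ one checks via this maximality that $VU=V\cap q^{-1}(\Go)$, so $P'(\tilde{1}_V)=P(\tilde{1}_V)$, and since the elements $\tilde{1}_V$ span $A$ (\cref{prop:f is a finite sum of lifts of indicators}), linearity gives $P'=P$ on all of $A$. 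Only then does a contradiction argument against $P$ yield the local bisection hypothesis.

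Secondly, the step you flag as the crux---extracting distinct $\gamma_1,\gamma_2\in\supp_G(n)$ with equal sources \emph{and} equal ranges---is left unproven in your sketch, and it is in fact avoidable here. It is true (it is Claim~1 in the proof of \cref{lem:global LBH}, following \cite[Proposition~4.7]{Steinberg2019}, proved by the idempotent trick $(n^\dagger pn)(x)+(n^\dagger qn)(x)=(n^\dagger(p+q)n)(x)$ forcing $2=1$), but once $P'=P$ is established the paper argues directly without any isotropy considerations: given $\sigma\in\supp(n)$, choose a compact open bisection $U\subseteq\Sigma$ containing $\sigma$; then $P(n\tilde{1}_{U^{-1}})=n\tilde{1}_{U^{-1}}\tilde{1}_V$ for some compact open $V\subseteq\Go$, and evaluating at $\r(\sigma)$ gives $\r(\sigma)\in V$; for any $\tau\in\Sigma_{\s(\sigma)}$ with $q(\tau)\ne q(\sigma)$ one then has $n(\tau)=(n\tilde{1}_{U^{-1}})(\tau\sigma^{-1})=P(n\tilde{1}_{U^{-1}})(\tau\sigma^{-1})=0$, since $\tau\sigma^{-1}\notin q^{-1}(\Go)$ while $P(n\tilde{1}_{U^{-1}})$ is supported in $q^{-1}(\Go)$. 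This kills same-source points with distinct images in $G$ outright, with no need to match ranges, so the step you expected to be hardest disappears entirely.
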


\begin{proof}
Since $R$ is indecomposable, the idempotents of $C_c(\Go,R)$ are precisely the characteristic functions of compact open subsets of $\Go$, and hence $B$ satisfies \cref{cond:torsion free}, because $B \cong C_c(\Go,R)$ by \cref{prop:Steinberg diagonal}. A routine argument using \cref{cor:normalisermult} shows that for each compact open bisection $U$ of $\Sigma$, $\tilde{1}_U$ is a normaliser of $B$ in $A$ with inverse $\tilde{1}_U^\dagger = \tilde{1}_{U^{-1}}$.

Properties \cref{item:ACP.local units,item:ACP.IB spans} of \cref{def:ACP} follow immediately from \cref{prop:Steinberg diagonal}, and property~\cref{item:ACP.NB spans} follows from \cref{prop:f is a finite sum of lifts of indicators}. For \cref{item:ACP.FCE}, it is straightforward to see that $P$ is a conditional expectation, and for faithfulness, note that if $f(\sigma) \ne 0$ for some $\sigma \in \Sigma$, then for any compact open bisection $X \subseteq \Sigma$ containing $\sigma^{-1}$, \cref{cor:normalisermult} implies that $P(\tilde{1}_X f)(\s(\sigma)) = (\tilde{1}_X f)(\r(\sigma^{-1})) = f(\sigma) \ne 0$.

Suppose that $(A,B)$ is an algebraic quasi-Cartan pair. Then by \cref{prop:canonical CE}, there is a unique faithful conditional expectation $P'\colon A \to B$ that is implemented by idempotents. We claim that $P = P'$. Since the functions $\tilde{1}_V$ with $V\subseteq \Sigma$ a compact open bisection span $A$ as an $R$-module by \cref{prop:f is a finite sum of lifts of indicators}, it suffices by the linearity of $P$ and $P'$ to prove that for each such $V$, $P'(\tilde{1}_V) = \tilde{1}_V\restr{q^{-1}(\Go)} = \tilde{1}_W$, where $W \coloneqq V \cap q^{-1}(\Go)$, which is a compact open bisection contained in $V$. Recall from \cref{cor:lattice iso} that $U \mapsto \tilde{1}_U$ is a lattice isomorphism between compact open subsets of $\Sigmao$ and $I(B)$. By \cref{prop:canonical CE} (and several applications of \cref{cor:normalisermult}), there is a maximum element $\tilde{1}_U$ of $I(B)$ satisfying $\tilde{1}_U \le \tilde{1}_V^\dagger \tilde{1}_V = \tilde{1}_{\s(V)}$ such that $\tilde{1}_V\tilde{1}_U\in B$, and $P'(\tilde{1}_V) = \tilde{1}_V\tilde{1}_U=\tilde{1}_{VU}$. It follows that $VU \subseteq W$ by the definition of $B$ (and since $U \subseteq \Sigmao$). On the other hand, $\s(W) \subseteq \s(V)$ and $\tilde{1}_V\tilde{1}_{\s(W)}=\tilde{1}_W\in B$, and so it follows by maximality that $\s(W) \subseteq U$. Thus $W = V\s(W) \subseteq VU$, and so $W=VU$. We conclude that $P'(\tilde{1}_V) = \tilde{1}_W = P(\tilde{1}_V)$, as required.

Now, we show that $(\Sigma,i,q)$ satisfies the local bisection hypothesis. Suppose that $n$ is a normaliser and that $\sigma \in \supp(n)$. We show that if $\tau \in \Sigma_{\s(\sigma)} \setminus (R^\times \cdot \sigma)$, then $n(\tau) = 0$; that the same holds if $\tau \in \Sigma^{\r(\sigma)} \setminus (R^\times \cdot \sigma)$ follows by a similar argument. By choosing a continuous local section from $G$ to $\Sigma$, we can find a compact open bisection $U \subseteq \Sigma$ containing $\sigma$ such that $(r,\epsilon) \mapsto r \cdot\epsilon$ is a homeomorphism of $R^\times \times U$ onto $q^{-1}(q(U))$. Then we have $P(n\tilde{1}_{U^{-1}}) = (n\tilde{1}_{U^{-1}})\restr{q^{-1}(\Go)}$, and in particular, by \cref{cor:normalisermult},
\[
P(n\tilde{1}_{U^{-1}})(\r(\sigma)) = n(\sigma)\tilde{1}_{U^{-1}}(\sigma^{-1}) = n(\sigma) \ne 0.
\]
Since $P(n\tilde{1}_{U^{-1}}) = n\tilde{1}_{U^{-1}}\tilde{1}_V$ for some compact open subset $V \subseteq \Go$, we deduce that $\r(\sigma) \in V$. Using again that $P(n\tilde{1}_{U^{-1}}) = (n\tilde{1}_{U^{-1}})\restr{q^{-1}(\Go)}$, we see that
\[
n(\tau) = (n\tilde{1}_{U^{-1}})(\tau\sigma^{-1}) = (n\tilde{1}_{U^{-1}} \tilde{1}_V)(\tau\sigma^{-1}) = P(n\tilde{1}_{U^{-1}})(\tau\sigma^{-1}).
\]
Since $\tau \in \Sigma_{\s(\sigma)} \setminus (R^\times \cdot \sigma)$, we have $\tau\sigma^{-1}\notin q^{-1}(\Go)$, and so we deduce that $n(\tau) = 0$.

Finally, suppose that $(\Sigma,i,q)$ satisfies the local bisection hypothesis. We must show that $(A,B)$ is an algebraic quasi-Cartan pair. Fix a normaliser $n \in N(B)$. Then $\supp_G(n)$ is a bisection. Let $U \coloneqq \supp(n) \cap \Sigmao$, and let $e \coloneqq \tilde{1}_U \in I(B)$. Then $ne = n\restr{\s^{-1}(U)}$. If $n(\sigma) \ne 0$ and $\sigma \notin R^\times \cdot U = \supp(n)\cap q^{-1}(\Go)$, then $\s(\sigma) \notin U$ because $\supp_G(n)$ is a bisection. It follows by a routine calculation that $ne = n\restr{\s^{-1}(U)} = P(n)$, and similarly $en = P(n)$,
as required.
\end{proof}

Since twists satisfying the local bisection hypothesis give examples of algebraic quasi-Cartan pairs, we are interested in identifying conditions that guarantee the local bisection hypothesis. In the next few results, we follow the analysis used in \cite{Steinberg2019} to give a sufficient condition in terms of twisted group algebras associated to the fibres of the interior of the isotropy in the twist. We begin by describing these algebras. This is standard, and our proof follows ideas from \cite[Section 4.3]{ACCLMR2022}.

Given a discrete $R$-twist $(\Sigma,i,q)$ over a groupoid $G$, let $\II$ denote the interior of the isotropy in $G$, and let $\JJ \coloneqq q^{-1}(\II)$, which is the interior of the isotropy in $\Sigma$. Notice that the image of $i$ is contained in $\JJ$ and we can restrict $q$ to $\JJ$ and obtain a map $q\restr{\JJ}\colon \JJ \to \II$. It is straightforward to check that $(\JJ, i, q\restr{\JJ})$ is again a twist and that $A_R(\II;\JJ)$ can be seen as a subalgebra of $A_R(G;\Sigma)$ by extending an element of $A_R(\II;\JJ)$ to be zero outside $\JJ$. Similarly, if we fix $x\in\Go$, then $\II_x=\II^x=\II_x^x$, and $\JJ_x=\JJ^x=\JJ_x^x$, and we can build a discrete $R$-twist $(\JJ_x, i_x, q_x)$ over $\II_x$, where $i_x=i\restr{\{x\}\times R^\times}$ and $q_x=q\restr{\JJ_x}$.

\begin{lemma} \label{lem:unit cocycle}
Let $R$ be a commutative unital ring and let $(\Sigma,i,q)$ be a discrete $R$-twist over an ample Hausdorff groupoid $G$. Let $\II$ denote the interior of the isotropy in $G$ and let $\JJ \coloneqq q^{-1}(\II)$, which is the interior of the isotropy in $\Sigma$. Fix $x \in \Go$, and let $\zeta\colon \II_x \to \JJ_x$ be a section for $q\restr{\JJ_x}$ such that $\zeta(x) = x$. Define $c_x\colon \II_x \times \II_x \to R^\times$ by $\zeta(\alpha)\zeta(\beta)= c_x(\alpha, \beta) \cdot\zeta(\alpha\beta)$. Then $c_x$ is a normalised $2$-cocycle on $\II_x$, and there is an isomorphism $\rho_\zeta\colon A_R(\II_x;\JJ_x) \to R(\II_x,c_x)$ of $R$-algebras such that $\rho_\zeta(f)(\beta) = f(\zeta(\beta))$ for all $f\in A_R(\II_x;\JJ_x)$ and $\beta \in \II_x$.
\end{lemma}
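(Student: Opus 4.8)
The plan is to treat $\JJ_x \to \II_x$ as a discrete $R$-twist over the group $\II_x$, viewed as an ample groupoid with one-point unit space $\{x\}$; since $G$ is \'etale, $\II_x \subseteq G^x$ is discrete, so $\II_x$ is a discrete group, $A_R(\II_x;\JJ_x)$ is exactly the twisted Steinberg algebra of this twist, and $R(\II_x,c_x)$ is the associated twisted group ring. First I would check that $c_x$ is well defined: for $\alpha,\beta \in \II_x$ we have $q(\zeta(\alpha)\zeta(\beta)) = \alpha\beta = q(\zeta(\alpha\beta))$, so by exactness (\cref{item:exact}) the elements $\zeta(\alpha)\zeta(\beta)$ and $\zeta(\alpha\beta)$ lie in the same fibre $q^{-1}(\alpha\beta) = R^\times \cdot \zeta(\alpha\beta)$, and freeness of the $R^\times$-action produces a unique $c_x(\alpha,\beta) \in R^\times$ with $\zeta(\alpha)\zeta(\beta) = c_x(\alpha,\beta)\cdot\zeta(\alpha\beta)$.

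Next I would verify that $c_x$ is a normalised $2$-cocycle. Normalisation is immediate from $\zeta(x)=x$ together with the groupoid unit laws, which give $\zeta(x)\zeta(\alpha)=\zeta(\alpha)=\zeta(\alpha)\zeta(x)$ and hence $c_x(x,\alpha)=c_x(\alpha,x)=1$. For the cocycle identity I would expand the triple product $\zeta(\alpha)\zeta(\beta)\zeta(\gamma)$ in the two ways permitted by associativity. The essential tool is that the central action commutes with multiplication: using \cref{item:centrality} one shows $\sigma\,(t\cdot\tau) = t\cdot(\sigma\tau) = (t\cdot\sigma)\,\tau$ for composable $\sigma,\tau$ and $t \in R^\times$. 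Pulling all scalars to the front gives $\zeta(\alpha)\zeta(\beta)\zeta(\gamma) = \big(c_x(\alpha,\beta)c_x(\alpha\beta,\gamma)\big)\cdot\zeta(\alpha\beta\gamma) = \big(c_x(\beta,\gamma)c_x(\alpha,\beta\gamma)\big)\cdot\zeta(\alpha\beta\gamma)$, and a final appeal to freeness yields $c_x(\alpha,\beta)c_x(\alpha\beta,\gamma)=c_x(\alpha,\beta\gamma)c_x(\beta,\gamma)$.

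Then I would establish that $\rho_\zeta$ is an $R$-linear bijection. Since $\supp_G(f)$ is compact and $\II_x$ is discrete, $\rho_\zeta(f)$ is finitely supported and so lies in $R(\II_x,c_x)$, while $R$-linearity is clear. For injectivity, every element of $\JJ_x$ has the form $t\cdot\zeta(\beta)$, so $R^\times$-contravariance forces $f$ to be determined by its values on $\zeta(\II_x)$. For surjectivity, given finitely supported $g$ I would use \cref{lem:unique extensions} (or a direct contravariant extension $f(t\cdot\zeta(\beta)) \coloneqq t^{-1}g(\beta)$) to produce a preimage in $A_R(\II_x;\JJ_x)$.

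The crux is multiplicativity, and this is where I expect the only real subtlety. Fixing $\beta \in \II_x$ and using \cref{prop:multiplication in twisted Steinberg algebras} with the section $\zeta$ gives the finite sum $(f * g)(\zeta(\beta)) = \sum_{\alpha} f(\zeta(\alpha))\,g(\zeta(\alpha)^{-1}\zeta(\beta))$. The key identity is that, after moving the central scalar across the inverse via \cref{item:centrality}, one has $\zeta(\alpha)^{-1}\zeta(\beta) = c_x(\alpha,\alpha^{-1}\beta)^{-1}\cdot\zeta(\alpha^{-1}\beta)$; applying the $R^\times$-contravariance of $g$ then turns the factor $c_x(\alpha,\alpha^{-1}\beta)^{-1}$ into $c_x(\alpha,\alpha^{-1}\beta)$, so that $(f*g)(\zeta(\beta)) = \sum_\alpha c_x(\alpha,\alpha^{-1}\beta)\,\rho_\zeta(f)(\alpha)\,\rho_\zeta(g)(\alpha^{-1}\beta)$, which is precisely $(\rho_\zeta(f) * \rho_\zeta(g))(\beta)$. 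It is exactly the contravariant convention of \cref{def:A_R(G;Sigma)} that makes the cocycle appear with the correct, non-inverted exponent; getting this bookkeeping right is the main obstacle, since a naive calculation produces $c_x^{-1}$ and would force $\rho_\zeta$ to land in $R(\II_x,c_x^{-1})$ instead.
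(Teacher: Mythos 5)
Your proposal is correct and follows essentially the same route as the paper: well-definedness and the normalised $2$-cocycle property of $c_x$ via exactness, freeness of the $R^\times$-action, and centrality; bijectivity of $\rho_\zeta$ via $R^\times$-contravariance (with the explicit contravariant extension for surjectivity); and multiplicativity via the convolution formula of \cref{prop:multiplication in twisted Steinberg algebras} computed with the section $\zeta$. The only (harmless) variation is in the scalar bookkeeping: you obtain $\zeta(\alpha)^{-1}\zeta(\beta) = c_x(\alpha,\alpha^{-1}\beta)^{-1}\cdot\zeta(\alpha^{-1}\beta)$ directly from the defining relation applied to the pair $(\alpha,\alpha^{-1}\beta)$, whereas the paper first writes $\zeta(\alpha)^{-1} = c_x(\alpha,\alpha^{-1})^{-1}\zeta(\alpha^{-1})$ and then invokes the $2$-cocycle identity to combine $c_x(\alpha,\alpha^{-1})\,c_x(\alpha^{-1},\beta)^{-1} = c_x(\alpha,\alpha^{-1}\beta)$, so your version reaches the same factor one step sooner.
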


\begin{proof}
It is routine to verify that each $c_x$ is a normalised $2$-cocycle using associativity of multiplication in $\Sigma$ and that $\zeta$ preserves the identity. Certainly there is an $R$-linear map $\rho_\zeta$ satisfying the given formula. To see that $\rho_\zeta$ is injective, suppose that $\rho_\zeta(f) = 0$. Then for each $\sigma \in \JJ_x$, we have $\sigma = t \cdot \zeta(q(\sigma))$ for some $t \in R^\times$, and then $f(\sigma) = f(t\cdot\zeta(q(\sigma))) = t^{-1}f(\zeta(q(\sigma))) = t^{-1} \rho_\zeta(f)(q(\sigma)) = 0$. To see that $\rho_\zeta$ is surjective, fix $h \in R(\II_x,c_x)$. For each $\sigma \in \JJ_x$, there is a unique $t_\sigma \in R^\times$ such that $\sigma = t_\sigma \cdot \zeta(q(\sigma))$ and $t_{\sigma} = 1$ for $\sigma \in \zeta(\II_x)$. Define $f\colon \JJ_x \to R$ by $f(\sigma) = t_\sigma^{-1} h(q(\sigma))$ for all $\sigma \in \JJ_x$. Then $f \in A_R(\II_x;\JJ_x)$, and $\rho_{\zeta}(f)=h$.

It remains to check that $\rho_\zeta$ is multiplicative. Fix $f,g \in A_R(\II_x;\JJ_x)$ and $\beta \in \II_x$. Then
\[
\rho_\zeta(f * g)(\beta) = (f * g)(\zeta(\beta)).
\]
By the definition of convolution in $A_R(\II_x;\JJ_x)$, the convolution product $(f*g)(\zeta(\beta))$ can be computed by choosing any section from $\II_x$ to $\JJ_x$, so we obtain
\[
\rho_\zeta(f * g)(\beta) = \sum_{\alpha \in \II^x} f(\zeta(\alpha)) \, g(\zeta(\alpha)^{-1}\zeta(\beta)).
\]
For each $\alpha \in \II^x$, we have $\zeta(\alpha)\zeta(\alpha^{-1}) = c_x(\alpha,\alpha^{-1})\zeta(\alpha\alpha^{-1}) = c_x(\alpha, \alpha^{-1}) \cdot x$, and rearranging this shows that $\zeta(\alpha)^{-1} = c_x(\alpha, \alpha^{-1})^{-1}\zeta(\alpha^{-1})$. Therefore, since $\zeta(\alpha^{-1}) \zeta(\beta) = c_x(\alpha^{-1},\beta) \cdot \zeta(\alpha^{-1}\beta)$ for all $\alpha \in \II^x$, we have
\begin{align*}
\rho_\zeta(f * g)(\beta)
&= \sum_{\alpha \in \II^x} c_x(\alpha, \alpha^{-1}) \, f(\zeta(\alpha)) \, g(\zeta(\alpha^{-1})\zeta(\beta)) \\
&= \sum_{\alpha \in \II^x} c_x(\alpha, \alpha^{-1}) \, c_x(\alpha^{-1}, \beta)^{-1} \, f(\zeta(\alpha)) \, g(\zeta(\alpha^{-1}\beta)).
\end{align*}
The $2$-cocycle identity and that $c_x$ is normalised give
\[
c_x(\alpha^{-1},\beta)c_x(\alpha, \alpha^{-1}\beta) = c_x(\alpha, \alpha^{-1})c_x(\alpha\alpha^{-1},\beta) = c_x(\alpha,\alpha^{-1}).
\]
Thus, rearranging gives $c_x(\alpha, \alpha^{-1}) c_x(\alpha^{-1},\beta)^{-1} =
c_x(\alpha,\alpha^{-1}\beta)$. Hence
\begin{align*}
\rho_\zeta(f * g)(\beta)
&= \sum_{\alpha \in \II^x} c_x(\alpha, \alpha^{-1}\beta) f(\zeta(\alpha)) g(\zeta(\alpha^{-1}\beta)) \\
&= \sum_{\alpha \in \II^x} c_x(\alpha, \alpha^{-1}\beta) \rho_\zeta(f)(\alpha) \rho_\zeta(g)(\alpha^{-1}\beta) \\
&= (\rho_\zeta(f) * \rho_\zeta(g))(\beta).\qedhere
\end{align*}
\end{proof}

Changing the section $\zeta$ in \cref{lem:unit cocycle} results in a cohomologous normalised $2$-cocycle and hence it does not change the algebra (up to isomorphism), nor does it affect whether or not the twisted group ring has only trivial units. In fact, a twisted group ring is graded by the underlying group, and the property of having only trivial units means that all the units are homogeneous elements; the isomorphism of algebras corresponding to replacing a $2$-cocycle by a cohomologous one does not change the grading.

\begin{remark} \label{rmk:Steinberg4.5}
Let $R$ be an indecomposable commutative ring and let $(\Sigma,i,q)$ be a discrete $R$-twist over an ample Hausdorff groupoid $G$. The argument of \cite[Proposition~4.5]{Steinberg2019} shows that if $n$ is a normaliser of $A_R(\Go; q^{-1}(\Go))$ in $A_R(G;\Sigma)$, then $n^\dagger n \in A_R(\Go; q^{-1}(\Go))$ is equal to $\tilde{1}_{\s(\supp(n))}$ and $n n^\dagger = \tilde{1}_{\r(\supp(n))}$.
\end{remark}

\begin{prop} \label{prop:trivial.units.good}
Let $R$ be an indecomposable commutative ring and let $(\Sigma,i,q)$ be a discrete $R$-twist over an ample Hausdorff groupoid $G$. Let $\II$ denote the interior of the isotropy in $G$ and let $\JJ \coloneqq q^{-1}(\II)$, which is the interior of the isotropy in $\Sigma$. Let $x \in \Go$, let $\II_x$ and $c_x$ be as in \cref{lem:unit cocycle}, and suppose that the twisted group ring $R(\II_x,c_x)$ has only trivial units. Then for any normaliser $n$ of $A_R(\Go; q^{-1}(\Go))$ in $A_R(\II;\JJ)$, we have $\lvert \supp_{\II}(n)\cap \II_x \rvert \le 1$.
\end{prop}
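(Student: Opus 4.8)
The plan is to push the problem into the twisted group ring $R(\II_x,c_x)$ via a restriction homomorphism and then exploit the trivial-units hypothesis. First I would observe that, because $\II$ is a group bundle (being the interior of the isotropy), every $\sigma \in \JJ_x$ satisfies $\r(\sigma) = \s(\sigma) = x$, so that $\II^{\r(\sigma)} = \II_x$. Using this together with the convolution formula of \cref{prop:multiplication in twisted Steinberg algebras}, I would show that the restriction map $\pi_x\colon A_R(\II;\JJ) \to A_R(\II_x;\JJ_x)$, $f \mapsto f\restr{\JJ_x}$, is an $R$-algebra homomorphism: for $\sigma \in \JJ_x$, the convolution sum ranges only over $\alpha \in \II_x$, and since $\JJ_x$ is a (sub)group of $\JJ$, both $f(S(\alpha))$ and $g(S(\alpha)^{-1}\sigma)$ depend only on the restrictions of $f$ and $g$ to $\JJ_x$. (That $\pi_x$ lands in $A_R(\II_x;\JJ_x)$ follows since $\II_x$ is discrete and $\supp_{\II}(f)$ is compact, so $\supp_{\II}(f) \cap \II_x$ is finite.) Composing with the isomorphism $\rho_\zeta$ of \cref{lem:unit cocycle} then gives an $R$-algebra homomorphism $\Phi \coloneqq \rho_\zeta \circ \pi_x\colon A_R(\II;\JJ) \to R(\II_x, c_x)$ with $\Phi(f)(\beta) = f(\zeta(\beta))$.

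Next I would apply $\Phi$ to the normaliser $n$. By \cref{rmk:Steinberg4.5}, $n^\dagger n = \tilde{1}_{\s(\supp(n))}$ and $nn^\dagger = \tilde{1}_{\r(\supp(n))}$; and since $\supp(n) \subseteq \JJ$ lies in the isotropy, we have $\r(\sigma) = \s(\sigma)$ for each $\sigma \in \supp(n)$, so $\s(\supp(n)) = \r(\supp(n))$, which is a single compact open subset $K$ of $\Go$. A direct computation shows that $\Phi(\tilde{1}_K)$ equals the identity $\delta_x$ of $R(\II_x, c_x)$ if $x \in K$, and equals $0$ otherwise.

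There are then two cases. If $x \notin K = \s(\supp(n))$, then no element of $\supp(n)$ has source $x$, so $\supp(n) \cap \JJ_x = \emptyset$ and $\supp_{\II}(n) \cap \II_x = \emptyset$, giving the bound. If $x \in K$, then $\Phi(n)\Phi(n^\dagger) = \Phi(nn^\dagger) = \delta_x$ and $\Phi(n^\dagger)\Phi(n) = \Phi(n^\dagger n) = \delta_x$, so $\Phi(n)$ is a unit of $R(\II_x, c_x)$ with inverse $\Phi(n^\dagger)$. By hypothesis this unit is trivial, that is, $\Phi(n) = t\delta_g$ for some $t \in R^\times$ and $g \in \II_x$. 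Since $\Phi(n)(\beta) = n(\zeta(\beta))$ and $n$ is $R^\times$-contravariant, $n$ vanishes on $\JJ_x$ off the fibre $q^{-1}(g) \cap \JJ_x$, whence $\supp_{\II}(n) \cap \II_x = \{g\}$ and the bound again holds.

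I expect the main obstacle to be the first step: verifying cleanly that restriction to the fibre $\JJ_x$ is multiplicative. This is where the group-bundle structure of $\II$ is crucial---one must check that the localised convolution sum receives no contribution from outside $\JJ_x$, and that the section used to compute the convolution can be chosen compatibly with $\zeta$---whereas the two-sided-inverse argument and the trivial-units dichotomy are then essentially formal.
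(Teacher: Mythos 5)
Your proof is correct and follows essentially the same route as the paper's: the paper likewise restricts to the fibre $\JJ_x$, uses \cref{rmk:Steinberg4.5} together with the fact that $\supp(n) \subseteq \JJ$ consists of isotropy to get $n^\dagger n = nn^\dagger = \tilde{1}_{\s(\supp(n))}$, observes that convolution is compatible with restriction to $\JJ_x$, and then applies $\rho_\zeta$ and the trivial-units hypothesis to conclude that $n\restr{\JJ_x}$ is supported on a single fibre $q^{-1}(\mu)$. The only cosmetic difference is that you package the fibrewise compatibility as an explicit homomorphism $\rho_\zeta \circ \pi_x$ with a clean case split on whether $x \in \s(\supp(n))$, whereas the paper verifies the same identity inline after assuming $n\restr{\JJ_x} \ne 0$.
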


\begin{proof}
Suppose that $n\restr{\JJ_x} \ne 0$. We must show that $n\restr{\JJ_x}$ is supported on $q^{-1}(\mu)$ for some $\mu \in \II$. Fix $\alpha \in \JJ_x$ with $n(\alpha) \ne 0$. \Cref{rmk:Steinberg4.5} gives $n^\dagger n = 1_{\s(\supp(n))} = 1_{\r(\supp(n))} = nn^\dagger$. So we obtain $(n^\dagger n)(\s(\alpha)) = 1 = (nn^\dagger)(\s(\alpha))$. Fix a section $\zeta\colon \II_x \to \JJ_x$ and let $\rho_\zeta\colon A_R(\II_x;\JJ_x) \to R(\II_x,c_x)$ be the isomorphism of \cref{lem:unit cocycle}. Since $\JJ= q^{-1}(\II)$ consists of isotropy, the convolution product $n^\dagger n$ evaluated at $x$ is the same as $n^\dagger\restr{\JJ_x} * n\restr{\JJ_x}$ at $x$ (and similarly for $nn^\dagger$), and we obtain $n^\dagger\restr{\JJ_x} * n\restr{\JJ_x} = 1_{A_R(\II_x;\JJ_x)} = n\restr{\JJ_x} * n^\dagger\restr{\JJ_x}$. It follows that $\rho_{\zeta}(n\restr{\JJ_x})$ is a unit, and therefore it has the form $t \delta_\mu$ for some $\mu \in \II_x$. So $n\restr{\JJ_x} = t \rho_\zeta^{-1}(\delta_\mu)$ is supported on $R^\times \cdot \zeta(\mu) = q^{-1}(\mu)$, as required.
\end{proof}

The following lemma can be viewed as a far-reaching extension of \cref{exam:no.non.triv.units}.

\begin{lemma} \label{lem:iso LBH}
Let $R$ be an indecomposable commutative ring and let $(\Sigma,i,q)$ be a discrete $R$-twist over an ample Hausdorff groupoid $G$. Let $\II$ denote the interior of the isotropy in $G$ and $\JJ \coloneqq q^{-1}(\II)$, which is the interior of the isotropy in $\Sigma$. For each $x \in \Go$, let $\II_x$ and $c_x$ be as in \cref{lem:unit cocycle}. Suppose that the set of units $x \in \Go$ for which $R(\II_x,c_x)^{\times} = \{ t\delta_\alpha : t \in R^\times \text{ and } \alpha \in \II_x \}$ is dense in $\Go$. Then $(\JJ,i,q\restr{\JJ})$ satisfies the local bisection hypothesis.
\end{lemma}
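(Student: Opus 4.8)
The plan is to fix a normaliser $n$ of $A_R(\Go; q^{-1}(\Go))$ in $A_R(\II;\JJ)$ and to prove that $W \coloneqq \supp_\II(n)$ is a bisection of $\II$; this is exactly the local bisection hypothesis for the twist $(\JJ,i,q\restr{\JJ})$ over $\II$. First I would record the structural facts needed. Since $\II$ is an open subgroupoid of the ample Hausdorff groupoid $G$, it is itself ample and Hausdorff, with unit space $\Go$, so the restricted twist is of the type to which \cref{prop:trivial.units.good} applies. The set $W$ is clopen in $\II$ (by the discussion following \cref{def:A_R(G;Sigma)}, applied to the twist over $\II$), hence open. Moreover $\II$ consists entirely of isotropy, so $\r = \s$ on $\II$ and $\II_x = \s^{-1}(x) \cap \II$ coincides with $\II^x = \II_x^x$. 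Because $\II$ is étale and $W$ is open, it then suffices to show that $\s\restr{W}$ is injective: an injective local homeomorphism is a homeomorphism onto an open set, and since $\r = \s$ on $\II$ the same holds for $\r\restr{W}$, so taking the containing open set to be $W$ itself makes $W$ a bisection.

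The engine of the argument is \cref{prop:trivial.units.good}, which tells us that whenever $x \in \Go$ satisfies $R(\II_x,c_x)^\times = \{t\delta_\alpha : t \in R^\times,\ \alpha \in \II_x\}$, the set $W \cap \II_x$ has at most one element. The real content is to upgrade this from the hypothesised \emph{dense} set of such $x$ to \emph{every} $x \in \Go$, and this density-and-topology step is the main obstacle. I would argue by contradiction: suppose $\gamma \ne \gamma'$ lie in $W$ with $\s(\gamma) = \s(\gamma') = x_0$. Using that $\II$ is Hausdorff and étale, separate $\gamma$ and $\gamma'$ by disjoint open bisections $U, U' \subseteq W$ with $\gamma \in U$ and $\gamma' \in U'$. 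Then $\s(U) \cap \s(U')$ is an open neighbourhood of $x_0$ in $\Go$, since the source images of bisections of an étale groupoid are open.

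By the density hypothesis I can choose a unit $x \in \s(U) \cap \s(U')$ for which $R(\II_x,c_x)$ has only trivial units. The bisection property of $U$ and $U'$ then produces points $\eta \in U$ and $\eta' \in U'$ with $\s(\eta) = \s(\eta') = x$; since $U \cap U' = \varnothing$ these are distinct, and both lie in $W \cap \II_x$ (they belong to $W$ and to the isotropy fibre over $x$). This gives $\lvert W \cap \II_x \rvert \ge 2$, contradicting \cref{prop:trivial.units.good}. Hence $\s\restr{W}$ is injective, and by the reduction above $W = \supp_\II(n)$ is a bisection, so $(\JJ,i,q\restr{\JJ})$ satisfies the local bisection hypothesis. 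The routine verifications I would suppress are that $W$ is clopen and that injectivity of $\s$ on an open subset of an étale groupoid yields a bisection; the only genuinely delicate point is the passage from the dense good set to an arbitrary fibre, which rests precisely on the openness of the source images of the bisections $U$ and $U'$ together with \cref{prop:trivial.units.good}.
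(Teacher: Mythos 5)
Your proof is correct and takes essentially the same route as the paper's: both argue by contradiction, separating two putative elements of $\supp_{\II}(n)$ with a common source by disjoint open bisections inside the support, using that the intersection of their source images is open together with the density hypothesis to find a unit $x$ at which $R(\II_x,c_x)$ has only trivial units, and then contradicting \cref{prop:trivial.units.good} via the two fibre points over $x$. Your explicit preliminary reduction---that since $\r=\s$ on $\II$ it suffices to prove $\s\restr{\supp_{\II}(n)}$ is injective---is left implicit in the paper, but the substance of the argument is identical.
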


\begin{proof}
Suppose that $n$ is a normaliser of $A_R(\Go; q^{-1}(\Go))$ in $A_R(\II;\JJ)$. We must show that $\supp_{\II}(n)$ is a bisection. Suppose for contradiction that this is not the case. Then we can find $\alpha,\beta\in \supp_{\II}(n)$ with $\s(\alpha)=\s(\beta)$ but $\alpha\ne \beta$. Since $\II$ is Hausdorff and $\supp_{\II}(n)$ is open, we can find disjoint compact open bisections $U$ and $V$ of $\II$ with $\alpha\in U$, $\beta\in V$, and $U,V\subseteq \supp_{\II}(n)$. By assumption, $\s(U)\cap \s(V)\ne\varnothing$ as $\s(\alpha)=\s(\beta)$ is in the intersection. Since $\s(U) \cap \s(V)$ is an open subset of $\Go$, there exists $x\in \s(U)\cap \s(V)$ such that the twisted group ring $R(\II_x,c_x)$ has only trivial units. Take $\alpha'\in U$ and $\beta'\in V$ with $\s(\alpha')=x=\s(\beta')$. Then $\alpha'=\beta'$ by \cref{prop:trivial.units.good}, contradicting that $U$ and $V$ are disjoint.
\end{proof}

The next lemma shows that the local bisection hypothesis can be checked on the interior of the isotropy.

\begin{lemma} \label{lem:global LBH}
Let $R$ be an indecomposable commutative ring and let $(\Sigma,i,q)$ be a discrete $R$-twist over an ample Hausdorff groupoid $G$. Let $\II$ denote the interior of the isotropy in $G$ and let $\JJ \coloneqq q^{-1}(\II)$, which is the interior of the isotropy in $\Sigma$. Let $A \coloneqq A_R(G;\Sigma)$, let $A' \coloneqq A_R(\II;\JJ)$, and let $B \coloneqq A_R(\Go; q^{-1}(\Go))$. Then
\begin{enumerate}[label=(\alph*)]
\item \label{item1:global LBH} the normaliser $N_{A'}(B)$ of $B$ in $A'$ satisfies
\[
N_{A'}(B) = N_A(B) \cap A';\text{ and}
\]
\item \label{item2:global LBH} $(\Sigma,i,q)$ satisfies the local bisection hypothesis if and only if $(\JJ, i, q\restr{\JJ})$ does.
\end{enumerate}
\end{lemma}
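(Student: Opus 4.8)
The plan is to treat the two parts in turn, using throughout that $\II$ is an open subgroupoid of $G$ which contains $\Go$ and consists entirely of isotropy, so that $A' = A_R(\II;\JJ)$ really is a subalgebra of $A$ and $I(B)$ is a set of local units for $A'$ as well as for $A$.

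For \cref{item1:global LBH}, the inclusion $N_{A'}(B) \subseteq N_A(B) \cap A'$ is immediate: a witness $k \in A'$ for the normaliser relations of some $n \in N_{A'}(B)$ also lies in $A$, so $n \in N_A(B)$, and of course $n \in A'$. For the reverse inclusion I would introduce the restriction map $E\colon A \to A'$ sending $f$ to $f\restr{\JJ}$ (extended by zero) and first prove that $E$ is an $A'$-bimodule map, i.e.\ $E(fg) = fE(g)$ and $E(gf) = E(g)f$ whenever $f \in A'$ and $g \in A$. This is exactly where the isotropy structure enters: for $\sigma \in \JJ$ and $f$ supported on $\JJ$, every nonzero term of the convolution $(fg)(\sigma)$ pairs an element of $\supp_G(f) \subseteq \II$ with an argument of $g$ that, being a product of two elements of the subgroupoid $\II$, again lies in $\II$; hence $(fg)\restr{\JJ}$ depends only on $g\restr{\JJ}$. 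Granting this, fix $n \in N_A(B) \cap A'$ and set $k := E(n^\dagger) \in A'$. Using the bimodule identity together with $nn^\dagger, n^\dagger n \in B \subseteq A'$ (see \cref{rmk:Steinberg4.5}) I obtain $nk = E(nn^\dagger) = nn^\dagger$ and $kn = E(n^\dagger n) = n^\dagger n$, whence $nkn = nn^\dagger n = n$ and $knk = (n^\dagger n)k = E(n^\dagger n n^\dagger) = E(n^\dagger) = k$; and for $b \in B$, $kbn = E(n^\dagger b n) = n^\dagger b n \in B$ (as $n \in N_A(B)$) and similarly $nbk \in B$. Thus $k$ witnesses $n \in N_{A'}(B)$, proving \cref{item1:global LBH}. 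In particular $k = n^\dagger$ by uniqueness of inverses, so $n^\dagger \in A'$.

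The forward direction of \cref{item2:global LBH} is then short. If $\Sigma \to G$ satisfies the local bisection hypothesis and $n \in N_{A'}(B)$, then $n \in N_A(B)$ by \cref{item1:global LBH}, so $\supp_G(n)$ is a bisection of $G$; since $n$ is supported on $\JJ$ we have $\supp_G(n) = \supp_{\II}(n) \subseteq \II$, and a bisection of $G$ contained in the open subgroupoid $\II$ is a bisection of $\II$. Hence $\JJ \to \II$ satisfies the local bisection hypothesis.

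The reverse direction is the substantive one, and I would argue it by contraposition, following the analysis of \cite{Steinberg2019}. Suppose $\Sigma \to G$ fails the local bisection hypothesis, so there is $n \in N_A(B)$ with $D := \supp_G(n)$ not a bisection; the cases where $\s$ or $\r$ fails to be injective on $D$ are symmetric (interchanging the roles of $n\,\tilde{1}_W\,n^\dagger$ and $n^\dagger\,\tilde{1}_W\,n$), so I assume there are $\alpha \ne \beta \in D$ with $\s(\alpha) = \s(\beta) = x$. The crucial step is to show that such branching is forced to be \emph{isotropy} branching living in the interior of the isotropy: localising the source to a small compact open $W \ni x$ and exploiting that $n\,\tilde{1}_W\,n^\dagger \in B$ (because $n$ normalises $B$), a cancellation analysis of the twisted convolution—the off-diagonal coefficient at $\alpha\beta^{-1}$ must vanish while the $(\alpha,\beta)$ contribution does not—produces $\alpha, \alpha' \in D$ related by a nontrivial element of the isotropy over $x$, and a version of this analysis uniform in $W$ places a whole neighbourhood of that element inside $\Iso(G)$, so that it lies in $\II$. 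From such an element one then builds a normaliser of $B$ in $A'$ whose support contains both a unit and a nontrivial isotropy element above it, and is therefore not a bisection of $\II$, contradicting the local bisection hypothesis for $\JJ \to \II$. I expect the main obstacle to be precisely this middle step: pinning the obstruction down inside the \emph{interior} of the isotropy (rather than merely in $\Iso(G)$) and verifying that the element extracted from it is a genuine normaliser of $B$ in $A_R(\II;\JJ)$; the remainder is bookkeeping with the identities of \cref{rmk:Steinberg4.5,cor:normalisermult,lem:properties of le}.
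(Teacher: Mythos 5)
Your part~(a) is correct, and it takes a genuinely different route from the paper. Where the paper deduces $n^\dagger \in A'$ from a support analysis (two claims adapted from \cite[Proposition~4.7 and Corollary~4.8]{Steinberg2019}: for $\sigma,\tau \in \supp(n)$ one has $\s(\sigma)=\s(\tau) \iff \r(\sigma)=\r(\tau)$, and consequently $\supp(n)\supp(n)^{-1} \cup \supp(n)^{-1}\supp(n) \subseteq \JJ$), you instead verify directly that the restriction map $E\colon A \to A'$ is an $A'$-bimodule map---which is valid, since $\II$ is an open subgroupoid consisting of isotropy, so in the convolution formula of \cref{prop:multiplication in twisted Steinberg algebras} every surviving argument stays in $\JJ$---and then check that $k \coloneqq E(n^\dagger)$ satisfies the relations of \cref{def:normaliser}, whence $k = n^\dagger$ by the uniqueness in \cref{lem:dagger}. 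Your computation of $nk$, $kn$, $knk$, and $kbn$ is sound. This is slicker in isolation; what the paper's heavier route buys is that its two claims are exactly the tools reused in part~(b). Your forward direction of~(b) agrees with the paper's.

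The reverse direction of~(b) is where your proposal has a genuine gap, which you flag yourself. The ``cancellation analysis'' you gesture at is precisely the content of the paper's Claim~1, and as you sketch it (the off-diagonal coefficient at $\alpha\beta^{-1}$ ``must vanish while the $(\alpha,\beta)$ contribution does not'') it does not go through over a general indecomposable $R$: several pairs in $\supp(n) \times \supp(n)^{-1}$ can sit over the same element of $G$, and their contributions to a coefficient of $n\,\tilde{1}_W\,n^\dagger$ can cancel in $R$, so nonvanishing of a single term forces nothing. The paper sidesteps coefficient bookkeeping entirely: if $\sigma,\tau \in \supp(n)$ had $\s(\sigma)=\s(\tau)=x$ but $\r(\sigma)\ne\r(\tau)$, choose disjoint compact open sets $U \ni \r(\sigma)$ and $V \ni \r(\tau)$ in $\Sigmao$ and put $p \coloneqq \tilde{1}_U$ and $p' \coloneqq \tilde{1}_V$; then $pn$, $p'n$, and $(p+p')n$ are normalisers, and \cref{rmk:Steinberg4.5} gives $(n^\dagger p n)(x) = (n^\dagger p' n)(x) = (n^\dagger (p+p') n)(x) = 1$, whence $2=1$. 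This establishes the equivalence of source- and range-branching on $\supp(n)$; Claim~2 then shows the open set $\supp(n)\supp(n)^{-1}$ consists of isotropy, and---as you correctly anticipated---openness upgrades this to containment in $\JJ$ for free. Your final step also needs to be made concrete: the paper takes a compact open bisection $U \ni \sigma$ on which $n$ is constant, forms $mn$ with $m \coloneqq \tilde{1}_{U^{-1}}$ (so $\supp(mn) \subseteq U^{-1}\supp(n) \subseteq \JJ$ by Claim~2), uses part~(a) to conclude $mn \in N_{A'}(B)$, computes $(mn)(x) = n(\sigma) \ne 0$ and $(mn)(\sigma^{-1}\tau) = n(\tau) \ne 0$, and then the local bisection hypothesis for $\JJ \to \II$ forces $q(\sigma^{-1}\tau) = x$, i.e.\ $q(\sigma) = q(\tau)$. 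Without Claim~1 and this construction, the reverse implication remains unproved in your proposal.
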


\begin{proof}
The proof very closely follows the arguments of \cite[Proposition~4.7, Corollary~4.8, and Proposition~4.10]{Steinberg2019}, with just minor adjustments to incorporate the generality of twists.

Claim~1 (cf.~\cite[Proposition~4.7]{Steinberg2019}): if $n \in N_A(B)$ and $\sigma,\tau \in \supp(n)$, then $\s(\sigma) = \s(\tau)$ if and only if $\r(\sigma) = \r(\tau)$. We suppose that $x = \s(\sigma) = \s(\tau)$ and prove that $\r(\sigma) = \r(\tau)$; the converse follows from a symmetric argument. Suppose for contradiction that $y \coloneqq \r(\sigma) \ne \r(\tau) \eqqcolon z$. Fix disjoint compact open neighbourhoods $y \in U$ and $z \in V$ in $\Sigmao$, and let $p \coloneqq \tilde{1}_U$ and $q \coloneqq \tilde{1}_V$. Then $pn, qn \in N_A(B)$.

\Cref{rmk:Steinberg4.5} gives $(n^\dagger p n)(x) = 1$ because $(pn)(\sigma) \ne 0$ and $(pn)^\dagger (pn) = n^\dagger pn$. Similarly, $(n^\dagger q n)(x) = 1$ because $(qn)(\tau) \ne 0$. Since $p+q=\tilde{1}_{U\cup V}\in I(B)$ by the disjointness of $U$ and $V$, we have by the same reasoning that $(n^\dagger (p+q) n)(x)= 1$ because $((p+q)n)(\sigma) \ne 0$. Hence
\[
2 = (n^\dagger p n)(x) + (n^\dagger q n)(x) = (n^\dagger(p+q)n)(x) = 1,
\]
which is a contradiction.

Claim~2 (cf.~\cite[Corollary~4.8]{Steinberg2019}): if $n \in N_A(B)$, then $\supp(n)\supp(n)^{-1}$ and $\supp(n)^{-1}\supp(n)$ are contained in $\JJ$. For this, suppose that $\sigma \in \supp(n)$ and $\tau \in \supp(n)^{-1}$ and $\s(\sigma) = \r(\tau)$. Then $\tau^{-1} \in \supp(n)$ and $\s(\sigma) = \s(\tau^{-1})$, so Claim~1 shows that $\r(\sigma) = \r(\tau^{-1})$. So $\sigma\tau$ is in the isotropy of $\Sigma$. It follows that the open set $\supp(n)\supp(n)^{-1}$ is contained in the isotropy, and hence $\supp(n)\supp(n)^{-1} \subseteq \JJ$. A symmetric argument shows that $\supp(n)^{-1}\supp(n) \subseteq \JJ$.

Now for part~\cref{item1:global LBH}, note that $N_{A'}(B)$ is clearly contained in $N_A(B) \cap A'$. For the reverse containment, it suffices to show that if $n \in N_A(B) \cap A'$, then $\supp(n^\dagger) \subseteq \JJ$. So suppose that $\sigma \in \supp(n^\dagger)$. Then \cref{rmk:Steinberg4.5} shows that $(nn^\dagger)(\s(\sigma)) = 1$. So any section $\zeta\colon G_{\s(\sigma)} \to \Sigma_{\s(\sigma)}$ satisfies
\[
1 = (nn^\dagger)(\s(\sigma)) = \sum_{\alpha \in G_{\s(\sigma)}} n(\zeta(\alpha)^{-1}) \, n^\dagger(\zeta(\alpha)).
\]
Since the sum is nonzero, at least one term, say corresponding to $\alpha_0\in G_{\s(\sigma)}$, satisfies $\zeta(\alpha_0)^{-1}\in \supp(n)$ and $\zeta(\alpha_0)\in \supp(n^\dagger)$. Applying Claim~1 to $n^\dagger$, we see that $\r(\alpha_0) = \r(\sigma)$. Since $\supp(n) \subseteq \JJ$ by hypothesis on $n$, we have $\alpha_0^{-1} \in \II$, and so we deduce that $\s(\sigma) = \s(\alpha_0) = \r(\alpha_0) = \r(\sigma)$, as required.

For part~\cref{item2:global LBH}, the ``only if'' implication is clear, so we suppose that $(\JJ, i, q\restr{\JJ})$ satisfies the local bisection hypothesis, and establish that $(\Sigma,i,q)$ does too. Fix $n \in N_A(B)$. Suppose that $\sigma, \tau \in \supp(n)$ satisfy $\s(\sigma) = \s(\tau)$. Then $\r(\sigma) = \r(\tau)$ by Claim~1. We must show that $q(\sigma) = q(\tau)$. Since $n$ is locally constant, there is a compact open bisection $U$ containing $\sigma$ such that $n\restr{U}$ is constant. Claim~2 implies that $U^{-1}\supp(n) \subseteq \JJ$. Let $m \coloneqq \tilde{1}_{U^{-1}}$. Then $mn$ belongs to $N_A(B) \cap A'$, and so part~\cref{item1:global LBH} shows that $mn \in N_{A'}(B)$. Since $(\JJ, i, q\restr{\JJ})$ satisfies the local bisection hypothesis, we deduce that $\supp_G(mn) = \supp_\II(mn)$ is a bisection. Let $x = \s(\sigma)$. We claim that $x, \sigma^{-1}\tau \in \supp(mn)$. To see this, fix a section $S\colon \supp_G(m)=q(U^{-1})\to \supp(m)=R^\times\cdot U^{-1}$ with $S(q(\sigma))=\sigma$. Since $U$ is a bisection, we have
\[
(mn)(x) = \sum_{\gamma \in G^x} m(S(\gamma)) \, n(S(\gamma)^{-1}) = n(S(q(\sigma))) = n(\sigma) \ne 0,
\]
and
\[
(mn)(\sigma^{-1}\tau) = \sum_{\gamma \in G^x} m(S(\gamma)) \, n(S(\gamma)^{-1}\sigma^{-1}\tau) = n(\tau) \ne 0,
\]
as claimed. Since $q(\supp(mn))$ is a bisection and $\s(x) = x = \s(\sigma^{-1}\tau)$, we deduce that $q(\sigma^{-1}\tau) = q(x)$, and hence $q(\sigma)=q(\tau)$.
\end{proof}

We have seen that twists satisfying the local bisection hypothesis give rise to algebraic quasi-Cartan pairs. To finish the section we show that if the underlying groupoid $G$ is effective, then we obtain an algebraic Cartan pair, and if it is principal, then we obtain an
algebraic diagonal pair.

\begin{prop} \label{prop:effectiveACPprincipalADP}
Let $R$ be an indecomposable commutative ring and let $(\Sigma,i,q)$ be a discrete $R$-twist over an ample Hausdorff groupoid $G$. If $G$ is effective then $(A,B) \coloneqq (A_R(G;\Sigma), A_R(\Go; q^{-1}(\Go)))$ is an algebraic Cartan pair, and if $G$ is principal then $(A,B)$ is an algebraic diagonal pair.
\end{prop}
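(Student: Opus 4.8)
The plan is to reduce both assertions to the work already done. By \cref{lem:LBH<->AQP}, the pair $(A,B)$ is automatically an algebraic quasi-Cartan pair as soon as we know $(\Sigma,i,q)$ satisfies the local bisection hypothesis; so the first task is to verify the local bisection hypothesis under either of our two standing assumptions on $G$. When $G$ is effective, the interior $\II$ of its isotropy equals $\Go$, so $\JJ = q^{-1}(\Go)$. In that case a normaliser of $B$ in $A' = A_R(\II;\JJ) = B$ is just an element of $B$, whose support in $\II = \Go$ is trivially a bisection; hence $(\JJ,i,q\restr{\JJ})$ satisfies the local bisection hypothesis vacuously, and \cref{lem:global LBH}\cref{item2:global LBH} lifts this to $(\Sigma,i,q)$. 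Since a principal groupoid is effective, the same conclusion holds in the principal case, and \cref{lem:LBH<->AQP} gives that $(A,B)$ is an algebraic quasi-Cartan pair in both situations.

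For the \textbf{effective} case, it then remains to upgrade ``quasi-Cartan'' to ``Cartan'', i.e.\ to show $B$ is a maximal commutative subalgebra of $A$. I would take $a \in A$ commuting with all of $B$ and show $a \in B$. Writing $a = \sum_{U \in \FF} r_U \tilde 1_U$ via \cref{prop:f is a finite sum of lifts of indicators}, the goal is to show that $\supp_G(a) \subseteq \Go$. If some $\sigma \in \supp(a)$ had $q(\sigma) \notin \Go$, then because $G$ is effective (so $\Go$ is dense-complemented by the empty interior of $\Iso(G)\setminus\Go$), one can find a compact open bisection $W \subseteq \Go$ with $\r(\sigma) \in W$ and a nearby point where $q(\sigma)$ moves off the diagonal; multiplying $a$ on the left and right by the idempotent $\tilde 1_W$ and comparing $\tilde 1_W * a$ with $a * \tilde 1_W$ using \cref{prop:Steinberg diagonal} produces functions that disagree at $\sigma$, contradicting that $a$ commutes with $\tilde 1_W \in B$. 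Effectiveness is exactly what guarantees enough such test idempotents to detect any off-diagonal support.

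For the \textbf{principal} case, I would show $A$ is spanned by free normalisers, so that $(A,B)$ is an algebraic diagonal pair. By \cref{prop:f is a finite sum of lifts of indicators} it suffices to treat $n = \tilde 1_U$ for a compact open bisection $U \subseteq \Sigma$. Using \cref{remark}-style decomposition, split $U = (U \cap q^{-1}(\Go)) \sqcup (U \setminus q^{-1}(\Go))$, so $\tilde 1_U$ is a sum of $\tilde 1_{U \cap q^{-1}(\Go)} \in B$ and $\tilde 1_{U \setminus q^{-1}(\Go)}$. For the latter, since $G$ is principal we have $\Iso(G) = \Go$, so any $\sigma \in \supp(\tilde 1_{U\setminus q^{-1}(\Go)})$ has $\r(q(\sigma)) \ne \s(q(\sigma))$; then $q(\sigma)^2$ is undefined or escapes the support, which forces $n^\dagger n\, n n^\dagger = 0$ via \cref{remark:Steinberg4.5} identifying these idempotents with $\tilde 1_{\s(\supp(n))}$ and $\tilde 1_{\r(\supp(n))}$ over disjoint subsets of $\Go$. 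Hence $\tilde 1_{U\setminus q^{-1}(\Go)}$ is a free normaliser, and $A$ is spanned by free normalisers.

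The main obstacle I anticipate is the maximality argument in the effective case: effectiveness is a purely topological density condition on $\Go$ inside $\Iso(G)$, and converting it into an algebraic statement about commutants requires choosing, for a putative off-diagonal point in $\supp(a)$, a compact open idempotent in $B$ that genuinely witnesses non-commutation. The delicate point is handling the possibility that a single $\sigma$ sits over an isotropy element that is a limit of off-diagonal elements; the empty-interior characterisation of effectiveness (rather than mere topological freeness) is what lets one slide to a nearby non-isotropy point, and getting this localisation precise is where the real care is needed.
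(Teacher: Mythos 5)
Your reduction of both cases to the local bisection hypothesis is exactly the paper's route: effectiveness gives $\II = \Go$ and hence $\JJ = q^{-1}(\Go)$, so the hypothesis holds trivially for $(\JJ, i, q\restr{\JJ})$ and \cref{lem:global LBH}\cref{item2:global LBH} lifts it to $(\Sigma,i,q)$, whence $(A,B)$ is an algebraic quasi-Cartan pair by \cref{lem:LBH<->AQP}. Your maximality argument in the effective case is also the paper's argument in outline, modulo two small slips worth fixing: the witness to non-commutation is a \emph{nearby} point $\tau$ with $\r(\tau)\ne\s(\tau)$, found inside the open level set $f^{-1}(f(\sigma))$ using effectiveness (the paper quotes \cite[Lemma~3.1(3)]{BCFS2014} for precisely this localisation step you flag as delicate), not $\sigma$ itself; and the test idempotent should be $\tilde{1}_V$ for a compact open $V \subseteq \Go$ containing $\s(\tau)$ but \emph{not} $\r(\tau)$, giving $(f\tilde{1}_V)(\tau) = f(\tau) = f(\sigma) \ne 0 = (\tilde{1}_V f)(\tau)$.

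The principal case, however, contains a genuine gap. You assert that $n \coloneqq \tilde{1}_{V}$ with $V \coloneqq U \setminus q^{-1}(\Go)$ is itself a free normaliser, on the grounds that no single $\sigma \in V$ has $\r(\sigma) = \s(\sigma)$. But by \cref{rmk:Steinberg4.5} and \cref{cor:normalisermult}, $n^\dagger n \, nn^\dagger = \tilde{1}_{\s(V)}\tilde{1}_{\r(V)} = \tilde{1}_{\s(V)\cap\r(V)}$, and principality does not make $\s(V)$ and $\r(V)$ disjoint: it forbids $\s(\sigma)=\r(\sigma)$ for each individual $\sigma$, but distinct $\sigma,\tau \in V$ may still satisfy $\s(\sigma) = \r(\tau)$, so that $\sigma\tau \in V^2 \ne \varnothing$. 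Concretely, let $G$ be the (principal, ample, Hausdorff) full equivalence relation on the two-point space $\{x,y\}$ with the trivial twist $\Sigma = G \times R^\times$, and let $V$ be the lift $\{(x,y),(y,x)\}\times\{1\}$ of the off-diagonal bisection. Then $\s(V) = \r(V) = \{x,y\}$, so $n^\dagger n\, nn^\dagger = \tilde{1}_{\Go} \ne 0$ and $n \notin B$: this $n$ is \emph{not} a free normaliser, although it is a sum of the two free normalisers coming from $\{(x,y)\}$ and $\{(y,x)\}$. That further decomposition is exactly the step your proposal is missing, and it is where the paper does its real work: for each $\sigma \in V$, choose disjoint compact open sets $W, W' \subseteq \Go$ with $\r(\sigma) \in W$ and $\s(\sigma) \in W'$, and put $V_\sigma \coloneqq WVW'$, a compact open bisection containing $\sigma$ with $V_\sigma^2 = \varnothing$ (equivalently $\s(V_\sigma) \cap \r(V_\sigma) = \varnothing$); by compactness finitely many $V_1,\dotsc,V_n$ cover $V$, and disjointifying via $V_i' \coloneqq V_i \setminus \bigcup_{j=1}^{i-1} V_j'$ (legitimate since $\Sigma$ is Hausdorff, so these differences are compact open, and each $V_i' \subseteq V_i$ still satisfies $(V_i')^2 = \varnothing$) yields $\tilde{1}_V = \sum_{i=1}^n \tilde{1}_{V_i'}$ with each summand a free normaliser. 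With this refinement inserted, your outline closes and agrees with the paper's proof.
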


\begin{proof}
If $G$ is effective, then \cref{lem:global LBH}\cref{item2:global LBH} implies that the local bisection hypothesis holds. Then by \cref{lem:LBH<->AQP}, $(A,B)$ is an algebraic quasi-Cartan pair with conditional expectation $P\colon A \to B$ given by restriction of functions from $\Sigma$ to $q^{-1}(\Go)$. So, we just have to show that if $G$ is effective, then $(A,B)$ satisfies~\labelcref{item:ACP} and that if $G$ is principal then $(A,B)$ satisfies~\labelcref{item:ADP}.

First suppose that $G$ is effective. Then for $f \in A {\setminus} B$, we must show that $f$ does not commute with $B$. Since $f \notin B$, there exists $\sigma \in \Sigma \setminus q^{-1}(\Go)$ such that $f(\sigma) \ne 0$. Then $U \coloneqq f^{-1}(f(\sigma))$ is open. Since $G$ is effective, $q(U)$ is open, and $q(\sigma) \notin \Go$, \cite[Lemma~3.1(3)]{BCFS2014} implies that there exists $\tau \in U$ such that $\r(\tau) \ne \s(\tau)$. Fix a compact open set $V \subseteq \Go$ that contains $\s(\tau)$ but not $\r(\tau)$, and let $e_V \coloneqq \tilde{1}_V \in B$. Then $(fe_V)(\tau) = f(\tau)=f(\sigma) \ne 0 = (e_V f)(\tau)$. So $f e_V \ne e_V f$, and hence $f$ does not commute with $B$, as required.

Now suppose that $G$ is principal. Fix a compact open bisection $U \subseteq \Sigma$. By \cref{prop:f is a finite sum of lifts of indicators}, it suffices to show that $\tilde{1}_U$ is a linear combination of free normalisers. Let $U_0 \coloneqq U \cap q^{-1}(\Go)$ and $V \coloneqq U {\setminus} U_0$. Both $U_0$ and $V$ are compact open bisections, and we have $\tilde{1}_U = \tilde{1}_{U_0} + \tilde{1}_V$. Since $\tilde{1}_{U_0} \in B$ is a free normaliser, we just have to show that $\tilde{1}_V$ is a linear combination of free normalisers. Since $q(V) \cap \Go$ is empty and $G$ is principal, we have $\r(\sigma) \ne \s(\sigma)$ for all $\sigma \in V$. Thus, for each $\sigma\in V$, we can find a compact open bisection $V_\sigma\subseteq V$ containing $\sigma$ with $V_\sigma^2=\varnothing$ by choosing disjoint compact open neighbourhoods $U, U' \subseteq \Go$ of $\r(\sigma)$ and $\s(\sigma)$, respectively, and putting $V_\sigma \coloneqq UVU'\subseteq V$. By compactness, we can cover $V$ by finitely many compact open bisections $V_1,\dotsc, V_n$ such that $V_i^2 = \varnothing$ for $i \in \{1,\dotsc,n\}$. Then by putting $V_i' = V_i \setminus \bigcup_{j=1}^{i-1} V_j'$ for each $i \in \{1,\dotsc,n\}$, we obtain a refined cover of $V$ consisting of mutually disjoint sets. Now each $\tilde{1}_{V_i'}$ is a free normaliser, and $\tilde{1}_V = \sum_{i=1}^n \tilde{1}_{V_i'}$.
\end{proof}

\section{Building a twist from a pair of algebras} \label{sec:build twist}

Throughout this section we assume that $A$ is an $R$-algebra with $B$ a commutative subalgebra without torsion in the sense of \cref{cond:torsion free} (that is, for $e \in I(B)$ and $t\in R$, if $te=0$, then $t=0$ or $e=0$), and satisfying property~\cref{item:ACP.local units} of \cref{def:ACP} (that is, $I(B)$ is a set of local units for $A$).

\subsection{The groupoid \texorpdfstring{$\Sigma$}{of ultrafilters}} \label{sec:Sigma def}

In this section we use the partial order $\le$ on $N(B)$ defined in \cref{sec:invsemigroup} (on \cpageref{def:partial order}) to define the groupoid of ultrafilters on $N(B)$, which will play the same role as Renault's Weyl twist in \cite{Renault2008}.

If $U \subseteq N(B)$, then the \emph{upclosure} of $U$ is the set
\[
U^{\uparrow} \coloneqq \{m \in N(B) : \text{there exists } n \in U \text{ with } n \le m\}.
\]
A \emph{filter} of $N(B)$ is a subset $U \subseteq N(B) {\setminus} \{0\}$ such that $U = U^{\uparrow}$ and whenever $m,n \in U$ there exists $p \in U$ such that $p \le m,n$. An \emph{ultrafilter} is a maximal filter. The collection $\Sigma$ of all ultrafilters of $N(B)$ forms a groupoid with the following structure (see \cite[Proposition~9.2.1]{Lawson1998} and \cite[Proposition~2.13]{Lawson2010}). For each $U \in \Sigma$,
\[
U^{-1} \coloneqq \{ n^\dagger : n \in U\}.
\]
A pair $(U,V)$ of ultrafilters of $N(B)$ is composable if and only if $m^\dagger mnn^\dagger \ne 0$ (or, equivalently, if and only if $mn\ne 0$) for all $m \in U$ and $n \in V$, and then
\[
UV \coloneqq \{ mn : m \in U, n \in V\}^{\uparrow}.
\]
The definition of an ultrafilter ensures that a pair $(U,V)$ of ultrafilters is composable if and only if $\s(U) \coloneqq U^{-1}U$ is equal to $\r(V) \coloneqq VV^{-1}$. It follows that
\[
\Sigmao = \{U \in \Sigma : U \cap I(B) \ne \varnothing\}.
\]
For $n \in N(B)$, we write
\[
\VV_n\coloneqq \{U \in \Sigma : n \in U\}.
\]
The collection $\{\VV_n: n \in N(B)\}$ forms a basis of open bisections for a topology on $\Sigma$, making it an \'etale groupoid. We have $\VV_n \VV_m = \VV_{nm}$ and $\VV_n^{-1} = \VV_{n^\dagger}$ for all $m,n \in N(B)$. For more details, see \cite[Lemma~3.2]{ACaHJL2021}. By \cite[Lemma~2.22]{Lawson2012} (see also \cite[Propositions~2.2 and~4.4]{ACaHJL2021}), $\Sigmao$ is a Hausdorff subspace of $\Sigma$. In \cref{lem:scalarbasis}, we show how scalar multiplication in the algebra interacts with the groupoid structure.

\begin{remark} \label{rem:ultrafilters on I(B)}
Note that \cite[Proposition~2.2 and Proposition~4.4]{ACaHJL2021} say that $U \mapsto U \cap I(B)$ is a homeomorphism from the set of ultrafilters of $N(B)$ that contain an element of $I(B)$ to the set of ultrafilters of $I(B)$, and so we often identify elements of $\Sigmao$ with ultrafilters of $I(B)$ when convenient. In particular, since $I(B)$ is a Boolean algebra, Stone duality implies that $\VV_e$ is compact for each $e \in I(B)$.
\end{remark}

We will frequently use the following standard fact about ultrafilters. This fact, and much of what we say about the groupoid $\Sigma$ as an ample groupoid, would immediately follow from the results of~\cite{LawsonLenz2013} if one proved that $N(B)$ is a Boolean inverse semigroup (called a weakly Boolean inverse semigroup in~\cite{LawsonLenz2013}). However, to prove this would go too far afield.

\begin{lemma} \label{lem:ultrafilter partition}
Suppose that $A$ is an $R$-algebra, and that $B \subseteq A$ is a commutative subalgebra satisfying \cref{cond:torsion free} and \cref{item:ACP.local units} of \cref{def:ACP}. Let $\Sigma$ be the groupoid of ultrafilters of $N(B)$. Suppose that $n \in U \in \Sigma$ and that $e_1, \dotsc, e_k \in I(B)$ are mutually orthogonal idempotents satisfying $n = \sum_{i=1}^k ne_i$. Then there exists a unique $i \in \{1,\dotsc,k\}$ such that $ne_i \in U$. Dually, if $n = \sum_{i=1}^k f_in$ for mutually orthogonal idempotents $f_1,\dotsc, f_k\in I(B)$, then $f_in \in U$ for a unique $i \in \{1,\dotsc,k\}$.
\end{lemma}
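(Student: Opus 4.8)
The plan is to reduce the statement to the standard primeness of ultrafilters in the Boolean algebra $I(B)$ by passing to source idempotents. First I would record the pieces $ne_i$ as an orthogonal family of normalisers below $n$: by \cref{lem:properties of le}\cref{item4:properties of le} each $ne_i \le n$, and since $(ne_i)^\dagger = e_i n^\dagger$ one computes $(ne_i)^\dagger(ne_j) = e_i e_j n^\dagger n = 0$ and $(ne_i)(ne_j)^\dagger = n e_i e_j n^\dagger = 0$ for $i \ne j$, using that idempotents of $B$ commute. Setting $f_i \coloneqq (ne_i)^\dagger(ne_i) = n^\dagger n e_i$, these are mutually orthogonal idempotents, and since $n = \sum_i ne_i = n\big(\sum_i e_i\big)$ forces $n^\dagger n \le \sum_i e_i$, their sum is $\sum_i f_i = n^\dagger n$. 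Moreover $nf_i = n n^\dagger n e_i = ne_i$. Thus $n^\dagger n = \bigvee_i f_i$ is a disjoint join in $I(B)$, and $n^\dagger n \in \s(U)$ because $n \in U$.

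The crux, which I expect to be the main obstacle, is the following membership criterion: for $n \in U$ and an idempotent $f \le n^\dagger n$, one has $nf \in U$ if and only if $f \in \s(U) = U^{-1}U$. The forward direction is immediate, since $(nf)^\dagger(nf) = f n^\dagger n f = f \in U^{-1}U$. For the converse I would first establish the useful description $\s(U) \cap I(B) = \{e \in I(B) : u^\dagger u \le e \text{ for some } u \in U\}$; this follows from the definition $U^{-1}U = \{m^\dagger m' : m,m' \in U\}^\uparrow$ together with the observation that a common lower bound $p \le m, m'$ in the filter $U$ satisfies $p^\dagger p \le m^\dagger m'$ (the order being preserved by multiplication and inversion). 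Then, given $f \in \s(U)$, I would choose $u \in U$ with $u^\dagger u \le f$, take a common lower bound $w \le u, n$ with $w \in U$, and check directly that $(nf)(w^\dagger w) = n(w^\dagger w) = w$ (using $w^\dagger w \le u^\dagger u \le f$ and $w = n w^\dagger w$ from $w \le n$); hence $w \le nf$, so $nf \in U$ by upward closure.

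Granting the criterion, both existence and uniqueness follow at once. By \cref{rem:ultrafilters on I(B)} the set $\s(U) \cap I(B)$ is an ultrafilter of the Boolean algebra $I(B)$, hence prime; since $n^\dagger n = \bigvee_i f_i \in \s(U)$ with the $f_i$ pairwise disjoint, exactly one $f_j$ lies in $\s(U)$ (at least one by primeness, and no two, since otherwise their meet $0 \in \s(U)$). The criterion then yields $ne_j = nf_j \in U$ for this unique $j$, proving the first statement. For the dual statement I would apply the first one to $n^\dagger \in U^{-1}$: from $n = \sum_i f_i n$ one gets $\sum_i f_i \ge n n^\dagger$, whence $n^\dagger = n^\dagger\big(\sum_i f_i\big) = \sum_i n^\dagger f_i$ is a source-side decomposition of $n^\dagger$ with orthogonal idempotents $f_i$; the first statement gives a unique $n^\dagger f_i \in U^{-1}$, and applying $\dagger$ (so $(n^\dagger f_i)^\dagger = f_i n$) produces a unique $f_i n \in U$.
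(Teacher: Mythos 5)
Your proof is correct and follows essentially the same route as the paper's: normalise to $f_i \coloneqq n^\dagger n e_i$ so that $n^\dagger n = \sum_i f_i$ with the $f_i$ mutually orthogonal, use that $\s(U) \cap I(B)$ is an ultrafilter of the Boolean algebra $I(B)$ (\cref{rem:ultrafilters on I(B)}) to pick out the unique $f_j \in \s(U)$, and transfer back along $n$. The only differences are cosmetic: where the paper concludes $ne_j \in U(U^{-1}U) = U$ directly from the composition law of the ultrafilter groupoid, you re-derive this step from the filter axioms via your membership criterion with common lower bounds, and you obtain the dual statement by passing to $n^\dagger \in U^{-1}$ rather than rewriting $n = \sum_i n(n^\dagger f_i n)$ --- both variants are sound.
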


\begin{proof}
Note that $ne_i=nn^\dagger ne_i$ for each $i$, and that the $n^\dagger ne_i\in I(B)$ are mutually orthogonal idempotents. So by replacing each $e_i$ with $n^\dagger n e_i$, we may assume that $n^\dagger ne_i=e_i$ for each $i \in \{1,\dotsc,k\}$, and hence $n^\dagger n=\sum_{i=1}^ke_i$. Since $n^\dagger n\in U^{-1}U\cap I(B)$, which is an ultrafilter of the Boolean algebra $I(B)$ by \cref{rem:ultrafilters on I(B)}, it follows that there is a unique $i \in \{1,\dotsc,k\}$ with $e_i \in U^{-1}U \cap I(B)$ (since the characteristic function of an ultrafilter on a Boolean algebra is a Boolean algebra homomorphism). Then $ne_i\in U(U^{-1}U)=U$. Moreover, if $ne_j\in U$ for some $j \in \{1,\dotsc,k\}$, then $e_j=(ne_j)^\dagger ne_j\in U^{-1}U \cap I(B)$, and so $j=i$. This proves the first statement.

For the dual result, note that $n = \sum_{i=1}^k f_i n = \sum_{i=1}^k n(n^\dagger f_i n)$, and that the $n^\dagger f_in$ are mutually orthogonal idempotents in $I(B)$. Therefore, $f_in=n(n^\dagger f_in)\in U$ for a unique $i \in \{1,\dotsc,k\}$ by the previous paragraph.
\end{proof}

\begin{lemma} \label{lem:scalarbasis}
Suppose that $A$ is an $R$-algebra, and that $B \subseteq A$ is a commutative subalgebra satisfying \cref{cond:torsion free} and satisfying property~\cref{item:ACP.local units} of \cref{def:ACP}. Let $\Sigma$ be the groupoid of ultrafilters of $N(B)$. For $t, s \in R^\times$ and $n \in N(B)$, we have the following.
\begin{enumerate}[label=(\alph*)]
\item \label{item1:scalarbasis}If $U \in \Sigma$, then $t U\coloneqq\{t m:m\in U\} \in \Sigma$.
\item \label{item2:scalarbasis} If $U \in \Sigma$, then $(t U)^{-1} = t^{-1}U^{-1}$.
\item \label{item3:scalarbasis} If $(U,W) \in \Sigmac$, then $(t U,s W) \in \Sigmac$ and $(t U)(s W) = (t s)(UW)$.
\item \label{item4:scalarbasis} If $U \in \Sigma$, then $\s(U)=\s(t U)$ and $\r(U) = \r(t U)$.
\item \label{item5:scalarbasis} $t \VV_n \coloneqq\{t U : U \in \VV_n\} =\VV_{tn}$.
\item \label{item6:scalarbasis} If $t \ne 1$, then $\VV_{t n} \cap \VV_n = \varnothing$.
\item \label{item7:scalarbasis} If $U,W \in \Sigmao$ with $tU = sW$, then $t=s$ and $U=W$.
\end{enumerate}
\end{lemma}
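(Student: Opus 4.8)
The plan is to prove the seven parts essentially in the stated order, since the later parts are built from the earlier ones: \cref{item4:scalarbasis} uses \cref{item2:scalarbasis,item3:scalarbasis}, \cref{item5:scalarbasis} uses \cref{item1:scalarbasis}, and \cref{item7:scalarbasis} uses \cref{item4:scalarbasis,item5:scalarbasis,item6:scalarbasis}. The workhorse throughout is \cref{lem:scalarn}: multiplication by a unit $t \in R^\times$ maps $N(B)$ bijectively to itself (with inverse given by $t^{-1}$), sends inverses to inverses via $(tn)^\dagger = t^{-1}n^\dagger$, and preserves the order, $n \le m \implies tn \le tm$. For \cref{item1:scalarbasis} I would first check that $U \mapsto tU$ sends filters to filters: membership in $N(B) {\setminus} \{0\}$ is clear since $t$ is a unit, and upward closure and downward directedness both transfer along multiplication by $t$ and $t^{-1}$ using order preservation. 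The clean way to get maximality is to observe that $U \mapsto tU$ and $V \mapsto t^{-1}V$ are mutually inverse, inclusion-preserving maps on the poset of filters, hence $U \mapsto tU$ is an order isomorphism and carries ultrafilters to ultrafilters. Part \cref{item2:scalarbasis} is then immediate: $(tU)^{-1} = \{(tn)^\dagger : n \in U\} = \{t^{-1}n^\dagger : n \in U\} = t^{-1}U^{-1}$ by \cref{lem:scalarn}.

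For \cref{item3:scalarbasis}, composability of $(tU, sW)$ follows from the characterisation that $(U,W) \in \Sigmac$ iff $mn \ne 0$ for all $m \in U$ and $n \in W$: given $tm \in tU$ and $sn \in sW$, we have $(tm)(sn) = (ts)(mn) \ne 0$ since $ts \in R^\times$ and $mn \ne 0$. For the product formula I would record the elementary fact that upclosure commutes with scalar multiplication, $(rS)^{\uparrow} = r(S^{\uparrow})$ for $r \in R^\times$ and $S \subseteq N(B)$, which is again just order preservation. Then $(tU)(sW) = \{(tm)(sn) : m \in U, n \in W\}^{\uparrow} = \big((ts)\{mn\}\big)^{\uparrow} = (ts)(\{mn\}^{\uparrow}) = (ts)(UW)$. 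Part \cref{item4:scalarbasis} drops out by applying \cref{item2:scalarbasis,item3:scalarbasis} to the composable pairs $(U^{-1},U)$ and $(U,U^{-1})$: $\s(tU) = (t^{-1}U^{-1})(tU) = (t^{-1}t)(U^{-1}U) = \s(U)$, and symmetrically $\r(tU) = \r(U)$.

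Part \cref{item5:scalarbasis} is a double inclusion using \cref{item1:scalarbasis}: if $n \in U$ then $tn \in tU$, so $t\VV_n \subseteq \VV_{tn}$; conversely, if $tn \in V$ then $n \in t^{-1}V \in \Sigma$, whence $V = t(t^{-1}V) \in t\VV_n$. For \cref{item6:scalarbasis}, suppose $U \in \VV_{tn} \cap \VV_n$, so $n, tn \in U$; by downward directedness choose $p \in U$ with $p \le n$ and $p \le tn$. From $p \le n$ we get $p = pp^\dagger n$, and then $p \le tn$ gives $p = pp^\dagger(tn) = t(pp^\dagger n) = tp$; right-multiplying by $p^\dagger$ yields $(1-t)pp^\dagger = 0$, and since $pp^\dagger \in I(B) {\setminus} \{0\}$, \cref{cond:torsion free} forces $t = 1$, a contradiction. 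Finally, \cref{item7:scalarbasis} combines the above: from $tU = sW$ and \cref{item4:scalarbasis} we get $U = \s(tU) = \s(sW) = W$ (using that $\s$ fixes units of $\Sigmao$); then $tU = sU$ gives $(t^{-1}s)U = U$, so for any idempotent $e \in U$ we have $e, (t^{-1}s)e \in U$, i.e.\ $U \in \VV_{(t^{-1}s)e} \cap \VV_e$, which by \cref{item6:scalarbasis} is empty unless $t^{-1}s = 1$.

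The only genuinely non-formal steps are the maximality argument in \cref{item1:scalarbasis} and part \cref{item6:scalarbasis}. For the former, I expect the main temptation to be a direct ``no proper extension'' argument; the order-isomorphism viewpoint is cleaner and avoids this. Part \cref{item6:scalarbasis} is the real crux, and is the one place where the torsion-free hypothesis \cref{cond:torsion free} is indispensable (it is precisely what rules out nontrivial scalar stabilisers); its conclusion then propagates directly into the scalar-rigidity statement \cref{item7:scalarbasis}. Everything else is bookkeeping resting on \cref{lem:scalarn} and the interchange of upclosure with scalar multiplication.
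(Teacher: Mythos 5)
Your proposal is correct and follows essentially the same route as the paper: parts (a), (b), (d), and (e) are argued identically, and the crux in (f) and (g) is, exactly as in the paper, the scalar-rigidity of idempotents coming from \labelcref{cond:torsion free}. The only harmless local deviations are that in (c) you compute the product directly using that upclosure commutes with multiplication by a unit, where the paper proves the single inclusion $(ts)(UW) \subseteq (tU)(sW)$ and invokes maximality of ultrafilters, and that in (g) you first deduce $U = W$ from (d) and then apply (f) to a common idempotent, where the paper instead shows $t^{-1}s = 1$ directly by taking a common lower bound of $e_1 \in U$ and $t^{-1}s\,e_2 \in U$ and citing parts \cref{item1:properties of le} and \cref{item2:properties of le} of \cref{lem:properties of le}.
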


\begin{proof}
For part~\cref{item1:scalarbasis}, notice that $tU \subseteq N(B)$ by \cref{lem:scalarn}\cref{item1:scalarn}. We first show that $t U$ is a filter. Fix $t m, t n \in t U$. Since $U$ is a filter, there exists $k \in U$ such that $k \le m$ and $k \le n$. Then $tk \in tU$, and \cref{lem:scalarn}\cref{item2:scalarn} implies that $t k \le t m$ and $t k \le t n$. Next suppose that $p \in N(B)$ and that there exists $t m \in t U$ with $t m \le p$. Then $m \le t^{-1}p$ by \cref{lem:scalarn}\cref{item2:scalarn} again. Again using that $U$ is a filter, we have $t^{-1}p \in U$, and hence $p = t t^{-1}p \in t U$. Also, $0 \notin tU$, for otherwise $0=t^{-1}0\in U$. Hence $t U$ is a filter.

To see that $t U$ is an ultrafilter, suppose that $t U \subseteq W$, where $W$ is a filter. Then $U \subseteq t^{-1}W$. Since $U$ is an ultrafilter and $t^{-1}W$ is a filter by our previous argument, $U = t^{-1}W$, and hence $t U = W$.

For part~\cref{item2:scalarbasis}, \cref{lem:scalarn}\cref{item1:scalarn} implies that
\[
(t U)^{-1} = \{(t m)^\dagger : m \in U\} = \{t^{-1}m^\dagger : m \in U\} = t^{-1}U^{-1}.
\]

For part~\cref{item3:scalarbasis}, fix $(U,W) \in \Sigmac$. We begin by verifying that $tU$ and $sW$ are composable. Suppose that $m\in tU$ and $n\in sW$. Then $m=tm'$ with $m'\in U$ and $n=sn'$ with $n'\in W$. Since $(U,W)\in \Sigmac$, we have that $m'n'\ne 0$. Therefore, $mn=(ts)m'n'\ne 0$ since $ts \in R^\times$. Thus $(tU,sW) \in \Sigmac$. Since $(t s)(UW)$ is an ultrafilter by part~\cref{item1:scalarbasis}, it suffices to show that $(ts)(UW) \subseteq (t U)(s W)$. Fix $k \in (ts)(UW)$. Then there exists $m\in UW$ such that $k=(ts)m$. Therefore, there exist $n\in U$ and $p\in W$ such that $np\le m$, and so $(tn)(sp)=(ts)(np)\le (ts)m=k$. Hence $k\in (tU)(sW)$, and so $(t U)(s W) = (t s)(UW)$.

Item~\cref{item4:scalarbasis} follows easily from parts~\cref{item2:scalarbasis,item3:scalarbasis}.

For part~\cref{item5:scalarbasis}, the containment $t \VV_n \subseteq \VV_{t n}$ follows from part~\cref{item1:scalarbasis}. For the reverse containment, fix $W \in \VV_{tn}$. Then $t n \in W \in \Sigma$, and hence $n \in t ^{-1}W \in \Sigma$ by part~\cref{item1:scalarbasis}. So $t^{-1} W \in \VV_n$, and $W = t (t ^{-1}W) \in t \VV_n$.

For part~\cref{item6:scalarbasis}, we prove the contrapositive. For this, suppose that $U \in \VV_{t n} \cap \VV_n$. Then $n \ne 0$ and $tn, n \in U$, and so there exists $m \in U$ such that $0 \ne m \le t n, n$. Thus $0 \ne mm^\dagger\le tnn^\dagger$, and so $t=1$ by \cref{lem:properties of le}\cref{item2:properties of le}.

For part~\cref{item7:scalarbasis}, suppose that $U,W \in \Sigmao$ satisfy $tU = sW$. Then $U = (t^{-1}s)W$, and so it suffices to show that $t^{-1}s = 1$. For this, fix $e_1, e_2 \in I(B)$ with $e_1 \in U$ and $e_2 \in W$. Then $t^{-1}s e_2 \in U$, and so there exists $f \in U$ such that $0 \ne f \le e_1, t^{-1}s e_2$. Now by \cref{lem:properties of le}\cref{item1:properties of le}, we have $f \in I(B)$, and so \cref{lem:properties of le}\cref{item2:properties of le} implies that $t^{-1}s = 1$.
\end{proof}

\subsection{The groupoid \texorpdfstring{$G$}{G}} \label{sec:G def}

Let $A$ be an $R$-algebra, and let $B$ be a commutative subalgebra of $A$ satisfying \cref{cond:torsion free} and satisfying property \cref{item:ACP.local units} of \cref{def:ACP}. We define a relation $\simeq$ on $\Sigma$, the groupoid of ultrafilters of the inverse semigroup $N(B)$, by
\begin{equation} \label{eqn:scalar equivalence}
U \simeq W \iff \text{there exists } t \in R^\times \text{ such that } U = t W.
\end{equation}
Then $\simeq$ is an equivalence relation. We define $G$ to be the quotient $\Sigma/{\simeq}$ endowed with the quotient topology, and we denote the corresponding quotient map by $q\colon \Sigma \to G$. In the following lemma we show that $G$ is a groupoid that inherits its structure from $\Sigma$.

\begin{lemma} \label{lem:Ggroupoid}
Suppose that $A$ is an $R$-algebra, and that $B \subseteq A$ is a commutative subalgebra satisfying \cref{cond:torsion free} and satisfying property~\cref{item:ACP.local units} of \cref{def:ACP}. Let $\Sigma$ be the groupoid of ultrafilters of $N(B)$, and let $q\colon\Sigma \to G$ be the quotient map defined above.
\begin{enumerate}[label=(\alph*)]
\item \label{item1:Ggroupoid} The quotient $G = \{q(U) : U \in \Sigma\}$ is a groupoid with inversion given by $q(U)^{-1} \coloneqq q(U^{-1})$, composable pairs $\Gc \coloneqq \{(q(U),q(W)) : (U,W) \in \Sigmac\}$, and composition given by $q(U) q(W) \coloneqq q(UW)$ for all $(U,W) \in \Sigmac$. We have $\s(q(U)) = q(\s(U))$ and $\r(q(U)) = q(\r(U))$ for all $U \in \Sigma$, and so $\Go = q(\Sigmao)$.
\item \label{item2:Ggroupoid} The quotient map $q\colon \Sigma \to G$ is a groupoid homomorphism.
\item \label{item3:Ggroupoid} The quotient map restricts to a bijection $q\restr{\Sigmao}\colon \Sigmao \to \Go$.
\end{enumerate}
\end{lemma}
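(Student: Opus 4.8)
The plan is to treat the entire lemma as a verification that the groupoid operations on $\Sigma$ descend to the quotient $G = \Sigma/{\simeq}$, with every well-definedness check supplied by the scaling-compatibility results of \cref{lem:scalarbasis}. Since $q$ is surjective by construction, once I know that inversion, partial multiplication, the composability relation, and the source and range maps are all compatible with $\simeq$, the groupoid axioms for $G$ will follow by pushing the corresponding identities in $\Sigma$ through $q$.

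For part~\cref{item1:Ggroupoid}, I would first check that each operation respects $\simeq$. Inversion: if $U = tW$ with $t \in R^\times$, then $U^{-1} = t^{-1}W^{-1}$ by \cref{lem:scalarbasis}\cref{item2:scalarbasis}, so $U^{-1} \simeq W^{-1}$ and $q(U^{-1})$ depends only on $q(U)$. Source and range are even simpler: \cref{lem:scalarbasis}\cref{item4:scalarbasis} gives $\s(tW) = \s(W)$ and $\r(tW) = \r(W)$, so $\s$ and $\r$ are literally constant on $\simeq$-classes, and hence $\s(q(U)) \coloneqq q(\s(U))$ and $\r(q(U)) \coloneqq q(\r(U))$ are well defined. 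The one step needing care is composability: I would argue that membership in $\Sigmac$ is a property of the pair of classes. Given $U = tU'$ and $W = sW'$, applying \cref{lem:scalarbasis}\cref{item3:scalarbasis} with the scalars $t,s$ shows composability is preserved under scaling, while applying it with $t^{-1},s^{-1}$ shows it is reflected; thus $(U,W) \in \Sigmac \iff (U',W') \in \Sigmac$, so $\Gc$ is well defined, and the same part gives $UW = (tU')(sW') = (ts)(U'W') \simeq U'W'$, so composition descends. With these compatibilities in hand, associativity, the inverse identities, and the unit laws for $G$ are immediate consequences of those for $\Sigma$, since each equation holds at the level of representatives and $q$ is surjective. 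Finally, the units of $G$ are exactly the elements $\s(q(U)) = q(\s(U))$ with $\s(U) \in \Sigmao$, giving $\Go = q(\Sigmao)$.

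Part~\cref{item2:Ggroupoid} is then immediate: the formulas $q(U)^{-1} = q(U^{-1})$ and $q(U)q(W) = q(UW)$ for $(U,W) \in \Sigmac$ are precisely the definitions adopted in part~\cref{item1:Ggroupoid}, and $q$ carries composable pairs to composable pairs by the definition of $\Gc$, so $q$ is a groupoid homomorphism by construction. For part~\cref{item3:Ggroupoid}, surjectivity of $q\restr{\Sigmao}\colon \Sigmao \to \Go$ is just the identity $\Go = q(\Sigmao)$ from part~\cref{item1:Ggroupoid}, and injectivity is where \cref{lem:scalarbasis}\cref{item7:scalarbasis} does the work: if $U, W \in \Sigmao$ satisfy $q(U) = q(W)$, then $U = tW$ for some $t \in R^\times$, i.e.\ $1 \cdot U = t \cdot W$, and the uniqueness statement forces $t = 1$ and $U = W$.

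The only genuinely delicate point, and the one I would state most carefully, is the two-sided nature of the composability check: scaling by $t,s$ and by $t^{-1},s^{-1}$ must both be invoked so that $\Sigmac$ induces a \emph{well-defined} relation $\Gc$ on classes, rather than merely a relation preserved in one direction. Everything else is routine transport of structure through the quotient map.
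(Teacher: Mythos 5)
Your proposal is correct and follows essentially the same route as the paper's proof: well-definedness of inversion, composability, and multiplication via \cref{lem:scalarbasis}, transport of the groupoid axioms through the surjection $q$, and injectivity of $q\restr{\Sigmao}$ via \cref{lem:scalarbasis}\cref{item7:scalarbasis} with $t=1$. Your only addition is making explicit the two-sided (preserved \emph{and} reflected) composability check, a point the paper handles implicitly since \cref{lem:scalarbasis}\cref{item3:scalarbasis} applied with the inverse scalars supplies the converse direction.
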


\begin{proof}
For part~\cref{item1:Ggroupoid}, note that \cref{lem:scalarbasis}\cref{item2:scalarbasis} implies that the inverse is well-defined. Composability and the product are well-defined by \cref{lem:scalarbasis}\cref{item3:scalarbasis}. The remainder of the groupoid structure comes from the groupoid structure of $\Sigma$, and part~\cref{item2:Ggroupoid} follows as well. For part~\cref{item3:Ggroupoid}, first note that $q(\Sigmao) = \Go$ by part~\cref{item1:Ggroupoid}. Injectivity of $q\restr{\Sigmao}$ follows from \cref{lem:scalarbasis}\cref{item7:scalarbasis} with $t=1$.
\end{proof}

\begin{lemma} \label{lem:qmap}
Suppose that $A$ is an $R$-algebra, and that $B \subseteq A$ is a commutative subalgebra satisfying \cref{cond:torsion free} and satisfying property \cref{item:ACP.local units} of \cref{def:ACP}. Let $\Sigma$ be the groupoid of ultrafilters of $N(B)$, and let $q\colon \Sigma \to G$ be the quotient map defined above.
\begin{enumerate}[label=(\alph*)]
\item \label{item1:qmap} The collection $\{\VV_n: n \in N(B)\}$ forms a basis of compact open bisections for the topology on $\Sigma$. In particular, $\Sigma$ is an ample groupoid.
\item \label{item2:qmap} The quotient map is open and restricts to a homeomorphism of unit spaces.
\item \label{item3:qmap} The collection $\{q(\VV_n): n \in N(B)\}$ forms a basis of compact open bisections for the quotient topology on $G$. In particular, $G$ is an ample groupoid.
\end{enumerate}
\end{lemma}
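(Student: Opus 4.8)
The plan is to prove the three parts in order, relying on the fact (already recorded above, via \cite[Lemma~3.2]{ACaHJL2021}) that $\{\VV_n : n \in N(B)\}$ is a basis of open bisections for $\Sigma$, together with the scalar-action properties collected in \cref{lem:scalarbasis} and the compactness of $\VV_e$ for $e \in I(B)$ from \cref{rem:ultrafilters on I(B)}.

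For part~\cref{item1:qmap}, since the $\VV_n$ are already open bisections forming a basis, the only thing left to check is that each $\VV_n$ is compact. First I would observe that, because $\VV_n$ is a bisection, $\s\restr{\VV_n}$ is a homeomorphism of $\VV_n$ onto $\s(\VV_n)$, and that $\s(\VV_n) = \VV_n^{-1}\VV_n = \VV_{n^\dagger}\VV_n = \VV_{n^\dagger n}$, using the identities $\VV_n^{-1} = \VV_{n^\dagger}$ and $\VV_n\VV_m = \VV_{nm}$ recorded above together with the fact that for any bisection $S$ one has $S^{-1}S = \s(S)$. Since $n^\dagger n \in I(B)$, \cref{rem:ultrafilters on I(B)} gives that $\VV_{n^\dagger n}$ is compact, and hence so is its homeomorphic preimage $\VV_n$. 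Thus $\{\VV_n\}$ is a basis of compact open bisections and $\Sigma$ is ample.

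For part~\cref{item2:qmap}, I would first prove that $q$ is open. For any open $W \subseteq \Sigma$, its saturation is $q^{-1}(q(W)) = \bigcup_{t \in R^\times} tW$; each map $U \mapsto tU$ is a homeomorphism of $\Sigma$ (by \cref{lem:scalarbasis}\cref{item5:scalarbasis} it carries the basic set $\VV_n$ onto $\VV_{tn}$, with continuous inverse $U \mapsto t^{-1}U$), so each $tW$ is open, whence $q^{-1}(q(W))$ is open and therefore $q(W)$ is open in the quotient topology. For the unit spaces, \cref{lem:Ggroupoid}\cref{item3:Ggroupoid} already gives that $q\restr{\Sigmao}$ is a continuous bijection onto $\Go$; since $\Sigmao$ is open in $\Sigma$ and $q$ is open, $q$ carries sets that are open in $\Sigmao$ to open subsets of $G$ contained in $\Go$, so $q\restr{\Sigmao}$ is open as well, hence a homeomorphism.

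For part~\cref{item3:qmap}, each $q(\VV_n)$ is open (as $q$ is open) and compact (as $\VV_n$ is compact and $q$ continuous); and because $q$ is an open continuous surjection while $\{\VV_n\}$ is a basis for $\Sigma$, the images $\{q(\VV_n)\}$ form a basis for $G$. It then remains to check that each $q(\VV_n)$ is a bisection. I would show $\s$ is injective on $q(\VV_n)$: if $U, W \in \VV_n$ satisfy $\s(q(U)) = \s(q(W))$, then $q(\s(U)) = q(\s(W))$ by \cref{lem:Ggroupoid}\cref{item1:Ggroupoid}, so $\s(U) = t\,\s(W)$ for some $t \in R^\times$; as both lie in $\Sigmao$, \cref{lem:scalarbasis}\cref{item7:scalarbasis} forces $t = 1$ and $\s(U) = \s(W)$, whence $U = W$ because $\VV_n$ is a bisection, giving $q(U) = q(W)$. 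Injectivity of $\r$ is symmetric. Finally I would identify $\s(q(\VV_n)) = q(\s(\VV_n)) = q(\VV_{n^\dagger n})$, which is open in $\Go$ by part~\cref{item2:qmap}, and verify that $\s\restr{q(\VV_n)}$ is a homeomorphism onto it by writing its inverse as a composite of the continuous maps $q$, $(\s\restr{\VV_n})^{-1}$, and $(q\restr{\VV_{n^\dagger n}})^{-1}$; the same argument handles $\r$. Hence $q(\VV_n)$ is a compact open bisection, $G$ has a basis of such sets, and $G$ is ample.

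The main obstacle is the bisection claim in part~\cref{item3:qmap}: one must rule out that two distinct points of $q(\VV_n)$ share a source (or range), and this is precisely where the rigidity of the scalar action on the unit space, \cref{lem:scalarbasis}\cref{item7:scalarbasis}, is indispensable, since a priori passing to the quotient by $R^\times$ could destroy the bisection property.
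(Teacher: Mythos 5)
Your parts~\cref{item1:qmap} and~\cref{item2:qmap} follow essentially the same route as the paper: the paper proves compactness of $\VV_n$ via $\r(\VV_n) = \VV_n\VV_{n^\dagger} = \VV_{nn^\dagger}$ and \cref{rem:ultrafilters on I(B)} (you use $\s(\VV_n) = \VV_{n^\dagger n}$, which is the mirror image), and it proves openness of $q$ by checking that the saturation $q^{-1}(q(\VV_n)) = \bigcup_t \VV_{tn}$ of each basic set is open (your version for arbitrary opens via the homeomorphisms $U \mapsto tU$ is the same idea). In part~\cref{item3:qmap}, your injectivity argument for $\s$ on $q(\VV_n)$ is just an unwinding of \cref{lem:Ggroupoid}\cref{item3:Ggroupoid} through \cref{lem:scalarbasis}\cref{item7:scalarbasis}, which is precisely how the paper proves that lemma, so this is the same argument in expanded form; your extra direct verification that $\s\restr{q(\VV_n)}$ is a homeomorphism onto the open set $q(\VV_{n^\dagger n})$ is correct and is something the paper gets instead as a by-product of its final citation.

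There is, however, one genuine omission. The conclusion ``$G$ is an ample groupoid'' presupposes that $G$ is a \emph{topological} groupoid, i.e.\ that multiplication $\Gc \to G$ and inversion are continuous for the quotient topology, and your proof never addresses this; the paper closes exactly this step by invoking \cite[Proposition~6.6]{BS2019}. Having a basis of compact open bisections in the sense verified does not by itself give continuity of multiplication. The gap is patchable without the citation: by \cref{lem:scalarbasis}\cref{item3:scalarbasis} one has $\Sigmac = (q \times q)^{-1}(\Gc)$, so $q \times q$ restricts to a continuous open surjection $\Sigmac \to \Gc$, which is therefore a quotient map; continuity of multiplication on $G$ then follows because its composition with this quotient map equals $q$ composed with the (continuous) multiplication on $\Sigma$, and inversion is handled the same way. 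Note also that the continuity of $\s_G$, which you implicitly use when asserting that $\s\restr{q(\VV_n)}$ is a homeomorphism, needs the same (easy) observation that $\s_G \circ q = q \circ \s_\Sigma$ together with $q$ being a quotient map. With this step added, your proof is complete.
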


\begin{proof}
For part~\cref{item1:qmap}, to see that $\Sigma$ is ample, we show that $\VV_n$ is compact for each $n \in N(B)$. Fix $n \in N(B)$. Then $\VV_n$ is homeomorphic to $\r(\VV_n)$ because $\VV_n$ is a bisection. Now
\[
\r(\VV_n) = \VV_n\VV_{n^\dagger} = \VV_{nn^\dagger},
\]
which is compact by \cref{rem:ultrafilters on I(B)}.

For part~\cref{item2:qmap}, since $G$ is endowed with the quotient topology by definition, to see that $q$ is open, it suffices to show that $q^{-1}(q(\VV_n))$ is open in $\Sigma$ for any $n \in N(B)$. Using \cref{lem:scalarbasis}\cref{item5:scalarbasis} for the last equality, we calculate
\[
q^{-1}(q(\VV_n)) = \{t U : t \in R^\times \text{ and } U \in \VV_n\} = \bigcup_{t \in R^\times} t\VV_n = \bigcup_{t \in R^\times} \VV_{t n},
\]
which is open in $\Sigma$. That $q$ restricts to a homeomorphism of unit spaces follows from \cref{lem:Ggroupoid}\cref{item3:Ggroupoid}.

For part~\cref{item3:qmap}, compactness of each $q(\VV_n)$ follows from part~\cref{item1:qmap}, because $q$ is continuous. Since $\{\VV_n : n \in N(B)\}$ is a basis for $\Sigma$ and $q$ is surjective, the collection $\{q(\VV_n):n \in N(B)\}$ covers $G$. Part~\cref{item2:qmap} shows that each $q(\VV_n)$ is open in $G$. So to see that $\{q(\VV_n) : n \in N(B)\}$ is a basis for the quotient topology, it suffices to show that for each open subset $O$ of $G$ and each element $q(U) \in O$, there exists $n \in N(B)$ such that
\[
q(U) \in q(\VV_n) \subseteq O.
\]
Since $O$ is open in the quotient topology on $G$, $q^{-1}(O)$ is open in $\Sigma$. Since $U \in q^{-1}(O)$ and $\{\VV_n : n \in N(B)\}$ is a basis for $\Sigma$, there exists $n \in N(B)$ such that $q(U) \in q(\VV_n) \subseteq O$, and so $\{q(\VV_n) : n \in N(B)\}$ is a basis. To see that the sets $q(\VV_n)$ are open bisections, we need to show that the source and range maps are injective on each $q(\VV_n)$. But this follows from parts~\cref{item1:Ggroupoid} and \cref{item3:Ggroupoid} of \cref{lem:Ggroupoid}, since $\VV_n$ is an open bisection. Now \cite[Proposition~6.6]{BS2019} implies that $G$ is a topological groupoid, and hence $G$ is ample.
\end{proof}

\subsection{The twist of an algebraic quasi-Cartan pair} \label{sec:twist def}

The main theorem of this section is that if $(A,B)$ is an algebraic quasi-Cartan pair, then $(\Sigma,i,q)$ is a discrete $R$-twist over $G$, and $G$ is Hausdorff.

\begin{theorem} \label{thm:the twist}
Suppose that $A$ is an $R$-algebra, and that $B \subseteq A$ is a commutative subalgebra satisfying \cref{cond:torsion free} and satisfying properties \cref{item:ACP.local units,item:ACP.IB spans} of \cref{def:ACP}. Let $\Sigma$ be the groupoid of ultrafilters of $N(B)$, let $G$ be the quotient of $\Sigma$ by the equivalence relation given in~\cref{eqn:scalar equivalence}, and let $q\colon \Sigma \to G$ be the quotient map. Define $i\colon \Go \times R^\times \to \Sigma$ by $i(q(U),t) = t U$ for $U \in \Sigmao$ and $t\in R^\times$. Then the sequence
\[
\Go \times R^\times \overset{i} \hookrightarrow \Sigma \overset{q} \twoheadrightarrow G
\]
is a discrete $R$-twist over $G$. If $(A,B)$ is an algebraic quasi-Cartan pair, then $G$ is Hausdorff.
\end{theorem}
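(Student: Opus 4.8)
The plan is to establish the two assertions separately. That the sequence is a discrete $R$-twist uses only \cref{cond:torsion free} and properties \cref{item:ACP.local units,item:ACP.IB spans}, and I would obtain it from \cref{prop:check.locally.trivial} via the implication \cref{item4:check.locally.trivial}$\implies$\cref{item1:check.locally.trivial}; as the remark after that proposition notes, this implication does not rely on Hausdorffness of $G$, which is essential here since Hausdorffness of $G$ is only claimed under the stronger quasi-Cartan hypothesis. Most of the required data is already assembled: \cref{lem:qmap} gives that $\Sigma$ and $G$ are ample (hence \'etale), that $q$ is open and restricts to a homeomorphism of unit spaces, and \cref{lem:Ggroupoid} gives that $q$ is a groupoid homomorphism with $\Go = q(\Sigmao)$.

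It remains to handle $i$ and to verify conditions \cref{item:exact,item:centrality}. The formula $i(q(U),t)=tU$ is well defined since $q\restr{\Sigmao}$ is a bijection (\cref{lem:Ggroupoid}\cref{item3:Ggroupoid}), it is injective by \cref{lem:scalarbasis}\cref{item7:scalarbasis}, and it is a groupoid homomorphism because $i(q(U),t)\,i(q(U),s)=(tU)(sU)=(ts)(UU)=(ts)U=i(q(U),ts)$ for $U\in\Sigmao$ by \cref{lem:scalarbasis}\cref{item3:scalarbasis}. Since $\Sigmao$ is open in the \'etale groupoid $\Sigma$ and each scalar map $U\mapsto tU$ is a homeomorphism carrying $\VV_n$ to $\VV_{tn}$ (\cref{lem:scalarbasis}\cref{item5:scalarbasis}), writing $i$ on $\Go\times\{t\}$ as the composite of $(q\restr{\Sigmao})^{-1}$ with $U\mapsto tU$ shows $i$ is continuous and open, and it restricts to a homeomorphism of unit spaces with $i(\Go\times\{1\})=\Sigmao$. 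Exactness \cref{item:exact} holds because the definition of $\simeq$ gives $q^{-1}(q(U))=R^\times U=i(\{q(U)\}\times R^\times)$, with $i$ injective and $q$ surjective; centrality \cref{item:centrality} holds because \cref{lem:scalarbasis}\cref{item3:scalarbasis} gives $(t\cdot\r(\sigma))\sigma=t\sigma=\sigma(t\cdot\s(\sigma))$. Feeding ``$i,q$ open and $\Sigma$ \'etale'' into \cref{prop:check.locally.trivial} then yields the twist.

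For the Hausdorffness of $G$ under the quasi-Cartan hypothesis, I would show that $\Go$ is closed in $G$; since $G$ is ample with Hausdorff unit space $\Go=q(\Sigmao)$, this is equivalent to $G$ being Hausdorff, and since $\{q(\VV_n):n\in N(B)\}$ is an open cover it suffices to prove that $\Go\cap q(\VV_n)$ is closed in $q(\VV_n)$ for each $n$. Using \cref{prop:canonical CE} I would take the maximum idempotent $e(n)\le n^\dagger n$ with $ne(n)\in B$, write $ne(n)=\sum_{i=1}^k t_ie_i$ with the $e_i$ mutually orthogonal and each $t_i\ne0$, and set $d(n):=\sum_{\{i:\,t_i\in R^\times\}}e_i$. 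The crux is the identity $\Go\cap q(\VV_n)=q(\VV_{nd(n)})$. Granting it, $\Go\cap q(\VV_n)$ is compact (a continuous image of the compact set $\VV_{nd(n)}$) and sits inside the Hausdorff space $q(\VV_n)$ (an open bisection, hence homeomorphic under $\s$ to an open subset of $\Go$), so it is closed in $q(\VV_n)$, as required.

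The main obstacle is this last identity, and in particular its nontrivial inclusion $\subseteq$: given $W\in\VV_n$ with $q(W)\in\Go$, I would extract a nonzero idempotent $e''\le n^\dagger n$ and a scalar $s'\in R^\times$ with $ne''=s'e''$, invoke the maximality of $e(n)$ to force $e''\le e(n)$, and then use \cref{cond:torsion free} to cancel and deduce that $e''$ is supported exactly on the indices $i$ with $t_i=s'\in R^\times$, whence $e''\le d(n)$. This is precisely where the quasi-Cartan hypothesis enters: the passage from $ne(n)\in B$ to its $R^\times$-supported part $d(n)$ relies on the existence and maximality of $e(n)$ from \cref{prop:canonical CE} (which itself needs the faithful conditional expectation implemented by idempotents) together with \cref{cond:torsion free}. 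The subtlety is exactly that $ne(n)$ may carry coefficients in $R\setminus R^\times$, which contribute to $q(\VV_n)$ but not to $\Go$, so one cannot simply use $e(n)$ in place of $d(n)$. This Hausdorffness argument is naturally isolated as \cref{prop:Hausdorff}.
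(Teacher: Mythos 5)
Your proposal is correct, and the first half is essentially the paper's own argument: the paper also assembles \cref{lem:Ggroupoid,lem:qmap,lem:scalarbasis} (well-definedness and injectivity of $i$ via \cref{lem:scalarbasis}\cref{item7:scalarbasis}, exactness via $q^{-1}(q(U)) = R^\times U$, centrality via \cref{lem:scalarbasis}\cref{item3:scalarbasis}, openness of $i$ via $i(\VV_e\times\{t\})=\VV_{te}$) and then invokes exactly the implication \cref{item4:check.locally.trivial}$\implies$\cref{item1:check.locally.trivial} of \cref{prop:check.locally.trivial}, noting as you do that this implication is available without Hausdorffness.

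Your Hausdorffness argument, however, takes a genuinely different route from \cref{prop:Hausdorff}. The paper shows directly that $q^{-1}(G\setminus\Go)$ is open: given an ultrafilter $U$ containing no element $te$ with $t\in R^\times$ and $e\in I(B)$, it picks $n\in U$, takes $f\in I(B)$ with $P(n)=nf=fn$, uses \cref{lem:properties of le}\cref{item5:properties of le} and \cref{lem:ultrafilter partition} to conclude $n-nf\in U$, and then shows $\VV_{n-nf}$ is disjoint from $q^{-1}(\Go)$ by a common-lower-bound argument. You instead prove $\Go$ is closed by identifying its trace on each basic bisection: $\Go\cap q(\VV_n)=q(\VV_{ne(n)})$, which is compact inside the Hausdorff set $q(\VV_n)$, hence closed there. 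Your key inclusion goes through: from $W=tU\in\VV_n$ with $U\in\Sigmao$ one extracts a nonzero idempotent $m\in U$ with $m\le t^{-1}n$, whence $nm=mn=tm$ and $m\le n^\dagger n$, so maximality in \cref{prop:canonical CE} gives $m\le e(n)$ and then $tm=ne(n)m\le ne(n)$ lies in $W$, forcing $ne(n)\in W$. The one inaccuracy is your claimed ``subtlety'' that $ne(n)$ may carry coefficients in $R\setminus R^\times$: under the quasi-Cartan hypothesis this cannot happen, since $ne(n)=P(n)\in N(B)\cap B$ and \cref{cor:units.in.front.diag.norm} (via \cref{lem:diagonal normalisers}) forces every coefficient in an orthogonal-idempotent decomposition to be a unit, so your $d(n)$ equals $e(n)$ and the coefficient-sorting step is redundant---though harmless, as the argument runs verbatim with $e(n)$. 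On the trade-off: the paper's proof is a local, purely order-theoretic argument about ultrafilters outside $q^{-1}(\Go)$, needing only some $f$ implementing $P(n)$, while yours trades this for a compactness argument that additionally yields the pleasant structural identity that the unit-space part of each basic set $q(\VV_n)$ is again basic compact open, namely $q(\VV_{ne(n)})$ (compare \cref{lem:Bunits}, which gives the easy inclusion $q(\VV_{ne(n)})\subseteq\Go$).
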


Since $B$ is without torsion as in \cref{cond:torsion free}, \cite[Th\'eor\`eme~1]{Keimel1970} shows that there is an isomorphism $\phi\colon B \to A_R(\Sigmao)$ that satisfies
\begin{equation} \label{eqn:phi description}
\phi(e) = 1_{\VV_e} \text{ for all } e \in I(B).
\end{equation}

In \cite[Th\'eor\`eme~1]{Keimel1970}, Keimel asks that $B$ and $R$ satisfy the stronger property that if $tb=0$ with $t\in R$ and $b \in B\setminus \{0\}$, then $t=0$. But the only time this property is used in the proof is in \cite[2.13]{Keimel1970}, when, in fact, $b \in I(B)$. Moreover, in the proof of~\cite[Corollaire~3]{Keimel1970}, Keimel implicitly uses that \cref{cond:torsion free} is strong enough to obtain the result (and the corollary is, in fact, formally equivalent to the theorem holding under the weaker hypothesis \labelcref{cond:torsion free}).

To start the proof of \cref{thm:the twist}, we show that if $(A,B)$ is an algebraic quasi-Cartan pair then $G$ is Hausdorff (see \cref{prop:Hausdorff}). For this, we need the following lemma.

\begin{lemma} \label{lem:diagonal normalisers}
Suppose that $(A,B)$ is an algebraic quasi-Cartan pair. If $n \in N(B) \cap B$, then $n^\dagger \in B$, and $\phi(n)(U) \in R^\times \cup\{0\}$ for all $U \in \Sigmao$.
\end{lemma}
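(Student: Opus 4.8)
The plan is to reduce both assertions to a single claim about scalars: writing $n=\sum_{i=1}^k t_i e_i$ with the $e_i\in I(B)$ mutually orthogonal and nonzero and each $t_i\in R\setminus\{0\}$ (possible since $B=\vecspan(I(B))$), I will show that every $t_i$ lies in $R^\times$. Granting this, the first assertion follows immediately: the element $m\coloneqq\sum_{i=1}^k t_i^{-1}e_i\in B$ satisfies $nmn=n$, $mnm=m$, and $mBn\cup nBm\subseteq B$ (all products taking place inside the commutative algebra $B$), so by the uniqueness in \cref{lem:dagger} we get $n^\dagger=m\in B$. The second assertion is then also immediate: since $\phi(e_i)=1_{\VV_{e_i}}$ by \cref{eqn:phi description}, we have $\phi(n)=\sum_i t_i 1_{\VV_{e_i}}$, and for $U\in\Sigmao$ at most one $e_i$ lies in $U$ (two orthogonal nonzero idempotents cannot both belong to an ultrafilter, as their meet is $0$), so $\phi(n)(U)\in\{t_1,\dotsc,t_k,0\}\subseteq R^\times\cup\{0\}$.

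To prove the scalar claim I first reduce to a single idempotent. Since $ne_i=t_ie_i$ and $N(B)$ is an inverse semigroup (hence closed under products and containing $I(B)$), each $a_i\coloneqq t_i e_i=ne_i$ lies in $N(B)$. Thus it suffices to prove: if $a=te$ with $e\in I(B)\setminus\{0\}$ and $t\in R\setminus\{0\}$ lies in $N(B)$, then $t\in R^\times$ and $a^\dagger=t^{-1}e$. Write $k\coloneqq a^\dagger$, so that $ak,ka\in I(B)$. From $ak\cdot a=a$ and \cref{cond:torsion free} (applied to the idempotent $e-(ak)e$) I obtain $e\le ak$, and symmetrically $e\le ka$; writing $g\coloneqq ka$ and $h\coloneqq ak$, this records the relations $tke=g$, $tek=h$, $eg=e$, and $eh=e$.

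The heart of the argument is the identity $tk=gh$: using $kak=k$ one has $t\,kek=k$, hence $t^2 kek=tk$, while on the other hand $(tke)(tek)=t^2k e e k=t^2 kek$, so $tk=gh\in B$. Expanding $a\cdot(tk)$ in two ways---as $a\cdot gh=t(egh)=te=a$ (using $eg=eh=e$) and as $(te)(tk)=t(tek)=th$---yields $t(h-e)=0$, whence $h=e$ by \cref{cond:torsion free} (as $h-e$ is idempotent); the symmetric computation with $(tk)\cdot a$ gives $g=e$, and therefore $tk=gh=e$ and $k=ek=ke$. Finally I feed $tk=e$ into the faithful conditional expectation $P\colon A\to B$ furnished by \cref{def:ACP}: applying $P$ gives $t\,P(k)=e$ with $P(k)=P(ek)=eP(k)\in B$ supported beneath $e$; expanding $P(k)=\sum_j u_j f_j$ with the $f_j\le e$ orthogonal and nonzero and comparing coefficients against $e=t\sum_j u_j f_j$ shows $(1-tu_l)f_l=0$, so $tu_l=1$ by \cref{cond:torsion free}. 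Since $e\ne0$ forces $P(k)\ne0$, such an $f_l$ exists and $t\in R^\times$.

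The main obstacle is the key identity $tk=gh$ together with the deduction $ak=ka=e$: these are the non-obvious algebraic manipulations that convert the abstract regularity of the normaliser into the concrete relation $tk=e$, from which invertibility of $t$ drops out via the conditional expectation. Throughout, the torsion-freeness hypothesis \cref{cond:torsion free} must be applied only to genuine idempotents (such as $e-(ak)e$, $h-e$, and $g-e$), so at each invocation one should first verify that the relevant difference is idempotent before appealing to \cref{cond:torsion free}.
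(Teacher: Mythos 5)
Your proposal is correct, but it takes a genuinely different route from the paper's proof. The paper argues in two short steps: $B$-bilinearity of the conditional expectation gives $n^\dagger = n^\dagger n n^\dagger = P(n^\dagger n)n^\dagger = P(n^\dagger)nn^\dagger = P(n^\dagger nn^\dagger) = P(n^\dagger) \in B$ in one line, and then multiplicativity of $\phi$ applied to $\phi(n^\dagger)\phi(n) = \phi(n^\dagger n) = 1_{\VV_{n^\dagger n}}$ yields the pointwise invertibility on $\supp(\phi(n))$. You instead prove the scalar claim---every coefficient in an orthogonal-idempotent decomposition of $n$ is a unit---head-on, via the inverse-semigroup computation $e \le ak, ka$ (from \cref{cond:torsion free}), the key identity $tk = (ka)(ak)$, and the deduction $ak = ka = e$ and $tk = e$, invoking $P$ only at the very end (and only through its $R$- and $B$-linearity) to push $tk = e$ into $B$ where \cref{cond:torsion free} finishes. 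In effect you establish \cref{cor:units.in.front.diag.norm} first and deduce the lemma from it, reversing the paper's logical order, since the paper derives that corollary from this lemma. Your route buys the explicit formula $n^\dagger = \sum_i t_i^{-1}e_i$ and a proof of the second assertion that uses $\phi$ only through \cref{eqn:phi description} and linearity rather than its multiplicativity; the cost is length, against the paper's four-line argument. Two details merit a sentence each in a write-up: the degenerate case $n = 0$ should be set aside (your decomposition is empty there), and the claim that $P(k)$ admits an orthogonal decomposition with all $f_j \le e$ needs the observation that $P(k) = eP(k)e$ (from $k = ek = ke$ and $B$-linearity), so one may multiply any orthogonal decomposition through by $e$ and discard the zero terms; neither affects the validity of the argument.
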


\begin{proof}
Suppose that $n \in N(B) \cap B$. Then
\[
n^\dagger = n^\dagger nn^\dagger = P(n^\dagger n)n^\dagger =P(n^\dagger)nn^\dagger = P(n^\dagger nn^\dagger)=P(n^\dagger)\in B,
\]
proving the first statement. Since $n^\dagger n$ is an idempotent element of $B$ and is a right identity for $n$, and since $R$ is indecomposable, we have $\phi(n^\dagger n) = 1_W$ for some compact open set $W$ containing the support of $\phi(n)$. Since $\phi$ is multiplicative, we deduce that $\phi(n^\dagger)(U) \, \phi(n)(U) = 1$ for all $U \in \supp(\phi(n))$, and so each $\phi(n^\dagger)(U)$ is an inverse for $\phi(n)(U)$ (because $R$ is commutative).
\end{proof}

The following consequence of \cref{lem:diagonal normalisers} is used frequently, often without comment.

\begin{cor} \label{cor:units.in.front.diag.norm}
Suppose that $(A,B)$ is an algebraic quasi-Cartan pair. Let $n\in N(B) \cap B$, and suppose that $n=\sum_{i=1}^kt_ie_i$, where $e_1, \dotsc, e_k$ are mutually orthogonal idempotents and $t_1, \dotsc, t_k \in R {\setminus} \{0\}$. Then $t_i \in R^\times$ for each $i \in \{1,\dotsc,k\}$.
\end{cor}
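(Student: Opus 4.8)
The plan is to read off each coefficient $t_i$ as a value of the function $\phi(n) \in A_R(\Sigmao)$ at a suitable point of the unit space, and then invoke the second conclusion of \cref{lem:diagonal normalisers}, which says precisely that $\phi(n)(U) \in R^\times \cup \{0\}$ for every $U \in \Sigmao$. Since $n \in N(B) \cap B$ lies in $B$, the isomorphism $\phi\colon B \to A_R(\Sigmao)$ from \cref{eqn:phi description} applies, and by $R$-linearity together with $\phi(e_i) = 1_{\VV_{e_i}}$ we have $\phi(n) = \sum_{i=1}^k t_i\, 1_{\VV_{e_i}}$. Throughout I take each $e_i \neq 0$ (the convention in force for such orthogonal decompositions, as in the atom decomposition of \cref{sec:prelims}), since this is exactly the content of the claim.

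The key fact is that the basic compact open sets $\VV_{e_1}, \dotsc, \VV_{e_k}$ are pairwise disjoint. For $i \ne j$, if a unit-space ultrafilter $U$ contained both $e_i$ and $e_j$, then $U \cap I(B)$ would be an ultrafilter of the Boolean algebra $I(B)$ (\cref{rem:ultrafilters on I(B)}) whose characteristic function $\chi$ is a Boolean homomorphism; but then $1 = \chi(e_i)\chi(e_j) = \chi(e_ie_j) = \chi(0) = 0$, which is absurd. Hence no $U \in \Sigmao$ meets two of the $\VV_{e_i}$, and so for any $U \in \VV_{e_i}$ the sum defining $\phi(n)(U)$ collapses to the single term $t_i$.

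It then remains only to produce, for each $i$, a point at which to evaluate. Since $e_i \ne 0$, the principal filter $\{e_i\}^{\uparrow}$ avoids $0$ and extends by Zorn's lemma to an ultrafilter $U \in \VV_{e_i}$; equivalently, Stone duality for the Boolean algebra $I(B)$ (via \cref{rem:ultrafilters on I(B)}) gives $\VV_{e_i} \ne \varnothing$. For such $U$ we have $\phi(n)(U) = t_i$ by the previous paragraph, while \cref{lem:diagonal normalisers} forces $\phi(n)(U) \in R^\times \cup \{0\}$; as $t_i \ne 0$ by hypothesis, we conclude $t_i \in R^\times$. I expect no substantial obstacle: this is a short evaluation argument, and the only point needing care is the nonemptiness of $\VV_{e_i}$, which is exactly where the nonvanishing of $e_i$ and the Boolean-algebra structure of $I(B)$ come in.
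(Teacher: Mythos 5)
Your proof is correct and follows essentially the same route as the paper's: choose an ultrafilter in $\VV_{e_i}$ (nonempty since $e_i \ne 0$), observe that orthogonality makes the $\VV_{e_j}$ disjoint so that $\phi(n)$ evaluates there to $t_i$, and conclude $t_i \in R^\times$ from \cref{lem:diagonal normalisers}. The paper's proof is just a terser version of the same evaluation argument, with the disjointness and the existence of the ultrafilter left implicit.
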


\begin{proof}
Choose an ultrafilter $W\in \VV_{e_i}$. Then $e_j\in W$ if and only if $j=i$, and so $\phi(n)(W) = t_i \in R^\times$ by \cref{lem:diagonal normalisers}.
\end{proof}

\begin{remark} \label{rem:N(B) cap B characterisation}
If $(A,B)$ is an algebraic quasi-Cartan pair, then since $B = \vecspan{(I(B))}$, every $n \in B$ can be expressed in the form $n = \sum_{i=1}^k t_i e_i$, where $e_1, \dotsc, e_k \in I(B)$ are mutually orthogonal idempotents and $t_1, \dotsc, t_k \in R$. It follows (using \cref{cor:units.in.front.diag.norm}) that $n \in N(B) \cap B$ if and only if $t_1, \dotsc, t_k \in R^\times \cup \{0\}$. Thus
\[
N(B) \cap B = \big\{ b \in B : \phi(b)(U) \in R^\times \cup \{0\} \text{ for all } U \in \Sigmao \big\}.
\]
\end{remark}

\begin{prop} \label{prop:Hausdorff}
Suppose that $(A,B)$ is an algebraic quasi-Cartan pair. Then $G$ is Hausdorff.
\end{prop}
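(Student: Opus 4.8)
The plan is to prove that $\Go$ is closed in $G$. Since $G$ is an ample (hence \'etale) groupoid by \cref{lem:qmap}, and since its unit space is Hausdorff (because $q\restr{\Sigmao}$ is a homeomorphism onto $\Go$ by \cref{lem:qmap}\cref{item2:qmap} and $\Sigmao$ is a Hausdorff subspace of $\Sigma$), the characterisation of Hausdorffness for \'etale groupoids recalled in \cref{sec:prelims} then yields that $G$ is Hausdorff. So I would fix $g = q(W) \in G {\setminus} \Go$ and exhibit an open neighbourhood of $g$ disjoint from $\Go$.

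The crucial ingredient is a criterion telling us when a basic open set $q(\VV_n)$ misses $\Go$. Using \cref{lem:scalarbasis}, \cref{rem:ultrafilters on I(B)}, and the description of $\Sigmao$, one checks that $q(\VV_n) \cap \Go \ne \varnothing$ if and only if there exist $t \in R^\times$ and a nonzero idempotent $f \le t n$. Unwinding the partial order via \cref{lem:properties of le}, the relation $f \le tn$ means precisely that $f \le n^\dagger n$ and $nf = t^{-1}f \in B$. By the maximality property of $e(n)$ from \cref{prop:canonical CE}, such an $f$ must satisfy $f \le e(n)$, so that
\[
e(n) = 0 \implies q(\VV_n) \cap \Go = \varnothing .
\]
I expect the order-theoretic bookkeeping here (passing between $f \le tn$ and the identities $nf = t^{-1}f$, $fn^\dagger n = f$) to be the fiddliest routine part, but it is controlled entirely by \cref{lem:properties of le}.

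With this in hand I would split into two cases according to whether $g$ is isotropy. If $\s(g) \ne \r(g)$, then since $\Go$ is Hausdorff and $\s, \r$ are continuous, I would choose disjoint open sets $O_1 \ni \s(g)$ and $O_2 \ni \r(g)$ in $\Go$; the open neighbourhood $\s^{-1}(O_1) \cap \r^{-1}(O_2)$ of $g$ then meets $\Go$ only in $O_1 \cap O_2 = \varnothing$, since $\s$ and $\r$ agree on units. If instead $\s(g) = \r(g)$, then as $q\restr{\Sigmao}$ is injective we get $\s(W) = \r(W)$, so $W$ lies in the isotropy of $\Sigma$ while $q(W) \notin \Go$. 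Here I would pick any $n \in W$ and write $n = n\,e(n) + n\big(n^\dagger n - e(n)\big)$ as a sum over the orthogonal idempotents $e(n)$ and $n^\dagger n - e(n)$. By \cref{lem:ultrafilter partition}, exactly one summand lies in $W$. If $n\,e(n) = P(n) \in W$, then writing $P(n) = \sum_i t_i e_i$ with $t_i \in R^\times$ (\cref{cor:units.in.front.diag.norm}) and applying \cref{lem:ultrafilter partition} once more gives some $t_i e_i \in W$; scaling by $t_i^{-1}$ via \cref{lem:scalarbasis}\cref{item1:scalarbasis} produces an idempotent in $t_i^{-1}W$, whence $t_i^{-1}W \in \Sigmao$ and $q(W) \in \Go$, a contradiction. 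Therefore $n' \coloneqq n - n\,e(n) \in W$, and a short computation (again using the maximality of $e(n)$ in \cref{prop:canonical CE}) shows $e(n') = 0$; the key criterion then makes $q(\VV_{n'})$ an open neighbourhood of $g$ disjoint from $\Go$.

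The conceptual crux—and the only place the full quasi-Cartan hypothesis is needed—is the dichotomy in the isotropy case: the ``diagonal part'' $P(n)$ of a normaliser in $W$ either already displays $q(W)$ as a scalar multiple of a unit, or can be subtracted off to leave a normaliser $n'$ supported off the diagonal in the strong sense $e(n') = 0$. This hinges on the conditional expectation being implemented by idempotents, accessed through \cref{prop:canonical CE}, which is exactly the structural input an algebraic quasi-Cartan pair supplies and which I expect to be the main obstacle to overcome.
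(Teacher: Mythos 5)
Your proposal is correct and takes essentially the same route as the paper's own proof: both decompose a normaliser $n \in W$ into its diagonal part $P(n) = ne(n)$ plus the remainder, invoke \cref{lem:ultrafilter partition} (with \cref{cor:units.in.front.diag.norm}) to rule out the diagonal summand and conclude $n - ne(n) \in W$, and then show via a common-lower-bound-of-idempotents computation that $\VV_{n - ne(n)}$ misses $q^{-1}(\Go)$. The only differences are organisational: the paper proves openness of $q^{-1}(G \setminus \Go)$ upstairs in $\Sigma$ all at once---which makes your case split on $\s(g) \ne \r(g)$ redundant, since your isotropy-case argument never actually uses $\s(W) = \r(W)$---and it computes directly with $P$ in the disjointness claim where you route the same content through the maximality of $e(n)$ from \cref{prop:canonical CE}.
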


\begin{proof}
It suffices by \cite[Lemma~8.3.2]{Sims2020} to prove that $\Go$ is closed. Since $q$ is a quotient map, it suffices to show that $q^{-1}(G {\setminus} \Go)$ is open. This set consists of all ultrafilters $U$ of $N(B)$ such that $U$ contains no elements of the form $te$ with $e\in I(B)$ and $t\in R^\times$. Let $U$ be such an ultrafilter, and fix $n \in U$. By~\cref{item:AQP.expectation}, there exists $f \in I(B)$ such that $P(n) = nf = fn\in N(B)\cap B$. Take mutually orthogonal idempotents $e_1, \dotsc, e_k \in I(B)$ and coefficients $t_1, \dotsc, t_k \in R{\setminus}\{0\}$ such that $nf=P(n) = \sum_{i=1}^kt_ie_i$, and note that $t_i \in R^\times$ for each $i \in \{1, \dotsc, k\}$, by \cref{cor:units.in.front.diag.norm}. Also, $nfe_i=t_ie_i$ for all $i \in \{1, \dotsc, k\}$, and so $nf=\sum_{i=1}^k nfe_i$.

Note that $n-nf \in N(B)$ by \cref{lem:properties of le}\cref{item5:properties of le}. Thus, since $n = n(n^\dagger n - n^\dagger nf)+nf$, we deduce that either $nf\in U$ or $n(n^\dagger n-n^\dagger nf)=n-nf\in U$ by \cref{lem:ultrafilter partition}. But if $nf\in U$, then since $nf=\sum_{i=1}^k nfe_i$, we must have $t_ie_i=nfe_i\in U$ for some $i \in \{1,\dotsc,k\}$ by \cref{lem:ultrafilter partition}, which is a contradiction to our hypothesis on $U$. Thus we must have $n-nf\in U$, and so $U\in \VV_{n-nf}$.

We claim that if $V\in \VV_{n-nf}$, then $V \notin q^{-1}(\Go)$. To see this, suppose for contradiction that $tg \in V$ for some $t \in R^\times$ and $g \in I(B)$. Then $tg$ and $n-nf$ have a common lower bound $m$ in $V$, and so $t^{-1}m\le g$. Hence $t^{-1}m \in I(B)$ by \cref{lem:properties of le}\cref{item1:properties of le}, and so $m=th$ for some idempotent $h\in I(B)$. Since $th = m \le n-nf \le n$, we have that $th = tht^{-1}hn = hn$. So
\[
th = P(th) = P(hn) = hP(n) = hnf = thf.
\]
But then $m = th = thf \le (n-nf)f = 0$, and so $m = 0$, which contradicts $m \in V$. Thus $V \notin q^{-1}(\Go)$, as required. We conclude that $q^{-1}(G {\setminus} \Go)$ is open and hence $\Go$ is closed, whence $G$ is Hausdorff.
\end{proof}

\begin{proof}[Proof of \cref{thm:the twist}]
\Cref{lem:Ggroupoid}\cref{item2:Ggroupoid} shows that $q$ is a groupoid homomorphism. It is continuous and surjective by definition. By \cref{lem:qmap}\cref{item2:qmap}, $q$ restricts to a homeomorphism of unit spaces. In what follows, we identify $\Go$ with $\Sigmao$ for cleaner notation.

To see that $i$ is a groupoid homomorphism, fix a composable pair $((U,t),(U,s)) \in (\Go \times
R^\times)^{(2)}$, where $U \in \Sigmao$. Then
\[
i((U,t)(U,s)) = i(U,ts) = ts U = ts UU.
\]
An application of \cref{lem:scalarbasis}\cref{item3:scalarbasis} shows that this is equal to $t U s U = i(U,t)i(U,s)$, and hence $i$ is a groupoid homomorphism.

To see that $i$ is continuous, fix a basic open set $\VV_n \subseteq \Sigma$. Then
\begin{align*}
i^{-1}(\VV_n) &= \{(U,t) \in \Sigmao \times R^\times : t U \in \VV_n\} \\
&= \{(U,t) \in \Sigmao \times R^\times : U \in \VV_{t^{-1}n}\} \quad \text{by \cref{lem:scalarbasis}\cref{item5:scalarbasis} } \\
&=\bigcup_{t \in R^\times} (\VV_{t^{-1}n} \cap \Sigmao) \times \{t\}.
\end{align*}
Since $\VV_{t^{-1}n} \cap \Sigmao$ is open in $\Sigmao \cong \Go$, we deduce that $i^{-1}(\VV_n)$ is open in $\Go \times R^\times$.

\Cref{lem:scalarbasis}\cref{item7:scalarbasis} shows that $i$ is injective, and clearly $i(\Go\times \{1\})=\Sigmao$. For the exactness condition~\cref{item:exact}, fix $U \in \Go$, and note that
\[
i(\{U\} \times R^\times) = \{t U : t \in R^\times\}= q^{-1}(U).
\]

We next check condition~\cref{item:centrality}, which requires that the image of $i$ is central in $\Sigma$. Fix $U \in \Sigma$ and $t \in R^\times$. Then we have
\[
i(\r(U), t)U = t (UU^{-1})U = t U = U(t(U^{-1}U)) = U i(\s(U),t)
\]
by \cref{lem:scalarbasis}\cref{item3:scalarbasis}.

We use the implication \mbox{\cref{item4:check.locally.trivial}$\implies$\cref{item1:check.locally.trivial}} of \cref{prop:check.locally.trivial} to complete the proof that the sequence satisfies \cref{item:exact}--\cref{item:centrality} (and hence is a discrete $R$-twist). We have that $\Sigma$ is ample and that the map $q$ is open by parts~\cref{item1:qmap,item2:qmap} of \cref{lem:qmap}. We show that $i$ is open. Fix a basic open set $\VV_e\times \{t\}$ with $e \in I(B)$ and $t\in R^{\times}$. Then, using \cref{lem:scalarbasis}\cref{item5:scalarbasis} for the last equality, we see that
\[
i(\VV_e\times \{t\}) = t\VV_e=\VV_{te}.
\]
Thus $i$ is an open map, as required.

The final statement now follows by \cref{prop:Hausdorff}.
\end{proof}

\section{The isomorphism \texorpdfstring{$A \cong A_R(G;\Sigma)$}{of A with a twisted Steinberg algebra}} \label{sec:main iso}

Let $(A,B)$ be an algebraic quasi-Cartan pair and let $\Sigma$ be the groupoid of ultrafilters of $N(B)$. Let $C(\Sigma,R)$ denote the $R$-module of continuous (or equivalently, locally constant) functions from $\Sigma$ to $R$ with pointwise operations. In this section we build a map from $A$ to $C(\Sigma,R)$ using both the faithful conditional expectation $P\colon A \to B$ that is implemented by idempotents and the isomorphism $\phi\colon B \to A_R(\Sigmao)$ from \cref{eqn:phi description} that satisfies $\phi(e) = 1_{\VV_e}$ for all $e \in I(B)$. We prove that this map is in fact an isomorphism of $A$ onto the twisted Steinberg algebra $A_R(G;\Sigma)$.

\begin{prop}
\label{prop:ahat} Suppose that $(A,B)$ is an algebraic quasi-Cartan pair, and let $G$ and $\Sigma$ be the groupoids constructed in \cref{sec:build twist}. Let $\phi\colon B \to A_R(\Sigmao)$ be the isomorphism from~\cref{eqn:phi description} that satisfies $\phi(e) = 1_{\VV_e}$ for all $e \in I(B)$. For each $a \in A$, there is a function $\widehat{a}\colon \Sigma \to R$ such that for any ultrafilter $U \in \Sigma$ and any $n \in U$,
\[
\widehat{a}(U) = \phi(P(n^\dagger a))(\s(U)).
\]
Furthermore,
\begin{enumerate}[label=(\alph*)]
\item \label{item1:ahat} $\widehat{a}$ is continuous;
\item \label{item2:ahat} the map $a \mapsto \widehat{a}$ from $A$ to $C(\Sigma,R)$ is $R$-linear;
\item \label{item3:ahat} the map $a \mapsto \widehat{a}$ from $A$ to $C(\Sigma,R)$ is injective;
\item \label{item4:ahat} $\widehat{a}(t U) = t^{-1} \widehat{a}(U)$ for every $t \in R^\times$ and $U \in \Sigma$;
\item \label{item5:ahat} for $b \in B$, we have $\widehat{b}\restr{\Sigmao} = \phi(b)$, and $\supp(\widehat{b}) \subseteq i(\Go \times R^\times)$.
\end{enumerate}
\end{prop}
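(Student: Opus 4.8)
The plan is to first show that the stated formula actually \emph{defines} a function---that $\phi(P(n^\dagger a))(\s(U))$ is independent of the choice of $n\in U$---and then to read off each of the five properties from this formula together with the scalar and inverse-semigroup structure recorded in \cref{lem:scalarn,lem:scalarbasis} and the fact (from \cref{eqn:phi description}) that $\phi$ is a multiplicative isomorphism with $\phi(e)=1_{\VV_e}$. For well-definedness, given $n,n'\in U$ I would use that $U$ is a filter to pick $p\in U$ with $p\le n,n'$, and compare the values at $p$ and at $n$. Setting $e\coloneqq p^\dagger p$, the relation $p\le n$ gives $p=ne$ and hence $p^\dagger=en^\dagger$; left $B$-linearity of $P$ then yields $P(p^\dagger a)=e\,P(n^\dagger a)$, so evaluating $\phi(P(p^\dagger a))=\phi(e)\phi(P(n^\dagger a))$ at $\s(U)$ gives $1_{\VV_e}(\s(U))\,\phi(P(n^\dagger a))(\s(U))$. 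Since $e=p^\dagger p\in U^{-1}U=\s(U)$, we have $1_{\VV_e}(\s(U))=1$, so the values agree; the same comparison with $n'$ closes the argument.

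Granting well-definedness, the continuity, linearity and contravariance claims are quick. For continuity, I would fix $U$, choose $n\in U$, and observe that on the basic open neighbourhood $\VV_n$ every $W$ contains $n$, so $\widehat a$ agrees there with $W\mapsto\phi(P(n^\dagger a))(\s(W))$, the composite of the locally constant function $\phi(P(n^\dagger a))\in A_R(\Sigmao)$ with the continuous map $\s\restr{\VV_n}$; hence $\widehat a$ is locally constant. For $R$-linearity, fixing a single $n\in U$ the map $a\mapsto\phi(P(n^\dagger a))(\s(U))$ is a composite of $R$-linear maps. For $R^\times$-contravariance, I would note that $tn\in tU$ by definition, that $(tn)^\dagger=t^{-1}n^\dagger$ by \cref{lem:scalarn}, and that $\s(tU)=\s(U)$ by \cref{lem:scalarbasis}; substituting $tn$ into the formula and pulling $t^{-1}$ through $P$ and $\phi$ gives $\widehat a(tU)=t^{-1}\widehat a(U)$.

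For injectivity I would argue contrapositively: if $a\ne0$, faithfulness of $P$ supplies $m\in N(B)$ with $P(ma)\ne0$. Since $ma=mm^\dagger\,ma$, right $B$-linearity gives $P(ma)=mm^\dagger P(ma)$, so $\supp(\phi(P(ma)))\subseteq\VV_{mm^\dagger}=\s(\VV_{m^\dagger})$. Choosing $U_0$ in this (nonempty) support and the unique $U\in\VV_{m^\dagger}$ with $\s(U)=U_0$, and using $(m^\dagger)^\dagger=m$, I get $\widehat a(U)=\phi(P(ma))(U_0)\ne0$, so $\widehat a\ne0$. The first part of claim~(e) is then a direct computation: for $U\in\Sigmao$ pick an idempotent $e\in U$, so $\widehat b(U)=\phi(P(eb))(U)=\phi(e)(U)\phi(b)(U)=\phi(b)(U)$, using $eb\in B$, $P\restr B=\id$, and $1_{\VV_e}(U)=1$.

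The main obstacle is the support inclusion $\supp(\widehat b)\subseteq i(\Go\times R^\times)=q^{-1}(\Go)$. By $R$-linearity and the decomposition of $b\in B$ into orthogonal idempotents, it suffices to treat $b=e\in I(B)$, and to show that $\widehat e(W)\ne0$ forces $W$ to contain an element of the form $tf$ with $t\in R^\times$ and $f\in I(B)$---the characterisation of $q^{-1}(\Go)$ used in \cref{prop:Hausdorff}. Here I would fix $n\in W$ and set $p\coloneqq n^\dagger e\in N(B)$, noting $np=nn^\dagger e=\r(n)e\in I(B)$. Since $P$ is implemented by idempotents, $P(p)=ph$ for some $h\in I(B)$, so $nP(p)=(np)h=\r(n)eh\eqqcolon f_1\in I(B)$; moreover $P(p)\in N(B)\cap B$, so by \cref{cor:units.in.front.diag.norm} we may write $P(p)=\sum_j r_jg_j$ with $r_j\in R^\times$ and the $g_j$ orthogonal idempotents. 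If $\widehat e(W)=\phi(P(p))(\s(W))\ne0$, then some $g\coloneqq g_{j_0}\in\s(W)$ satisfies $P(p)g=rg$ with $r\coloneqq r_{j_0}\in R^\times$. Left-multiplying by $n$ gives, on one side, $nP(p)g=r(ng)$, and on the other, $nP(p)g=f_1g\in I(B)$; hence $ng=r^{-1}(f_1g)$ is of the desired form $tf$. Finally, because $g\in\s(W)=W^{-1}W$ and $(W,\s(W))$ is composable, $ng\ne0$ and $ng\in W\,\s(W)=W$, so $W$ contains $r^{-1}(f_1g)$ and therefore lies in $q^{-1}(\Go)$. This yields $\supp(\widehat e)\subseteq q^{-1}(\Go)$ and, by linearity, the same for every $b\in B$.
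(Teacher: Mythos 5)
Your proposal is correct, and its skeleton coincides with the paper's: the same common-lower-bound argument for well-definedness, the same scalar computation for~(d), the same use of $e\in U\cap I(B)$ for the first half of~(e), and the same reduction of the support claim to $b=e\in I(B)$, resting on the implemented-by-idempotents hypothesis and the unit-coefficient fact (\cref{cor:units.in.front.diag.norm}). Where you diverge is in execution, and each variant checks out. For continuity you observe that $\widehat a$ agrees on all of $\VV_n$ with the composite $\phi(P(n^\dagger a))\circ \s\restr{\VV_n}$ of a locally constant function with a continuous map; the paper instead manufactures a smaller neighbourhood $\VV_{ne}$ on which $\widehat a$ is constant, so your version is slightly cleaner. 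For injectivity you argue contrapositively, exhibiting a single ultrafilter $U\in\VV_{m^\dagger}$ (unique with $\s(U)=U_0$, since $\VV_{m^\dagger}$ is a bisection with $\s(\VV_{m^\dagger})=\VV_{mm^\dagger}$) at which $\widehat a(U)=\phi(P(ma))(U_0)\ne 0$; the paper proves $P(na)=0$ for \emph{every} $n\in N(B)$, splitting into the cases $nn^\dagger\in U$ and $nn^\dagger\notin U$, before invoking faithfulness---same ingredients, reorganised. The most substantive difference is in the support half of~(e): the paper applies the implemented-by-idempotents property to $n^\dagger$ (getting $P(n^\dagger)=n^\dagger f=fn^\dagger$), shows $fe\in\s(U)$, passes to $U^{-1}\ni fen^\dagger$, and invokes \cref{lem:ultrafilter partition} to isolate a summand $t_ie_i\in U^{-1}$; you apply it directly to $p=n^\dagger e$, derive the identity $r(ng)=nP(p)g=f_1g$ with $f_1=nn^\dagger eh\in I(B)$, and conclude $ng=r^{-1}(f_1g)\in W$ outright, with the pairwise disjointness of the $\VV_{g_j}$ selecting $g\in\s(W)$ in place of \cref{lem:ultrafilter partition}, and no passage to inverses---this is valid, since $n\in W$, $g\in\s(W)$, and $W\s(W)=W$ give $ng\in W$, while composability guarantees $ng\ne 0$, so $W\in q^{-1}(\Go)$ by the characterisation from \cref{prop:Hausdorff}. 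The only blemish is cosmetic: in the injectivity step you cite ``right'' $B$-linearity for $P(mm^\dagger\, ma)=mm^\dagger P(ma)$, where $mm^\dagger$ multiplies on the left.
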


\begin{proof}
To see that there exists a (well-defined) function $\widehat{a}$ satisfying the given formula, we must show that $\phi(P(n^\dagger a))(\s(U))$ is independent of the choice of $n \in U$. So fix $a \in A$, $U \in \Sigma$, and $n, m \in U$. We must show that
\[
\phi(P(n^\dagger a))(\s(U)) - \phi(P(m^\dagger a))(\s(U)) = 0.
\]
Observe that if $m,n \in U$, then there exists $k \in U$ such that $k \le m,n$, whence $k^\dagger\le m^\dagger,n^\dagger$. So
\[
k^\dagger km^\dagger = k^\dagger = k^\dagger kn^\dagger,
\]
and moreover,
\[
\phi(k^\dagger k)(\s(U)) =
1_{\VV_{k^\dagger k}}(U^{-1}U) = 1.
\]
Now, using that $\phi$ is multiplicative and that $P$ is a conditional expectation, we compute
\begin{align*}
\phi(P(n^\dagger a))(\s(U)) - \phi(P(m^\dagger a))(\s(U)) &= \phi(k^\dagger k)(\s(U)) \, \phi(P(n^\dagger a - m^\dagger a))(\s(U)) \\
&= \phi(P(k^\dagger kn^\dagger a - k^\dagger km^\dagger a))(\s(U)) \\
&= \phi(P(k^\dagger a - k^\dagger a))(\s(U)) \\
&= 0.
\end{align*}

For part~\cref{item1:ahat}, fix $a \in A$. We show that $\widehat{a}$ is locally constant. Let $U\in \Sigma$ and choose $n\in U$. The function $\phi(P(n^\dagger a)) \in A_R(\Sigmao)$ is locally constant, and so there is an idempotent $e \in \s(U)$ such that for each ultrafilter $W \in \Sigmao$ containing $e$, we have $\phi(P(n^\dagger a))(W)=\Phi(P(n^\dagger a))(\s(U))$. Let $m \in U$ with $m^\dagger m\le e$. Then $m$ and $n$ have a common lower bound $k \in U$, and hence $k^\dagger k\le m^\dagger m \le e$. Thus $k=nk^\dagger k \le ne$. So $ne \in U$, and hence $\VV_{ne}$ is an open neighbourhood of $U$. If $V \in \VV_{ne}$, then $n \in V$, because $ne \le n$ (by \cref{lem:properties of le}\cref{item4:properties of le}), and $e \in \s(V)$ because $(ne)^\dagger ne = en^\dagger ne \le e$. Therefore,
\[
\widehat{a}(V) = \phi(P(n^\dagger a))(\s(V))=\phi(P(n^\dagger a))(\s(U)) = \widehat{a}(U),
\]
by the choice of $e$. Thus $\widehat{a}$ is locally constant, and hence is continuous.

Part~\cref{item2:ahat} follows from the $R$-linearity of $P$ and $\phi$.

For part~\cref{item3:ahat}, suppose that $\widehat{a}=0$. Then $\widehat{a}(U) = 0$ for all $U \in \Sigma$. We claim that $\phi(P(na)) = 0$ for all $n \in N(B)$. Fix $n \in N(B)$ and $U \in \Sigmao$. We show that $\phi(P(na))(U) = 0$ by considering two cases. First, suppose that $nn^\dagger \in U$. Then
\[
U \in \VV_{nn^\dagger} = \VV_n\VV_{n^\dagger} = \s(\VV_{n^\dagger}),
\]
and so we can find an ultrafilter $W \in \VV_{n^\dagger}$ such that $\s(W)=U$. We then have
\[
\phi(P(na))(U) = \phi(P(na))(\s(W)) = \widehat{a}(W) = 0.
\]
For the second case, suppose that $nn^\dagger \notin U$. Then $U \notin \VV_{nn^\dagger}$, and since $P$ is a conditional expectation, we have
\[
\phi(P(na))(U) = \phi(P(nn^\dagger na))(U) = \phi(nn^\dagger)(U) \, \phi(P(na))(U) = 1_{\VV_{nn^\dagger}}(U) \, \phi(P(na))(U) = 0,
\]
as claimed. Since $\phi$ is injective, we deduce that $P(na)=0$ for all $n\in N(B)$. Now, since $P$ is faithful (property~\cref{item:ACP.FCE} of \cref{def:ACP}), we deduce that $a = 0$.

For part~\cref{item4:ahat}, fix $U \in \Sigma$ and $t \in R^\times$. Then $\s(U) = \s(tU)$ by \cref{lem:scalarbasis}\cref{item4:scalarbasis}. Let $n\in U$. Then $tn\in tU$ and $(tn)^\dagger =t^{-1}n^\dagger$ by \cref{lem:scalarn}\cref{item1:scalarn}. Therefore,
\[
\widehat{a}(tU) = \phi(P((tn)^\dagger a))(\s(tU)) = \phi(P(t^{-1}n^\dagger a))(\s(U)) = t^{-1}\phi(P(n^\dagger a))(\s(U)) = t^{-1}\widehat a(U),
\]
as required.

For part~\cref{item5:ahat}, fix $b \in B$ and $U \in \Sigmao$ with $e \in U \cap I(B)$, and note that $e=e^\dagger$. Then
\[
\widehat{b}(U) = \phi(P(eb))(\s(U)) = \phi(eb)(U)=\phi(e)(U)\phi(b)(U) = 1_{\VV_e}(U) \, \phi(b)(U) = \phi(b)(U).
\]
Finally, we show that the support of $\widehat{b}$ is contained in $i(\Go \times R^\times)$. Since $B$ is spanned by $I(B)$ and the map $a \mapsto \widehat{a}$ is $R$-linear by part~\cref{item2:ahat}, it suffices to consider the case where $b = e \in I(B)$. Fix $U \in \Sigma$ such that $\widehat{e}(U) \ne 0$. We claim that $U \in R^\times \cdot \Sigmao$. To see this, fix $n \in U$. Then
\[
\phi(P(n^\dagger e))(\s(U)) = \widehat{e}(U) \ne 0.
\]
Since
$(A,B)$ is an algebraic quasi-Cartan pair, there exists $f \in I(B)$ such that $fn^\dagger = n^\dagger f = P(n^\dagger)$, and so
\[
fen^\dagger =efn^\dagger = eP(n^\dagger) = P(n^\dagger) e=n^\dagger fe.
\]
Also, $P(n^\dagger e) = P(n^\dagger)e=n^\dagger fe$. We then have
\begin{align*}
0 \ne \phi(P(n^\dagger e))(\s(U)) &= \phi(n^\dagger fe)(\s(U)) \\
&= \phi(n^\dagger fefe)(\s(U)) \\
&= \phi(n^\dagger fe)(\s(U)) \, \phi(fe)(\s(U)) \\
&= \phi(n^\dagger fe)(\s(U)) \, 1_{\VV_{fe}}(\s(U)).
\end{align*}
Thus $fe \in \s(U)$, and so $nfe \in U\s(U) = U$. Therefore, $fen^\dagger\in U^{-1}$. Since $P(n^\dagger e) = n^\dagger fe \in N(B) \cap B$, \cref{rem:N(B) cap B characterisation} implies that $fen^\dagger=n^\dagger fe = P(n^\dagger e)$ can be expressed as a finite sum $\sum_{i=1}^k t_i e_i$, where $e_1,\dotsc,e_k \in I(B)$ are mutually orthogonal idempotents and $t_1,\dotsc,t_k \in R^\times$. Notice that for each $i \in \{1, \dotsc, k\}$, we have $t_ie_i = fen^\dagger e_i$, and so
\[
\sum_{i=1}^k fen^\dagger e_i = fen^\dagger \in U^{-1}.
\]
Since $U^{-1}$ is an ultrafilter, it follows from \cref{lem:ultrafilter partition} that there exists (a unique) $i \in \{1, \dotsc, k\}$ such that $t_i e_i = fen^\dagger e_i \in U^{-1}$. Hence $t_i^{-1}e_i=(t_ie_i)^\dagger \in U$ by \cref{lem:scalarn}\cref{item1:scalarn}, which forces $V \coloneqq t_i U \in \Sigmao$. Therefore, $U = t_i^{-1} V \in q^{-1}(\Go) = i(\Go \times R^\times)$.
\end{proof}

We now prove that if $(A,B)$ is an algebraic quasi-Cartan pair, and $G$ and $\Sigma$ are as constructed in \cref{sec:build twist}, then there is an isomorphism of $A$ onto $A_R(G;\Sigma)$ that carries $B$ to the canonical subalgebra isomorphic to $A_R(\Sigmao)$. We need the following technical lemma.

In what follows, we write $\widehat{P}(a) \coloneqq \widehat{P(a)}$ for each $a\in A$, where $a \mapsto \widehat{a} \colon A \to C(\Sigma,R)$ is the map from \cref{prop:ahat}.

\begin{lemma} \label{lem:cond.expectation.zero}
Let $R$ be an indecomposable commutative ring. Suppose that $(A,B)$ is an algebraic quasi-Cartan pair with faithful conditional expectation $P\colon A \to B$ implemented by idempotents. Let $G$ and $\Sigma$ be the groupoids constructed in \cref{sec:build twist}. Let $m,n\in N(B)$ and $U \in \Sigmao$. If $\widehat{P}(m n)(U) \ne 0$, then there exists $f \in U\cap I(B)$ such that $f m n = m n f = \widehat{P}(m n)(U) f \in B$.
\end{lemma}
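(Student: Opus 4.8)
The plan is to reduce everything to the single normaliser $p := mn \in N(B)$ (recall $N(B)$ is an inverse semigroup, hence closed under multiplication) and to extract the desired idempotent $f$ as a suitable atom in the canonical idempotent expansion of $P(p)$. First I would note that since $U \in \Sigmao$, \cref{prop:ahat}\cref{item5:ahat} gives $\widehat{P}(mn)(U) = \widehat{P(p)}(U) = \phi(P(p))(U)$, so the hypothesis reads $\phi(P(p))(U) \ne 0$.

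Next I would invoke \cref{prop:canonical CE}: because $P$ is implemented by idempotents, we may write $P(p) = \sum_{i=1}^k t_i e_i$ with $e_1,\dots,e_k \in I(B)$ mutually orthogonal and $t_1,\dots,t_k \in R\setminus\{0\}$ (in fact in $R^\times$, by \cref{cor:units.in.front.diag.norm}), and, setting $e(p) := \sum_{i=1}^k e_i \in I(B)$, we have $P(p) = e(p)\,p = p\,e(p)$. Applying the isomorphism $\phi$ yields $\phi(P(p)) = \sum_{i=1}^k t_i 1_{\VV_{e_i}}$. Since the $e_i$ are mutually orthogonal, the basic compact open sets $\VV_{e_i}$ are pairwise disjoint: if $U$ lay in $\VV_{e_i}\cap\VV_{e_j}$ with $i \ne j$, then $e_i,e_j$ would both lie in the ultrafilter $U\cap I(B)$ of the Boolean algebra $I(B)$ (\cref{rem:ultrafilters on I(B)}), forcing $e_ie_j = 0$ into $U$, which is absurd. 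Hence the nonvanishing of $\phi(P(p))(U)$ singles out a unique index $i_0$ with $e_{i_0} \in U$, and then $\phi(P(p))(U) = t_{i_0}$.

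I would then set $f := e_{i_0} \in U \cap I(B)$ and $t := t_{i_0} = \widehat{P}(mn)(U)$. Because $f$ is one of the mutually orthogonal summands of $e(p)$, we have $f\,e(p) = e(p)\,f = f$ (i.e.\ $f \le e(p)$) and $f\,e_i = e_i\,f = 0$ for $i \ne i_0$. The desired identities then follow by a short computation:
\[
f\,p = f\,e(p)\,p = f\,P(p) = \sum_{i=1}^k t_i\, f e_i = t\,f,
\]
and symmetrically $p\,f = p\,e(p)\,f = P(p)\,f = t\,f$. Since $t \in R$ and $f \in I(B) \subseteq B$, we have $tf \in B$, giving $fmn = mnf = \widehat{P}(mn)(U)\,f \in B$, as required.

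The computations here are entirely routine; the only real content is that the refined expansion of $P(p)$ supplied by \cref{prop:canonical CE} (rather than merely \emph{some} idempotent implementing $P$) lets us choose $f$ to be a single atom $e_{i_0}$ below $e(p)$, which is exactly what makes $f\,e(p) = f$ and hence $fp = fP(p)$ hold. I expect the main thing to get right is the disjointness of the $\VV_{e_i}$ and the consequent uniqueness of $i_0$, so that $f$ is unambiguously determined and $t$ is correctly read off as the single coefficient $t_{i_0}$.
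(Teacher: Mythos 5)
Your proof is correct, and it takes a slightly but genuinely different route from the paper's. The paper does not invoke \cref{prop:canonical CE} at this point: it starts from an arbitrary implementing idempotent $e_0$ with $e_0mn = mne_0 = P(mn)$, uses a $\phi$-evaluation of $\phi(mm^\dagger)(U)\,\phi(mne_0)(U)\,\phi(e_0)(U) \ne 0$ to deduce that $mm^\dagger, e_0 \in U$, forms the commuting idempotent $e \coloneqq e_0mm^\dagger \in U \cap I(B)$, and only then atomizes $eP(mn) = \sum_{f\in F} r_f f$ with all atoms below $e$, extracting the unique $f \in F \cap U$. You instead atomize $P(mn)$ itself and let \cref{prop:canonical CE} do the commuting work, since its proof identifies $e(p) = \sum_i e_i$ (independently of the chosen orthogonal decomposition, via the ``smallest idempotent with $P(p)f = P(p)$'' characterisation) and establishes $P(p) = p\,e(p) = e(p)\,p$; your atom $e_{i_0} \le e(p)$ then satisfies $fp = fe(p)p = fP(p) = t_{i_0}f$ and symmetrically $pf = t_{i_0}f$. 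Both arguments hinge on the same ultrafilter step --- the $\VV_{e_i}$ are pairwise disjoint because a filter in $N(B)\setminus\{0\}$ cannot contain $e_ie_j = 0$, so exactly one atom lies in $U$ and the coefficient is read off as $\widehat{P}(mn)(U)$ --- and your disjointness and uniqueness reasoning is exactly right. Your version is shorter because \cref{prop:canonical CE} (proved earlier, in \cref{sec:algebraic qcps}, with no reference to the groupoid construction, so there is no circularity) already packages the commutation $P(p) = pe(p) = e(p)p$ that the paper re-derives in situ from $e_0$ and $mm^\dagger$; what the paper's more hands-on route buys is independence from the canonical-maximum machinery, needing only the bare existence of some implementing idempotent. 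One cosmetic point: the identification of $e(p)$ with the sum of the atoms of your chosen decomposition lives in the \emph{proof} of \cref{prop:canonical CE} rather than its statement (which defines $e(p)$ as a maximum), so a careful write-up should either cite that step explicitly or repeat the one-line verification that $e(p)$ is the smallest idempotent $g$ with $P(p)g = P(p)$.
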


\begin{proof}
Suppose that $\widehat{P}(mn)(U) \ne 0$. Then $P(mn) \ne 0$ by \cref{prop:ahat}\cref{item2:ahat}, and since $P$ is implemented by idempotents, there exists $e_0 \in I(B)$ such that $e_0 mn = mn e_0 = P(mn)$. Since $P(mn) \in B$, \cref{prop:ahat}\cref{item5:ahat} implies that
\begin{align*}
0 &\ne \widehat{P}(mn)(U) = \phi(P(mn))(U) = \phi(mm^\dagger mne_0^2)(U) \\
&= \phi(mm^\dagger)(U) \, \phi(mne_0)(U) \, \phi(e_0)(U) = 1_{\VV_{mm^\dagger}}(U) \, \phi(mne_0)(U) \, 1_{\VV_{e_0}}(U),
\end{align*}
and so $mm^\dagger,e_0 \in U$. Hence $e \coloneqq e_0 mm^\dagger \in U \cap I(B)$ satisfies
\begin{equation} \label{eqn:emn=mne}
e mn = e e_0 mn = e P(mn) = P(mn)e = mn e_0 e = mn e.
\end{equation}
Since $e P(mn) \in B$, there is a finite set $F \subseteq I(B)$ of mutually orthogonal idempotents and nonzero coefficients $r_f \in R$ such that $e P(mn) = \sum_{f \in F} r_f f$; without loss of generality we may assume that $f \le e$ for each $f \in F$. Thus, using \cref{prop:ahat}\cref{item5:ahat} and \cref{eqn:phi description}, we see that
\[
0 \ne \widehat{P}(mn)(U) = \widehat{e}(U) \widehat{P}(mn)(U) = \sum_{f \in
F} r_f \widehat{f}(U) = \sum_{f \in F} r_f 1_{\VV_f}(U).
\]
Hence we deduce that $U \in \bigcup_{f\in F}\VV_f$. But the mutual orthogonality of the idempotents in $F$ implies that the $\VV_f$ are pairwise disjoint, and so there is a unique $f \in F \cap U$. Therefore, $r_f = \widehat{P}(mn)(U)$. Since $f \le e$, a similar argument to the one used in \cref{eqn:emn=mne} shows that $f$ commutes with $mn$, and $fmn = femn = feP(mn) = r_f f = \widehat{P}(mn)(U) f \in B$.
\end{proof}

\begin{lemma} \label{lem:nhat}
Let $(A,B)$ be an algebraic quasi-Cartan pair, and let $G$ and $\Sigma$ be the groupoids constructed in \cref{sec:build twist}. Then for $n \in N(B)$ and $U \in \Sigma$, we have
\[
\widehat{n}(U) = \begin{cases}
t^{-1} & \text{if } U \in \VV_{t n} \text{ for some } t \in R^\times \\
0 & \text{otherwise}.
\end{cases}
\]
In particular, $\widehat{n}$ is equal to the function $\tilde{1}_{\VV_n} \in A_R(G;\Sigma)$ of \cref{lem:unique extensions}.
\end{lemma}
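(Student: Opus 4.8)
The plan is to evaluate $\widehat{n}(U)$ directly from its defining formula $\widehat{n}(U) = \phi(P(m^\dagger n))(\s(U))$, using the freedom to choose any $m \in U$, and to treat the two cases separately. For the first case, suppose $U \in \VV_{tn}$ for some $t \in R^\times$; then I would take $m = tn \in U$, so that $m^\dagger = t^{-1}n^\dagger$ by \cref{lem:scalarn}\cref{item1:scalarn}, giving
\[
\widehat{n}(U) = \phi\big(P(t^{-1}n^\dagger n)\big)(\s(U)) = t^{-1}\phi(n^\dagger n)(\s(U)),
\]
since $n^\dagger n \in I(B)$ is fixed by $P$. As $m^\dagger m = n^\dagger n$ lies in $\s(U) = U^{-1}U$, we have $\phi(n^\dagger n)(\s(U)) = 1_{\VV_{n^\dagger n}}(\s(U)) = 1$, whence $\widehat{n}(U) = t^{-1}$. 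Taking $t = 1$ gives $\widehat{n}\restr{\VV_n} = 1$, and since $\widehat{n}(U)$ determines $t^{-1}$, this also shows $t$ is uniquely determined by $U$.

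For the converse I would argue contrapositively: assuming $r \coloneqq \widehat{n}(U) \ne 0$, I would produce $t \in R^\times$ with $tn \in U$. Fix any $m \in U$. Since $\widehat{P}(m^\dagger n)(\s(U)) = \phi(P(m^\dagger n))(\s(U)) = r \ne 0$ by \cref{prop:ahat}\cref{item5:ahat}, \cref{lem:cond.expectation.zero} supplies an idempotent $f \in \s(U) \cap I(B)$ with $m^\dagger n f = f m^\dagger n = rf \in B$. The element $m^\dagger n f$ is a product of normalisers with an idempotent, hence lies in $N(B) \cap B$, so \cref{rem:N(B) cap B characterisation} forces $\phi(m^\dagger n f)(\s(U)) \in R^\times \cup \{0\}$. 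But $\phi(m^\dagger n f)(\s(U)) = \phi(rf)(\s(U)) = r\,1_{\VV_f}(\s(U)) = r$ because $f \in \s(U)$, so in fact $r \in R^\times$. Setting $t \coloneqq r^{-1}$, the identity $rf = m^\dagger n f$ rearranges to $f = m^\dagger(tn)f$.

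It then remains to pass from $f = m^\dagger(tn)f$ to membership $tn \in U$. Multiplying on the left by $m$ yields $mf = mm^\dagger(tn)f$, and two applications of \cref{lem:properties of le}\cref{item4:properties of le} give $mm^\dagger(tn)f \le (tn)f \le tn$. Since $f \in \s(U)$ and $m \in U$, the product $mf$ lies in $U = U\s(U)$; as $U$ is upward closed and $mf \le tn$, we conclude $tn \in U$, that is, $U \in \VV_{tn}$. This establishes the displayed formula. For the ``in particular'' clause I would observe that $\widehat{n}$ is $R^\times$-contravariant (\cref{prop:ahat}\cref{item4:ahat}), locally constant (\cref{prop:ahat}\cref{item1:ahat}), and supported in $R^\times \cdot \VV_n$ with $q(\supp \widehat{n}) \subseteq q(\VV_n)$ compact, so $\widehat{n} \in A_R(G;\Sigma)$; since $\widehat{n}\restr{\VV_n} = 1$ and $\supp \widehat{n} \subseteq R^\times \cdot \VV_n$, the uniqueness clause of \cref{lem:unique extensions} identifies $\widehat{n}$ with $\tilde{1}_{\VV_n}$.

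The main obstacle is the converse direction, and specifically the proof that a nonzero value $r = \widehat{n}(U)$ is automatically a unit of $R$. This is precisely where the quasi-Cartan hypotheses are used, through \cref{lem:cond.expectation.zero} (which relies on $P$ being implemented by idempotents) together with the characterisation of $N(B)\cap B$ in \cref{rem:N(B) cap B characterisation}. Once the invertibility of $r$ is secured, the remaining order-theoretic step of locating $tn$ inside the ultrafilter $U$ is routine.
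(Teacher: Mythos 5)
Your proposal is correct and follows essentially the same route as the paper's proof: the same direct computation $\widehat{n}(U) = t^{-1}\phi(n^\dagger n)(\s(U)) = t^{-1}$ in the first case, and in the second case the same chain through \cref{prop:ahat}\cref{item5:ahat} and \cref{lem:cond.expectation.zero} to get $m^\dagger n f = rf \in N(B)\cap B$, invertibility of $r$, and then the order-theoretic step $mf \le tn$ with $mf \in U\s(U) = U$ to conclude $tn \in U$. The only cosmetic differences are that the paper cites \cref{lem:diagonal normalisers} directly where you route through \cref{rem:N(B) cap B characterisation} (the same underlying fact), and that you spell out the identification $\widehat{n} = \tilde{1}_{\VV_n}$ via \cref{lem:unique extensions}, which the paper leaves implicit.
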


\begin{proof}
For the first case, suppose that $U \in \VV_{t n}$ for some $t \in R^\times$, and let $\phi\colon B \to A_R(\Sigmao)$ be the isomorphism of \cref{eqn:phi description} that satisfies $\phi(e) = 1_{\VV_e}$ for all $e \in I(B)$. Then $(tn)^\dagger = t^{-1}n^\dagger$ by \cref{lem:scalarn}\cref{item1:scalarn}, and so $n^\dagger n= (tn)^\dagger tn\in \s(U)$. Therefore, we have
\[
\widehat{n}(U) = \phi(P(t^{-1}n^\dagger n))(\s(U)) = \phi(t^{-1}n^\dagger n)(\s(U)) = t^{-1} \phi(n^\dagger n)(\s(U)) = t^{-1}.
\]

For the second case, we prove the contrapositive. Suppose that $\widehat{n}(U) \ne 0$, and fix $m \in U$. Then by \cref{prop:ahat}\cref{item5:ahat} and the definition of $\widehat{n}$, we have
\[
\widehat{P}(m^\dagger n)(\s(U)) = \phi(P(m^\dagger n))(\s(U)) = \widehat{n}(U) \ne 0.
\]
Hence \cref{lem:cond.expectation.zero} shows that there exists an idempotent $f \in \s(U)$ such that $f m^\dagger n = m^\dagger nf = \widehat{n}(U) f$. Since $\widehat{n}(U) f = fm^\dagger n \in N(B) \cap B$, \cref{lem:diagonal normalisers} implies that $t \coloneqq \widehat{n}(U)$ belongs to $R^\times$. Since $f \in \s(U)$ and $m\in U$, we have $mf\in U$. Note that $t(mf) = m(tf) = mm^\dagger nf \le n$ by \cref{lem:properties of le}\cref{item4:properties of le}. Thus $mf \le t^{-1}n$ by \cref{lem:scalarn}\cref{item2:scalarn}, and so $t^{-1}n \in U$. That is, $U\in \VV_{t^{-1}n}$, as required.
\end{proof}

For the surjectivity in the main theorem (\cref{thm:main}), we need to know that each element of $A_R(G;\Sigma)$ can be written as a finite sum of elements of the form $t \widehat{n}$ (where $t \in R$ and $n \in N(B)$); we do this in the following two results, the first of which is standard.

\begin{lemma} \label{lem:disjointify}
Let $H$ be an ample groupoid and let $D$ be a compact open bisection of $H$. Let $\BB$ be an inverse semigroup of compact open bisections that form a basis for $H$ and whose idempotents are closed under relative complement and disjoint union. Then $D$ can be expressed as a finite disjoint union of elements of $\BB$.
\end{lemma}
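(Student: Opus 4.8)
The plan is to start by using that $\BB$ is a basis of compact open bisections for $H$ to obtain a finite cover of $D$ by elements of $\BB$ that are contained in $D$, and then to disjointify this cover using the generalized Boolean algebra structure on the idempotents of $\BB$. Since $D$ is open and $\BB$ is a basis, every $\gamma \in D$ lies in some $B \in \BB$ with $\gamma \in B \subseteq D$; as $D$ is compact, finitely many such sets suffice, so I would fix $B_1, \dots, B_n \in \BB$ with $B_i \subseteq D$ for each $i$ and $D = \bigcup_{i=1}^n B_i$.

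The crucial observation is that, because $D$ is a bisection and each $B_i \subseteq D$, membership in $B_j$ is detected by the source map: for $\gamma \in D$ and any $j$, one has $\gamma \in B_j$ if and only if $\s(\gamma) \in \s(B_j)$. Indeed, if $\s(\gamma) \in \s(B_j)$ then some $\delta \in B_j \subseteq D$ has $\s(\delta) = \s(\gamma)$, and since $\s$ is injective on the bisection $D$ we get $\gamma = \delta \in B_j$; the converse is immediate. Now for each $i$ set $F_i \coloneqq \s(B_i) \setminus \bigcup_{j<i} \s(B_j)$. Each $\s(B_i)$ equals the idempotent $B_i^{-1}B_i$, which lies in $\BB$ since $\BB$ is an inverse semigroup, and a finite union of idempotents of $\BB$ lies in $\BB$ (writing $E \cup F = E \sqcup (F \setminus E)$ and using closure under relative complement and disjoint union), so $F_i$ is again an idempotent of $\BB$. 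Defining $C_i \coloneqq B_i F_i$, which lies in $\BB$ as a product of elements of $\BB$, the source characterization above gives $C_i = \{\gamma \in B_i : \s(\gamma) \in F_i\} = B_i \setminus \bigcup_{j<i} B_j$. Hence the $C_i$ are pairwise disjoint, each $C_i \in \BB$, and $\bigsqcup_{i=1}^n C_i = \bigcup_{i=1}^n B_i = D$, as required.

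The main point requiring care---the step I would flag as the real content of the argument---is verifying that the disjointified pieces $C_i$ remain inside $\BB$. This is exactly where the two hypotheses on $\BB$ are used in tandem: the inverse semigroup structure supplies the source idempotents $B_i^{-1}B_i$ and the products $B_i F_i$, while closure of the idempotents under relative complement and disjoint union guarantees that the Boolean combination $F_i$ stays in $\BB$. The bisection property of $D$ is what lets me translate the set-theoretic relative complement $B_i \setminus \bigcup_{j<i}B_j$ of bisections into the product $B_i F_i$ with an idempotent, so that all three closure properties line up.
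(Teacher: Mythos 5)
Your proof is correct and follows essentially the same route as the paper: cover the compact bisection $D$ by finitely many elements of $\BB$ contained in $D$, then disjointify by multiplying each $B_i$ by a Boolean combination of source idempotents, using the bisection property of $D$ to detect membership in $B_j$ via $\s$. The only cosmetic difference is bookkeeping: you subtract the sources $\s(B_j)$ of the original (possibly overlapping) cover, so you first note that \emph{arbitrary} finite unions of idempotents stay in $\BB$ via $E \cup F = E \sqcup (F \setminus E)$, whereas the paper inducts on already-disjointified pieces $B_i'$, whose sources are automatically pairwise disjoint, so closure under disjoint union applies directly---an equivalent reorganisation of the same argument.
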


\begin{proof}
Since $\BB$ is a basis and $D$ is compact, we can certainly write $D=\bigcup_{i=1}^N B_i$ with $B_i\in \BB$. Put $B'_1=B_1$. Assume inductively that we have found $B_1',\dotsc, B_j'\in \BB$ pairwise disjoint with $\bigcup_{i=1}^j B_i=\bigsqcup_{i=1}^j B_i'$ for $1\le j<n$. Then put $B_{j+1}' = B_{j+1}\big(\s(B_{j+1}) \setminus \cup_{i=1}^j \s(B_i')\big)$. By assumption on $\BB$, we have that $B_{j+1}'\in \BB$ as $B_i ',B_k'\subseteq D$ disjoint implies that $\s(B_i')$ and $\s(B_k')$ are disjoint because $D$ is a bisection. Note that $B_{j+1}'$ is disjoint from $B_1',\dotsc, B_j'$ by construction.

Since $B_{j+1}'\subseteq B_{j+1}$, by the inductive assumption it suffices to show that $B_{j+1}\subseteq \bigsqcup_{i=1}^{j+1} B_i'$. Fix $\gamma \in B_{j+1}$. If $\s(\gamma) \notin \s(B_1') \cup \dotsb \cup \s(B_j')$, then trivially $\gamma\in B_{j+1}'$. If $\s(\gamma) \in \s(B_i')$ for some $i \in \{1,\dotsc,j\}$, then since $B_i',B_{j+1}\subseteq D$ and $D$ is a bisection, we must have that $\gamma\in B_i'$. This completes the proof.
\end{proof}

\begin{prop} \label{prop:hat ns span}
Let $(A,B)$ be an algebraic quasi-Cartan pair, and let $G$ and $\Sigma$ be the groupoids constructed in \cref{sec:build twist}. For any $f \in A_R(G;\Sigma)$, there exist $n_1,\dotsc,n_M \in N(B)$ and $t_1,\dotsc,t_M \in R$ such that
\[
f = \sum_{j=1}^M t_j \widehat{n_j}.
\]
\end{prop}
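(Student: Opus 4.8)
The plan is to reduce the statement, via \cref{prop:f is a finite sum of lifts of indicators}, to the single claim that for every compact open bisection $U$ of $\Sigma$, the function $\tilde{1}_U$ is a finite $R$-linear combination of functions of the form $\widehat{n}$ with $n \in N(B)$. Indeed, \cref{prop:f is a finite sum of lifts of indicators} writes any $f \in A_R(G;\Sigma)$ as $f = \sum_{U \in \FF} r_U \tilde{1}_U$ for a finite family $\FF$ of compact open bisections of $\Sigma$ and coefficients $r_U \in R$, so once each $\tilde{1}_U$ has been expressed in the required form, collecting terms yields the desired decomposition of $f$ (with the $t_j$ the products $r_U$ and the $n_j$ the normalisers produced below).

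To treat a single $\tilde{1}_U$, I would apply \cref{lem:disjointify} to the ample groupoid $H = \Sigma$ (ample by \cref{lem:qmap}\cref{item1:qmap}) with $\BB = \{\VV_n : n \in N(B)\}$. Recall that $\BB$ is a basis of compact open bisections for $\Sigma$ and is an inverse semigroup under $\VV_n \VV_m = \VV_{nm}$ and $\VV_n^{-1} = \VV_{n^\dagger}$, whose idempotents are the sets $\VV_e$ with $e \in I(B)$. The hypotheses of \cref{lem:disjointify} then require that these idempotents be closed under relative complement and disjoint union. This follows from \cref{rem:ultrafilters on I(B)}: the assignment $e \mapsto \VV_e$ is, by Stone duality, a Boolean-algebra isomorphism of $I(B)$ onto the compact open subsets of $\Sigmao$, so that $\VV_e {\setminus} \VV_f = \VV_{e - ef}$ (using that $e - ef$ is an idempotent below $e$ with $(e-ef)f = 0$) and $\VV_e \sqcup \VV_f = \VV_{e+f}$ whenever $ef = 0$. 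Thus \cref{lem:disjointify} applies and yields $n_1, \dotsc, n_k \in N(B)$ with $U = \bigsqcup_{i=1}^k \VV_{n_i}$.

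It then remains to show that $\tilde{1}_U = \sum_{i=1}^k \tilde{1}_{\VV_{n_i}}$, which by \cref{lem:nhat} equals $\sum_{i=1}^k \widehat{n_i}$. Here I would invoke the uniqueness clause of \cref{lem:unique extensions}. Since $U$ is a bisection, $q\restr{U}$ is injective, so the disjointness of the $\VV_{n_i}$ inside $U$ forces their images $q(\VV_{n_i})$ to be mutually disjoint in $G$; hence the supports $R^\times \cdot \VV_{n_i} = q^{-1}(q(\VV_{n_i}))$ are mutually disjoint. Consequently $\sum_{i=1}^k \tilde{1}_{\VV_{n_i}}$ is an element of $A_R(G;\Sigma)$ whose support is $R^\times \cdot U$ and which restricts to the constant function $1$ on $U = \bigsqcup_i \VV_{n_i}$; by uniqueness in \cref{lem:unique extensions} it therefore coincides with $\tilde{1}_U$. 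Substituting this into $f = \sum_{U \in \FF} r_U \tilde{1}_U$ completes the proof.

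I expect the only genuine subtlety to be the verification that the idempotents $\{\VV_e : e \in I(B)\}$ are closed under relative complement and disjoint union, since this is precisely the compatibility condition needed to invoke \cref{lem:disjointify}; this is essentially the Stone-duality statement recorded in \cref{rem:ultrafilters on I(B)}. Everything else is bookkeeping built on \cref{prop:f is a finite sum of lifts of indicators}, \cref{lem:nhat}, and the disjoint-support argument feeding into the uniqueness in \cref{lem:unique extensions}.
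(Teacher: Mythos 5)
Your proposal is correct and follows essentially the same route as the paper's proof: reduce via \cref{prop:f is a finite sum of lifts of indicators} to a single $\tilde{1}_U$, disjointify $U$ into sets $\VV_{n_i}$ using \cref{lem:disjointify} with the Stone-duality verification of the idempotent hypotheses, and identify $\tilde{1}_U$ with $\sum_i \widehat{n_i}$ via \cref{lem:nhat} and the uniqueness clause of \cref{lem:unique extensions}. Your explicit disjoint-support argument through the injectivity of $q\restr{U}$ is a slightly more detailed rendering of the step the paper attributes to \cref{lem:scalarbasis}\cref{item5:scalarbasis}, but it is the same idea.
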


\begin{proof}
By \cref{prop:f is a finite sum of lifts of indicators}, it suffices to prove the claim for $f = \tilde{1}_D$, where $D$ is a compact open bisection of $\Sigma$. The collection $\BB = \{\VV_n:n\in N(B)\}$ is an inverse semigroup of compact open bisections forming a basis for the topology on $\Sigma$. Moreover, if $e,f \in I(B)$, then $\VV_e\setminus \VV_f = \VV_{e-ef}$, and $\VV_e\cap \VV_f=\varnothing$ if and only if $ef=0$, in which case, $\VV_e \sqcup \VV_f = \VV_{e+f}$, by Stone duality applied to $I(B)$. Thus, by \cref{lem:disjointify}, we can express $D$ as a finite disjoint union of basic compact open sets, say, $D = \bigsqcup_{j=1}^M \VV_{n_j}$. Then $g\coloneqq\sum_{j=1}^M \widehat{n_j}$ is an element of $A_R(G;\Sigma)$ satisfying $\supp(g)\subseteq R^\times \cdot D$ and $g\restr{D} \equiv 1$, by \cref{lem:scalarbasis}\cref{item5:scalarbasis} and \cref{lem:nhat}. Therefore, \cref{lem:unique extensions}
gives $g = \tilde{1}_D$.
\end{proof}

\begin{theorem} \label{thm:main}
Suppose that $(A,B)$ is an algebraic quasi-Cartan pair. Let $G$ and $\Sigma$ be the groupoids constructed in \cref{sec:build twist}. Then the map $\varphi\colon a \mapsto \widehat{a}$ from $A$ to $C(\Sigma,R)$ defined in \cref{prop:ahat} is an isomorphism of $A$ onto $A_R(G;\Sigma)$ that takes $B$ to $A_R(\Go; q^{-1}(\Go))$, which is isomorphic to $A_R(\Sigmao)$, and hence also to $A_R(\Go)$.
\end{theorem}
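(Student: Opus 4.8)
The plan is to assemble the properties of the map $\varphi\colon a \mapsto \widehat a$ established in \cref{prop:ahat,lem:nhat,prop:hat ns span}, verifying in turn that $\varphi$ lands in $A_R(G;\Sigma)$, is bijective, is multiplicative, and carries $B$ onto the diagonal subalgebra. Most of the work has already been packaged into the preceding lemmas, so the theorem itself is largely an assembly of these pieces.

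First I would check that $\varphi(A) \subseteq A_R(G;\Sigma)$. By \cref{prop:ahat}\cref{item1:ahat,item4:ahat}, $\widehat a$ is continuous and $R^\times$-contravariant for every $a$, so the only remaining requirement is that $q(\supp(\widehat a))$ be compact. Since $A = \vecspan(N(B))$ by property~\cref{item:ACP.NB spans} of \cref{def:ACP}, I can write $a = \sum_i r_i n_i$ with $n_i \in N(B)$, and then $R$-linearity (\cref{prop:ahat}\cref{item2:ahat}) together with \cref{lem:nhat} gives $\widehat a = \sum_i r_i \tilde 1_{\VV_{n_i}}$. Each $\VV_{n_i}$ is a compact open bisection of $\Sigma$ by \cref{lem:qmap}\cref{item1:qmap}, so each $\tilde 1_{\VV_{n_i}} \in A_R(G;\Sigma)$ by \cref{lem:unique extensions}, and hence $\widehat a \in A_R(G;\Sigma)$ because this is an $R$-module. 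Injectivity of $\varphi$ is precisely \cref{prop:ahat}\cref{item3:ahat}. For surjectivity I would invoke \cref{prop:hat ns span}: any $f \in A_R(G;\Sigma)$ has the form $f = \sum_j t_j \widehat{n_j}$ with $n_j \in N(B)$ and $t_j \in R$, and since $\widehat{t_j n_j} = t_j \widehat{n_j}$ by $R$-linearity, we get $f = \varphi\big(\sum_j t_j n_j\big)$, which lies in the image.

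Multiplicativity is the step I expect to carry the most weight within this proof. As $\varphi$ is $R$-linear in its single argument and $A = \vecspan(N(B))$, and since the convolution product is $R$-bilinear, it suffices to prove $\widehat{nm} = \widehat n * \widehat m$ for $n, m \in N(B)$. Because $N(B)$ is an inverse semigroup we have $nm \in N(B)$, so \cref{lem:nhat} gives $\widehat{nm} = \tilde 1_{\VV_{nm}}$; on the other hand, $\widehat n * \widehat m = \tilde 1_{\VV_n} * \tilde 1_{\VV_m} = \tilde 1_{\VV_n \VV_m}$ by the identity $\tilde 1_U * \tilde 1_V = \tilde 1_{UV}$ established in the proof of \cref{prop:multiplication in twisted Steinberg algebras}. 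Since $\VV_n \VV_m = \VV_{nm}$ in the groupoid $\Sigma$, the two agree, and hence $\varphi$ is an $R$-algebra isomorphism.

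Finally, for the statement about $B$, \cref{prop:ahat}\cref{item5:ahat} shows that $\widehat b\restr{\Sigmao} = \phi(b)$ and $\supp(\widehat b) \subseteq i(\Go \times R^\times) = q^{-1}(\Go)$, so $\varphi(B) \subseteq A_R(\Go; q^{-1}(\Go))$. For the reverse inclusion, given $g \in A_R(\Go; q^{-1}(\Go))$, I would use that $\phi\colon B \to A_R(\Sigmao)$ is an isomorphism to choose $b \in B$ with $\phi(b) = g\restr{\Sigmao}$; then $\widehat b$ and $g$ are elements of $A_R(\Go; q^{-1}(\Go))$ agreeing on $\Sigmao$, hence equal by \cref{lem:unique extensions}, giving $g = \varphi(b)$. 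Thus $\varphi$ restricts to an isomorphism of $B$ onto $A_R(\Go; q^{-1}(\Go))$, and the identifications $A_R(\Go; q^{-1}(\Go)) \cong A_R(\Sigmao) \cong A_R(\Go)$ follow from \cref{prop:Steinberg diagonal} and the homeomorphism $q\restr{\Sigmao}\colon \Sigmao \to \Go$ of \cref{lem:qmap}\cref{item2:qmap}.
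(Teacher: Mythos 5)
Your proposal is correct and follows essentially the same route as the paper's proof: membership of $\widehat{a}$ in $A_R(G;\Sigma)$ via \cref{lem:nhat} and linearity, injectivity from \cref{prop:ahat}, surjectivity from \cref{prop:hat ns span}, and multiplicativity reduced to $\widehat{nm} = \widehat{n} * \widehat{m}$ on normalisers using $\tilde{1}_{\VV_n} * \tilde{1}_{\VV_m} = \tilde{1}_{\VV_n\VV_m}$ and $\VV_n\VV_m = \VV_{nm}$. The only (cosmetic) difference is in the final step: where the paper observes that $I(B)$ spans $B$ while $\{\tilde{1}_{\VV_e} : e \in I(B)\}$ spans $A_R(\Go; q^{-1}(\Go))$, you instead re-run the restriction-and-extension argument via $\phi$ and \cref{lem:unique extensions}, which is exactly the mechanism inside the proof of \cref{prop:Steinberg diagonal} that the paper cites.
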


\begin{proof}
We begin by showing that $\widehat{a} \in A_R(G;\Sigma)$ for each $a \in A$. If $n \in N(B)$, then $\widehat{n} \in A_R(G;\Sigma)$ by \cref{lem:nhat}. Since each $a \in A$ can be expressed as an $R$-linear combination of elements of $N(B)$ (by property~\cref{item:ACP.NB spans} of \cref{def:ACP}), it follows that $\widehat{a} \in A_R(G;\Sigma)$, because $\varphi$ is $R$-linear by \cref{prop:ahat}\cref{item2:ahat}.

\Cref{prop:ahat} implies that $\varphi$ is an injective $R$-linear map, and \cref{prop:hat ns span} implies that $\varphi$ is surjective.

To complete the proof that $\varphi$ is an isomorphism, we must show that $\widehat{aa'} = \widehat{a} * \widehat{a'}$ for all $a, a' \in A$. Since $A$ is the $R$-linear span of $N(B)$, it suffices to prove this for $a = n, a' = m \in N(B)$. But this follows from \cref{lem:nhat,cor:normalisermult}, since $\VV_n \VV_m = \VV_{nm}$.

Finally, we check the statement about $B$. If $e\in I(B)$, then $\varphi(e) = \widehat e = \tilde{1}_{\VV_e}$ by \cref{lem:nhat}. Since $I(B)$ spans $B$ and $\{\tilde{1}_{\VV_e} : e \in I(B)\}$ spans $A_R(\Go; q^{-1}(\Go))$ (by \cref{prop:Steinberg diagonal}), it follows that $\varphi$ carries $B$ isomorphically to $A_R(\Go; q^{-1}(\Go))$, completing the proof.
\end{proof}

\begin{cor}
Suppose that $(A,B)$ is an algebraic quasi-Cartan pair. Then the discrete $R$-twist $(\Sigma,i,q)$ of \cref{thm:the twist} satisfies the local bisection hypothesis.
\end{cor}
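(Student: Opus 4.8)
The plan is to deduce this corollary by transporting the algebraic quasi-Cartan structure across the isomorphism of \cref{thm:main} and then invoking the equivalence in \cref{lem:LBH<->AQP}. First I would check that the hypotheses of \cref{lem:LBH<->AQP} are met for the pair $\big(A_R(G;\Sigma), A_R(\Go; q^{-1}(\Go))\big)$. The groupoid $G$ is ample by \cref{lem:qmap}\cref{item3:qmap}, and it is Hausdorff by \cref{prop:Hausdorff} precisely because $(A,B)$ is an algebraic quasi-Cartan pair; moreover, since $B$ satisfies \cref{cond:torsion free} and $I(B)$ forms a set of local units for $A$ (so that $I(B)$ is nontrivial whenever $A \neq 0$), the ring $R$ is indecomposable as observed in \cref{sec:prelims}. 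Thus $(\Sigma,i,q)$ is a discrete $R$-twist over an ample Hausdorff groupoid and $R$ is indecomposable, so \cref{lem:LBH<->AQP} applies to it.

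Next I would use \cref{thm:main}, which provides an $R$-algebra isomorphism $\varphi\colon A \to A_R(G;\Sigma)$ carrying $B$ onto $A_R(\Go; q^{-1}(\Go))$. The key point is that being an algebraic quasi-Cartan pair is an invariant of the isomorphism class of the pair. Concretely, $\varphi$ sends $I(B)$ bijectively onto the idempotents of $A_R(\Go; q^{-1}(\Go))$ and $N(B)$ bijectively onto $N\big(A_R(\Go; q^{-1}(\Go))\big)$, since the normaliser is defined purely algebraically in \cref{def:normaliser}; hence conditions \cref{item:ACP.local units}, \cref{item:ACP.IB spans}, and \cref{item:ACP.NB spans} of \cref{def:ACP} are preserved, and $A_R(\Go; q^{-1}(\Go))$ inherits \cref{cond:torsion free}. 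Finally, if $P\colon A \to B$ is the faithful conditional expectation implemented by idempotents furnished by \cref{item:AQP.expectation}, then $\varphi \circ P \circ \varphi^{-1}$ is an $R$-linear map onto $A_R(\Go; q^{-1}(\Go))$ that restricts to the identity there, satisfies the bimodule property, is faithful, and is implemented by idempotents (as $\varphi$ respects idempotents and the relevant products). Therefore $\big(A_R(G;\Sigma), A_R(\Go; q^{-1}(\Go))\big)$ is an algebraic quasi-Cartan pair.

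With this in hand, the final step is immediate: \cref{lem:LBH<->AQP} states that $\big(A_R(G;\Sigma), A_R(\Go; q^{-1}(\Go))\big)$ is an algebraic quasi-Cartan pair if and only if $(\Sigma,i,q)$ satisfies the local bisection hypothesis, so the conclusion follows. The only real work here is the routine bookkeeping in the second paragraph—verifying that every clause of \cref{def:ACP} is stable under an isomorphism of pairs—and this is the step I would expect to be the mildest of obstacles rather than a genuine difficulty, since all the defining data (idempotents, normalisers, and the implementing conditional expectation) are preserved by the $R$-algebra isomorphism $\varphi$.
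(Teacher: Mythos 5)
Your proposal is correct and follows exactly the paper's route: the paper proves this corollary in one line as an immediate consequence of \cref{thm:main} and \cref{lem:LBH<->AQP}, which is precisely your argument of transporting the quasi-Cartan structure across the isomorphism $\varphi$ and invoking the equivalence with the local bisection hypothesis. Your second paragraph simply makes explicit the routine isomorphism-invariance bookkeeping that the paper leaves implicit in the word ``immediately''.
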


\begin{proof}
This follows immediately from \cref{thm:main,lem:LBH<->AQP}.
\end{proof}

\section{Algebraic information from the isotropy structure of \texorpdfstring{$G$}{G}} \label{sec:info from iso}

In this section we describe the properties of the groupoid $G$ (from the previous section) that identify the algebraic diagonal pairs and the algebraic Cartan pairs amongst all algebraic quasi-Cartan pairs.

\begin{prop}
\label{prop:effective} Suppose that $(A,B)$ is an algebraic quasi-Cartan pair. Then
\begin{enumerate}[label=(\alph*)]
\item \label{item:masa<->effective} $(A,B)$ is an algebraic Cartan pair if and only if $G$ is effective, and
\item \label{item:diagonal<->principal} $(A,B)$ is an algebraic diagonal pair if and only if $G$ is principal.
\end{enumerate}
\end{prop}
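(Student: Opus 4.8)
The plan is to transport everything through the isomorphism of \cref{thm:main}. That theorem gives an isomorphism $\varphi\colon A \to A_R(G;\Sigma)$ carrying $B$ onto $A_R(\Go; q^{-1}(\Go))$, and both ``algebraic Cartan pair'' and ``algebraic diagonal pair'' are properties of a pair preserved by such isomorphisms, so I would reduce to proving the two biconditionals for the concrete pair $\big(A_R(G;\Sigma),\, A_R(\Go; q^{-1}(\Go))\big)$. Since $(A,B)$ is an algebraic quasi-Cartan pair, $R$ is indecomposable and \cref{prop:effectiveACPprincipalADP} applies; this immediately supplies both ``if'' directions (effective $G$ gives a Cartan pair, principal $G$ gives a diagonal pair). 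The work then lies in the two ``only if'' implications, which I would prove by contraposition inside $A_R(G;\Sigma)$, using that $G$ and $\Sigma$ are Hausdorff and ample by \cref{prop:Hausdorff,cor:coveringspace}.

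For part~\cref{item:masa<->effective}, suppose $G$ is not effective, so that $\II \setminus \Go$ is nonempty and open (as $\Go$ is clopen). Using ampleness of $\Sigma$, I would pick $\sigma \in q^{-1}(\II \setminus \Go)$ and a compact open bisection $U \subseteq q^{-1}(\II \setminus \Go)$ containing $\sigma$, obtaining $\tilde{1}_U \in A_R(G;\Sigma)$ with $\supp_G(\tilde{1}_U) \subseteq \Iso(G)$. Every $\sigma' \in \supp(\tilde{1}_U)$ then satisfies $\r(\sigma') = \s(\sigma')$, so the two formulas of \cref{prop:Steinberg diagonal} force $\tilde{f} * \tilde{1}_U = \tilde{1}_U * \tilde{f}$ for all $f \in C_c(\Sigmao,R)$; as such $\tilde{f}$ span $A_R(\Go; q^{-1}(\Go))$, the element $\tilde{1}_U$ commutes with the whole diagonal. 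Yet $\supp_G(\tilde{1}_U) \not\subseteq \Go$ shows $\tilde{1}_U \notin A_R(\Go; q^{-1}(\Go))$, so the diagonal is not maximal commutative and the pair is not Cartan.

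For part~\cref{item:diagonal<->principal}, suppose $G$ is not principal and fix $\gamma_0 \in \Iso(G) \setminus \Go$. The crux is to show that every free normaliser of the diagonal vanishes on $q^{-1}(\Iso(G) \setminus \Go)$: one lying in $B$ is supported on $q^{-1}(\Go)$, while one with $n^2 = 0$ has $(n^\dagger n)(nn^\dagger) = 0$, and since $\supp_G(n)$ is a bisection (the twist satisfies the local bisection hypothesis, by \cref{lem:LBH<->AQP}), \cref{rmk:Steinberg4.5} identifies this product with $\tilde{1}_{\s(\supp(n)) \cap \r(\supp(n))}$; its vanishing means $\s(\supp_G(n)) \cap \r(\supp_G(n)) = \varnothing$, and any $\gamma \in \supp_G(n) \cap \Iso(G)$ would put $\r(\gamma) = \s(\gamma)$ in both sets, so $\supp_G(n) \cap \Iso(G) = \varnothing$. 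Hence any finite $R$-linear combination of free normalisers vanishes on $q^{-1}(\Iso(G) \setminus \Go)$. But choosing $\sigma_0 \in q^{-1}(\gamma_0)$ and a compact open bisection $X \ni \sigma_0$ in $\Sigma$, the element $\tilde{1}_X$ has $\tilde{1}_X(\sigma_0) = 1 \neq 0$ with $\sigma_0 \in q^{-1}(\Iso(G) \setminus \Go)$, so it is not such a combination; thus the free normalisers do not span $A$ and the pair is not diagonal.

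The main obstacle I expect is the free-normaliser analysis in part~\cref{item:diagonal<->principal}: one must correctly read off the $G$-support of a free normaliser and argue that the identity $(n^\dagger n)(nn^\dagger) = 0$ forces that support off the \emph{entire} isotropy $\Iso(G)$ rather than merely its interior $\II$. This hinges on the bisection property of $\supp_G(n)$ and on \cref{rmk:Steinberg4.5}, with care needed to translate the idempotent identity into the disjointness $\s(\supp_G(n)) \cap \r(\supp_G(n)) = \varnothing$ (passing between $\Sigma$ and $G$ via the homeomorphism $q$ on unit spaces) and thence into disjointness from $\Iso(G)$.
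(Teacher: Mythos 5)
Your proof is correct, but it takes a genuinely different route from the paper's for the two forward implications. Like the paper, you obtain the ``if'' directions by transporting the pair through \cref{thm:main} and invoking \cref{prop:effectiveACPprincipalADP}. For the ``only if'' directions, however, the paper stays in the abstract ultrafilter picture: for (a) it takes an open set $q(\VV_n)$ inside the interior of $\Iso(G)$, shows via \cref{lem:inB} that such an $n$ commutes with $B$ (a computation with the map $a \mapsto \widehat{a}$ and \cref{lem:ultrafilter partition}), hence $n \in B$ by maximal commutativity, and concludes $q(\VV_n) \subseteq \Go$ by \cref{lem:Bunits}; for (b) it takes an ultrafilter $U$ with $\r(U) = \s(U)$, expands some $n \in U$ as $n_0 + \sum_i n_i$ with $n_0 \in B$ and $n_i^2 = 0$, and compresses by a carefully chosen idempotent $e \in \s(U)$ to get $ene = en_0e \in U \cap B \cap N(B)$, again finishing with \cref{lem:Bunits}. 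You instead prove the contrapositives concretely inside $A_R(G;\Sigma)$: for (a) an explicit $\tilde{1}_U$ with $U \subseteq q^{-1}(\II \setminus \Go)$ that commutes with the diagonal (via the two formulas of \cref{prop:Steinberg diagonal}) but lies outside it; for (b) the observation that a free normaliser $n$ with $(n^\dagger n)(nn^\dagger) = 0$ satisfies $\supp_G(n) \cap \Iso(G) = \varnothing$ by \cref{rmk:Steinberg4.5}, so every $R$-linear combination of free normalisers vanishes on $q^{-1}(\Iso(G) \setminus \Go)$, whereas $\tilde{1}_X$ does not. Both arguments are sound; note only that your appeal to the local bisection hypothesis in (b) is superfluous, since \cref{rmk:Steinberg4.5} holds for arbitrary normalisers of the diagonal over an indecomposable ring, and your disjointness argument never actually uses that $\supp_G(n)$ is a bisection. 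What your route buys is uniformity and brevity---once \cref{thm:main} is in hand, both directions of both parts reduce to support bookkeeping in the function model---whereas the paper's route exhibits directly how maximal commutativity and free-normaliser spanning constrain the inverse semigroup $N(B)$ and its ultrafilters, without leaving the abstract pair $(A,B)$.
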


The proof will follow easily once we establish the following two lemmas.

\begin{lemma}
\label{lem:inB}
Let $(A,B)$ be an algebraic Cartan pair, and let $n \in N(B)$. Suppose that $\VV_n \subseteq \Iso(\Sigma)$. Then $n \in B$.
\end{lemma}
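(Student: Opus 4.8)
The plan is to exploit that $B$ is a maximal commutative subalgebra (since $(A,B)$ is an algebraic Cartan pair) and reduce the claim $n \in B$ to showing that $n$ commutes with every idempotent of $B$. Once $ne = en$ for all $e \in I(B)$, it follows from $B = \vecspan(I(B))$ that $n$ commutes with all of $B$; the subalgebra generated by $B \cup \{n\}$ is then commutative and contains $B$, so maximality forces $n \in B$. Thus the whole proof comes down to proving that $n$ commutes with $I(B)$, and the role of the hypothesis $\VV_n \subseteq \Iso(\Sigma)$ is to make the ``conjugation'' by $n$ trivial.

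The first step I would take is to translate $\VV_n \subseteq \Iso(\Sigma)$ into two algebraic identities. Since $\VV_n$ is a compact open bisection, $\r(\VV_n) = \VV_n\VV_n^{-1} = \VV_{nn^\dagger}$ and $\s(\VV_n) = \VV_n^{-1}\VV_n = \VV_{n^\dagger n}$; and since every $U \in \VV_n$ satisfies $\r(U) = \s(U)$ by hypothesis, comparing ranges and sources over $\VV_n$ gives $\VV_{nn^\dagger} = \VV_{n^\dagger n}$. Because $e \mapsto \VV_e$ is injective on $I(B)$ (equivalently, the map $\phi$ of \cref{eqn:phi description} is an isomorphism; cf.\ \cref{cor:lattice iso}), this yields $nn^\dagger = n^\dagger n =: e_0 \in I(B)$. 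Next, for each $e \in I(B)$ with $e \le e_0$, the idempotent $nen^\dagger \in nBn^\dagger \subseteq B$ satisfies $\VV_{nen^\dagger} = \VV_n\VV_e\VV_{n^\dagger}$, which is precisely the image of $\VV_e \subseteq \VV_{e_0} = \s(\VV_n)$ under the partial homeomorphism $\s(U) \mapsto \r(U)$ of $\VV_n$. The isotropy hypothesis makes this partial homeomorphism the identity on $\VV_{e_0}$, so $\VV_{nen^\dagger} = \VV_e$, and injectivity again gives $nen^\dagger = e$ for all $e \le e_0$.

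With these identities in hand the remaining computation is routine. For $e \le e_0$, right-multiplying $nen^\dagger = e$ by $n$ and using $e(n^\dagger n) = e$ gives $ne = nen^\dagger n = en$. For a general $e \in I(B)$, I would set $f \coloneqq n^\dagger n e = e n^\dagger n \le e_0$ and observe, using $nn^\dagger = n^\dagger n = e_0$, that $ne = nn^\dagger n e = nf$ while $en = e nn^\dagger n = fn$; since $nf = fn$ by the case just treated, we conclude $ne = en$. As this holds for every $e \in I(B)$, the reduction in the opening paragraph applies and $n \in B$.

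The main obstacle is the middle step, namely extracting the conjugation identities $nn^\dagger = n^\dagger n$ and $nen^\dagger = e$ from $\VV_n \subseteq \Iso(\Sigma)$, which is where the Stone-type correspondence between $I(B)$ and the compact open subsets of $\Sigmao$ does the real work. Once these are available, maximality of $B$ together with the order-theoretic facts recorded in \cref{lem:scalarbasis,lem:properties of le} and the groupoid identities $\VV_n\VV_m = \VV_{nm}$ and $\VV_n^{-1} = \VV_{n^\dagger}$ make the remaining manipulations elementary. I expect no genuine difficulty beyond carefully checking that the isotropy condition indeed trivialises the partial homeomorphism induced by $\VV_n$.
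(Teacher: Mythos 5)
Your proof is correct, but it takes a genuinely different route from the paper's. The paper proves $n$ commutes with each $e \in I(B)$ by passing to the function model: it shows $\widehat{ne} = \widehat{en}$ pointwise on $\Sigma$, using the conditional expectation $P$, the representation $a \mapsto \widehat{a}$ of \cref{prop:ahat}, its multiplicativity from \cref{thm:main}, and \cref{lem:ultrafilter partition} to split into the cases $e \in \s(U)$ and $e \notin \s(U)$ (the latter handled via the normaliser $m - em$ and \cref{lem:properties of le}\cref{item5:properties of le}). You instead stay entirely at the level of the inverse semigroup $N(B)$ and Stone duality: from $\VV_n \subseteq \Iso(\Sigma)$ you extract the identities $nn^\dagger = n^\dagger n \eqqcolon e_0$ and $nen^\dagger = e$ for $e \le e_0$, using that $e \mapsto \VV_e$ is injective on $I(B)$ (ultrafilters of the generalised Boolean algebra $I(B)$ separate its elements, via the identification in \cref{rem:ultrafilters on I(B)}) and that for a bisection $V$ and unit set $E \subseteq \s(V)$ one has $VEV^{-1} = \{\r(\alpha) : \alpha \in V,\ \s(\alpha) \in E\}$, which the isotropy hypothesis collapses to $E$; the commutation $ne = en$ for general $e$ then follows by purely algebraic manipulation, and maximality of $B$ finishes. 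Both arguments are sound; yours is more elementary in that it bypasses the conditional expectation and the isomorphism $a \mapsto \widehat{a}$ of \cref{thm:main} altogether (only properties \cref{item:ACP.local units,item:ACP.IB spans} of \cref{def:ACP}, maximal commutativity, and the basic facts $\VV_n\VV_m = \VV_{nm}$, $\VV_n^{-1} = \VV_{n^\dagger}$ are used), and it isolates the conceptual content---isotropy trivialises conjugation by $n$ on idempotents below $n^\dagger n$---whereas the paper's proof buys economy by reusing machinery already established for the main theorem. One cosmetic remark: when $\VV_n = \varnothing$ (i.e.\ $n = 0$, since every nonzero normaliser lies in an ultrafilter) your first step gives $nn^\dagger = 0$ and hence $n = nn^\dagger n = 0 \in B$, so the degenerate case causes no trouble, though it is worth a sentence.
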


\begin{proof}
Let $P\colon A \to B$ be the faithful conditional expectation that is implemented by idempotents, and let $\phi\colon B \to A_R(\Sigmao)$ be the isomorphism from~\cref{eqn:phi description} that satisfies $\phi(e) = 1_{\VV_e}$ for all $e \in I(B)$. We claim that $nb=bn$ for every $b \in B$. Since $B$ is spanned by $I(B)$ (property~\cref{item:ACP.IB spans} of \cref{def:ACP}), it suffices to fix $e \in I(B)$ and show that $en = ne$. We show that $\widehat{ne} = \widehat{en}$; the claim then follows by the injectivity of $a \mapsto \widehat{a}$ (\cref{prop:ahat}\cref{item3:ahat}). Fix $U \in \Sigma$. If $\widehat{n}(U) = 0$, then by multiplicativity of $a \mapsto \widehat{a}$ (\cref{thm:main}), we have
\[
\widehat{en}(U) = \widehat{e}(U) \, \widehat{n}(U) = 0 = \widehat{n}(U) \, \widehat{e}(U) = \widehat{ne}(U).
\]
So it suffices to consider $U \in \supp(\widehat{n}) = R^\times\cdot \VV_n$ by \cref{lem:nhat}; so, in particular, $\r(U) = \s(U)$ since $\VV_n \in \Iso(\Sigma)$. Now fix $m \in U$. Then
\begin{align*}
\widehat{ne}(U) &= \phi(P(m^\dagger ne))(\s(U)) \\
&= \phi(P(m^\dagger n))(\s(U)) \, \phi(e)(\s(U)) \\
&= \begin{cases} \phi(P(m^\dagger n))(\s(U)) & \text{ if } e \in \s(U) \\
0 & \text{otherwise}
\end{cases} \\
&= \begin{cases} \widehat{n}(U) & \text{ if } e \in \s(U) \\
0 & \text{otherwise}.
\end{cases}
\end{align*}

In the first case, we have $e \in \s(U)=\r(U)$. Since $\r(U)U = U$, we have $em \in U$, and hence
\[
\widehat{en}(U) = \phi(P(m^\dagger en))(\s(U)) = \phi(P((em)^\dagger n))(\s(U)) = \widehat{n}(U) = \widehat{ne}(U).
\]

For the second case, suppose that $e \notin \s(U)=\r(U)$. Let $e_1 \coloneqq mm^\dagger - emm^\dagger$ and let $e_2 \coloneqq emm^\dagger$. These are orthogonal idempotents in $I(B)$, and $m=e_1m+e_2m$. Therefore, either $e_1m=m-em$ or $e_2m=em$ belongs to $U$, by \cref{lem:ultrafilter partition}. But if $em \in U$, then since $em(em)^\dagger = em m^\dagger e \le e$, we have $e \in \r(U)$, which is a contradiction. So we must have $m - em \in U$. Let $k \coloneqq (m-em)^\dagger$. Then $k = m^\dagger - m^\dagger e$ by \cref{lem:properties of le}\cref{item5:properties of le}, and so $ke=0$. Therefore, $\widehat{en}(U) = \phi(P(ken))(\s(U))=0$, as required. This completes the proof of the claim.

By the claim, $n$ commutes with all of $B$. Since $B$ is a maximal commutative subalgebra, this forces $n \in B$.
\end{proof}

\begin{lemma} \label{lem:Bunits}
Suppose that $(A,B)$ is an algebraic quasi-Cartan pair. If $n \in N(B) \cap B$, then $\VV_n \subseteq q^{-1}(\Go)$.
\end{lemma}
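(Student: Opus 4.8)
The plan is to unwind the definition of $q^{-1}(\Go)$ and then reduce the whole statement to a single application of \cref{lem:ultrafilter partition}. Recall from \cref{thm:the twist} that $q^{-1}(\Go) = i(\Go \times R^\times) = \{t V : t \in R^\times,\ V \in \Sigmao\}$; equivalently, an ultrafilter $U$ lies in $q^{-1}(\Go)$ precisely when $U$ contains an element of the form $t e$ with $t \in R^\times$ and $e \in I(B)$. Indeed, in that case $t^{-1}U$ is an ultrafilter by \cref{lem:scalarbasis}\cref{item1:scalarbasis} and it contains the idempotent $e = t^{-1}(te)$, so $t^{-1}U \in \Sigmao$ and hence $U = t(t^{-1}U) \in q^{-1}(\Go)$. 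So it suffices to show that every ultrafilter containing $n$ also contains such a scaled idempotent.

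First I would put $n$ into a convenient normal form. Since $n \in N(B) \cap B$ and $(A,B)$ is an algebraic quasi-Cartan pair, \cref{rem:N(B) cap B characterisation} lets me write $n = \sum_{i=1}^k t_i e_i$ with $e_1,\dotsc,e_k \in I(B)$ mutually orthogonal and each $t_i \in R^\times$ (discarding any zero coefficients). The reason for invoking the normal form from \cref{rem:N(B) cap B characterisation} rather than an arbitrary idempotent expansion is exactly that it guarantees the coefficients are \emph{units}, which is what ultimately lands $U$ in $q^{-1}(\Go)$.

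Next, fix $U \in \VV_n$, so that $n \in U$. Using mutual orthogonality of the $e_i$ we have $n e_j = t_j e_j$, and hence $n = \sum_{i=1}^k n e_i$ with the $e_i$ mutually orthogonal. This is precisely the hypothesis of \cref{lem:ultrafilter partition}, which yields a unique index $i$ with $n e_i = t_i e_i \in U$. Thus $U$ contains the element $t_i e_i$ with $t_i \in R^\times$ and $e_i \in I(B)$, and by the reformulation of $q^{-1}(\Go)$ above we conclude that $t_i^{-1}U \in \Sigmao$ and therefore $U = t_i(t_i^{-1}U) \in q^{-1}(\Go)$, as required.

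I do not anticipate a genuine obstacle here: the substantive content is already packaged in the normal-form characterisation of $N(B)\cap B$ provided by \cref{rem:N(B) cap B characterisation} (which itself rests on \cref{cor:units.in.front.diag.norm}), and the only remaining move is a direct appeal to \cref{lem:ultrafilter partition}. The one point requiring mild care is checking that the decomposition $n=\sum_i n e_i$ genuinely satisfies the orthogonality hypothesis of that lemma, which is immediate from $n e_j = t_j e_j$; the fact that the surviving coefficients $t_i$ are units (and not merely nonzero ring elements) is exactly what \cref{rem:N(B) cap B characterisation} supplies, and is what forces $U \in q^{-1}(\Go)$ rather than only $U \in \VV_{n e_i}$.
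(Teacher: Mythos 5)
Your proof is correct and takes essentially the same route as the paper's: the same normal form $n=\sum_{i=1}^k t_ie_i$ with $t_i \in R^\times$ from \cref{rem:N(B) cap B characterisation}, the same application of \cref{lem:ultrafilter partition} to get $t_ie_i \in U$, and the same conclusion $t_i^{-1}U \in \Sigmao$, so $U = t_i(t_i^{-1}U) \in q^{-1}(\Go)$. The only cosmetic differences are that you spell out the characterisation of $q^{-1}(\Go)$ as the set of ultrafilters containing some $te$ with $t \in R^\times$ and $e \in I(B)$ (which the paper uses implicitly), and that the paper dispatches the case $n=0$ explicitly, whereas in your argument it is vacuous since $\VV_0 = \varnothing$.
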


\begin{proof}
The result is trivial if $n = 0$, and so we assume that $n \ne 0$. By \cref{rem:N(B) cap B characterisation}, there exist mutually orthogonal idempotents $e_1, \dotsc, e_k \in I(B)$ and coefficients $t_1, \dotsc, t_k \in R^\times$ such that $n = \sum_{i=1}^k t_i e_i$. Note that $n e_i = t e_i$ for each $i \in \{1,\dotsc,k\}$, and so $n = \sum_{i=1}^k n e_i$. Fix $V \in \VV_n$. Then \cref{lem:ultrafilter partition} implies that $n e_i \in V$ for some (unique) $i \in \{1,\dotsc,k\}$. Thus $t_i e_i = n e_i \in V$, and so $e_i \in t_i^{-1}V$. Therefore, $t_i^{-1}V \in \Sigmao$, and so $V = t_i(t_i^{-1}V) \in R^\times\cdot \Sigmao=q^{-1}(\Go)$, as required.
\end{proof}

\begin{proof}[Proof of \cref{prop:effective}]
For part~\cref{item:masa<->effective}, first suppose that $(A,B)$ is an algebraic Cartan pair. Fix an open set $O$ contained in the interior of the isotropy of $G$. Without loss of generality, we may assume that $O=q(\VV_n)$ for some $n \in N(B)$. Then $\VV_n$ is contained in the isotropy of $\Sigma$. \Cref{lem:inB} implies that $n \in B$, and \cref{lem:Bunits} gives
$q(\VV_n) \subseteq \Go$.

Now suppose that $G$ is effective. Then the algebraic quasi-Cartan pair $(A,B)$ is isomorphic to $(A_R(G;\Sigma), A_R(\Go, q^{-1}(\Go)))$ (by \cref{thm:main}), which is an algebraic Cartan pair by \cref{prop:effectiveACPprincipalADP}.

For part~\cref{item:diagonal<->principal}, first suppose that $(A,B)$ is an algebraic diagonal pair. Fix $U \in \Sigma$ such that $\r(U) = \s(U)$. We must show that $q(U) \in \Go$. For this, fix $n \in U$. Since $A$ is spanned by the free normalisers of $B$, we can write $n = n_0 + \sum_{i=1}^k n_i$, where $n_0 \in B$ and each $n_i$ is a normaliser satisfying $n_i^2 = 0$. Since $\s(U)$ is an ultrafilter, we can find an idempotent $e \in \s(U)=\r(U)$ such that for every $i \in \{1, \dotsc, k\}$, we have $e \le n_i^\dagger n_i$ whenever $n_i^\dagger n_i \in \s(U)$, and $e n_i^\dagger n_i = 0$ whenever $n_i^\dagger n_i \notin \s(U)$. Since $e \in \s(U) = \r(U)$, we have $\sum_{i=0}^k en_i e = ene \in U$. If $i \in \{1, \dotsc, k\}$ satisfies $n_i^\dagger n_i \notin \s(U)$, then $e n_i e = e n_i n_i^\dagger n_i e = 0$; and if $i\in \{1, \dotsc, k\}$ satisfies $n_i^\dagger n_i \in \s(U)$, then $e \le n_i^\dagger n_i$, and so $e n_i e = e(n_i^\dagger n_i) (n_i e) = 0$ as well. So we obtain $e n e = e n_0 e \in U \cap B \cap N(B)$. Hence $U \in \VV_{ene} \subseteq q^{-1}(\Go)$ by \cref{lem:Bunits}.

Now suppose that $G$ is principal. Then $(A,B) \cong (A_R(G;\Sigma), A_R(\Go, q^{-1}(\Go)))$ is the algebra of a twist over a principal groupoid (by \cref{thm:main}), and hence is an algebraic diagonal pair by \cref{prop:effectiveACPprincipalADP}.
\end{proof}

\section{Recovering a twist from its quasi-Cartan pair} \label{sec:recover twist}

In this section we show that if $(\Sigma,i,q)$ is a discrete $R$-twist over an ample Hausdorff groupoid $G$, then we can construct a natural embedding of $\Sigma$ into the groupoid of ultrafilters $\Sigma'$ obtained from \cref{thm:the twist} applied to the pair $(A_R(G;\Sigma), A_R(\Go; q^{-1}(\Go)))$. We then show that this map is an isomorphism if and only if $(\Sigma,i,q)$ satisfies the local bisection hypothesis. We start with some technical results. Throughout, we will identify $\Sigma'^{(0)}$ with ultrafilters of $I(B)$, where $B \coloneqq A_R(\Go; q^{-1}(\Go))$, via the homeomorphism described in \cref{rem:ultrafilters on I(B)}.

\begin{lemma} \label{lem:germ-ultrafilter}
Let $(\Sigma,i,q)$ be a discrete $R$-twist over an ample Hausdorff groupoid $G$, let $A \coloneqq A_R(G;\Sigma)$, and let $B \coloneqq A_R(\Go; q^{-1}(\Go))$. If $U$ is an ultrafilter of $I(B)$ and $n\in N(B)$ satisfies $n^\dagger n\in U$, then the upclosure $V \coloneqq (nU)^{\uparrow}$ of $nU \subseteq N(B) {\setminus} \{0\}$ is an ultrafilter of $N(B)$ containing $n$.
\end{lemma}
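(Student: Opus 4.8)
The plan is to avoid verifying the maximality of $V$ directly, and instead to identify $V$ with the unique ultrafilter of $N(B)$ that lies in $\VV_n$ and has source $U$; maximality then comes for free from the already-established groupoid structure on $\Sigma$. Recall that $\{\VV_m : m \in N(B)\}$ is a basis of compact open bisections with $\VV_m^{-1} = \VV_{m^\dagger}$ and $\VV_m\VV_{m'} = \VV_{mm'}$. Since $\VV_n$ is a bisection, the range map is injective on it: if $W,W' \in \VV_n$ and $(W^{-1},W')$ is composable, then $\r(W) = \s(W^{-1}) = \r(W')$ forces $W = W'$, so $W^{-1}W' = W^{-1}W = \s(W)$. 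Hence $\VV_n^{-1}\VV_n = \s(\VV_n)$, and combining this with $\VV_n^{-1}\VV_n = \VV_{n^\dagger}\VV_n = \VV_{n^\dagger n}$ gives the set equality $\s(\VV_n) = \VV_{n^\dagger n}$. Identifying $U$ with the corresponding element of $\Sigmao$ via \cref{rem:ultrafilters on I(B)}, the hypothesis $n^\dagger n \in U$ says $U \in \VV_{n^\dagger n} = \s(\VV_n)$, so there is a (unique) ultrafilter $V' \in \VV_n$ with $\s(V') = U$. In particular $V'$ is an ultrafilter of $N(B)$ containing $n$, and it remains only to show $V' = (nU)^{\uparrow} = V$.

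Before comparing them I would record that $nU \subseteq N(B){\setminus}\{0\}$, so that the upclosure is meaningful: for $e \in U$ the meet $e\,n^\dagger n$ lies in the filter $U$ and is therefore nonzero, while $ne = 0$ would give $n^\dagger n e = 0$, contradicting $n^\dagger n e = e\, n^\dagger n \neq 0$. (Here $ne \in N(B)$ as a product in the inverse semigroup $N(B)$.)

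For the inclusion $V \subseteq V'$, fix $e \in U$. Since $\s(V') = U$ and $U$ is a unit (so $\r(U) = U$), the pair $(V',U)$ is composable and $V'U = V'$; as $n \in V'$, $e \in U$, and $ne \neq 0$, the product $ne$ lies in $V'U = V'$. Because $V'$ is upward closed, $(nU)^{\uparrow} \subseteq V'$. For the reverse inclusion, fix $v \in V'$. Since $n,v \in V'$ and $V'$ is a filter, there is $p \in V'$ with $p \le n$ and $p \le v$. From $p \le n$ and the definition of $\le$ we get $p = n\,(p^\dagger p)$, where $p^\dagger p \in I(B)$ satisfies $p^\dagger p \in V'^{-1}V' = \s(V') = U$. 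Thus $p = ne$ with $e \coloneqq p^\dagger p \in U$, and $p \le v$ gives $v \in (nU)^{\uparrow}$. Hence $V' \subseteq V$, so $V = V'$ is an ultrafilter of $N(B)$ containing $n$, as required.

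The main obstacle is precisely the ultrafilter (maximality) property, which is exactly what this strategy sidesteps by realising $V$ as the source-preimage $V'$ inside the bisection $\VV_n$. The only genuinely delicate point is justifying the set equality $\s(\VV_n) = \VV_{n^\dagger n}$ from the stated facts---that is, that $\s$ is surjective onto $\VV_{n^\dagger n}$, not merely injective on $\VV_n$---since it is this surjectivity that supplies the witness $V'$. Everything else reduces to the two short order-theoretic inclusions above, together with the elementary check that $nU$ avoids $0$.
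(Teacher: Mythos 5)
Your proof is correct and takes essentially the same route as the paper: both sidestep maximality by identifying $V$ with the unique ultrafilter in the compact open bisection $\VV_n$ whose source is (the unit corresponding to) $U$, using $\s(\VV_n) = \VV_{n^\dagger n}$. The only difference is that where the paper cites an external lemma (\cite[Lemma~3.1(a)]{ACaHJL2021}) for the identity $V' = (nU)^{\uparrow}$, you verify it directly via the two order-theoretic inclusions, which is a harmless, self-contained substitute.
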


\begin{proof}
Regarding $U$ as an ultrafilter in $\Sigma'^{(0)}$, we have $U \in \VV_{n^\dagger n} = \s(\VV_n)$, and so there exists an ultrafilter $V \in \VV_n$ with $\s(V) = U$. Then \cite[Lemma~3.1(a)]{ACaHJL2021} implies that $V = (nU)^{\uparrow}$. Since $n^\dagger n \in U$, we have $n = n n^\dagger n \in nU \subseteq V \subseteq N(B) {\setminus} \{0\}$.
\end{proof}

\begin{lemma} \label{lem:norm.vs.indicator}
Let $(\Sigma,i,q)$ be a discrete $R$-twist over an ample Hausdorff groupoid $G$, let $A \coloneqq A_R(G;\Sigma)$, and let $B \coloneqq A_R(\Go; q^{-1}(\Go))$. Let $X \subseteq \Sigma$ be a compact open bisection, and let $n \in N(B)$. Then
\begin{enumerate}[label=(\alph*)]
\item \label{lem:nvi.i.leq.n} $\tilde{1}_X\le n$ if and only if $n\restr{X} \equiv 1$ and $q(X) = \s^{-1}(\s(q(X)))\cap \supp_G(n)$;
\item \label{item:nvi.n.leq.i} if $n\le\tilde{1}_X$, then $n=\tilde{1}_Y$ for some compact open bisection $Y\subseteq X$; and
\item \label{item:nvi.Y.sub.X} given a compact open bisection $Y \subseteq \Sigma$, we have $\tilde{1}_Y\le\tilde{1}_X$ if and only if $Y\subseteq X$.
\end{enumerate}
\end{lemma}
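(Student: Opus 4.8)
The plan is to reduce all three statements to the multiplicative calculus of the indicator-type functions $\tilde{1}_Z$, leaning on three standing facts. First, for compact open bisections $U,V$ of $\Sigma$ (which is ample by \cref{cor:coveringspace}) we have the semigroup identities $\tilde{1}_U*\tilde{1}_V=\tilde{1}_{UV}$, $\tilde{1}_U^\dagger=\tilde{1}_{U^{-1}}$, $\tilde{1}_U^\dagger\tilde{1}_U=\tilde{1}_{\s(U)}$ and $\tilde{1}_U\tilde{1}_U^\dagger=\tilde{1}_{\r(U)}$ (from \cref{cor:normalisermult} and the proof of \cref{prop:multiplication in twisted Steinberg algebras}). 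Second, the module formulas $(\tilde{f}*a)(\sigma)=f(\r(\sigma))a(\sigma)$ and $(a*\tilde{f})(\sigma)=a(\sigma)f(\s(\sigma))$ for $\tilde{f}\in B$ and $a\in A$ (from \cref{prop:Steinberg diagonal}). Third, the order characterisation $s\le t\iff s=s\,s^\dagger t\iff s=t\,s^\dagger s$ on $N(B)$. I will also repeatedly use that $Z\mapsto\tilde{1}_Z$ is injective on compact open bisections of $\Sigma$: since $q\restr{Z}$ is injective each $R^\times$-orbit meets $Z$ at most once, so by $R^\times$-contravariance $\{\sigma:\tilde{1}_Z(\sigma)=1\}=Z$, recovering $Z$ from $\tilde{1}_Z$.

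For part~\cref{lem:nvi.i.leq.n}, I would rewrite $\tilde{1}_X\le n$ as $\tilde{1}_X=n\,\tilde{1}_X^\dagger\tilde{1}_X=n\,\tilde{1}_{\s(X)}$ and evaluate the right-hand side with the second formula of \cref{prop:Steinberg diagonal}, obtaining $(n\,\tilde{1}_{\s(X)})(\sigma)=n(\sigma)\,1_{\s(X)}(\s(\sigma))$. The claim then follows from a single orbit-by-orbit comparison of this function with $\tilde{1}_X$. Evaluating on $X$, and extending by $R^\times$-contravariance to $R^\times\cdot X$, forces $n\restr{X}\equiv 1$; conversely this makes the two functions agree on $R^\times\cdot X$. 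Off $R^\times\cdot X$ the function $n(\sigma)\,1_{\s(X)}(\s(\sigma))$ must vanish, which says precisely that $\{\sigma\in\supp(n):\s(\sigma)\in\s(X)\}\subseteq R^\times\cdot X$; applying $q$ (which respects $\s$, is surjective, and satisfies $q^{-1}(q(X))=R^\times\cdot X$), together with the reverse inclusion coming from $n\restr{X}\equiv 1$, converts this into $q(X)=\s^{-1}(\s(q(X)))\cap\supp_G(n)$. Both implications travel through this one pointwise comparison.

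For part~\cref{item:nvi.n.leq.i}, I would use that $n\le\tilde{1}_X$ means $n=\tilde{1}_X\,n^\dagger n$. Since $n^\dagger n\in I(B)$, \cref{cor:lattice iso} gives $n^\dagger n=\tilde{1}_W$ for a compact open $W\subseteq\Sigmao$, whence $n=\tilde{1}_X\tilde{1}_W=\tilde{1}_{XW}$. As $XW=X\cap\s^{-1}(W)$ is a compact open bisection contained in $X$, setting $Y\coloneqq XW$ completes this part. For part~\cref{item:nvi.Y.sub.X}, the order definition gives $\tilde{1}_Y\le\tilde{1}_X\iff\tilde{1}_Y=\tilde{1}_X\tilde{1}_Y^\dagger\tilde{1}_Y=\tilde{1}_X\tilde{1}_{\s(Y)}=\tilde{1}_{X\cap\s^{-1}(\s(Y))}$; by the injectivity noted above this is equivalent to $Y=X\cap\s^{-1}(\s(Y))$, and since $X$ is a bisection a short argument shows $Y=X\cap\s^{-1}(\s(Y))$ holds exactly when $Y\subseteq X$ (the inclusion $Y\subseteq X$ is immediate, and conversely any $\sigma\in X\cap\s^{-1}(\s(Y))$ shares a source with some $\tau\in Y\subseteq X$, forcing $\sigma=\tau\in Y$).

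I expect part~\cref{lem:nvi.i.leq.n} to be the main obstacle: the genuinely delicate step is the bookkeeping that translates the pointwise vanishing and $R^\times$-orbit structure on $\Sigma$ into the stated intersection condition on $G$. Once that is in place, parts~\cref{item:nvi.n.leq.i} and~\cref{item:nvi.Y.sub.X} are formal consequences of the semigroup identities $\tilde{1}_U\tilde{1}_V=\tilde{1}_{UV}$ and the injectivity of $Z\mapsto\tilde{1}_Z$.
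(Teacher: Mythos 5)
Your proposal is correct and takes essentially the same route as the paper's proof: part~\cref{lem:nvi.i.leq.n} by evaluating $n\tilde{1}_{\s(X)}$ pointwise via \cref{prop:Steinberg diagonal} and comparing $R^\times$-orbits, part~\cref{item:nvi.n.leq.i} from $n=\tilde{1}_X n^\dagger n$ with $n^\dagger n$ an idempotent indicator, and part~\cref{item:nvi.Y.sub.X} from the indicator calculus together with injectivity of $Z\mapsto\tilde{1}_Z$. The only cosmetic differences are that in part~\cref{item:nvi.n.leq.i} you invoke \cref{cor:lattice iso} where the paper uses \cref{rmk:Steinberg4.5} to identify $n^\dagger n=\tilde{1}_{\s(\supp(n))}$ explicitly (both rest on the same implicit indecomposability of $R$), and that in part~\cref{lem:nvi.i.leq.n} you conclude by a direct pointwise comparison where the paper finishes via \cref{lem:unique extensions}.
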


\begin{proof}
For part~\cref{lem:nvi.i.leq.n}, suppose first that $\tilde{1}_X \le n$; that is, that $\tilde{1}_X = n\tilde{1}_X^\dagger \tilde{1}_X = n\tilde{1}_{\s(X)}$. \Cref{prop:Steinberg diagonal} implies that for all $\sigma \in X$,
\[
1 = \tilde{1}_X(\sigma) = (n\tilde{1}_{\s(X)})(\sigma) = n(\sigma) \, 1_{\s(X)}(\s(\sigma)) = n(\sigma).
\]
The containment $q(X) \subseteq \s^{-1}(\s(q(X))) \cap \supp_G(n)$ follows from the above equality. For the reverse containment, suppose that $\sigma \in \Sigma$ satisfies $q(\sigma)\in \s^{-1}(\s(q(X)))\cap\supp_G(n)$. Then $\s(q(\sigma))\in \s(q(X))$, and so there exists $\gamma\in X$ such that $q(\s(\gamma))=\s(q(\gamma))=\s(q(\sigma))=q(\s(\sigma))$. This implies that there exists a unique $t \in R^\times$ such that $\s(\sigma) = t \cdot \s(\gamma)$. By the equality $\tilde{1}_X=n\tilde{1}_{\s(X)}$, we obtain
\[
\tilde{1}_X(\sigma) = (n\tilde{1}_{\s(X)})(\sigma) = n(\sigma) \, \tilde{1}_{\s(X)}(\s(\sigma)) = t^{-1}n(\sigma)\ne 0.
\]
This implies that $\sigma = u \cdot \sigma'$ for some $\sigma'\in X$ and $u\in R^\times$. Hence $q(\sigma)=q(\sigma')\in q(X)$, and the containment $\s^{-1}(\s(q(X)))\cap\supp_G(n)\subseteq q(X)$ follows.

For the converse, notice that for $\gamma \in \Sigma$, applying \cref{prop:Steinberg diagonal} at the first step gives
\begin{align*}
(n\tilde{1}_{\s(X)})(\gamma) \ne 0
&\iff n(\gamma) \, \tilde{1}_{\s(X)}(\s(\gamma)) \ne 0 \\
&\iff \gamma \in \supp (n) \text{ and } \s(q(\gamma)) = q(\s(\gamma)) \in q(\s(X)) = \s(q(X)) \\
&\iff q(\gamma)\in \s^{-1}(\s(q(X)))\cap \supp_G(n) = q(X) \\
&\iff \gamma\in R^\times\cdot X.
\end{align*}
Since $(n\tilde{1}_{\s(X)})(\sigma) = 1$ for all $\sigma \in X$, we deduce that $n\tilde{1}_{\s(X)}=\tilde{1}_X$ by \cref{lem:unique extensions}, and hence $\tilde{1}_X\le n$.

For part~\cref{item:nvi.n.leq.i}, let $Y \coloneqq X \cap \s^{-1}(\s(\supp(n)))$. Note that $\s(\supp(n))=\s(\supp_G(n))$ (under the usual identification of $\Go$ and $\Sigmao$) is compact and open, and hence is clopen since $\Go$ is Hausdorff. Thus $Y$ is a clopen subset of the compact open bisection $X$, and so $Y$ is a compact open bisection. Since $n \le \tilde{1}_X$ by hypothesis, \cref{rmk:Steinberg4.5} implies that
\[
n = \tilde{1}_X n^\dagger n = \tilde{1}_X \tilde{1}_{\s(\supp(n))} = \tilde{1}_Y.
\]

For part~\cref{item:nvi.Y.sub.X}, let $Y$ be a compact open bisection of $\Sigma$. If $\tilde{1}_Y \le \tilde{1}_X$, then since $\tilde{1}_Y \in N(B)$, part~\cref{item:nvi.n.leq.i} implies that $\tilde{1}_Y = \tilde{1}_Z$ for some compact open bisection $Z\subseteq X$. This forces $Y=Z$, so that $Y\subseteq X$. On the other hand, if $Y\subseteq X$, then $X\s(Y)=Y$, and so $\tilde{1}_Y = \tilde {1}_X\tilde{1}_{\s(Y)} \le \tilde{1}_X$ by \cref{lem:properties of le}\cref{item4:properties of le} and \cref{cor:normalisermult} (since $\tilde{1}_{\s(Y)} \in I(B)$ and $\tilde{1}_X \in N(B)$).
\end{proof}

\begin{lemma} \label{lem:ind.choice.bisection}
Let $(\Sigma,i,q)$ be a discrete $R$-twist over an ample Hausdorff groupoid $G$, let $A \coloneqq A_R(G;\Sigma)$, and let $B \coloneqq A_R(\Go; q^{-1}(\Go))$. Fix $\sigma\in\Sigma$. The set $U_{\s(\sigma)} \coloneqq \{e \in I(B) : e(\s(\sigma)) = 1\}$ is an ultrafilter of $I(B)$.
\begin{enumerate}[label=(\alph*)]
\item \label{item1:ind.choice.bisection} If $X$ and $Y$ are compact open bisections such that $\sigma\in X\cap Y$, then $(\tilde{1}_X U_{\s(\sigma)})^{\uparrow}=(\tilde{1}_Y U_{\s(\sigma)})^{\uparrow}$.
\item \label{item2:ind.choice.bisection} If $X$ and $Y$ are compact open bisections such that $\sigma\in X$ and $\tilde{1}_Y \in (\tilde{1}_X U_{\s(\sigma)})^{\uparrow}$, then $\sigma\in Y$.
\item \label{item3:ind.choice.bisection} If $X$ is a compact open bisection such that $\sigma\in X$ and $n\in (\tilde{1}_X U_{\s(\sigma)})^{\uparrow}$, then there exists a compact open set $W \subseteq \s(X)$ such that $\sigma \in XW$ and $\tilde{1}_{XW} \le n$.
\end{enumerate}
\end{lemma}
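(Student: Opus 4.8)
I would prove the three parts of \cref{lem:ind.choice.bisection} in order, building each on the last. The preliminary claim that $U_{\s(\sigma)} \coloneqq \{e \in I(B) : e(\s(\sigma)) = 1\}$ is an ultrafilter of $I(B)$ is immediate from \cref{rem:ultrafilters on I(B)} and Stone duality: under the identification of $\Sigmao$ with ultrafilters of $I(B)$, the set $U_{\s(\sigma)}$ is precisely the ultrafilter corresponding to the unit $\s(\sigma) \in \Sigmao$ (an element $e = \tilde 1_K \in I(B)$ lies in it exactly when $\s(\sigma) \in K$), so it is a genuine ultrafilter of the Boolean algebra $I(B)$. I would note at the outset that if $\sigma \in X$ then $\tilde 1_X \in N(B)$ with $\tilde 1_X^\dagger \tilde 1_X = \tilde 1_{\s(X)}$ by \cref{lem:LBH<->AQP} (or \cref{cor:normalisermult} and \cref{rmk:Steinberg4.5}), and $\tilde 1_{\s(X)} \in U_{\s(\sigma)}$ since $\s(\sigma) \in \s(X)$. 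Hence by \cref{lem:germ-ultrafilter} the upclosure $(\tilde 1_X U_{\s(\sigma)})^{\uparrow}$ is a well-defined ultrafilter of $N(B)$ containing $\tilde 1_X$ whenever $\sigma \in X$. This observation makes all three parts sensible.

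For part \cref{item1:ind.choice.bisection}, the cleanest route is to show that both upclosures equal the \emph{same} ultrafilter of $\Sigma'$, namely the one in $\VV_{\tilde 1_X} \cap \VV_{\tilde 1_Y}$ lying over $U_{\s(\sigma)}$. By \cref{lem:germ-ultrafilter}, $V_X \coloneqq (\tilde 1_X U_{\s(\sigma)})^{\uparrow}$ is the unique ultrafilter in $\VV_{\tilde 1_X}$ with source $U_{\s(\sigma)}$, and likewise for $V_Y$. Since $\sigma \in X \cap Y$, both $\tilde 1_X$ and $\tilde 1_Y$ agree with $1$ at $\sigma$, and more importantly, on the clopen set $W \coloneqq \s^{-1}(\s(\sigma)$-neighbourhood$)$ they restrict compatibly; so I would show $\tilde 1_{X \cap Y} \in V_X$ and $\tilde 1_{X \cap Y} \in V_Y$, forcing $V_X = V_Y$ by uniqueness of the source-$U_{\s(\sigma)}$ ultrafilter in $\VV_{\tilde 1_{X\cap Y}}$. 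Concretely, $X \cap Y$ is a compact open bisection containing $\sigma$, and $\tilde 1_{X\cap Y} \le \tilde 1_X$ and $\le \tilde 1_Y$ by \cref{lem:norm.vs.indicator}\cref{item:nvi.Y.sub.X}; since $\tilde 1_{\s(X \cap Y)} \in U_{\s(\sigma)}$, the element $\tilde 1_{X \cap Y} = \tilde 1_X \tilde 1_{\s(X\cap Y)}$ lies in both $V_X$ and $V_Y$, which (having the same source) must therefore coincide.

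Part \cref{item2:ind.choice.bisection} follows by combining the structure of the upclosure with \cref{lem:norm.vs.indicator}\cref{item:nvi.Y.sub.X}. If $\tilde 1_Y \in (\tilde 1_X U_{\s(\sigma)})^{\uparrow}$, then there is $e \in U_{\s(\sigma)}$ with $\tilde 1_X e \le \tilde 1_Y$; writing $\tilde 1_X e = \tilde 1_{X'}$ for the compact open bisection $X' = X \cap \s^{-1}(K)$ (where $e = \tilde 1_K$), part \cref{item:nvi.Y.sub.X} gives $X' \subseteq Y$. Since $e \in U_{\s(\sigma)}$ means $\s(\sigma) \in K$, and $\sigma \in X$, we get $\sigma \in X' \subseteq Y$, as desired. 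For part \cref{item3:ind.choice.bisection}, given $n \in (\tilde 1_X U_{\s(\sigma)})^{\uparrow}$, by definition of upclosure there is $e = \tilde 1_W \in U_{\s(\sigma)}$ (with $W \subseteq \s(X)$ after intersecting) such that $\tilde 1_X \tilde 1_W = \tilde 1_{XW} \le n$; here $\s(\sigma) \in W$ gives $\sigma \in XW$. The main obstacle, I expect, is bookkeeping: ensuring at each stage that the idempotents drawn from $U_{\s(\sigma)}$ can be taken with support inside $\s(X)$ (so that $\tilde 1_X e$ is genuinely $\tilde 1_{X'}$ for a \emph{subset} $X'$ of $X$), which requires repeated use of \cref{cor:normalisermult} and the identification $\tilde 1_X \tilde 1_K = \tilde 1_{X \cap \s^{-1}(K)}$. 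Once this identity is established cleanly, all three parts reduce to the order-theoretic facts in \cref{lem:norm.vs.indicator} together with uniqueness of lifts via \cref{lem:germ-ultrafilter}.
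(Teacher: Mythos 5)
Your proposal is correct, and for parts \cref{item2:ind.choice.bisection} and \cref{item3:ind.choice.bisection} it coincides with the paper's proof: in both, one unpacks the upclosure to obtain $e = \tilde{1}_W \in U_{\s(\sigma)}$ with $\tilde{1}_X\tilde{1}_W = \tilde{1}_{XW} \le n$ (respectively $\le \tilde{1}_Y$), replaces $W$ by $W \cap \s(X)$ where needed, and concludes via \cref{cor:normalisermult} and \cref{lem:norm.vs.indicator} --- the paper happens to cite \cref{lem:norm.vs.indicator}\cref{lem:nvi.i.leq.n} where you cite \cref{lem:norm.vs.indicator}\cref{item:nvi.Y.sub.X}, an immaterial difference, and the preliminary ultrafilter claim is ``routine'' in both treatments. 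Where you genuinely diverge is part \cref{item1:ind.choice.bisection}. The paper never needs to know that the two upclosures are ultrafilters: it observes that $(\tilde{1}_Y U_{\s(\sigma)})^{\uparrow}$ is closed upwards and under right multiplication by $U_{\s(\sigma)}$, so the containment reduces to the single membership $\tilde{1}_X \in (\tilde{1}_Y U_{\s(\sigma)})^{\uparrow}$, witnessed by a compact open bisection $W \subseteq X \cap Y$ containing $\sigma$ via $\tilde{1}_Y\tilde{1}_{\s(W)} = \tilde{1}_W \le \tilde{1}_X$. You instead invoke \cref{lem:germ-ultrafilter} to see that each upclosure is an ultrafilter with source $U_{\s(\sigma)}$, show both contain $\tilde{1}_{X\cap Y}$ (using that $X \cap Y$ is itself a compact open bisection, which is valid here since $\Sigma$ is Hausdorff by \cref{cor:coveringspace}), and conclude equality because $\s$ is injective on the bisection $\VV_{\tilde{1}_{X\cap Y}}$. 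Both arguments rest on the same kernel --- an indicator of a compact open bisection inside $X \cap Y$ through $\sigma$, lying below $\tilde{1}_X$ and $\tilde{1}_Y$, whose source idempotent belongs to $U_{\s(\sigma)}$ --- but your route buys a cleaner conceptual statement (each upclosure is the unique lift of $U_{\s(\sigma)}$ through a bisection neighbourhood) at the cost of importing \cref{lem:germ-ultrafilter}, which the paper reserves for the proof of \cref{thm:reconstructing.the.twist}; the paper's version is more elementary in that it never appeals to maximality or to the groupoid structure of $\Sigma'$. One stray sentence in your part~\cref{item1:ind.choice.bisection} (``on the clopen set $W \coloneqq \s^{-1}(\ldots)$ they restrict compatibly'') is garbled, but the concrete argument that follows it is complete, so nothing essential is missing.
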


\begin{proof}
A routine argument shows that $U_{\s(\sigma)}$ is an ultrafilter of $I(B)$.

For part~\cref{item1:ind.choice.bisection}, it suffices by symmetry to show that $(\tilde{1}_X U_{\s(\sigma)})^{\uparrow} \subseteq (\tilde{1}_Y U_{\s(\sigma)})^{\uparrow}$. Since the right-hand side is closed upwards, it suffices to show that it contains $\tilde{1}_X U_{\s(\sigma)}$. But the right-hand side is also closed under right multiplication by $U_{\s(\sigma)}$. Thus it suffices to show that $\tilde{1}_X\in (\tilde{1}_Y U_{\s(\sigma)})^{\uparrow}$. Since $X \cap Y$ is open, there is a compact open set $W\subseteq X\cap Y$ with $\sigma\in W$, and clearly $W$ is a compact open bisection. So $\tilde{1}_W\le \tilde{1}_X,\tilde{1}_Y$ by \cref{lem:norm.vs.indicator}\cref{item:nvi.Y.sub.X}, and $\tilde{1}_{\s(W)} \in U_{\s(\sigma)}$ since $\sigma \in W$. Note that $\tilde{1}_W = \tilde{1}_Y\tilde{1}_{\s(W)}$ by \cref{cor:normalisermult}. Therefore, $\tilde{1}_Y \tilde{1}_{\s(W)} = \tilde{1}_W \le \tilde{1}_X$, and so $\tilde{1}_X\in (\tilde{1}_Y U_{\s(\sigma)})^{\uparrow}$, as required.

For part~\cref{item2:ind.choice.bisection}, note that since $\tilde{1}_Y \in (\tilde{1}_X U_{\s(\sigma)})^{\uparrow}$, there exists a compact open set $W \subseteq \Sigmao$ such that $\tilde{1}_W \in U_{\s(\sigma)}$ and $\tilde{1}_{XW} = \tilde{1}_X \tilde{1}_W \le \tilde{1}_Y$ by \cref{cor:normalisermult}. Thus $\sigma=\sigma\s(\sigma)\in XW$, and so $\tilde{1}_Y(\sigma)=1$ by \cref{lem:norm.vs.indicator}\cref{lem:nvi.i.leq.n}. Hence $\sigma \in Y$.

For part~\cref{item3:ind.choice.bisection}, note that since $n \in (\tilde{1}_X U_{\s(\sigma)})^{\uparrow}$, there exists a compact open set $W \subseteq \Sigmao$ such that $\tilde{1}_W \in U_{\s(\sigma)}$ and $\tilde{1}_X \tilde{1}_W \le n$. By replacing $W$ with $W \cap \s(X)$, we may assume that $W \subseteq \s(X)$. Hence $\tilde{1}_{XW} = \tilde{1}_X \tilde{1}_W \le n$. Since $W \in U_{\s(\sigma)}$, we have $\s(\sigma) \in W$, and so $\sigma = \sigma \s(\sigma) \in XW$.
\end{proof}

We are now in a position to show that for any discrete $R$-twist $(\Sigma,i,q)$ over $G$, there is a continuous open groupoid homomorphism $\Phi$ of $\Sigma$ into the twist obtained from the pair $(A_R(G;\Sigma), A_R(\Go; q^{-1}(\Go)))$, and that this induces a morphism of twists. We first establish the existence of $\Phi$.

\begin{prop} \label{prop:embedding}
Let $(\Sigma,i,q)$ be a discrete $R$-twist over an ample Hausdorff groupoid $G$, let $A \coloneqq A_R(G;\Sigma)$, and let $B \coloneqq A_R(\Go; q^{-1}(\Go))$. Let $\Sigma'$ be the groupoid of ultrafilters of $N(B)$ defined in \cref{sec:Sigma def}, and let $G'$ be the corresponding quotient of $\Sigma'$ by the action of $R^\times$. For each $x \in \Sigmao$, let $U_x$ be the ultrafilter $\{e \in I(B) : e(x) = 1\}$ of $I(B)$ corresponding to $x$. For $\sigma \in \Sigma$, let $X$ be a compact open bisection containing $\sigma$, and let
\[
\Phi(\sigma) \coloneqq (\tilde{1}_X U_{\s(\sigma)})^{\uparrow} \in \Sigma'.
\]
Then $\Phi(\sigma)$ does not depend on the choice of $X$. The map $\Phi\colon \Sigma \to \Sigma'$ is an $R^\times$-equivariant continuous open embedding of topological groupoids such that $\Phi(\Sigmao)=\Sigma'^{(0)}$.
\end{prop}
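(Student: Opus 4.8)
The plan is to verify, in order: that $\Phi(\sigma)$ is a well-defined element of $\Sigma'$ independent of $X$; that $\Phi$ is a groupoid homomorphism respecting $\s$ and $\r$; that $\Phi$ is $R^\times$-equivariant; that $\Phi$ restricts to a homeomorphism $\Sigmao \to \Sigma'^{(0)}$; that $\Phi$ is injective; and finally that $\Phi$ is continuous and open. First, since $\sigma \in X$ forces $\tilde{1}_{\s(X)} = \tilde{1}_X^\dagger \tilde{1}_X \in U_{\s(\sigma)}$, \cref{lem:germ-ultrafilter} shows that $\Phi(\sigma) = (\tilde{1}_X U_{\s(\sigma)})^{\uparrow}$ is an ultrafilter of $N(B)$ containing $\tilde{1}_X$, so $\Phi(\sigma) \in \VV_{\tilde{1}_X} \subseteq \Sigma'$; independence of $X$ is exactly \cref{lem:ind.choice.bisection}\cref{item1:ind.choice.bisection}. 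The proof of \cref{lem:germ-ultrafilter} (via \cite[Lemma~3.1(a)]{ACaHJL2021}) also records that $\s(\Phi(\sigma)) = U_{\s(\sigma)}$, and a symmetric computation gives $\r(\Phi(\sigma)) = U_{\r(\sigma)}$; taking $X \subseteq \Sigmao$ when $\sigma$ is a unit shows $\Phi(\s(\sigma)) = U_{\s(\sigma)}$ and $\Phi(\r(\sigma)) = U_{\r(\sigma)}$, so $\Phi$ intertwines source and range.

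For multiplicativity, when $\s(\sigma) = \r(\tau)$ I would choose compact open bisections $X \ni \sigma$ and $Y \ni \tau$; then $XY \ni \sigma\tau$ and $\tilde{1}_{XY} = \tilde{1}_X * \tilde{1}_Y$ by \cref{cor:normalisermult}. The pair $(\Phi(\sigma),\Phi(\tau))$ is composable because $\s(\Phi(\sigma)) = U_{\s(\sigma)} = U_{\r(\tau)} = \r(\Phi(\tau))$, and the product $\Phi(\sigma)\Phi(\tau)$ is an ultrafilter lying in $\VV_{\tilde{1}_{XY}}$ with source $U_{\s(\tau)}$; since $\VV_{\tilde{1}_{XY}}$ is a bisection (\cref{lem:qmap}\cref{item1:qmap}), such an ultrafilter is unique and therefore equals $\Phi(\sigma\tau)$. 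Equivariance follows from $\s(t\cdot\sigma) = \s(\sigma)$ together with the identity $t\tilde{1}_X = \tilde{1}_{t\cdot X}$ (immediate from $R^\times$-contravariance and the uniqueness in \cref{lem:unique extensions}); combined with \cref{lem:scalarbasis} this gives $\Phi(t\cdot\sigma) = (t\tilde{1}_X\, U_{\s(\sigma)})^{\uparrow} = t\,\Phi(\sigma)$.

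Next I would handle the unit space and injectivity. For a compact open $V \subseteq \Sigmao$ we have $\tilde{1}_V \in I(B)$ and $\VV_{\tilde{1}_V} \subseteq \Sigma'^{(0)}$, and the correspondence $x \mapsto U_x$ of \cref{rem:ultrafilters on I(B)} identifies $\VV_{\tilde{1}_V}$ with $\{U_x : x \in V\} = \Phi(V)$. As these sets form a basis for $\Sigma'^{(0)}$, the restriction $\Phi|_{\Sigmao}$ is a continuous open bijection onto $\Sigma'^{(0)}$, hence a homeomorphism, proving $\Phi(\Sigmao) = \Sigma'^{(0)}$. Injectivity then follows: if $\Phi(\sigma) = \Phi(\tau)$ then applying $\s$ yields $U_{\s(\sigma)} = U_{\s(\tau)}$, so $\s(\sigma) = \s(\tau)$ by injectivity of $x \mapsto U_x$; moreover, choosing $Y \ni \tau$ we have $\tilde{1}_Y \in \Phi(\tau) = \Phi(\sigma)$, so \cref{lem:ind.choice.bisection}\cref{item2:ind.choice.bisection} gives $\sigma \in Y$, and since $Y$ is a bisection containing both $\sigma$ and $\tau$ with equal sources, $\sigma = \tau$.

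Finally, continuity is local: given $\sigma$ with $\Phi(\sigma) \in \VV_n$, so $n \in (\tilde{1}_X U_{\s(\sigma)})^{\uparrow}$, \cref{lem:ind.choice.bisection}\cref{item3:ind.choice.bisection} produces a compact open $W \subseteq \s(X)$ with $\sigma \in XW$ and $\tilde{1}_{XW} \le n$; every $\rho \in XW$ then has $\tilde{1}_{XW} \in \Phi(\rho)$ by \cref{lem:germ-ultrafilter}, hence $n \in \Phi(\rho)$ by up-closure, so $XW \subseteq \Phi^{-1}(\VV_n)$. For openness it suffices to treat a compact open bisection $X$, and I claim $\Phi(X) = \VV_{\tilde{1}_X}$: we have $\Phi(X) \subseteq \VV_{\tilde{1}_X}$, while applying $\s$ gives $\s(\Phi(X)) = \Phi(\s(X)) = \VV_{\tilde{1}_{\s(X)}} = \s(\VV_{\tilde{1}_X})$, and since $\VV_{\tilde{1}_X}$ is a bisection on which $\s$ is injective, the two sets coincide and are open. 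Thus $\Phi$ is a continuous open injection, i.e.\ an embedding. I expect the main obstacle to be the bookkeeping around the germ-type ultrafilters $(nU)^{\uparrow}$—specifically pinning down $\s(\Phi(\sigma)) = U_{\s(\sigma)}$ and the uniqueness of an ultrafilter in $\VV_n$ with prescribed source—since both the multiplicativity and the openness arguments hinge on it; this is exactly what \cref{lem:germ-ultrafilter} and the étale structure of $\Sigma'$ from \cref{lem:qmap} are designed to supply.
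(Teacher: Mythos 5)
Your proposal is correct, and it rests on the same foundations as the paper's proof---\cref{lem:germ-ultrafilter}, \cref{lem:ind.choice.bisection}, \cref{cor:normalisermult}, and Stone duality for $I(B)$---but in three places you substitute a genuinely different mechanism: wherever the paper grows a containment of filters and then invokes maximality of ultrafilters, you instead exploit that each $\VV_n$ is a bisection of $\Sigma'$, so that an ultrafilter in $\VV_n$ with prescribed source is unique. For multiplicativity, the paper envelops $XY$ in a bisection $W \ni \sigma\tau$ and shows $\tilde{1}_W U_{\s(\sigma\tau)} \subseteq \Phi(\sigma)\Phi(\tau)$, concluding by maximality; you work directly with $XY$ (legitimately, since $XY$ is itself a compact open bisection containing $\sigma\tau$, hence usable in the definition of $\Phi(\sigma\tau)$ once independence of the choice is known) and match sources inside $\VV_{\tilde{1}_{XY}}$. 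For injectivity, the paper separates $\sigma$ and $\tau$ by a bisection using Hausdorffness; you use equality of sources plus \cref{lem:ind.choice.bisection}\cref{item2:ind.choice.bisection} to put both in one bisection. For openness, the paper exhibits, for each $U \in \VV_{\tilde{1}_X}$, the point $\sigma \in X$ with $\Phi(\sigma) \subseteq U$; you instead compare $\s(\Phi(X)) = \Phi(\s(X)) = \s(\VV_{\tilde{1}_X})$ and use injectivity of $\s$ on the bisection $\VV_{\tilde{1}_X}$. Your variants are slightly shorter and make transparent that everything reduces to the \'etale structure of $\Sigma'$; the paper's maximality arguments are more self-contained at the level of filters.

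One step you gloss deserves writing out: the claim that $\r(\Phi(\sigma)) = U_{\r(\sigma)}$ follows ``by a symmetric computation.'' It is not formally symmetric, because $\Phi(\sigma) = (\tilde{1}_X U_{\s(\sigma)})^{\uparrow}$ privileges the source; passing to ranges requires conjugating idempotents across $\tilde{1}_X$. The fix is short: for $e \in U_{\r(\sigma)}$ one computes via \cref{prop:Steinberg diagonal} that $(\tilde{1}_X^\dagger e\, \tilde{1}_X)(\s(\sigma)) = e(\r(\sigma)) = 1$, so $\tilde{1}_X^\dagger e\, \tilde{1}_X \in U_{\s(\sigma)}$ and $e \ge \tilde{1}_X(\tilde{1}_X^\dagger e\, \tilde{1}_X)\tilde{1}_X^\dagger \in \Phi(\sigma)\Phi(\sigma)^{-1}$---which is exactly the computation the paper performs; equivalently, one can verify $\Phi(\sigma)^{-1} = (\tilde{1}_{X^{-1}} U_{\r(\sigma)})^{\uparrow}$ using the identity $e\,\tilde{1}_X = \tilde{1}_X(\tilde{1}_X^\dagger e\, \tilde{1}_X)$. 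Similarly, your equivariance step $(t\,\tilde{1}_X\, U_{\s(\sigma)})^{\uparrow} = t\,\Phi(\sigma)$ tacitly uses that upclosure commutes with scaling, i.e.\ $(tS)^{\uparrow} = t(S^{\uparrow})$, which follows from \cref{lem:scalarn}\cref{item2:scalarn}. Neither point is a gap that would derail the proof, but both need a line or two rather than an appeal to symmetry.
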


\begin{proof}
\Cref{lem:ind.choice.bisection}\cref{item1:ind.choice.bisection} shows that $\Phi(\sigma)$ does not depend on the choice of $X$. To see that $\Phi$ is injective, suppose that $\sigma$ and $\tau$ are distinct elements of $\Sigma$. Since $\Sigma$ is Hausdorff, there is a compact open bisection $X \subseteq \Sigma$ such that $\sigma \in X$ and $\tau \notin X$. In particular, $\tilde{1}_X \tilde{1}_{\s(X)} = \tilde{1}_X \in \Phi(\sigma) \setminus \Phi(\tau)$ by \cref{cor:normalisermult} and \cref{lem:ind.choice.bisection}\cref{item2:ind.choice.bisection}, and so $\Phi(\sigma) \ne \Phi(\tau)$.

Fix $x \in \Sigmao$. By taking any compact open neighbourhood $X \subseteq \Sigmao$ of $x$, we see that $\Phi(x) = (\tilde{1}_X U_x)^{\uparrow} = (U_x)^{\uparrow}$ is the unique ultrafilter of $N(B)$ containing $U_x$. By Stone duality, it follows that $\Phi$ restricts to a homeomorphism from $\Sigmao$ to $\Sigma'^{(0)}$.

Fix $\sigma \in \Sigma$ and let $X \subseteq \Sigma$ be a compact open bisection containing $\sigma$. Put $V\coloneqq (\tilde{1}_XU_{\s(\sigma)})^\uparrow=\Phi(\sigma)$. Then $\s(\Phi(\sigma)) = V^{-1}V$. If $e \in U_{\s(\sigma)}$, then $e \ge (\tilde{1}_X e)^\dagger (\tilde{1}_X e) \in V^{-1}V$, and so $e \in V^{-1}V$. Therefore, $\Phi(\s(\sigma)) = (U_{\s(\sigma)})^{\uparrow}\subseteq V^{-1}V$, and so by the maximality property of ultrafilters, we have $\Phi(\s(\sigma)) = V^{-1}V = \s(\Phi(\sigma))$. Suppose that $e \in U_{\r(\sigma)} \subseteq \Phi(\r(\sigma))$. Then $(\tilde{1}_X^\dagger e \tilde{1}_X)(\s(\sigma)) = 1$ by \cref{prop:Steinberg diagonal}, and so $\tilde{1}_X^\dagger e \tilde{1}_X \in U_{\s(\sigma)} = V^{-1}V$. Therefore, $e \ge \tilde{1}_X(\tilde{1}_X^\dagger e \tilde{1}_X) \tilde{1}_X^\dagger \in V(V^{-1}V)V^{-1} = VV^{-1}$, and so $e \in VV^{-1}$. Thus $\Phi(\r(\sigma)) = (U_{\r(\sigma)})^{\uparrow} \subseteq VV^{-1} = \r(\Phi(\sigma))$, and so $\Phi(\r(\sigma)) = \r(\Phi(\sigma))$ since these are ultrafilters. It follows that $\Phi$ carries composable pairs to composable pairs.

To see that $\Phi$ is multiplicative, fix $(\sigma,\tau) \in \Sigmac$, and choose a compact open bisection $W \subseteq \Sigma$ containing $\sigma\tau$. By the continuity of multiplication, there exist compact open bisections $X$ and $Y$ of $\Sigma$ such that $\sigma \in X$, $\tau \in Y$, and $XY \subseteq W$. Then $\tilde{1}_X \tilde{1}_Y = \tilde{1}_{XY} \le \tilde{1}_W$ by \cref{cor:normalisermult} and \cref{lem:norm.vs.indicator}\cref{item:nvi.Y.sub.X}. It follows that $\tilde{1}_W \in \Phi(\sigma) \Phi(\tau)$. Since $\s(\tau) = \s(\sigma\tau)$, it follows that $\tilde{1}_W U_{\s(\sigma\tau)} \subseteq \Phi(\sigma) \Phi(\tau) U_{\s(\tau)} \subseteq \Phi(\sigma) \Phi(\tau)$, and so $\Phi(\sigma\tau) \subseteq \Phi(\sigma) \Phi(\tau)$. Hence $\Phi(\sigma\tau) = \Phi(\sigma) \Phi(\tau)$ by the maximality property of ultrafilters.

To see that $\Phi$ is continuous, fix $n \in N(B)$, and let $\sigma \in \Phi^{-1}(\VV_n)$. Choose a compact open bisection $X \subseteq \Sigma$ containing $\sigma$. Then $(\tilde{1}_X U_{\s(\sigma)})^{\uparrow} = \Phi(\sigma) \in \VV_n$, and so by the definition of $\VV_n$, we have $n \in (\tilde{1}_X U_{\s(\sigma)})^{\uparrow}$. It follows from \cref{lem:ind.choice.bisection}\cref{item3:ind.choice.bisection} that there is a compact open set $W \subseteq \s(X)$ such that $\sigma \in XW$ and $\tilde{1}_{XW} \tilde{1}_{\s(X)} = \tilde{1}_{XW} \le n$. Hence $\Phi(\tau) \in \VV_n$ for each $\tau \in XW$, and so $XW$ is an open neighbourhood of $\sigma$ contained in $\Phi^{-1}(\VV_n)$.

To see that $\Phi$ is open, it suffices to fix a compact open bisection $X$ of $\Sigma$, and show that $\Phi(X) = \VV_{\tilde{1}_X}$. If $\sigma \in X$, then certainly $\tilde{1}_X \in \Phi(\sigma)$, and so $\Phi(\sigma) \in \VV_{\tilde{1}_X}$. This gives $\Phi(X) \subseteq \VV_{\tilde{1}_X}$. For the reverse containment, suppose that $U \in \VV_{\tilde{1}_X}$. Then $\tilde{1}_X \in U$, and so $\tilde{1}_{\s(X)} = \tilde{1}_X^\dagger \tilde{1}_X \in \s(U)$. We already know that $\s(U) = (U_x)^{\uparrow}$ for some $x \in \Sigmao$, and hence $(\tilde{1}_{\s(X)})(x) = 1$, which implies that $x \in \s(X)$. Let $\sigma$ be the unique element of $X$ such that $\s(\sigma) = x$. We claim that $U = \Phi(\sigma)$. Indeed, since $\s(U) = (U_x)^{\uparrow} = (U_{\s(\sigma)})^{\uparrow}$ and $\tilde{1}_X \in U$, we have $\tilde{1}_X U_{\s(\sigma)} \subseteq U$, and hence $\Phi(\sigma)\subseteq U$. Thus $U = \Phi(\sigma) \subseteq \Phi(X)$ by the maximality property of ultrafilters, and so $\Phi$ is an open map.

It remains to prove $R^\times$-equivariance. Fix $t \in R^\times$ and $\sigma \in \Sigma$. Fix a compact open bisection $X \subseteq \Sigma$ containing $\sigma$. Then $t \cdot X$ is a compact open bisection containing $t \cdot \sigma$, and $\tilde{1}_{t\cdot X} = t \tilde{1}_X$. Since $\s(t\cdot \sigma)=\s(\sigma)$, we have $\Phi(t \cdot \sigma) = (\tilde{1}_{t \cdot X} U_{\s(\sigma)})^{\uparrow}$. Now
\[
\tilde{1}_{t\cdot X} U_{\s(\sigma)} = (t \tilde{1}_X) U_{\s(\sigma)} \subseteq t \cdot \Phi(\sigma),
\]
and thus $\Phi(t \cdot \sigma) \subseteq t \cdot \Phi(\sigma)$. Hence $\Phi(t \cdot \sigma) = t \cdot \Phi(\sigma)$ since they are ultrafilters.
\end{proof}

It is now relatively straightforward to show that $\Phi$ induces a morphism of discrete
$R$-twists.

\begin{cor} \label{cor:comm.diagram.twist}
Let $(\Sigma,i,q)$ be a discrete $R$-twist over an ample Hausdorff groupoid $G$, let $A \coloneqq A_R(G;\Sigma)$, and let $B \coloneqq A_R(\Go; q^{-1}(\Go))$. Let
\[
G'^{(0)} \times R^\times \overset{i'} \hookrightarrow \Sigma' \overset{q'} \twoheadrightarrow G'
\]
be the sequence obtained from $(A,B)$ via \cref{thm:the twist}. Let $\Phi\colon \Sigma \to \Sigma'$ be the map of \cref{prop:embedding}. Then there is a (well-defined) map $\Phi_G\colon G \to G'$ given by $\Phi_G(q(\sigma))=q'(\Phi(\sigma))$, and the following diagram commutes.
\[
\begin{tikzcd}
\Go\times R^\times \ar[r,hook,"i"]\ar[d,"\Phi_G \times \id"] & \Sigma \ar[r,two heads, "q"]\ar[d,"\Phi"] & G \ar[d,"\Phi_G"] \\
G'^{(0)}\times R^\times \ar[r,hook,"i'"] & \Sigma' \ar[r,two heads, "q'"] & G'
\end{tikzcd}
\]
\end{cor}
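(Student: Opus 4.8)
The plan is to verify the two asserted facts separately: that $\Phi_G$ is well-defined, and that the two squares commute. There is no substantial new idea here beyond the $R^\times$-equivariance of $\Phi$ already established in \cref{prop:embedding}; the whole content is careful bookkeeping with the identifications of unit spaces and with the two scalar conventions in play, namely the $R^\times$-action $t\cdot\sigma = i(\r(\sigma),t)\sigma$ on a twist and the scalar multiplication $tU$ on ultrafilters from \cref{lem:scalarbasis}.

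First I would prove well-definedness. The key preliminary observation is that the fibres of $q$ are exactly the $R^\times$-orbits of $\Sigma$: if $q(\sigma)=q(\tau)$, then $\sigma$ and $\tau$ share the same range and source, so $\sigma\tau^{-1}$ is defined and lies in $q^{-1}(\r(q(\sigma)))$, which by the exactness condition \cref{item:exact} equals $i(\{\r(\sigma)\}\times R^\times)$; hence $\sigma = i(\r(\tau),t)\,\tau = t\cdot\tau$ for some $t\in R^\times$. Symmetrically, $q'$ collapses precisely the $R^\times$-orbits of $\Sigma'$, since $G'=\Sigma'/{\simeq}$ and $\simeq$ is the relation $U = tW$ of \cref{eqn:scalar equivalence}, with $tW = t\cdot W$. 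Given $q(\sigma)=q(\tau)$, write $\tau = t\cdot\sigma$; then $\Phi(\tau)=t\cdot\Phi(\sigma)$ by the $R^\times$-equivariance of $\Phi$, and therefore $q'(\Phi(\tau)) = q'(t\cdot\Phi(\sigma)) = q'(\Phi(\sigma))$. This shows that $\Phi_G(q(\sigma)) \coloneqq q'(\Phi(\sigma))$ is independent of the representative $\sigma$, so $\Phi_G$ is well-defined. The right-hand square then commutes by the very definition of $\Phi_G$, that is, $q'\circ\Phi = \Phi_G\circ q$.

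For the left-hand square I would unwind both composites on a pair $(x,t)\in\Go\times R^\times$, identifying $x$ with the corresponding unit in $\Sigmao$. On units the $R^\times$-action gives $i(x,t) = i(\r(x),t)\,x = t\cdot x$, so $\Phi(i(x,t)) = \Phi(t\cdot x) = t\cdot\Phi(x)$ by equivariance, and since $\Phi(x)\in\Sigma'^{(0)}$ (by $\Phi(\Sigmao)=\Sigma'^{(0)}$ in \cref{prop:embedding}) the action on the unit $\Phi(x)$ reduces to $t\cdot\Phi(x) = t\,\Phi(x)$. On the other side, the formula for $i'$ from \cref{thm:the twist} gives $i'\big((\Phi_G\times\id)(x,t)\big) = i'(q'(\Phi(x)),t) = t\,\Phi(x)$ directly. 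Both composites therefore equal $t\,\Phi(x)$, so $\Phi\circ i = i'\circ(\Phi_G\times\id)$ and the left square commutes.

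The only obstacle is keeping the conventions straight: one must confirm that the action $t\cdot\sigma=i(\r(\sigma),t)\sigma$ restricts on units to $i(x,t)$, and that on the $\Sigma'$ side the value $i'(q'(U),t)=tU$ coincides with the action $t\cdot U$ on a unit $U$; the latter is the short computation $t\cdot U = (tU)U = t(UU) = tU$ using \cref{lem:scalarbasis}\cref{item3:scalarbasis}. Once these identifications are pinned down, equivariance does all the work. Finally, although the statement only asks for well-definedness and commutativity, I would note that $\Phi_G$ is automatically a continuous groupoid homomorphism: $\Phi$ and $q'$ are continuous groupoid homomorphisms and $q$ is an open quotient map, so the factorisation $\Phi_G\circ q = q'\circ\Phi$ transfers these properties to $\Phi_G$, justifying the description of the pair $(\Phi,\Phi_G)$ as a morphism of discrete $R$-twists.
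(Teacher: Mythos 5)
Your proposal is correct and follows essentially the same route as the paper: the $R^\times$-equivariance of $\Phi$ from \cref{prop:embedding} gives well-definedness of $\Phi_G$ and commutativity of the right-hand square, and the left-hand square is verified by the same chain $\Phi(i(x,t)) = \Phi(t\cdot x) = t\cdot\Phi(x) = i'(q'(\Phi(x)),t)$. Your extra bookkeeping---checking via \cref{item:exact} that the fibres of $q$ are precisely the $R^\times$-orbits, and reconciling $t\cdot U$ with $tU$ via \cref{lem:scalarbasis}\cref{item3:scalarbasis}---merely makes explicit what the paper leaves implicit.
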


\begin{proof}
\Cref{prop:embedding} shows that $\Phi$ is $R^\times$-equivariant. Thus $\Phi_G$ is well-defined, and the right-hand square of the diagram commutes. To see that the left-hand square commutes, observe that for all $x \in \Sigmao$, we have
\[
\Phi(i(q(x),t)) = \Phi(t \cdot x) = t \cdot \Phi(x) = i'(q'(\Phi(x)),t) = i'(\Phi_G(q(x)),t). \qedhere
\]
\end{proof}

To proceed, we need the following observation about normalisers $n \in N(B)$ with $\supp_G(n)$ a bisection.

\begin{prop} \label{prop:local.bisection.is.indicator}
Fix $n \in N(B)$. Then $\supp_G(n)$ is a bisection if and only if $n=\tilde{1}_X$ for a (necessarily unique) compact open bisection $X$ of $\Sigma$.
\end{prop}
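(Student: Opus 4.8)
The plan is to prove the two implications separately, treating the reverse direction as routine and concentrating on the forward one. For the easy direction, suppose $n = \tilde{1}_X$ for a compact open bisection $X$ of $\Sigma$. Then $\supp(n) = R^\times\cdot X$ and $\supp_G(n) = q(X)$; since $q$ is a groupoid homomorphism intertwining $\r$ and $\s$ that restricts to a homeomorphism of unit spaces (\cref{cor:coveringspace}), injectivity of $\s$ (and of $\r$) on $q(X)$ follows from $\s(q(\sigma)) = q(\s(\sigma))$ together with injectivity of $q$ on $\Sigmao$ and the bisection property of $X$. Hence $\supp_G(n)$ is a bisection. Uniqueness of $X$ is immediate: since $n$ is $R^\times$-contravariant, for $t \in R^\times\setminus\{1\}$ and $\sigma \in X$ we have $\tilde{1}_X(t\cdot\sigma) = t^{-1}\neq 1$, so $X = n^{-1}(1)$ is determined by $n$.

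For the forward direction, write $W := \supp_G(n)$ and assume it is a bisection; recall that $W$ is automatically compact and clopen. The crux is to show that $n$ takes values in $R^\times$ on all of $\supp(n)$, and this is exactly where the bisection hypothesis is used. By \cref{rmk:Steinberg4.5} we have $nn^\dagger = \tilde{1}_{\r(\supp(n))}$, so $(nn^\dagger)(y) = 1$ for every $y \in \r(\supp(n))$. Fix $\sigma \in \supp(n)$ and set $y := \r(\sigma)$. Evaluating the convolution $nn^\dagger$ at $y$ via \cref{prop:multiplication in twisted Steinberg algebras}, using a section $S$ of $q$ over $W = \supp_G(n)$ chosen so that $S(q(\sigma)) = \sigma$, the index set $G^y\cap W$ is a singleton (as $W$ is a bisection and $y\in\r(W)$), so the sum collapses to the single term
\[
1 = (nn^\dagger)(y) = n(\sigma)\, n^\dagger(\sigma^{-1}).
\]
Since $R$ is commutative, $n(\sigma)$ has a two-sided inverse and hence lies in $R^\times$.

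With unit-valuedness in hand, set $X := n^{-1}(1)$, which is clopen because $n$ is locally constant and $R$ is discrete. By $R^\times$-contravariance, $n(t\cdot\sigma) = t^{-1}n(\sigma)$, so each $R^\times$-orbit in $\supp(n)$ meets $X$ in exactly one point, namely $n(\sigma)\cdot\sigma$. Thus $q\restr{X}$ is a continuous open (as $q$ is open) bijection onto $W$, hence a homeomorphism, so $X$ is compact; and $X$ inherits the bisection property from $W$ via $\s(q(\sigma)) = q(\s(\sigma))$ (and likewise for $\r$). Finally $\supp(\tilde{1}_X) = R^\times\cdot X = \supp(n)$ and $\tilde{1}_X\restr{X} \equiv 1 \equiv n\restr{X}$, so the uniqueness clause of \cref{lem:unique extensions} yields $n = \tilde{1}_X$, completing the proof.

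The main obstacle is the middle step, namely proving that $n$ is unit-valued on its support. Everything else is bookkeeping about slices and the $R^\times$-action, but unit-valuedness genuinely requires \emph{both} \cref{rmk:Steinberg4.5} (to know $nn^\dagger$ is an honest indicator taking the value $1$ on $\r(\supp(n))$) and the bisection hypothesis (to collapse the convolution defining $nn^\dagger$ at a unit to a single term, exposing $n(\sigma)$ as a factor of $1$). Without the bisection assumption the sum would have several terms and $n(\sigma)$ could fail to be a unit, precisely mirroring the nontrivial-unit phenomena for twisted group rings discussed in \cref{exam:no.non.triv.units}.
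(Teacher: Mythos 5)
Your proposal is correct and follows essentially the same route as the paper's proof: unit-valuedness of $n$ on its support via \cref{rmk:Steinberg4.5} together with the collapse of the convolution sum over the bisection $\supp_G(n)$, then taking $X = n^{-1}(1)$, checking it is a compact open bisection via the homeomorphism $q\restr{X}$, and concluding $n = \tilde{1}_X$ from the uniqueness clause of \cref{lem:unique extensions}. The only (immaterial) difference is that you evaluate $nn^\dagger$ at $\r(\sigma)$ where the paper evaluates $n^\dagger n$ at $\s(\sigma)$, and you spell out the one-term convolution collapse that the paper compresses into ``since $\supp_G(n)$ is a bisection.''
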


\begin{proof}
Suppose that $X \subseteq \Sigma$ is a compact open bisection of $\Sigma$. Then $\supp_G(\tilde{1}_X) = q(X)$ is a compact open bisection of $G$, since $q$ is a continuous open map that restricts to a homeomorphism of unit spaces. Moreover, $\tilde{1}_X^{-1}(1) = X$, and so uniqueness is clear.

For the converse, suppose that $\supp_G(n)$ is a compact open bisection of $G$. Note that $n^\dagger n = \tilde{1}_{\s(\supp(n))}$ by \cref{rmk:Steinberg4.5}. Fix $\sigma \in \supp(n)$. Then $1 = (n^\dagger n)(\s(\sigma)) = n^\dagger(\sigma^{-1})n(\sigma)$ since $\supp_G(n)$ is a bisection, and so $n(\sigma)\in R^\times$. It follows that $n$ takes values in $R^\times$, and so by the $R^\times$-contravariance of $n$, we have $\supp(n) = R^\times \cdot X$, where $X\coloneqq n^{-1}(1)$. Since $n$ is continuous, $X$ is open. Moreover, $q(X) = \supp_G(n)$ by the above observation. We claim that $q\restr{X}$ is injective. To see this, suppose that $q(\sigma) = q(\tau)$ for $\sigma, \tau \in X$. Then $\tau = t \cdot \sigma$ for some $t \in R^\times$, and so $n(\sigma) = 1 = n(\tau) = t^{-1}$. Hence $t = 1$, and so $\sigma = \tau$. Therefore, $X$ is homeomorphic to $q(X) = \supp_G(n)$, and hence $X$ is compact. To see that $X$ is a bisection, note that if $\s(\sigma) = \s(\tau)$ for $\sigma, \tau \in X$, then $q(\sigma) = q(\tau)$ since $\supp_G(n)$ is a bisection, and so $\sigma = \tau$ by the injectivity of $q\restr{X}$. Similarly, $\r\restr{X}$ is injective. Since $\tilde{1}_X$ and $n$ are supported on $R^\times \cdot X$ and agree on $X$, we deduce that $n = \tilde{1}_X$ by \cref{lem:unique extensions}.
\end{proof}

It follows from \cref{prop:local.bisection.is.indicator} that $X \mapsto \tilde{1}_X$ is an isomorphism of the inverse semigroup of compact open bisections of $\Sigma$ onto $N(B)$ if the twist $\Sigma \to G$ satisfies the local bisection hypothesis (using \cref{cor:normalisermult}). Therefore, we deduce by so-called noncommutative Stone duality~\cite{LawsonLenz2013} that $\Sigma \cong \Sigma'$. Moreover, the map $\Phi$ is the isomorphism that noncommutative Stone duality provides. The next result gives a more precise statement; namely, that the map $\Phi$ of \cref{prop:embedding} is surjective precisely when the pair of algebras associated to $G$ is an algebraic quasi-Cartan pair.

\begin{theorem} \label{thm:reconstructing.the.twist}
Let $(\Sigma,i,q)$ be a discrete $R$-twist over an ample Hausdorff groupoid $G$, let $A \coloneqq A_R(G;\Sigma)$, and let $B \coloneqq A_R(\Go; q^{-1}(\Go))$. Let $\Sigma'$ be the groupoid of ultrafilters of $N(B)$ defined in \cref{sec:Sigma def}. Let $\Phi\colon \Sigma \to \Sigma'$ be the map from \cref{prop:embedding}. The following are equivalent.
\begin{enumerate}[label=(\arabic*)]
\item \label{item:iso.twist.qcp} The pair $(A,B)$ is an algebraic quasi-Cartan pair.
\item \label{item:iso.twist.lbh} The twist $(\Sigma,i,q)$ satisfies the local bisection hypothesis.
\item \label{item:iso.twist.Phi.surjective} The map $\Phi$ is surjective and, in particular, it is an isomorphism of topological groupoids.
\end{enumerate}
\end{theorem}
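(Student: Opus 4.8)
The plan is to take the equivalence \cref{item:iso.twist.qcp}$\iff$\cref{item:iso.twist.lbh} directly from \cref{lem:LBH<->AQP} (recall $R$ is indecomposable here, as \cref{cond:torsion free} for the nonzero idempotents of $B \cong A_R(\Go)$ forces this whenever $\Go \ne \varnothing$, while the case $\Go = \varnothing$ is trivial), and then to prove \cref{item:iso.twist.lbh}$\iff$\cref{item:iso.twist.Phi.surjective}. By \cref{prop:embedding}, $\Phi$ is always an $R^\times$-equivariant continuous open embedding restricting to a homeomorphism $\Sigmao \to \Sigma'^{(0)}$; so the content of \cref{item:iso.twist.Phi.surjective} is surjectivity, and once surjectivity is known, $\Phi$ is a continuous open bijective groupoid homomorphism and hence an isomorphism of topological groupoids. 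The bookkeeping fact I would record first is that, since the proof of \cref{prop:embedding} shows $\Phi(X) = \VV_{\tilde{1}_X}$ for every compact open bisection $X \subseteq \Sigma$, we have
\[
\Phi(\Sigma) = \bigcup_{X} \VV_{\tilde{1}_X} = \{U \in \Sigma' : \tilde{1}_X \in U \text{ for some compact open bisection } X \subseteq \Sigma\};
\]
that is, $\Phi$ is surjective if and only if every ultrafilter of $N(B)$ contains an indicator $\tilde{1}_X$.

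For \cref{item:iso.twist.lbh}$\implies$\cref{item:iso.twist.Phi.surjective} I would fix $U \in \Sigma'$ and any nonzero $n \in U$; the local bisection hypothesis makes $\supp_G(n)$ a bisection, so \cref{prop:local.bisection.is.indicator} gives $n = \tilde{1}_X$ for a compact open bisection $X$, whence $U \in \VV_{\tilde{1}_X} = \Phi(X) \subseteq \Phi(\Sigma)$. This is routine.

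The substantive direction is \cref{item:iso.twist.Phi.surjective}$\implies$\cref{item:iso.twist.lbh}, which I would prove by contraposition. Assuming the local bisection hypothesis fails, there is $n \in N(B)$ with $\supp_G(n)$ not a bisection; replacing $n$ by $n^\dagger$ if necessary (which only inverts $\supp_G(n)$ and so preserves the failure), I may assume the source map is not injective on $\supp_G(n)$, so there is $x_0 \in \s(\supp_G(n))$ with $\lvert \s^{-1}(x_0) \cap \supp_G(n) \rvert \ge 2$. Using $n^\dagger n = \tilde{1}_{\s(\supp(n))}$ from \cref{rmk:Steinberg4.5}, the unit $\Phi(x_0) = (U_{x_0})^{\uparrow}$ satisfies $(n^\dagger n)(x_0)=1$ and so lies in $\VV_{n^\dagger n} = \s(\VV_n)$; hence I can choose an ultrafilter $U \in \VV_n$ with $\s(U) = \Phi(x_0)$. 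The goal is to show $U \notin \Phi(\Sigma)$. If instead $U = \Phi(\sigma)$, then $\s(\sigma) = x_0$ because $\Phi\restr{\Sigmao}$ is injective, and $n \in U = \Phi(\sigma)$; applying \cref{lem:ind.choice.bisection}\cref{item3:ind.choice.bisection} to a compact open bisection containing $\sigma$ produces a compact open bisection $Y \ni \sigma$ with $\tilde{1}_Y \le n$. Then \cref{lem:norm.vs.indicator}\cref{lem:nvi.i.leq.n} gives $q(Y) = \s^{-1}(\s(q(Y))) \cap \supp_G(n)$, and since $x_0 = \s(\sigma) \in \s(q(Y))$ this forces $\s^{-1}(x_0) \cap \supp_G(n) \subseteq q(Y)$, which is a bisection, contradicting $\lvert \s^{-1}(x_0) \cap \supp_G(n) \rvert \ge 2$. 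Hence $U \notin \Phi(\Sigma)$ and $\Phi$ is not surjective.

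The main obstacle is precisely this last direction: the heart of the matter is recognising that the image of $\Phi$ consists exactly of those ultrafilters containing an indicator $\tilde{1}_X$, and then manufacturing, over a non-bisection fibre of $\supp_G(n)$, an ultrafilter that contains $n$ but can contain no indicator dominated by $n$ at that fibre. Everything else amounts to assembling the already-established facts in \cref{prop:embedding,prop:local.bisection.is.indicator,lem:norm.vs.indicator,lem:ind.choice.bisection,rmk:Steinberg4.5} and citing \cref{lem:LBH<->AQP}.
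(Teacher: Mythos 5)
Your architecture matches the paper's proof almost exactly: \cref{item:iso.twist.qcp}$\iff$\cref{item:iso.twist.lbh} is \cref{lem:LBH<->AQP}; your \cref{item:iso.twist.lbh}$\implies$\cref{item:iso.twist.Phi.surjective} (nonzero $n \in U$, \cref{prop:local.bisection.is.indicator}, and $\Phi(X) = \VV_{\tilde{1}_X}$ from the proof of \cref{prop:embedding}) is the paper's argument; and your contrapositive of \cref{item:iso.twist.Phi.surjective}$\implies$\cref{item:iso.twist.lbh} runs on the same machinery the paper uses in its direct version (an ultrafilter containing $n$ with prescribed source, as in \cref{lem:germ-ultrafilter}, then \cref{lem:ind.choice.bisection}\cref{item3:ind.choice.bisection} and \cref{lem:norm.vs.indicator}\cref{lem:nvi.i.leq.n} to trap the source fibre of $\supp_G(n)$ inside the bisection $q(Y)$). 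Direct versus contrapositive here is cosmetic.

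There is, however, one genuinely flawed step: your WLOG reduction. You pass from ``$\supp_G(n)$ is not a bisection'' to ``$\s$ is not injective on $\supp_G(n)$'' by replacing $n$ with $n^\dagger$, asserting that this ``only inverts $\supp_G(n)$'', i.e.\ that $\supp_G(n^\dagger) = \supp_G(n)^{-1}$. That identity is false for general normalisers, and you are working precisely in the regime where the local bisection hypothesis fails, so nothing like \cref{prop:local.bisection.is.indicator} is available to rescue it. Concretely, take $G = \Z/3\Z = \{e,g,g^2\}$ (unit space a point), the trivial twist, and $R = \mathbb{Q}$, so that $A \cong \mathbb{Q}[\Z/3\Z]$ and $N(B) = A^\times \cup \{0\}$ as in \cref{exam:no.non.triv.units}. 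Then $n = \delta_e + \delta_g$ is a normaliser with $n^\dagger = n^{-1} = \tfrac{1}{2}(\delta_e - \delta_g + \delta_{g^2})$, so $\supp_G(n^\dagger) = G$ while $\supp_G(n)^{-1} = \{e, g^2\}$. All that \cref{rmk:Steinberg4.5} gives you is $\s(\supp(n^\dagger)) = \r(\supp(n))$ and $\r(\supp(n^\dagger)) = \s(\supp(n))$, which does not control $\supp(n^\dagger)$ itself.

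The gap is easily repaired, in either of two ways. Claim~1 in the proof of \cref{lem:global LBH} holds for every $n \in N_A(B)$ with no bisection hypothesis: for $\sigma, \tau \in \supp(n)$ one has $\s(\sigma) = \s(\tau)$ if and only if $\r(\sigma) = \r(\tau)$, so $\s$ is injective on $\supp_G(n)$ if and only if $\r$ is, and ``not a bisection'' already yields your $x_0$ with $\lvert \s^{-1}(x_0) \cap \supp_G(n) \rvert \ge 2$ without ever touching $n^\dagger$. Alternatively, do as the paper does: prove source-injectivity for every normaliser and then repeat the whole argument symmetrically for ranges. With that repair your proof is complete. One further small caveat: your parenthetical deriving indecomposability of $R$ from \cref{cond:torsion free} is circular in the direction \cref{item:iso.twist.lbh}$\implies$\cref{item:iso.twist.qcp}, since $B$ is only known to satisfy \cref{cond:torsion free} under hypothesis \cref{item:iso.twist.qcp}; indecomposability should be treated as a standing hypothesis here (as in \cref{lem:LBH<->AQP}, \cref{rmk:Steinberg4.5}, and \cref{cor:lattice iso}, the last of which underpins \cref{prop:embedding}) --- indeed, if $R$ has a nontrivial idempotent and $G$ is a single unit, then \cref{item:iso.twist.lbh} holds vacuously while \cref{item:iso.twist.qcp} fails, because $B \cong R$ violates \cref{cond:torsion free}.
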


\begin{proof}
\Cref{lem:LBH<->AQP} gives \cref{item:iso.twist.qcp}$\iff$\cref{item:iso.twist.lbh}.

To see that \cref{item:iso.twist.lbh}$\implies$\cref{item:iso.twist.Phi.surjective}, let $U \in \Sigma'$ and $n \in U$. Then $\supp_G(n)$ is a bisection of $G$, and so by \cref{prop:local.bisection.is.indicator}, there is a compact open bisection $X$ of $\Sigma$ such that $n = \tilde{1}_X$. For each $x \in \Sigmao$, let $U_x$ be the ultrafilter $\{e \in I(B) : e(x) = 1\}$ of $I(B)$. By \cref{prop:embedding}, we have $\s(U) = (U_x)^\uparrow$ for some $x \in \Sigmao$, and since $\tilde{1}_{\s(X)} = n^\dagger n \in \s(U)$, there exists $e \in U_x$ such that $e \le \tilde{1}_{\s(X)}$. Since $e(x) = 1$, \cref{lem:norm.vs.indicator}\cref{item:nvi.Y.sub.X} implies that $x \in \s(X)$. Since $X$ is a bisection, there is a unique $\sigma \in X$ with $\s(\sigma) = x$. Note that $\tilde{1}_X U_x = n U_x \subseteq U\s(U) = U$, and hence $\Phi(\sigma) = (\tilde{1}_X U_{\s(\sigma)})^{\uparrow}\subseteq U$. Thus $U = \Phi(\sigma)$ since these are ultrafilters. Therefore, $\Phi$ is surjective and hence is an isomorphism by \cref{prop:embedding}.

To see that \cref{item:iso.twist.Phi.surjective}$\implies$\cref{item:iso.twist.lbh}, fix $n \in N(B) {\setminus} \{0\}$ and $x \in \s(\supp_G(n))$. Note that $n^\dagger n = \tilde{1}_{\s(\supp(n))}$ by \cref{rmk:Steinberg4.5}, and hence $\s(\supp_G(n)) = \supp_G(n^\dagger n)$. Therefore, $n^\dagger n \in U_x$, and so \cref{lem:germ-ultrafilter} implies that $(nU_x)^{\uparrow}$ is an ultrafilter of $N(B)$ containing $n$, with source $U_x$. Since $\Phi$ is surjective and respects the source map, there exists $\tau \in \Sigma$ with $\s(\tau) = x$ such that $\Phi(\tau) = (nU_x)^{\uparrow}$. So $n \in \Phi(\tau)$. Let $X$ be a compact open bisection of $\Sigma$ containing $\tau$, so that $\Phi(\tau) = (\tilde{1}_X U_{\s(\tau)})^{\uparrow}$. By \cref{lem:ind.choice.bisection}\cref{item3:ind.choice.bisection}, there exists a compact open set $W \subseteq \s(X)$ such that $\tau\in XW$ and $\tilde{1}_{XW}\le n$. Since $XW \subseteq X$ is a bisection, \cref{lem:norm.vs.indicator}\cref{lem:nvi.i.leq.n} implies that $q(\tau)$ is the unique element of $\supp_G(n)$ with $\s(q(\tau)) = x$. Since $x \in \s(\supp_G(n))$ was arbitrary, it follows that $\s$ is injective on $\supp_G(n)$. A symmetric argument shows that $\r$ is injective on $\supp_G(n)$, and so $\supp_G(n)$ is a bisection.
\end{proof}

To finish off, we deduce that the local bisection hypothesis is encoded algebraically in the sense that if the algebraic pairs corresponding to two twists are isomorphic, then either neither of the two twists satisfies the local bisection hypothesis, or they both do.

\begin{definition}
Let $(\Sigma_1,i_1,q_1)$ and $(\Sigma_2,i_2,q_2)$ be discrete $R$-twists over ample Hausdorff groupoids $G_1$ and $G_2$, respectively. We say that an isomorphism $\Psi\colon A_R(G_1;\Sigma_1) \to A_R(G_2;\Sigma_2)$ is \emph{diagonal-preserving} if $\Psi\big(A_R(G^{(0)}_1; q_1^{-1}(G^{(0)}_1))\big) = A_R(G^{(0)}_2; q_2^{-1}(G^{(0)}_2))$.
\end{definition}

\begin{remark}
Let $(\Sigma,i,q)$ be a discrete $R$-twist over $G$, let $A \coloneqq A_R(G;\Sigma)$, and let $B \coloneqq A_R(\Go; q^{-1}(\Go))$. As in the untwisted case, we call $B$ the \emph{diagonal subalgebra} of $A$, even though \cref{prop:effective} says that $(A,B)$ is an algebraic diagonal pair if and only if $G$ is principal.
\end{remark}

\begin{cor} \label{cor:equiv.twist}
Let $(\Sigma_1,i_1,q_1)$ and $(\Sigma_2,i_2,q_2)$ be discrete $R$-twists over ample Hausdorff groupoids $G_1$ and $G_2$, respectively. Suppose that $\Sigma_1 \to G_1$ satisfies the local bisection hypothesis. The following are equivalent.
\begin{enumerate}[label=(\arabic*)]
\item \label{item:cor.equiv.twist} The twists $(\Sigma_1,i_1,q_1)$ and $(\Sigma_2,i_2,q_2)$ are isomorphic.
\item \label{item:cor.R-alg.iso} There exists a diagonal-preserving isomorphism of $R$-algebras $\Psi\colon A_R(G_1;\Sigma_1)\to A_R(G_2;\Sigma_2)$.
\end{enumerate}
\end{cor}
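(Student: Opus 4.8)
The plan is to prove the two implications separately, writing $A_k \coloneqq A_R(G_k;\Sigma_k)$ and $B_k \coloneqq A_R(G_k^{(0)}; q_k^{-1}(G_k^{(0)}))$ for $k \in \{1,2\}$, and to treat the statement as an assembly of the reconstruction results already established rather than a fresh computation.

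For \cref{item:cor.equiv.twist}$\implies$\cref{item:cor.R-alg.iso}, I would unpack an isomorphism of twists as a pair of groupoid isomorphisms $\rho\colon \Sigma_1 \to \Sigma_2$ and $\bar\rho\colon G_1 \to G_2$ that intertwine $i_1,i_2$ and $q_1,q_2$; in particular $\rho$ is $R^\times$-equivariant, is a homeomorphism, and carries $q_1^{-1}(G_1^{(0)})$ onto $q_2^{-1}(G_2^{(0)})$. Define $\Psi(f) \coloneqq f \circ \rho^{-1}$. The $R^\times$-equivariance of $\rho$ shows that $\Psi(f)$ is $R^\times$-contravariant whenever $f$ is, and the relation $q_2 \circ \rho = \bar\rho \circ q_1$ together with $\bar\rho$ being a homeomorphism shows that $\Psi$ preserves the condition that $q_k(\supp(\cdot))$ be compact, so $\Psi$ maps $A_1$ into $A_2$ (with inverse $g \mapsto g\circ\rho$). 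It is visibly $R$-linear, and multiplicativity follows from the convolution formula of \cref{prop:multiplication in twisted Steinberg algebras} together with the fact that $\rho$ respects composition and $R^\times$-orbits. Finally, since $\rho$ sends $q_1^{-1}(G_1^{(0)})$ to $q_2^{-1}(G_2^{(0)})$, we get $\Psi(B_1)=B_2$, so $\Psi$ is diagonal-preserving.

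For \cref{item:cor.R-alg.iso}$\implies$\cref{item:cor.equiv.twist}, I would first invoke the hypothesis: since $\Sigma_1 \to G_1$ satisfies the local bisection hypothesis, \cref{thm:reconstructing.the.twist} (via \cref{lem:LBH<->AQP}) shows that $(A_1,B_1)$ is an algebraic quasi-Cartan pair. The diagonal-preserving isomorphism $\Psi$ is then an isomorphism of \emph{pairs} $(A_1,B_1)\to(A_2,B_2)$, and since every condition in \cref{def:ACP} and \cref{def:conditional expectation} is purely algebraic and stable under such isomorphisms, $(A_2,B_2)$ is also an algebraic quasi-Cartan pair; \cref{thm:reconstructing.the.twist} then shows that $\Sigma_2 \to G_2$ likewise satisfies the local bisection hypothesis. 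Writing $\Sigma_k'$ for the groupoid of ultrafilters of $N(B_k)$ and $\Phi_k\colon \Sigma_k \to \Sigma_k'$ for the map of \cref{prop:embedding}, each $\Phi_k$ is now an isomorphism of topological groupoids by \cref{thm:reconstructing.the.twist}, and \cref{cor:comm.diagram.twist} upgrades each $\Phi_k$ to an isomorphism of twists $\Sigma_k \cong \Sigma_k'$. It remains to produce a twist isomorphism $\Sigma_1' \cong \Sigma_2'$: because $N(B)$ in \cref{def:normaliser} is defined purely from $(A,B)$, the algebra isomorphism $\Psi$ with $\Psi(B_1)=B_2$ restricts to an isomorphism of inverse semigroups $N(B_1)\to N(B_2)$ (with $\Psi(n^\dagger)=\Psi(n)^\dagger$), which carries ultrafilters to ultrafilters and sends each basic set $\VV_n$ to $\VV_{\Psi(n)}$, hence is a groupoid homeomorphism $\Sigma_1'\to\Sigma_2'$; $R$-linearity of $\Psi$ gives $\Psi(tn)=t\Psi(n)$, so this map is $R^\times$-equivariant and descends compatibly with the embeddings and quotients $i_k',q_k'$ constructed in \cref{thm:the twist}, i.e.\ it is an isomorphism of twists. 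Composing $\Sigma_1 \cong \Sigma_1' \cong \Sigma_2' \cong \Sigma_2$ then yields the desired twist isomorphism.

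The one step requiring genuine care---rather than bookkeeping---is verifying that the induced map $\Sigma_1'\to\Sigma_2'$ really is a morphism of twists and not merely of the underlying groupoids; this amounts to checking that it intertwines the embeddings $i_k'$ and the quotient maps $q_k'$, which follows from the explicit scalar-action formulas of \cref{lem:scalarbasis} and the construction of $i',q'$ in \cref{thm:the twist}. The remaining subtlety, promoting ``$(A_1,B_1)$ is an algebraic quasi-Cartan pair'' to the same statement for $(A_2,B_2)$, is immediate once one notes that the defining data (the idempotents $I(B)$, the normalisers $N(B)$, and the conditional expectation implemented by idempotents) are all preserved by a diagonal-preserving algebra isomorphism.
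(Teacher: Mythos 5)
Your proposal is correct and takes essentially the same route as the paper: the paper treats \cref{item:cor.equiv.twist}$\implies$\cref{item:cor.R-alg.iso} as immediate (your map $f \mapsto f \circ \rho^{-1}$ simply makes this explicit), and for the converse it likewise transports the quasi-Cartan structure through $\Psi$ (via $\tilde{P} \coloneqq \Psi \circ P_1 \circ \Psi^{-1}$, using $\Psi(N(B_1)) = N(B_2)$) to conclude that $(A_2,B_2)$ is an algebraic quasi-Cartan pair and that $\Sigma_2 \to G_2$ satisfies the local bisection hypothesis, then identifies each $\Sigma_k$ with the ultrafilter groupoid $\Sigma_k'$ via \cref{thm:reconstructing.the.twist} and \cref{cor:comm.diagram.twist} and defines the twist isomorphism on ultrafilters by $U \mapsto \Psi(U)$, exactly as you propose. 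Your closing verifications ($R^\times$-equivariance from $R$-linearity, and compatibility with $i_k'$ and $q_k'$) match the paper's concluding commutative diagram argument.
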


\begin{proof}
The implication \cref{item:cor.equiv.twist}$\implies$\cref{item:cor.R-alg.iso} is immediate. We prove the implication \cref{item:cor.R-alg.iso}$\implies$\cref{item:cor.equiv.twist}. Write $(A_1,B_1)$ and $(A_2,B_2)$ for the algebraic pairs associated to $\Sigma_1 \to G_1$ and $\Sigma_2 \to G_2$, respectively. We first show that $(A_2,B_2)$ is an algebraic quasi-Cartan pair. Define $\tilde{P}\colon A_2 \to B_2$ by $\tilde{P}(a_2) = \Psi(P_1(\Psi^{-1}(a_2)))$, where $P_1$ is the unique faithful conditional expectation that is implemented by idempotents for the pair $(A_1,B_1)$. Since $\Psi$ is diagonal-preserving, it is straightforward to prove that $\tilde{P}\restr{B_2} = \id_{B_2}$, that $\tilde{P}(b_2a_2b'_2) = b_2 \tilde{P}(a_2)b'_2$ for all $a_2 \in A_2$ and $b_2,b'_2 \in B_2$, and that $\Psi(N(B_1)) = N(B_2)$. The latter implies that $\tilde{P}$ is faithful and that it is implemented by idempotents. Since $\Psi$ and $P_1$ are $R$-linear, $\tilde{P}$ is $R$-linear. Hence the pair $(A_2,B_2)$ with $\tilde{P}$ is an algebraic quasi-Cartan pair. By \cref{lem:LBH<->AQP}, $(\Sigma_2,i_2,q_2)$ satisfies the local bisection hypothesis, and by \cref{prop:canonical CE,lem:LBH<->AQP}, $\tilde{P}$ coincides with the conditional expectation of $(A_2,B_2)$ given by restriction of
functions.

We now build an isomorphism between the twists. By \cref{thm:reconstructing.the.twist}\cref{item:iso.twist.Phi.surjective} and \cref{cor:comm.diagram.twist}, we can identify each $\Sigma_i$ with the groupoid of ultrafilters in $N(B_i)$, and we can identify each $G_i$ with the corresponding quotient. Since the partial order on each $N(B_i)$ is defined in terms of the multiplication, it follows that if $U$ is a filter of $N(B_1)$, then $\Psi(U)$ is a filter of $N(B_2)$, and if $V$ is a filter of $N(B_2)$, then $\Psi^{-1}(V)$ is a filter of $N(B_1)$. Hence the map $\psi\colon \Sigma_1 \to \Sigma_2$ given by $\psi(U) \coloneqq \Psi(U)$ is a well-defined bijection. Moreover, since the multiplication and topology on each $\Sigma_i$ depend only on the multiplicative structure of $N(B_i)$, it is straightforward to show that $\psi$ is a topological groupoid isomorphism. Since $\Psi$ is $R$-linear, we have $\psi(tU) = t \psi(U)$ for all $t \in R^\times$ and $U \in \Sigma_1$, and so $\psi$ is $R^\times$-equivariant. Hence there is a (well-defined) topological groupoid isomorphism $\psi_G\colon G_1 \to G_2$ given by $\psi_G(q_1(U)) \coloneqq q_2(\psi(U))$. It follows that the diagram
\[
\begin{tikzcd}
G_1^{(0)}\times R^\times \ar[r,hook,"i_1"]\ar[d,"\psi_G \times \id"] & \Sigma_1 \ar[r,two heads, "q_1"]\ar[d,"\psi"] & G_1 \ar[d,"\psi_G"] \\
G_2^{(0)}\times R^\times \ar[r,hook,"i_2"] & \Sigma_2 \ar[r,two heads, "q_2"] & G_2
\end{tikzcd}
\]
gives an isomorphism between of the twists $(\Sigma_1,i_1,q_1)$ and $(\Sigma_2,i_2,q_2)$.
\end{proof}

If at least one of the two twists is effective, then we can strengthen the previous result by asking only for an isomorphism of twisted Steinberg algebras that restricts to an inclusion of diagonal subalgebras; this implies that in fact it restricts to an isomorphism of diagonal subalgebras.

\begin{cor} \label{cor:equiv.twist.effective}
Let $(\Sigma_1,i_1,q_1)$ and $(\Sigma_2,i_2,q_2)$ be discrete $R$-twists over ample Hausdorff groupoids $G_1$ and $G_2$, respectively. Suppose that $G_1$ is effective. The following are equivalent.
\begin{enumerate}[label=(\arabic*)]
\item \label{item:cor.eff.equi.twist} The twists $(\Sigma_1,i_1,q_1)$ and $(\Sigma_2,i_2,q_2)$ are isomorphic.
\item \label{item:cor.eff.R-alg.iso} There exists a diagonal-preserving isomorphism of $R$-algebras $\Psi\colon A_R(G_1;\Sigma_1)\to A_R(G_2;\Sigma_2)$.
\item \label{item:cor.eff.R-alg.iso.ma} There exists an isomorphism of $R$-algebras $\Psi\colon A_R(G_1;\Sigma_1)\to A_R(G_2;\Sigma_2)$ such that $\Psi\big(A_R(G^{(0)}_1; q_1^{-1}(G^{(0)}_1))\big) \subseteq A_R(G^{(0)}_2; q_2^{-1}(G^{(0)}_2))$.
\end{enumerate}
\end{cor}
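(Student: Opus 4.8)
The plan is to leverage the already-established \cref{cor:equiv.twist} for the equivalence of \cref{item:cor.eff.equi.twist} and \cref{item:cor.eff.R-alg.iso}, and then to upgrade the one-sided inclusion of \cref{item:cor.eff.R-alg.iso.ma} to the two-sided equality of \cref{item:cor.eff.R-alg.iso} using a maximal-commutative-subalgebra argument that is powered by the effectiveness of $G_1$.

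First I would record that, because $G_1$ is effective, the argument in the proof of \cref{prop:effectiveACPprincipalADP} (through \cref{lem:global LBH}\cref{item2:global LBH}) shows that the twist $\Sigma_1 \to G_1$ satisfies the local bisection hypothesis, and moreover that the pair $(A_1,B_1) \coloneqq (A_R(G_1;\Sigma_1), A_R(G_1^{(0)}; q_1^{-1}(G_1^{(0)})))$ is an algebraic Cartan pair; in particular, $B_1$ is a maximal commutative subalgebra of $A_1$. Since the local bisection hypothesis holds for $\Sigma_1 \to G_1$, \cref{cor:equiv.twist} applies verbatim and gives \cref{item:cor.eff.equi.twist} $\iff$ \cref{item:cor.eff.R-alg.iso}. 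The implication \cref{item:cor.eff.R-alg.iso} $\implies$ \cref{item:cor.eff.R-alg.iso.ma} is immediate, as any diagonal-preserving isomorphism sends $B_1$ onto $B_2$ and hence into $B_2$.

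The remaining and only substantive step is \cref{item:cor.eff.R-alg.iso.ma} $\implies$ \cref{item:cor.eff.R-alg.iso}: I must promote an isomorphism $\Psi$ with $\Psi(B_1) \subseteq B_2$ to one that is diagonal-preserving. Here I would argue that since $\Psi$ is an $R$-algebra isomorphism, it carries the maximal commutative subalgebra $B_1$ to a maximal commutative subalgebra $\Psi(B_1)$ of $A_2$ (an algebra isomorphism preserves both commutativity and containment of subalgebras). On the other hand, $B_2 = A_R(G_2^{(0)}; q_2^{-1}(G_2^{(0)}))$ is commutative by \cref{prop:Steinberg diagonal}, and by hypothesis it contains $\Psi(B_1)$. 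Maximality of $\Psi(B_1)$ then forces $\Psi(B_1) = B_2$, so $\Psi$ is diagonal-preserving. Combining the three implications yields \cref{item:cor.eff.equi.twist} $\iff$ \cref{item:cor.eff.R-alg.iso} $\iff$ \cref{item:cor.eff.R-alg.iso.ma}.

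The crux is conceptual rather than computational: effectiveness of $G_1$ is precisely what makes $B_1$ a maximal commutative subalgebra (via the Cartan-pair structure of \cref{prop:effectiveACPprincipalADP}), and this maximality is exactly the leverage needed to convert an a priori one-sided inclusion of the diagonals into an equality. I expect no genuine obstacle beyond correctly invoking that $B_1$ is maximal commutative under the effectiveness assumption; the hypothesis cannot be dropped, since in the non-effective case \cref{cor:equiv.twist} already requires the full diagonal-preserving hypothesis to recover an isomorphism of the twists.
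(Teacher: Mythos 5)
Your proposal is correct and follows essentially the same route as the paper's own proof: invoke \cref{cor:equiv.twist} for the equivalence of \cref{item:cor.eff.equi.twist} and \cref{item:cor.eff.R-alg.iso} (after noting that effectiveness of $G_1$ gives the local bisection hypothesis), and then use \cref{prop:effectiveACPprincipalADP} to see that $B_1$ is maximal commutative, so that $\Psi(B_1)$ is maximal commutative in $A_R(G_2;\Sigma_2)$ and the containment $\Psi(B_1) \subseteq A_R(G^{(0)}_2; q_2^{-1}(G^{(0)}_2))$ is forced to be an equality. The only cosmetic difference is the citation path to the local bisection hypothesis (you go through \cref{lem:global LBH}, the paper through \cref{lem:LBH<->AQP,lem:C=>Q,prop:effectiveACPprincipalADP}), which is immaterial.
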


\begin{proof}
Since $G_1$ is effective, \cref{lem:LBH<->AQP,lem:C=>Q,prop:effectiveACPprincipalADP} together imply that $(\Sigma_1,i_1,q_1)$ satisfies the local bisection hypothesis. The equivalence \cref{item:cor.eff.equi.twist}$\iff$\cref{item:cor.eff.R-alg.iso} follows from \cref{cor:equiv.twist}. The implication \cref{item:cor.eff.R-alg.iso}$\implies$\cref{item:cor.eff.R-alg.iso.ma} is clear. We prove that \cref{item:cor.eff.R-alg.iso.ma}$\implies$\cref{item:cor.eff.R-alg.iso}. Let $\Psi$ be an isomorphism as in \cref{item:cor.eff.R-alg.iso.ma}. Since $G_1$ is effective, \cref{prop:effectiveACPprincipalADP} implies that $B_1 \coloneqq A_R(G^{(0)}_1; q^{-1}(G^{(0)}_1))$ is a maximal commutative subalgebra of $A_R(G_1;\Sigma_1)$, which implies that $\Psi(B_1)$ is a maximal commutative subalgebra of $A_R(G_2;\Sigma_2)$. Since $A_R(G^{(0)}_2; q^{-1}(G^{(0)}_2))$ is a commutative subalgebra containing $\Psi(B_1)$, they are equal.
\end{proof}

\section{Examples satisfying the local bisection hypothesis} \label{sec:examples}

In this short section we show that our theorems apply to a relatively large class of groupoids and twists. In particular, all twists over Deaconu--Renault groupoids with totally disconnected unit spaces satisfy the local bisection hypothesis.

The starting point is to show that there are plenty of groups whose twisted group algebras over large classes of indecomposable rings have no nontrivial units. Most of what we write here is well known.

The question as to which group rings have no nontrivial units goes back to the thesis of Graham Higman from the early 1940s. The famous Kaplansky unit conjecture asserts that if $R$ is an integral domain and $G$ is a torsion-free group, then $RG$ has no nontrivial units. Not long after this paper was first posted to ArXiv, Gardam \cite{Gardam2021} produced the first known counterexample to the Kaplansky unit conjecture; the group is torsion-free virtually abelian (in fact, crystallographic) and Gardam showed that it has nontrivial units over fields of characteristic $2$. Murray recently extended Gardam's result by showing that the same group has nontrivial units over any field of positive characteristic \cite{Murray2021}. The question of whether there are counterexamples in characteristic zero is still open. In any event, it is now clear that the class of groups whose group rings have no nontrivial units is less wide than previously hoped. So let us turn to those classes of groups for which the unit conjecture is known to hold; twisted group rings of groups in these classes also have no nontrivial units, which is the relevance for us.

Recall that a group $G$ is \emph{right orderable} if there is a total order $<$ on $G$ such that $x<y$ implies that $xz<yz$ for all $x,y,z\in G$. If, in addition, the order satisfies $zx<zy$ for all $x,y,z\in G$ such that $x < y$, then $G$ is said to be \emph{orderable}. Note that a group $G$ is (right) orderable if and only if all of its finitely generated subgroups are too \cite[Lemma~13.2.2]{PassmanBook}.

A group $G$ is said to have the \emph{unique product property} if, given two finite nonempty subsets $A$ and $B$ of $G$, there is an element of $AB$ that can be uniquely written as a product $ab$ with $a \in A$ and $b \in B$. This property is satisfied by any right-orderable group \cite[Theorem~13.1.7]{PassmanBook} and so, in particular, by any torsion-free abelian group, torsion-free nilpotent group \cite[Lemma~13.1.6]{PassmanBook}, and free group. Note that locally indicable groups are right orderable. Here, a group $G$ is \emph{locally indicable} if every finitely generated subgroup admits a nontrivial homomorphism to an infinite cyclic group.

It was shown by Strojnowski~\cite{Strojnowski} that if $G$ has the unique product property and $A$ and $B$ are finite subsets of $G$ with $\lvert A \rvert + \lvert B \rvert > 2$, then there are two distinct elements $g,h\in AB$ which have unique representations $g=ab$ and $h=a'b'$ with $a,a'\in A$ and $b,b'\in B$. Note that every group satisfying the unique product property is torsion-free, and that there are groups satisfying the unique product property that are not right orderable.

If $R$ is an integral domain and $G$ has the unique product property, then it is well known that the group ring $RG$ has no nontrivial units. The same argument applies \emph{mutatis mutandis} to twisted group rings. A more general result for group graded rings can be found in~\cite[Theorem~3.4]{Oinert}. We include the elementary proof for completeness.

\begin{lemma} \label{lem:unique.prod}
Let $G$ be a group satisfying the unique product property, let $R$ be a unital integral domain, and let $c$ be a normalised $R^\times$-valued $2$-cocycle on $G$. Then the only units of the twisted group ring $R(G,c)$ are the elements of the set $\{t \delta_g : t \in R^\times, g \in G\}$.
\end{lemma}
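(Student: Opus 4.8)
The plan is to work directly with the convolution formula for $R(G,c)$, rewritten in the form
\[
(u * v)(\beta) = \sum_{ab = \beta} c(a,b)\, u(a)\, v(b),
\]
which follows from the given formula by the substitution $a = \alpha$, $b = \alpha^{-1}\beta$. First I would record that every $t\delta_g$ with $t \in R^\times$ really is a unit: the special case $\delta_g * \delta_h = c(g,h)\,\delta_{gh}$ of the convolution formula, together with $c$ being $R^\times$-valued and normalised, shows that $\delta_g$ is invertible with inverse a scalar multiple of $\delta_{g^{-1}}$. The substance of the lemma is the reverse inclusion, namely that every unit has this form, and this is where the unique product property enters.

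So I would take a unit $u \in R(G,c)$, with inverse $v$ satisfying $u * v = \delta_e$. The case $u = 0$ is vacuous, so assume $u \neq 0$; then $v \neq 0$ as well, and $A \coloneqq \supp(u)$ and $B \coloneqq \supp(v)$ are finite nonempty subsets of $G$. The key observation is that if $\beta \in AB$ admits a \emph{unique} factorisation $\beta = ab$ with $a \in A$ and $b \in B$, then the sum for $(u*v)(\beta)$ collapses to the single term $c(a,b)\,u(a)\,v(b)$; since $c(a,b) \in R^\times$, $u(a) \neq 0$, $v(b) \neq 0$, and $R$ is an integral domain, this term is nonzero, so $(u*v)(\beta) \neq 0$. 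As $u * v = \delta_e$, this forces any uniquely factored element of $AB$ to equal $e$.

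From here I would argue by contradiction that $\lvert A \rvert = \lvert B \rvert = 1$. If instead $\lvert A \rvert + \lvert B \rvert > 2$, then Strojnowski's refinement of the unique product property (quoted above from \cite{Strojnowski}) provides two \emph{distinct} elements of $AB$, each with a unique factorisation of the form $ab$ with $a\in A$, $b\in B$. By the previous paragraph both must equal $e$, contradicting their distinctness. Hence $\lvert A \rvert = \lvert B \rvert = 1$, say $A = \{g\}$ and $B = \{h\}$, so $u = t\delta_g$ with $t \coloneqq u(g) \neq 0$ and $v = s\delta_h$. Then $\delta_e = u * v = ts\,c(g,h)\,\delta_{gh}$ forces $gh = e$ and $ts\,c(g,h) = 1$; since $c(g,h) \in R^\times$, the latter exhibits $t$ as a unit of $R$, giving $u = t\delta_g$ with $t \in R^\times$, as required.

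The argument is short and essentially routine once Strojnowski's theorem is available; the single point that genuinely demands his strengthening rather than the bare unique product property is the need for \emph{two} uniquely factored elements of $AB$. A single such element would only be forced to equal $e$, which is no contradiction, whereas two distinct uniquely factored elements cannot both be $e$. I expect this to be the crux of the proof, with everything else being direct bookkeeping with the convolution formula and the fact that $c$ takes values in $R^\times$.
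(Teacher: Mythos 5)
Your proof is correct and takes essentially the same route as the paper's: both invoke Strojnowski's strengthening of the unique product property to produce two \emph{distinct} uniquely factored elements of $AB$ when $\lvert A\rvert+\lvert B\rvert>2$, observe that their coefficients in $u*v$ are nonzero because $c$ is $R^\times$-valued and $R$ is an integral domain, and derive a contradiction with $u*v=\delta_e$ before concluding $u=t\delta_g$ with $t\in R^\times$. The only (harmless) difference is that you also spell out the easy inclusion that each $t\delta_g$ is a unit, which the paper leaves implicit.
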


\begin{proof}
Let $e$ denote the identity of $G$. Suppose that $a,b \in R(G,c)$ satisfy $ab = ba = 1_{R(G,c)} = \delta_e$. Write $a = \sum_{g \in G} a_g \delta_g$ and $b = \sum_{h \in G} b_h \delta_h$. Let $A \coloneqq \{g : a_g \ne 0\}$ and $B \coloneqq \{h : b_h \ne 0\}$. These are finite sets. If $a$ and $b$ are not both trivial units, then $\lvert A \rvert + \lvert B \rvert > 2$. So there exist distinct $g, h \in AB$ which have unique factorisations $g=xy$ and $h=x'y'$ with $x,x'\in A$ and $y,y'\in B$. It follows that the coefficients of $\delta_g$ and $\delta_h$ in $ab$ are $a_xb_yc(x,y)$ and $a_{x'}b_{y'}c(x',y')$, respectively, and these are both nonzero since $R$ is an integral domain. This contradicts $ab=\delta_e$. Therefore, $\lvert A \rvert + \lvert B \rvert \le 2$, and so $a=a_g\delta_g$ and $b=b_h\delta_h$ with $g,h\in G$. We then have $a_gb_hc(g,h) = 1$, and so $a_g, b_h\in R^\times$, as required.
\end{proof}

We call $G$ a \emph{trivial-units-only group} if the twisted group ring $k(G,c)$ has no nontrivial units for every field $k$ and every normalised $k^\times$-valued $2$-cocycle $c$ on $G$. For example, a group satisfying the unique product property is a trivial-units-only group by \cref{lem:unique.prod}.

Recall that a commutative unital ring $R$ is \emph{reduced} if it has no nonzero nilpotent elements. This is equivalent to the intersection of all prime ideals of $R$ being trivial. Every integral domain is reduced and indecomposable. However, the class of reduced and indecomposable commutative rings is much larger and includes the coordinate rings of connected (but not necessarily irreducible) affine varieties over algebraically closed fields. For instance, $\C[x,y]/(xy)$, which is the coordinate ring of the union of the coordinate axes in $\C^2$, is reduced and indecomposable but not an integral domain.

The following proposition is a variation of \cite[Proposition~2.1]{Steinberg2019}, which is based on \cite[Theorem~3]{Neher}. It generalises \cref{lem:unique.prod}.

\begin{prop} \label{prop:Neher}
Let $G$ be a nontrivial trivial-units-only group as defined above (for example, a group satisfying the unique product property). Let $R$ be a commutative unital ring and let $c$ be an $R^\times$-valued normalised $2$-cocycle on $G$. Then $R(G,c)$ has no nontrivial units if and only if $R$ is reduced and indecomposable.
\end{prop}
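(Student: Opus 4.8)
The plan is to prove the two implications separately, disposing of the ``only if'' direction by contraposition with two explicit constructions of nontrivial units, and reserving the real work for the ``if'' direction.

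First I would handle the forward implication. If $R$ is not reduced, choose a nonzero nilpotent $r \in R$ and, using that $G$ is nontrivial, a nonidentity $g \in G$. Since $(r\delta_g)^n$ has coefficient a unit multiple of $r^n$, the element $r\delta_g$ is nilpotent in $R(G,c)$, so $\delta_e + r\delta_g$ is a unit with two-element support, hence a nontrivial unit. If $R$ is decomposable, write $R \cong R_1 \times R_2$ with both factors nonzero and let $f = (1,0)$ be the corresponding nontrivial idempotent. Then $f\delta_e + (1-f)\delta_g$ is a unit, because in each direct factor it restricts to a trivial unit $\delta_e$ or $\delta_g$, yet it is supported on the two distinct group elements $e$ and $g$, so it is a nontrivial unit. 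Either way $R(G,c)$ has a nontrivial unit, which is the contrapositive of what we want.

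For the converse, suppose $R$ is reduced and indecomposable and let $u = \sum_{g} u_g \delta_g$ be a unit, with support $A \coloneqq \{g : u_g \ne 0\}$. The key reduction is to fields: for each prime $\fp \in \Spec(R)$, the composite $R \to R/\fp \hookrightarrow k_\fp \coloneqq \operatorname{Frac}(R/\fp)$ sends $R^\times$ into $k_\fp^\times$ and induces a unital ring homomorphism $R(G,c) \to k_\fp(G,\bar{c})$, where $\bar{c}$ is the image cocycle (still normalised and now $k_\fp^\times$-valued); this map carries units to units. Since $G$ is a trivial-units-only group and $k_\fp$ is a field, the image of $u$ is a trivial unit of $k_\fp(G,\bar{c})$. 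Consequently there is a \emph{unique} $g \in A$ with $u_g \notin \fp$, while $u_{g'} \in \fp$ for every other $g' \in A$.

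It follows that the basic open sets $D(u_g) = \{\fp : u_g \notin \fp\}$, for $g \in A$, are pairwise disjoint and together cover $\Spec(R)$; being a finite disjoint open cover, each $D(u_g)$ is therefore clopen. Connectedness of $\Spec(R)$ (equivalently, indecomposability of $R$) forces all but one of these clopen sets to be empty. But $D(u_g) = \varnothing$ means $u_g$ lies in every prime of $R$, hence is nilpotent, hence is zero because $R$ is reduced --- which is impossible for $g \in A$. Thus $\lvert A \rvert = 1$, say $u = u_{g_0}\delta_{g_0}$, and comparing coefficients in $u\,u^{-1} = \delta_e$ (using that $c$ is normalised) shows $u_{g_0} \in R^\times$; so $u$ is a trivial unit, as required.

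The routine constructions in the forward direction and the bilinear/cocycle bookkeeping are harmless; the delicate points are verifying that reduction modulo a prime followed by passage to the fraction field yields a genuine homomorphism of twisted group rings sending units to units, and then organizing the partition of $\Spec(R)$ so that connectedness applies. I expect this last topological step --- where indecomposability, reducedness, and the trivial-units-over-fields hypothesis all come together --- to be the crux, exactly as in the arguments of Neher and Steinberg that this proposition generalizes.
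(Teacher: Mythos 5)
Your proposal is correct and follows essentially the same route as the paper: both directions match, with the same explicit nontrivial units for non-reduced or decomposable $R$, and the same reduction modulo primes to residue fields $\kappa(\fp)$ followed by a connectedness argument on $\Spec(R)$ (your finite clopen partition by the sets $D(u_g)$ is just a repackaging of the paper's locally constant map $\fp \mapsto g(\fp)$). The only cosmetic differences are your direct-product decomposition in the idempotent case, where the paper writes the inverse explicitly, and your coefficient comparison in $u\,u^{-1} = \delta_e$ at the end, where the paper notes that $a_h$ lies in no maximal ideal.
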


\begin{proof}
Let $e$ denote the identity of $G$.
We begin with necessity; this argument applies to all nontrivial groups $G$. Fix $g\in G{\setminus} \{e\}$. Suppose first that $R$ is not indecomposable, and let $f \in R {\setminus} \{0,1\}$ be an idempotent. Then $a = f\delta_e+(1-f)\delta_g$ is a nontrivial unit with inverse $b = f\delta_e+(1-f)c(g,g^{-1})^{-1}\delta_{g^{-1}}$. Indeed, using that $c$ is normalised, we have that $ab = \big(f+(1-f)c(g,g^{-1})c(g,g^{-1})^{-1}\big)\delta_e = \delta_e$, and similarly, $ba=\delta_e$, using that $c(g,g^{-1})=c(g^{-1},g)$, since $c$ is a normalised $2$-cocycle.

Next assume that $R$ is not reduced. If $n \in R {\setminus} \{0\}$ is nilpotent, then $n\delta_g$ is nilpotent, and so $\delta_e - n\delta_g$ is a nontrivial unit with inverse $\delta_e + n\delta_g$.

For sufficiency, we use some algebraic geometry, following~\cite{Neher}. Since $R$ is indecomposable, its prime (Zariski) spectrum $\Spec(R)$ is connected. If $\fp \in \Spec(R)$, then the residue field at $\fp$ is the field of fractions $\kappa(\fp)$ of $R/\fp$. For each $r \in R$, we write $r(\fp)$ for the image of $r$ in $\kappa(\fp)$ under the composition $R \to R/\fp \to \kappa(\fp)$. We have an induced $\kappa(\fp)^\times$-valued normalised $2$-cocycle $c_{\fp}$ on $G$ obtained by putting $c_{\fp}(g,h) = c(g,h)(\fp)$. Moreover, there is a canonical surjective homomorphism $\pi_{\fp}\colon R(G,c)\to \kappa(\fp)(G,c_{\fp})$ sending $\sum_{g\in G}a_g\delta_g$ to $a(\fp)\coloneqq \sum_{g\in G}a_g(\fp)\delta_g$.

Let $a \in R(G,c)$ be a unit, say $a = \sum_{g\in G} a_g\delta_g$. Then $a(\fp)$ is a unit of $\kappa(\fp)(G,c_{\fp})$ and hence is a trivial unit by the hypothesis that $G$ is a trivial-units-only group. It follows that there is a unique element $h \in G$ with $a_h(\fp) \ne 0$. Denote this element by $g(\fp)$. Then $g\colon \fp \mapsto g(\fp)$ is a map from $\Spec(R)$ to $G$. We claim that $g$ is locally constant. Indeed, if $g(\fp) = h$, then $a_h\notin \fp$. Let $D(a_h)$ denote the basic open set of all prime ideals not containing $a_h$. For $\fq \in D(a_h)$, we have $a_h(\fq)\ne 0$, and so $g(\fq) = h$. Therefore, $g$ is constant on $D(a_h)$, and so $g$ is locally constant, as claimed. Since $\Spec(R)$ is connected, we deduce that $g$ is constant. Hence there exists $h \in G$ such that $a_h(\fp) \ne 0$ for all $\fp \in \Spec(R)$; and for all $h' \ne h$, we have $a_{h'}(\fp) = 0$ for all $\fp \in \Spec(R)$. Since $R$ is reduced, the intersection of all of its prime ideals is zero, and so $a_{h'} = 0$ for $h'\ne h$. And, since $a_h$ belongs to no prime ideal, and hence to no maximal ideal, it is a unit. Thus $a = a_h\delta_h$ is a trivial unit.
\end{proof}

\begin{cor} \label{cor:application}
Let $R$ be a reduced and indecomposable commutative ring, and let $(\Sigma,i,q)$ be a discrete $R$-twist over an ample Hausdorff groupoid $G$. Let $\II$ denote the interior of the isotropy of $G$, and suppose that there is a dense set $X\subseteq \Go$ such that for each $x \in X$, the group $\II_x$ satisfies the unique product property (for example, it might be a right-orderable group). Let $A \coloneqq A_R(G;\Sigma)$ and $B \coloneqq A_R(\Go; q^{-1}(\Go)) \subseteq A$. Then $(A,B)$ is an algebraic quasi-Cartan pair, and the map $\Phi$ of \cref{prop:embedding} from $\Sigma$ to the twist $\Sigma'$ is an isomorphism of twists.
\end{cor}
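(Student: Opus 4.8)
The plan is to obtain both conclusions as the endpoints of the chains of implications assembled in \cref{sec:prototypes,sec:recover twist}; the only real content is verifying the hypothesis of \cref{lem:iso LBH}, after which everything is a concatenation of results already proved. The central intermediate goal is to show that $(\Sigma,i,q)$ satisfies the local bisection hypothesis.

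First I would fix $x \in X$ and argue that the twisted group ring $R(\II_x, c_x)$ (with $c_x$ the normalised cocycle of \cref{lem:unit cocycle}) has only trivial units, distinguishing two cases. If $\II_x$ is the trivial group, then $R(\II_x, c_x) \cong R$, whose units are exactly $R^\times = \{t\delta_x : t \in R^\times\}$, so the claim is immediate. If $\II_x$ is nontrivial, then since $\II_x$ has the unique product property it is a trivial-units-only group by \cref{lem:unique.prod} (applied over fields), and because $R$ is reduced and indecomposable, \cref{prop:Neher} gives that $R(\II_x, c_x)$ has no nontrivial units; that is, $R(\II_x, c_x)^\times = \{t\delta_\alpha : t \in R^\times,\ \alpha \in \II_x\}$. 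In either case $x$ lies in the set of units appearing in the hypothesis of \cref{lem:iso LBH}. Since that set contains the dense set $X$, it is itself dense in $\Go$.

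Next I would run the reduction to the interior of the isotropy. By \cref{lem:iso LBH}, the restricted twist $(\JJ, i, q\restr{\JJ})$ over $\II$ (with $\JJ = q^{-1}(\II)$) satisfies the local bisection hypothesis; by \cref{lem:global LBH}\cref{item2:global LBH}, the local bisection hypothesis on $(\Sigma,i,q)$ is equivalent to that on $(\JJ, i, q\restr{\JJ})$, so $(\Sigma,i,q)$ satisfies it too. Then \cref{lem:LBH<->AQP} yields that $(A,B)$ is an algebraic quasi-Cartan pair, which is the first assertion. For the second assertion I would invoke \cref{thm:reconstructing.the.twist}: the implication \cref{item:iso.twist.qcp}$\implies$\cref{item:iso.twist.Phi.surjective} shows that $\Phi$ is surjective and hence an isomorphism of topological groupoids, and combining this with the commuting diagram of \cref{cor:comm.diagram.twist} upgrades $\Phi$ (together with the induced $\Phi_G$) to an isomorphism of discrete $R$-twists.

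The hard part---really the only step requiring care---is the bookkeeping in the first step: \cref{prop:Neher} is stated only for \emph{nontrivial} groups, so the case $\II_x = \{x\}$ must be handled by hand, and one must thread the definitions correctly so that ``unique product property'' passes through ``trivial-units-only group'' into the hypothesis of \cref{prop:Neher}. Everything downstream is a direct application of the cited results, with no new estimates or constructions needed.
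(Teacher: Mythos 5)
Your proposal is correct and follows essentially the same route as the paper's own proof: verify the hypothesis of \cref{lem:iso LBH} via \cref{lem:unique.prod,prop:Neher}, pass the local bisection hypothesis from $(\JJ,i,q\restr{\JJ})$ to $(\Sigma,i,q)$ using \cref{lem:global LBH}\cref{item2:global LBH}, and conclude via \cref{thm:reconstructing.the.twist} together with \cref{cor:comm.diagram.twist}. Your explicit handling of the case $\II_x = \{x\}$ (since \cref{prop:Neher} is stated only for nontrivial groups, while \cref{lem:unique.prod} alone does not cover reduced indecomposable rings that are not integral domains) is a small point the paper's one-line citation glosses over, and it is correctly resolved by your direct observation that $R(\II_x,c_x) \cong R$ has only trivial units.
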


\begin{proof}
\Cref{lem:unique.prod,prop:Neher} show that twisted group rings of groups satisfying the unique product property have no nontrivial units, and so the hypothesis of \cref{lem:iso LBH} is satisfied. Therefore, \cref{lem:iso LBH} and \cref{lem:global LBH}\cref{item2:global LBH} together show that $(\Sigma,i,q)$ satisfies the local bisection hypothesis. The result follows from \cref{cor:comm.diagram.twist,thm:reconstructing.the.twist}.
\end{proof}

Since free abelian groups are orderable and hence have the unique product property, we have the following corollary of \cref{cor:application}.

\begin{cor} \label{cor:DR-groupoids}
Let $X$ be a totally disconnected locally compact Hausdorff space, and let $T\colon n \mapsto T_n$ be an action of $\N^k$ on $X$ by local homeomorphisms. Let $G$ be the associated Deaconu--Renault groupoid, let $R$ be a reduced and indecomposable commutative ring, and let $(\Sigma,i,q)$ be a discrete $R$-twist over $G$. Let $(A,B) \coloneqq (A_R(G;\Sigma), A_R(\Go; q^{-1}(\Go)))$ be the associated algebraic pair. Then $(A,B)$ is an algebraic quasi-Cartan pair, and \cref{thm:reconstructing.the.twist} recovers the groupoid $G$ and the discrete twist $\Sigma$ from $(A,B)$.
\end{cor}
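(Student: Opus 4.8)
The plan is to assemble \cref{cor:DR-groupoids} as a direct instance of \cref{cor:application}, so the real work is to verify that the hypotheses of \cref{cor:application} hold for a Deaconu--Renault groupoid $G$ built from an action $T$ of $\N^k$ on a totally disconnected locally compact Hausdorff space $X$. First I would recall the concrete description of $G$: its unit space is (homeomorphic to) $X$, which is totally disconnected, locally compact, and Hausdorff by assumption, so $G$ is ample and Hausdorff---exactly the standing hypotheses required throughout \cref{sec:recover twist} and in \cref{cor:application}. Since $R$ is reduced and indecomposable by hypothesis, it remains only to exhibit a dense set $X_0 \subseteq \Go$ such that the isotropy group $\II_x$ of the interior of the isotropy is, for each $x \in X_0$, a group with the unique product property.

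The crucial structural input is that the fibres of the isotropy of a Deaconu--Renault groupoid of an $\N^k$-action embed into $\Z^k$. Concretely, an element of $G$ has the form $(x,m-n,y)$ with $T_m x = T_n y$, and isotropy at $x$ consists of those $(x,p,x)$ with $T_m x = T_n x$ for some $m,n$ with $m-n = p$; the map $(x,p,x)\mapsto p$ is an injective group homomorphism from $G^x_x$ into $\Z^k$. The interior of the isotropy fibre $\II_x$ is a subgroup of $G^x_x$, hence also embeds in $\Z^k$, so $\II_x$ is a finitely generated torsion-free abelian group for every $x\in\Go$. Since free abelian groups are orderable (they are torsion-free abelian, hence orderable, hence satisfy the unique product property by \cite[Theorem~13.1.7]{PassmanBook}), and subgroups of orderable groups are orderable, every $\II_x$ has the unique product property---with no density hypothesis needed at all. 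Thus I may take $X_0 = \Go$, which is trivially dense.

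With these observations in hand, the proof is short: I would state that $G$ is ample and Hausdorff with totally disconnected unit space $X$, note that $R$ is reduced and indecomposable, and observe that for \emph{every} $x\in\Go$ the group $\II_x$ embeds in $\Z^k$ and is therefore torsion-free abelian and so satisfies the unique product property. Hence the dense set $X$ itself witnesses the hypothesis of \cref{cor:application}, and \cref{cor:application} immediately gives that $(A,B)$ is an algebraic quasi-Cartan pair and that $\Phi$ is an isomorphism of twists. Finally I would invoke \cref{thm:reconstructing.the.twist} (equivalence of \cref{item:iso.twist.qcp} and \cref{item:iso.twist.Phi.surjective}) together with \cref{cor:comm.diagram.twist} to conclude that $G$ and the discrete twist $\Sigma$ are recovered from $(A,B)$.

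The main obstacle, such as it is, is not analytic but bookkeeping: one must correctly identify the interior-of-isotropy fibres $\II_x$ of the Deaconu--Renault groupoid and confirm the embedding into $\Z^k$, since it is this purely group-theoretic fact---rather than any subtlety about the twist or the ring---that makes the unique product property automatic. The only place where care is required is ensuring that I am using $\II_x$ (the fibre of the \emph{interior} of the isotropy) rather than the full isotropy $G^x_x$; but since $\II_x \le G^x_x \hookrightarrow \Z^k$ and subgroups of unique-product-property groups again have the unique product property, this causes no difficulty, and indeed the density hypothesis in \cref{cor:application} is satisfied vacuously with $X_0 = \Go$.
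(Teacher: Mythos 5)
Your proposal is correct and matches the paper's own route: the paper derives this corollary directly from \cref{cor:application} via the one-line observation that free abelian groups are orderable and hence have the unique product property, which is exactly your argument once you note that each isotropy fibre of a Deaconu--Renault groupoid (and a fortiori each $\II_x$) embeds in $\Z^k$. Your extra care in distinguishing $\II_x$ from $G^x_x$ and in noting that the density hypothesis holds vacuously with $X_0 = \Go$ is sound and consistent with the paper's implicit reasoning.
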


\begin{example} \label{eg:KP-algs}
Let $\Lambda$ be a countable row-finite higher-rank graph with no sources as in \cite{KP2000NYJM}, let $R$ be a reduced and indecomposable commutative ring, and let $\omega$ be normalised $R^\times$-valued $2$-cocycle on $\Lambda$ as in \cite{KPS2015TAMS}. The arguments of \cite[Section~6]{KPS2015TAMS} show how to construct a normalised locally constant $R^\times$-valued $2$-cocycle $c_\omega$ on the $k$-graph groupoid $G_\Lambda$. Since this groupoid is a Deaconu--Renault groupoid, our results show that the twist $G_\Lambda \times_{c_\omega} R^\times$ over $G_\Lambda$ obtained from $c_\omega$ can be recovered from the twisted Steinberg algebra $A_R(G_\Lambda; G_\Lambda \times_{c_\omega} R^\times)$ and its canonical commutative subalgebra (isomorphic to) $C_c(\Lambda^\infty,R)$. Moreover, $A_R(G_\Lambda;G_\Lambda \times_{c_\omega} R^\times) \cong A_R(G_\Lambda,c_\omega)$ as in \cite[Theorem~4.23]{ACCLMR2022}, and one can recover the cohomology class of ${c_\omega}$ from the twist $G_\Lambda \times_{c_\omega} R^\times$ over $G_\Lambda$ as in \cite[Corollary~4.16]{ACCLMR2022}. So the groupoid $G_\Lambda$ and the cohomology class of $c_\omega$ are invariants of the pair $(A_R(G_\Lambda,c_\omega), C_c(\Lambda^\infty,R))$.
\end{example}

\vspace{2ex}
\bibliographystyle{amsplain}
\makeatletter\renewcommand\@biblabel[1]{[#1]}\makeatother
\bibliography{references}
\newpage
\end{document}